\newtheorem{proposition}{Proposition}
\newtheorem{corollary}[proposition]{Corollary}
\newtheorem{lemma}[proposition]{Lemma}
\newtheorem{theorem}[proposition]{Theorem}
\newtheorem{conjecture}[proposition]{Conjecture}
\newtheorem*{conjecture*}{Conjecture}
\newtheorem*{theorem*}{Theorem}
\newtheorem*{corollary*}{Corollary}
\newtheorem*{proposition*}{Proposition}
\newtheorem*{lemma*}{Lemma}
\theoremstyle{definition}
\newtheorem{definition}[proposition]{Definition}
\newtheorem{construction}[proposition]{Construction}
\newtheorem*{definition*}{Definition}
\newtheorem*{construction*}{Construction}
\newtheorem{remark}[proposition]{Remark}
\newtheorem*{remark*}{Remark}
\newtheorem{example}[proposition]{Example}
\newtheorem*{example*}{Example}
\newcommand{\id}{\operatorname{id}}
\newcommand{\Z}{\mathbb{Z}}
\newcommand{\Q}{\mathbb{Q}}
\newcommand{\F}{\mathbb{F}}
\let\scr=\mathcal
\let\bb=\mathbb
\def\A{\bb A}
\def\P{\bb P}
\newcommand{\SH}{\mathcal{SH}}
\newcommand{\Spt}{\mathcal{S}\mathrm{pt}{}}
\newcommand{\Perf}{\mathrm{Perf}}
\DeclareMathOperator*{\colim}{colim}
\let\lim=\relax
\DeclareMathOperator*{\lim}{lim}
\def\Map{\mathrm{Map}}
\def\map{\mathrm{map}}
\def\CAlg{\mathrm{CAlg}}
\def\Cat{\mathcal{C}\mathrm{at}{}}
\def\Spc{\mathrm{Spc}}
\def\Fun{\mathrm{Fun}}
\newcommand{\Spec}{\mathrm{Spec}}
\DeclareRobustCommand{\ul}{\underline}
\newcommand{\tr}{\mathrm{tr}}
\newcommand{\Hom}{\operatorname{Hom}}
\def\op{\mathrm{op}}
\let\cat=\mathrm
\def\Sm{{\cat{S}\mathrm{m}}}
\def\Sch{\cat{S}\mathrm{ch}{}}
\def\Nis{\mathrm{Nis}}
\def\Zar{\mathrm{Zar}}
\def\cdh{\mathrm{cdh}}
\def\mot{\mathrm{mot}}
\newcommand{\et}{{\acute{e}t}}
\def\ph{\mathord-}
\newcommand{\Gr}{\operatorname{Gr}}
\newcommand{\Fil}{\operatorname{Fil}}
\newcommand{\gr}{\mathrm{gr}}
\newcommand{\PShv}{\mathrm{PShv}}
\newcommand{\syn}{\mathrm{syn}}
\newcommand{\qcqs}{\mathrm{qcqs}}
\newcommand{\KGL}{\mathrm{KGL}}
\newcommand{\K}{\mathrm{K}}
\newcommand{\KH}{\mathrm{KH}}
\newcommand{\HH}{\mathrm{HH}}
\newcommand{\HC}{\mathrm{HC}}
\newcommand{\HP}{\mathrm{HP}}
\newcommand{\BMS}{\mathrm{BMS}}
\newcommand{\Sp}{\mathrm{Sp}}
\newcommand{\comp}{{{\kern -.5pt}\wedge}}
\newcommand{\Shv}{\operatorname{Shv}}
\renewcommand{\PShv}{\operatorname{PSh}}
\newcommand{\eh}{\text{\'{e}h}}
\newcommand{\h}{\mathrm{h}}
\renewcommand{\H}{\mathrm{H}}
\newcommand{\TC}{\mathrm{TC}}
\newcommand{\fib}{\mathrm{fib}}
\newcommand{\THH}{\mathrm{THH}}
\newcommand{\Pic}{\mathcal{P}\mathrm{ic}}
\renewcommand{\SH}{\mathrm{SH}}
\renewcommand{\Cat}{\opp{Cat}}
\DeclareSymbolFontAlphabet{\mathbb}{AMSb} 
\DeclareSymbolFontAlphabet{\mathbbl}{bbold}
\numberwithin{proposition}{section}
\numberwithin{equation}{section}
\renewcommand{\bb}[1]{\mathbb{#1}}
\newcommand{\sub}[1]{{\mbox{\rm \scriptsize #1}}}
\newcommand{\roi}{\mathcal{O}}
\newcommand{\isoto}{\stackrel{\simeq}{\to}}
\newcommand{\quis}{\stackrel{\sim}{\to}}
\newcommand{\opp}{\operatorname}
\renewcommand{\hat}{\widehat}
\newcommand{\To}{\longrightarrow}
\renewcommand{\cal}{\mathcal}
\newcommand{\comment}[1]{}
\renewcommand{\Spt}{\mathrm{Sp}}
\newcommand{\xto}{\xrightarrow}
\renewcommand{\tilde}{\widetilde}
\newcommand{\res}{\overline}
\newcommand{\al}{\alpha}
\newcommand{\into}{\hookrightarrow}
\title{Motivic cohomology of equicharacteristic schemes}
\date{\today}
\author{Elden Elmanto\footnote{University of Toronto}, Matthew Morrow\footnote{CNRS et Laboratoire de Math\'ematiques d'Orsay}}
\begin{document}

\maketitle

\begin{abstract}
We construct a theory of motivic cohomology for quasi-compact, quasi-separated schemes of equal characteristic, which is related to non-connective algebraic $K$-theory via an Atiyah--Hirzebruch spectral sequence, and to \'etale cohomology in the range predicted by Beilinson and Lichtenbaum. On smooth varieties over a field our theory recovers classical motivic cohomology, defined for example via Bloch's cycle complex. Our construction uses trace methods and (topological) cyclic homology.

As predicted by the behaviour of algebraic $K$-theory, the motivic cohomology is in general sensitive to singularities, including non-reduced structure, and is not $\bb A^1$-invariant. It nevertheless has good geometric properties, satisfying for example the projective bundle formula and pro cdh descent.

Further properties of the theory include a Nesterenko--Suslin comparison isomorphism to Milnor $K$-theory, and a vanishing range which simultaneously refines Weibel's conjecture about negative $K$-theory and a vanishing result of Soul\'e for the Adams eigenspaces of higher algebraic $K$-groups. We also explore the relation of the theory to algebraic cycles, showing in particular that the Levine--Weibel Chow group of zero cycles on a surface arises as a motivic cohomology group.
\end{abstract}

\tableofcontents

\section{Introduction}
The vision of motivic cohomology is due to Beilinson and Lichtenbaum \cite{Beilinson1987a, bms-zero, Lichtenbaum1984}. For a reasonable class of schemes $X$ they predicted the existence of natural complexes of abelian groups $\Z(j)^{\mot}(X)$, for $j\ge0$, satisfying various relations to algebraic $K$-theory and \'etale cohomology. Perhaps the most important of these relations is a desired {\em Atiyah--Hirzebruch spectral sequence}
\begin{equation}\label{eq:ahss}
E_2^{ij}=H_{\mot}^{i-j}(X, \Z(-j)) \implies \K_{-i-j}(X),
\end{equation}
relating the {\em motivic cohomology groups} $H^i_{\mot}(X, \Z(j)) := H^i(\Z(j)^{\mot}(X))$ to the algebraic $K$-groups of $X$. (Note that $\Z(j)^{\mot}=0$ for $j<0$; this is either a result or a convention, depending on the context.) They asked that this spectral sequence would degenerate rationally and identify the rationalised motivic cohomology $H^i_{\mot}(X, \Z(j))\otimes_\bb Z\bb Q$ with the Adams eigenspace  $\K_{i-2j}(X)_\bb Q^{(j)}$. Meanwhile, motivic cohomology with finite coefficients $H^i_{\mot}(X, \Z/\ell (j)) := H^i(\Z(j)^{\mot}(X)/\ell)$ was expected to coincide with \'etale cohomology $H^i_\sub{\'et}(X,\mu_{\ell}^{\otimes j})$ in the range $i\le j$, whenever $\ell>0$ is invertible on $X$. Note that any such theory of motivic cohomology must necessarily fail to be $\bb A^1$-invariant for sufficiently singular $X$, i.e., the maps $H^i_\sub{mot}(\bb A_X^1,\bb Z(j))\to H^i_\sub{mot}(X,\bb Z(j))$ are not in general isomorphisms, since algebraic $K$-theory also fails to be $\bb A^1$-invariant on general schemes.

In this article, which builds on our joint work with T.~Bachmann about $\bb A^1$-invariant motivic cohomology \cite{BachmannElmantoMorrow}, we construct such motivic complexes $\bb Z(j)^\sub{mot}(X)$ whenever $X$ is a quasi-compact, quasi-separated scheme of equal characteristic.\footnote{By {\em equal characteristic} we mean that the structure map $X\to\Spec(\bb Z)$ factors through $\Spec(\bb Q)$ or $\Spec(\bb F_p)$ for some prime number $p$. The equal characteristic assumption which runs throughout this paper has been removed by Bouis \cite{Bouis2025, Bouis2025a, Bouis2025b}, whose work therefore provides a robust theory of motivic cohomology for arbitrary quasi-compact, quasi-separated schemes.} For the rest of the introduction let $\bb F$ be a prime field, i.e., $\bb Q$ or $\bb F_p$ for some prime number $p$.

\begin{theorem}\label{thm:main}
There exist finitary Nisnevich sheaves
\[
\Z(j)^{\mot}: \Sch_{\bb F}^{\qcqs,\op} \To \rm D(\Z)
\]
for $j\ge0$ such that the following properties hold for any qcqs $\bb F$-scheme $X$:
\begin{enumerate}
\item There exists a functorial, multiplicative, $\mathbb{N}$-indexed filtration $\mathrm{Fil}_\sub{mot}^{\star}\K(X)$ on the non-connective algebraic $K$-theory $K(X)$, such that the graded pieces are naturally and multiplicatively given by
\[
\mathrm{gr}^j_\sub{mot}\K(X) \simeq \Z(j)(X)^{\mot}[2j]
\]
for $j\ge0$.
In particular, writing $H^i_\sub{mot}(X,\bb Z(j)):=H^i(\bb Z(j)^\sub{mot}(X))$ for the corresponding {\em motivic cohomology groups}, there exists a natural multiplicative Atiyah-Hirzebruch spectral sequence
\[
E_2^{ij}=H_{\mot}^{i-j}(X, \Z(-j)) \implies \K_{-i-j}(X).
\]
If $X$ has finite valuative dimension, then the filtration $\mathrm{Fil}_\sub{mot}^{\star}\K(X)$ is complete and the Atiyah--Hirzebruch spectral sequence is convergent.

\item Rational structure: the Atiyah--Hirzebruch spectral sequence degenerates rationally and there are natural isomorphisms \[H^i_\sub{mot}(X,\bb Z(j))\otimes_\bb Z\bb Q\cong \K_{2j-i}(X)_\bb Q^{(j)}\] for all $i\in\bb Z$ and $j\ge0$, where the right side refers to Adams eigenspaces of rationalised $K$-theory.
\item Relation to \'etale cohomology: for any integer $\ell>0$ invertible in $\bb F$, there are natural equivalences \[\tau^{\le j}(\bb Z(j)^\sub{mot}(X)/\ell)\simeq\tau^{\le j}R\Gamma_\sub{\'et}(X,\mu_\ell^{\otimes j})\] for $j\ge0$.
\item Relation to syntomic cohomology: if $\bb F=\bb F_p$ then for any $r>0$ there are natural equivalences
\[\tau^{\leq j}(\bb Z(j)^\sub{mot}(X)/p^r) \simeq \tau^{\leq j}(\bb Z_p(j)^\sub{syn}(X)/p^r)\] for $j\ge0$, where $\bb Z_p(j)^\sub{syn}(X)$ denotes the weight-$j$ {\em syntomic} cohomology of $X$ in the sense of \cite{AntieauMathewMorrowNikolaus, BhattMorrowScholze2}.
\item Weight zero: there is a natural equivalence \[\bb Z(0)^\sub{mot}(X)\simeq R\Gamma_\sub{cdh}(X,\bb Z)\] where the right side denotes cdh cohomology with coefficients in the constant sheaf $\bb Z$.
\item Weight one: there is a natural first Chern class map \[R\Gamma_\sub{Nis}(X,\bb G_m)[-1]\To \bb Z(1)^\sub{mot}(X),\] which is an equivalence in degrees $\le 3$.
\item Projective bundle formula: For any $j,r \geq 0$, the powers of the first Chern class of the tautological bundle $c_1(\scr O(1)) \in \mathrm{Pic}(\P^r_X) \cong H^{2}_\sub{mot}(\P^r_X, \Z(1))$
induce a natural equivalence
\[
\bigoplus^r_{i=0} \Z(j-i)^{\mot}(X)[-2i] \quis \Z(j)^{\mot}(\P^r_X). 
\]
\item Blow-up formula: Given any regular closed immersion $Y\to X$ (i.e., $X$ admits an open affine cover such that, on each such affine, $Y$ is defined by a regular sequence), then $\bb Z(j)^\sub{mot}$ carries the cartesian square of schemes
\[\xymatrix{
Y\times_X\mathrm{Bl}_Y(X)\ar[d]\ar[r] & \mathrm{Bl}_Y(X)\ar[d]\\
Y\ar[r] & X
}\]
to a cartesian square in $\rm D(\bb Z)$.
\item Finally, suppose $X$ is a smooth scheme over a field. Then there are equivalences \[
\Z(j)^{\mot}(X) \simeq z^j(X,\bullet)[-2j]
\]
for $j\ge0$, where $z^j(X,\bullet)$ is Bloch's cycle complex of $X$. Moreover the filtration $\Fil^\star_\sub{mot}$ on $\K(X)$ is naturally equivalent, as multiplicative filtered spectra, with the filtration coming from Voevodsky's slice filtration as in \cite{Voevodsky2002a}.
\end{enumerate}
\end{theorem}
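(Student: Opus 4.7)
The plan is to construct the motivic complexes $\Z(j)^\mot(X)$ by gluing two filtered objects along the cyclotomic trace: the cdh-local motivic filtration on homotopy invariant $K$-theory $\KH(X)$ established in our previous joint work with Bachmann, and the motivic filtration on topological cyclic homology $\TC(X)$ of Antieau--Mathew--Morrow--Nikolaus (constructed prime-by-prime after $p$-completion and then assembled across primes). The gluing will rest on a cartesian square
\[
\K(X) \simeq \KH(X) \times_{L_\cdh \TC(X)} \TC(X)
\]
expressing non-connective $K$-theory in terms of $\KH$ and $\TC$, in the spirit of Kerz--Strunk--Tamme, Land--Tamme, and Clausen--Mathew--Morrow. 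Each of $\KH$, $\TC$, and $L_\cdh \TC$ carries its own motivic filtration, so we can set $\Fil^\star_\mot \K(X)$ to be the pullback of filtered spectra and define $\Z(j)^\mot(X) := \gr^j_\mot \K(X)[-2j]$. The $\N$-indexing, multiplicativity, and finitary Nisnevich sheaf property are then inherited from the constituent filtrations.

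Given this setup, properties (1)--(4) should follow relatively formally. Property (1) is by construction, with completeness under finite valuative dimension coming from bounded Nisnevich cohomological dimension of $\KH$ and $\TC$. For (2), rationally $\TC_\Q \simeq \HC^-_\Q$ and its motivic filtration agrees with the HKR filtration, whose graded pieces compute Adams eigenspaces; rational degeneration of the spectral sequence then follows. For (3), $\KH/\ell$ recovers \'etale cohomology in the truncation range by the prior joint paper, and the $\TC/\ell$ contribution after gluing stays inside that range. For (4), the $\TC/p^r$ piece of the filtration is by construction the Antieau--Mathew--Morrow--Nikolaus syntomic filtration, while the $\KH/p^r$ piece agrees with it in the truncation range, so the pullback reproduces $\Z_p(j)^\syn$ there.

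The remaining properties require more geometric input. For weight zero (5), $\gr^0_\mot \TC$ should be concentrated in degree zero and match its $L_\cdh$ counterpart, collapsing the pullback to $\gr^0_\cdh \KH \simeq R\Gamma_\cdh(X,\Z)$. For weight one (6), the units map $\bb G_m \to \Omega^\infty \K$ lifts to $\Fil^1_\mot$, giving the desired map whose behaviour in low degrees is controlled by (3) and (4). Properties (7) and (8) will hold for each of $\KH$ and $\TC$ separately (the projective bundle formula being classical for $\KH$ and following from $p$-adic Hodge-theoretic methods for $\TC$; the blow-up formula for regular closed immersions following from cdh descent of $\KH$ together with derived deformation theory arguments for $\TC$) and hence transfer to the pullback. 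For (9), on a smooth $\bb F$-scheme the map $\K(X) \to \KH(X)$ is an equivalence by Quillen--Gersten, so the pullback collapses and $\Z(j)^\mot \simeq \Z(j)^\cdh$; the latter was shown in the prior joint paper to recover Bloch's cycle complex, and the slice filtration identification then follows from classical comparison results of Levine and Voevodsky.

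The principal obstacle will be making the gluing coherent integrally. Individually one has $p$-completed motivic filtrations on $\TC$ for each prime plus a rational HKR filtration, and patching these into a single integral filtration requires the equicharacteristic hypothesis: for an $\bb F_p$-algebra only the prime $p$ contributes non-trivially to $\TC$ away from rationalisation, so the global assembly is governed by a single prime at each point. A secondary difficulty will be verifying that the gluing respects the geometric properties (5)--(8) \emph{at the filtered level} and not merely on underlying spectra, which forces one to check compatibility of the cyclotomic trace with projective bundles, blow-ups, and the units map; these should reduce to known but delicate statements about the two input filtrations.
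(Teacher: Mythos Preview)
Your overall gluing strategy via the Kerz--Strunk--Tamme/Land--Tamme cartesian square is exactly the paper's approach, and your sketches for (1)--(6) are broadly on target. There are, however, two genuine gaps in how you handle (7) and (9), and one misframing of the setup.

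\textbf{The smooth comparison (9) does not follow from $\K\simeq\KH$.} Knowing $\K(X)\simeq\KH(X)$ for smooth $X$ does not collapse the \emph{filtered} pullback: you would additionally need $\Fil^\star_\sub{BMS}\TC(X)\to\Fil^\star_\sub{BMS}L_\sub{cdh}\TC(X)$ to be an equivalence, i.e.\ $\bb Z_p(j)^\sub{syn}(X)\to L_\sub{cdh}\bb Z_p(j)^\sub{syn}(X)$ to be an equivalence in characteristic $p$. In characteristic zero this follows from resolution of singularities, but in characteristic $p$ it is not at all obvious a priori. The paper instead deduces the smooth comparison \emph{from} the projective bundle formula: PBF makes $\bb Z(j)^\sub{mot}$ ``deflatable'' in the sense of Colliot-Th\'el\`ene--Hoobler--Kahn, which gives Gersten injectivity and reduces the comparison to the case of fields, where it is trivial since fields are cdh points. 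So (9) is logically downstream of (7), not independent.

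\textbf{The hard part of (7) is $L_\sub{cdh}\TC$, not $\KH$ or $\TC$.} You correctly note that PBF holds for $\KH$ and for $\TC$ separately, but cdh-sheafification does not in general preserve PBF, so the PBF for $L_\sub{cdh}\bb Z_p(j)^\sub{syn}$ requires real work. The paper proves it by first observing that $L_\sub{cdh}\TC$ is a localising invariant (via the $\K/\KH/\TC$ square), so it has PBF at the spectrum level; this gives a spectral sequence with $E_2$-page built from the putative PBF defects $C(j)$ and converging to zero. One then argues these defects vanish by a combination of Frobenius-eigenvalue degeneration, reduction to henselian valuation rings of rank $\le1$ via hv-excision, and a $p$-torsion-freeness argument. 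This is the technical heart of the paper and is not accessible by the route you sketch.

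\textbf{Minor correction on the $\TC$ filtration.} The paper does not assemble a single integral filtration on $\TC$ across primes. It defines the motivic filtration separately in characteristic zero (where $\TC\simeq\HC^-(-/\bb Q)$ with the HKR filtration) and in characteristic $p$ (where $\TC$ carries the BMS filtration). The equicharacteristic hypothesis enters precisely to permit this case split; your ``principal obstacle'' of global assembly does not arise. Relatedly, the rational splitting in characteristic $p$ comes from Frobenius eigenspaces, not from HKR.
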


The original approach to motivic cohomology was that of Bloch \cite{Bloch1986b}, in terms of his cycle complexes $z^j(X,\bullet)$ for algebraic varieties $X$. Ignoring certain technicalities (such as functoriality, multiplicative structure, quasi-projectivity hypotheses,...), the work of Bloch, Bloch--Lichtenbaum \cite{BlochLichtenbaum}, Friedlander--Suslin \cite{friedlander-suslin}, and Levine \cite{levine-tech, Levine2008} show that the complexes $z^j(X,\bullet)[-2j]$ satisfy a variant of the conjectural framework of Beilinson and Lichtenbaum; the crucial difference is that $z^j(-,\bullet)$ is covariant in the algebraic variety and the Atiyah--Hirzebruch spectral sequence converges not to the $K$-theory of $X$ but rather to the $G$-theory. (In terms of Voevodsky's approach \cite{lecture-notes-mot-cohom, voevodsky-triang-motives} via triangulated categories of motives, Bloch's cycle complex appears as Borel--Moore homology.) However, restricting attention to smooth algebraic varieties $X$, the motivic complexes \[\bb Z(j)^\bb A(X)=z^j(X,\bullet)[-2j]\] do have all desired properties (and the technicalities can be overcome using motivic stable homotopy theory and the slice filtration); we will call this theory the {\em classical motivic cohomology} of the smooth algebraic variety $X$; see \S\ref{ss_classical} for a review. Theorem \ref{thm:main}(9) states that the new theory of this paper reduces to the classical theory in the smooth case.

\begin{remark}
Although the focus of this article is to extend motivic cohomology beyond smooth algebraic varieties, our results have applications to the smooth case. For example, we will see in Corollary \ref{corol_smooth_comparison} that Theorem \ref{thm:main}(9) implies that the canonical map $R\Gamma_\sub{\'et}(X,\Omega^j_\sub{log})\to R\Gamma_\sub{\'eh}(X,\Omega^j_\sub{log})$ is an equivalence for any smooth variety $X$ over a field of characteristic $p$. The analogous equivalence between the Nisnevich and cdh cohomologies is contained in the joint work with Bachmann \cite{BachmannElmantoMorrow}. Such results, which are required for example in Geisser's theory of arithmetic cohomology \cite{Geisser2006}, seem to have been previously out of reach without assuming resolution of singularities.
\end{remark}

\subsection{Relation to $\bb A^1$-invariant and cdh-motivic cohomologies}\label{ss_intro_cdh}
This article depends on our joint work with Bachmann \cite{BachmannElmantoMorrow}, in which we revisit the theory of $\bb A^1$-invariant motivic cohomology on arbitrary qcqs schemes. In the following overview of some aspects of \cite{BachmannElmantoMorrow} we restrict to the case of equicharacteristic schemes, both because it is enough for the present paper and because \cite{BachmannElmantoMorrow} simplifies in that case. See \S\ref{ss_cdh_local} for further details.

In \cite{BachmannElmantoMorrow} we introduced {\em $\bb A^1$-motivic cohomology} \[\bb Z(j)^\bb A:\text{Sch}_\bb F^\sub{qcqs,op}\To\mathrm{D}(\bb Z),\qquad j\ge0\] as an extension of classical motivic cohomology from smooth $\bb F$-schemes to arbitrary qcqs $\bb F$-schemes, while retaining $\bb A^1$-invariance. A priori it is defined using Morel--Voevodsky's motivic homotopy theory and Voevodsky's slice filtration (see Remark \ref{rem:def_of_ZjA} for further details), but with Bachmann we show that it admits the following simpler description: it is the cdh sheafification of the left Kan extension of classical motivic cohomology from smooth $\bb F$-schemes to qcqs $\bb F$-schemes. It is therefore a cdh analogue of Bloch's cycle complex and, in the case of singular algebraic varieties assuming strong resolution of singularities, in fact coincides with a motivic cohomology studied by Friedlander and Voevodsky \cite[Definition 9.2]{FriedlanderVoevodsky2000}.

\begin{remark}[Cdh-motivic cohomology]
The order of logic of \cite{BachmannElmantoMorrow} requires introducing and studying the cdh-local left Kan extension of classical motivic cohomology, which we denote by $\bb Z(j)^\sub{cdh}$, before the eventual equivalence $\bb Z(j)^\sub{cdh}\quis \bb Z(j)^\bb A$ can be established. For the sake of the present article, readers who wish to bypass the motivic homotopy theoretic definition of $\bb Z(j)^\bb A$ can simply redefine it to be $\bb Z(j)^\sub{cdh}$; we implicitly did this in the first version of the article by working throughout with $\bb Z(j)^\sub{cdh}$ but now feel that it does not notationally reflect the conceptual dichotomy between non-$\bb A^1$-invariant and $\bb A^1$-invariant motivic cohomologies.
\end{remark}

$\bb A^1$-motivic cohomology fits into an Atiyah--Hirzebruch spectral sequence converging to $\KH$-theory, with finite coefficients it is related to \'etale and syntomic cohomology, and one of the deepest theorems of \cite{BachmannElmantoMorrow} shows that it is represented by the zeroth slice of the unit $1_X\in\SH(X)$. Overall, $\bb A^1$-motivic cohomology satisfies a variant of Beilinson and Lichtenbaum's vision, after imposing $\bb A^1$-invariance everywhere, and we have no doubt that it is the ``correct'' theory of $\bb A^1$-invariant motivic cohomology.
The theory of the present paper is designed so that, for any qcqs equicharacteristic scheme $X$, the canonical map $\K(X)\to\KH(X)$ is compatible with the motivic filtrations on each side, thereby inducing comparison maps \begin{equation}\bb Z(j)^\sub{mot}(X)\To \bb Z(j)^\bb A(X)\label{eqn_nonA1_to_A1}\end{equation} from the new not-necessarily-$\bb A^1$-invariant motivic cohomology to the $\bb A^1$-motivic cohomology. This comparison map has the following properties, thereby refining to the level of motivic cohomology known comparisons between $\K$-theory and $\KH$-theory:

\begin{theorem}[See Theorems~\ref{thm_mot_vs_cdh}, \ref{thm:graded-pieces}, and \ref{thm:graded-pieces_charp}]\label{intro_thm_SH}
Let $j\ge0$ and let $\bb F$ be a prime field.
\begin{enumerate}
\item The map (\ref{eqn_nonA1_to_A1}) identifies $\bb Z(j)^\bb A$ with the $\bb A^1$-localisation and the cdh-sheafification of $\bb Z(j)^\sub{mot}$. That is, on the category of qcqs $\bb F$-schemes, there are natural equivalences of $\rm D(\bb Z)$-valued presheaves: \[L_{\bb A^1}\bb Z(j)^\sub{mot}\simeq \bb Z(j)^\bb A \simeq L_\sub{cdh}\bb Z(j)^\sub{mot}.\]
\item For any qcqs $\bb F$-scheme and integer $\ell>0$ invertible in $\bb F$, the map (\ref{eqn_nonA1_to_A1}) is an equivalence mod $\ell$: \[\bb Z(j)^\sub{mot}(X)/\ell\quis \bb Z(j)^\bb A(X)/\ell.\]
\item For any qcqs $\bb F_p$-scheme $X$, the map (\ref{eqn_nonA1_to_A1}) is an equivalence after inverting $p$: \[\bb Z(j)^\sub{mot}[\tfrac1p]\quis \bb Z(j)^\bb A[\tfrac1p].\]
\item For any regular Noetherian $\bb F$-scheme $X$, the map (\ref{eqn_nonA1_to_A1}) is an equivalence.
\end{enumerate}
\end{theorem}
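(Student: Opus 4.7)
The plan is to leverage two key inputs: by Theorem \ref{thm:main}(9), $\bb Z(j)^\sub{mot}$ agrees with classical motivic cohomology $\bb Z(j)^\sub{cla}$ on smooth $\bb F$-schemes, and by \cite{BachmannElmantoMorrow} so does $\bb Z(j)^\sub{cdh}$; moreover $\bb Z(j)^\sub{cdh}$ is a finitary, $\bb A^1$-invariant cdh sheaf. The comparison map (\ref{eqn_nonA1_to_A1}) to $\bb Z(j)^\sub{cdh}$ therefore factors through both $L_\sub{cdh}\bb Z(j)^\sub{mot}$ and $L_{\bb A^1}\bb Z(j)^\sub{mot}$, and in each case the goal is to show this factorisation is an equivalence.

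For part (1), I would establish the cdh-sheafification claim by a density argument. Since $\bb Z(j)^\sub{mot}$ is finitary, so is $L_\sub{cdh}\bb Z(j)^\sub{mot}$; thus both $L_\sub{cdh}\bb Z(j)^\sub{mot}$ and $\bb Z(j)^\sub{cdh}$ are finitary cdh sheaves on $\Sch_\bb F^\sub{qcqs}$ which agree on smooth $\bb F$-schemes. Any two such sheaves must coincide, since every qcqs $\bb F$-scheme can be resolved by smooth ones in the cdh topology---a core technical theme of \cite{BachmannElmantoMorrow} relying on equicharacteristic alteration results. The $\bb A^1$-localisation claim follows analogously: $L_{\bb A^1}\bb Z(j)^\sub{mot}$ is a finitary presheaf that on smooth $\bb F$-schemes agrees with $\bb Z(j)^\sub{cla}$ (already $\bb A^1$-invariant there), and one shows it is automatically cdh-local (e.g.\ by running the same density argument, or by invoking that $\bb A^1$-localisation commutes with cdh-sheafification in this setting) so that it coincides with $\bb Z(j)^\sub{cdh}$.

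Parts (2)--(4) reduce to known comparisons between $\K$ and $\KH$ at the $K$-theoretic level, combined with the compatibility of motivic filtrations built into the construction of $\bb Z(j)^\sub{mot}$. Concretely: for (2), the fiber of $\K \to \KH$ vanishes mod $\ell$ when $\ell$ is invertible (by Thomason-type rigidity applied to mod-$\ell$ nil-$K$-theory); for (3), the same fiber is $p^\infty$-torsion on $\bb F_p$-schemes via trace methods (since $\HC^-$, $\THH$, and $\TC$ of nil-ideals in characteristic $p$ are $p$-complete), hence vanishes after inverting $p$; and for (4), $\K(X) \simeq \KH(X)$ for regular Noetherian $X$ by Quillen's agreement with $G$-theory plus $\bb A^1$-invariance of $G$-theory. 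Since $\bb Z(j)^\sub{mot}$ is constructed so that its motivic filtration on $\K$ maps to the analogous filtration on $\KH$, each $K$-theoretic equivalence refines to an equivalence on graded pieces, giving the stated comparison of motivic cohomologies.

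The main obstacle is the density argument in part (1): identifying $L_\sub{cdh}\bb Z(j)^\sub{mot}$ with $\bb Z(j)^\sub{cdh}$ requires reducing general qcqs equicharacteristic schemes to smooth ones via the cdh topology, which lies at the technical heart of \cite{BachmannElmantoMorrow} and depends on delicate alteration/resolution results in equal characteristic. A secondary subtlety in parts (2)--(4) is verifying that the known $K$-theoretic equivalences genuinely refine to equivalences of \emph{filtered} spectra, not merely of underlying spectra; this compatibility is baked into the gluing construction of $\bb Z(j)^\sub{mot}$ from $\K$ and $\TC$, but requires careful tracking of the motivic filtration through the cyclotomic trace diagram.
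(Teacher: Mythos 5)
Your approach diverges from the paper's in every part, and each divergence hides a genuine gap.

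For part (1), your density argument — that any two finitary cdh sheaves agreeing on smooth $\bb F$-schemes must coincide because every qcqs $\bb F$-scheme "can be resolved by smooth ones in the cdh topology" — fails in characteristic $p$. Smooth schemes do not form a basis for the cdh topology there: the cdh points are (henselian) valuation rings, which are not smooth and not covered by smooth schemes absent resolution of singularities, and alterations only control things up to $p$-torsion. The universal property of $\bb Z(j)^\sub{cdh}$ (initial among cdh sheaves under $\bb Z(j)^\sub{cla}|_{\Sm}$) gives maps \emph{out} of it, not uniqueness of cdh sheaves extending a given restriction. The paper's actual argument (Theorem \ref{thm_mot_vs_cdh}(1)) is much softer and avoids this entirely: apply $L_\sub{cdh}$, resp.\ $L_{\bb A^1}$, to the defining cartesian squares of Theorems \ref{thm:graded-pieces}(2)/\ref{thm:graded-pieces_charp}(2). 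The right-hand column collapses tautologically under $L_\sub{cdh}$, and under $L_{\bb A^1}$ it collapses by Lemma \ref{lem:a10} (ultimately the Geller--Weibel vanishing $L_{\bb A^1}\Omega^b_{-/\bb F}\simeq 0$), so the left vertical map becomes an equivalence in each case.

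For parts (2)--(4), your reduction "each $K$-theoretic equivalence refines to an equivalence on graded pieces" is precisely the content of the theorem, not something that follows from the construction: an equivalence of underlying spectra of filtered spectra does not induce equivalences on graded pieces. The paper never argues this way. For (2) and (3) it works directly with the cartesian squares: in characteristic zero the correction terms are rational, so the map is an equivalence mod any $m$; in characteristic $p$ one needs that $\bb Q_p(j)^\sub{syn}$ is a cdh sheaf (Corollary \ref{corol_Qpsyn}), and the passage from the spectrum-level statement $\TC[\tfrac1p]\simeq L_\sub{cdh}\TC[\tfrac1p]$ to its weight-graded pieces is exactly where the Frobenius splitting of the BMS filtration (Theorem \ref{thm_BMS2+}(2)) is used — the step you omit. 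For (4), the refinement of $\K(X)\simeq\KH(X)$ to a filtered equivalence for regular Noetherian $X$ is the hardest result in the paper: it requires the projective bundle formula (Section \ref{section_pbf}, including the delicate Theorem \ref{thm:cdh-syn-pbf} for $L_\sub{cdh}\bb Z_p(j)^\sub{syn}$), which makes $\bb Z(j)^\sub{mot}$ deflatable, and then Gabber's presentation lemma à la Colliot-Thélène--Hoobler--Kahn to reduce to fields, where one concludes via part (1). None of this is "baked into the gluing construction."
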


Part (1) of the theorem refines the fact that $\KH$-theory is both the $\bb A^1$-localisation of $K$-theory (by definition) and its cdh-sheafification (as we will recall at the start of \S\ref{intro_ss_sketch}). Parts (2) and (3) refine in equicharacteristic results of Weibel \cite{Weibel1989a} that $\K(A)/\ell \quis \KH(A)/\ell$ (resp.~$\K(A)[\tfrac1p] \quis \KH(A)[\tfrac1p]$) for rings $A$ in which $\ell$ is invertible (resp.~in which $p=0$). Finally, part (4) refines in equicharacteristic the equivalence between $\K$-theory and $\KH$-theory for regular Noetherian rings.

An input to establishing part (3) of the previous theorem, which is essential to controlling our motivic cohomology in characteristic $p$, is to show that rationalised syntomic cohomology $\bb Q_p(j)^\sub{syn}:=\bb Z_p(j)^\sub{syn}[\tfrac1p]$ is a cdh sheaf on qcqs $\bb F_p$-schemes (see Lemma \ref{lemma_Qpsyn}). Perhaps this can be proved directly, but our approach is rather to reduce it to the aforementioned fact that $\K[\tfrac1p]=\KH[\tfrac1p]$ on such schemes; the reduction argument passes through the cartesian square (\ref{eq:mainsquare}) below and so ultimately depends on trace methods. This extraction of information about cohomology theories from localising invariants is a theme which runs throughout this paper and \cite{BachmannElmantoMorrow}; we will return to it in Remark \ref{re_into_pmf} when discussing the projective bundle formula.

\subsection{The construction of $\bb Z(j)^\sub{mot}$ via trace methods}\label{intro_ss_sketch}
Our construction of $\bb Z(j)^\sub{mot}$ is inspired by trace methods in algebraic $K$-theory. For any qcqs scheme $X$ let $\TC(X)$ denote its topological cyclic homology, and $\K^\sub{inf}(X)$ the fibre of the trace map $\K(X)\to\TC(X)$. The presheaf $\K^\sub{inf}$ is nil-invariant by the Dundas--Goodwillie--McCarthy theorem \cite{Dundas2013}, and even a cdh sheaf by Kerz--Strunk--Tamme \cite{KerzStrunkTamme2018} and Land--Tamme \cite{LandTamme2019}. Coupled with the surprising fact that Weibel's $\KH$-theory is equivalent to the $\cdh$-sheafification of $K$-theory (first proved by Haesemeyer in characteristic zero \cite{Haesemeyer2004} and Kerz--Strunk--Tamme \cite{KerzStrunkTamme2018} in general), we arrive at a cartesian square for any qcqs scheme
\begin{equation}\label{eq:mainsquare}
\begin{tikzcd}
\K(X) \ar{d} \ar{r} & \TC(X) \ar{d}\\
\KH(X) \ar{r} & L_{\cdh}\TC(X),
\end{tikzcd}
\end{equation}
where the bottom map is the $\cdh$-sheafified trace map. We define the motivic filtration $\Fil^{\star}_\sub{mot}$ on $\K(X)$ by glueing existing filtrations on $\KH(X)$, $\TC(X)$, and $L_\sub{cdh}\TC(X)$:

\begin{enumerate}
\item For any qcqs $\bb F_p$-scheme $X$, Bhatt, Scholze, and the second author \cite{BhattMorrowScholze2} have defined a filtration on $\TC(X)$ whose graded pieces are $\bb Z_p(j)^\sub{syn}(X)[2j]$ for $j\ge0$; here $\bb Z_p(j)^\sub{syn}(X)$ is the {\em syntomic cohomology} of $X$, which modulo $p$ is a derived version of the \'etale cohomology of Illusie--Milne's sheaves $\Omega^j_{X,\log}$. Cdh sheafifying this filtration over qcqs $\bb F_p$-schemes induces a filtration on $L_\sub{cdh}\TC(X)$.
\item For any qcqs $\bb Q$-scheme $X$, its topological cyclic homology $\TC(X)$ identifies with its negative cyclic homology $\HC^-(X/\bb Q)$. Antieau \cite{antieau-fil} has defined a filtration on $\HC^-(X/\Q)$, extending previous work of Loday \cite{Loday1989}, Weibel \cite{Weibel1997} and To\"en--Vezzosi \cite{ToenVezzosi2011}, whose graded pieces are $R\Gamma(X, \widehat{L\Omega}_{-/\bb Q}^{\geq j})[2j]$ for $j\in\bb Z$. Here $\widehat{L\Omega}_{-/\bb Q}$ is the Hodge-completed \emph{derived de Rham complex} equipped with its Hodge filtration, as studied notably by Illusie \cite{Illusie1971, Illusie1972} and Bhatt \cite{Bhatt2012}. As in characteristic $p$, cdh sheafifying then induces a compatible filtration on $L_\sub{cdh}\TC(X)=L_\sub{cdh}\HC^-(X/\bb Q)$.
\item As already discussed in \S\ref{ss_intro_cdh}, for any qcqs equicharacteristic scheme $X$, motivic homotopy theory defines a motivic filtration on $\KH(X)$ whose graded pieces are $\bb A^1$-motivic cohomology $\bb Z(j)^\bb A(X)[2j]$ for $j\ge0$.
\end{enumerate}

The following compatibility of these filtrations depends crucially on the cdh-local description of $\bb A^1$-motivic cohomology mentioned in \S\ref{ss_intro_cdh} and is fundamental to our construction; we refer to Corollary \ref{corol_cdh_filtered_trace} and Proposition \ref{prop_cdh_filtered_trace_p} for more precise statements:

\begin{proposition}
For any qcqs equicharacteristic scheme $X$, the cdh-sheafified trace map $\KH(X)\to \TC(X)$ respects the filtrations on each side.
\end{proposition}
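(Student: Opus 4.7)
The strategy is to reduce the compatibility to smooth schemes and then propagate via the universal properties of left Kan extension and cdh sheafification that underlie the construction of the $\KH$-filtration.

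First I would construct a filtered trace map on smooth $\bb F$-schemes, lifting $\K(X)\to\TC(X)$ to a map of filtered spectra where $\K(X)$ carries the slice filtration and $\TC(X)$ carries the BMS filtration (in characteristic $p$) or, via the identification $\TC(X)=\HC^-(X/\bb Q)$, Antieau's HKR filtration (in characteristic $0$). This filtered trace is available from previous work: in characteristic $p$, it is essentially an input to Bhatt--Morrow--Scholze's construction of the motivic filtration on $p$-adic $K$-theory, and propagates from smooth $\bb F_p$-algebras to smooth $\bb F_p$-schemes by Nisnevich descent; in characteristic $0$ it is built into Antieau's construction, via the standard compatibility between the Chern character and the Hodge filtration on de Rham cohomology.

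Given the filtered trace on smooth schemes, the propagation to qcqs schemes is formal. Left Kan extending from $\Sm_\bb F^\sub{qcqs}$ gives a natural transformation
\[L_{\mathrm{Kan}}\Fil^\star_\sub{slice}\K \To L_{\mathrm{Kan}}(\Fil^\star\TC|_{\Sm_\bb F}).\]
Postcomposing with the canonical comparison $L_{\mathrm{Kan}}(\Fil^\star\TC|_{\Sm_\bb F}) \to \Fil^\star\TC$ (given by the universal property of left Kan extension applied to the pre-existing filtration on $\TC$ over all qcqs $\bb F$-schemes) and then cdh sheafifying both sides yields a filtered map
\[\Fil^\star\KH = L_\sub{cdh}L_{\mathrm{Kan}}\Fil^\star_\sub{slice}\K \To L_\sub{cdh}\Fil^\star\TC = \Fil^\star L_\sub{cdh}\TC,\]
whose underlying map of spectra recovers, by functoriality, the cdh-sheafified trace.

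The main obstacle is the first step. The propagation via left Kan extension and cdh sheafification is routine, but establishing the filtered trace on smooth schemes requires knowing compatibility between the slice filtration on $\K$ and the BMS (resp.~HKR) filtration on $\TC$. This compatibility is not automatic from the spectrum-level trace; it reflects the fact that the BMS/HKR filtrations on $\TC$ were tailored precisely to be compatible with the motivic filtration on $\K$ via the trace map, and in particular that their graded pieces fit into natural comparisons $\bb Z(j)^\sub{cla}\to\bb Z_p(j)^\sub{syn}$ and $\bb Z(j)^\sub{cla}\to R\Gamma(-,\widehat{L\Omega}_{-/\bb Q}^{\ge j})$ on smooth $\bb F$-schemes.
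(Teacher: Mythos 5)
Your overall architecture --- first produce a filtered refinement of the trace on smooth $\bb F$-schemes, then propagate by left Kan extension followed by cdh sheafification --- is exactly the paper's (Propositions \ref{prop:compat} and \ref{prop:mot-v-bms} for the smooth case; Corollary \ref{corol_cdh_filtered_trace} and Proposition \ref{prop_cdh_filtered_trace_p} for the cdh-local one), and your propagation step is set up correctly: $\Fil^\star_\sub{cdh}\KH$ is by definition $L_\sub{cdh}$ of the left Kan extension of the classical filtration, so composing with $L_{\mathrm{Kan}}(\Fil^\star\TC|_{\Sm_\bb F})\to\Fil^\star\TC$ and sheafifying does produce a filtered map lifting the cdh-sheafified trace. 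The genuine gap is the step you yourself call ``the main obstacle'': the filtered trace on smooth schemes is the entire mathematical content of the proposition, and you do not prove it. It is not available in the literature in the form needed --- Bhatt--Morrow--Scholze build their filtration on $\TC$ by quasisyntomic descent with no reference to the slice filtration on integral $K$-theory, and Antieau's HKR filtration on $\HC^-$ is likewise constructed independently of the motivic filtration on $\K$ --- and the remark that the filtrations were ``tailored to be compatible via the trace'' is a motivation, not an argument.

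The paper closes this gap with a short obstruction-theoretic argument which you should supply. Work in the standard $t$-structure on $\Shv_{\Zar}(\Sm_k;\Spt)$. On one side, $\Fil^{\ge j}_\sub{cla}\K$ is $j$-connective: this follows from Gersten-type vanishing (the Zariski cohomology sheaves of $\bb Z(i)^\sub{cla}$ are concentrated in degrees $\le i$) together with completeness of the classical filtration. On the other side, the quotient $\Fil^{<j}$ of the target is $(j-1)$-truncated: in characteristic zero $\gr^i_\sub{HKR}\HC^-(R/\bb Q)\simeq\Omega^{\ge i}_{R/\bb Q}[2i]$ lives in cohomological degrees $\le -i$ for smooth $R$, and in characteristic $p$ the graded piece $\bb Z_p(i)^\sub{syn}(R)[2i]$ is concentrated in degrees $[-i,-i+1]$. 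Hence the mapping space from $\Fil^{\ge j}_\sub{cla}\K$ to $\Fil^{<j}$ of the target is contractible, and the trace lifts (uniquely) through each filtration stage; multiplicativity is then extracted from the Postnikov $t$-structure on filtered Zariski sheaves. Finally, be aware that the precise statements in the paper also assert \emph{uniqueness} of the filtered refinement, which is used later; in characteristic $p$ this does not follow formally from the smooth case by your adjunction, because one must run the $t$-structure argument against $(\Fil^{<j}_\sub{BMS}L_\sub{cdh}\TC)|_{\Sm_{\bb F_p}}$, and showing the required coconnectivity of $L_\sub{cdh}\bb Z_p(i)^\sub{syn}$ on smooth schemes needs a separate $p$-completeness argument (it fails on general qcqs $\bb F_p$-schemes, cf.\ Remark \ref{rem:cdh-local}). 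Your existence-only construction sidesteps that subtlety, but only by proving less than the paper needs.
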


We consequently define our motivic filtration $\Fil^\star_\sub{mot}$ on $\K(X)$ by glueing the existing filtrations on the three other corners of the square (\ref{eq:mainsquare}). Passing to graded pieces yields the following description of our motivic cohomology:

\begin{theorem}[See Theorems~\ref{thm:graded-pieces} and \ref{thm:graded-pieces_charp}]\label{intro_thm_main_squares}
For $j\ge0$ and any qcqs scheme $X$ over $\bb Q$ (resp.~$\bb F_p)$, there is a natural cartesian square in $\mathrm{D}(\bb Z)$
\begin{equation}\label{eq:fundamental_squares}
\xymatrix@=1cm{
\Z(j)^{\mot}(X) \ar[r] \ar[d] & R\Gamma(X,\widehat{L\Omega}_{-/\Q}^{\geq j})  \ar[d]\\
\Z(j)^\bb A(X) \ar[r] & R\Gamma_{\cdh}(X,\widehat{L\Omega}_{-/\Q}^{\geq j}).
}
\qquad\text{resp.~}
\xymatrix@=1.3cm{
\Z(j)^{\mot}(X) \ar[r] \ar[d] & \bb Z_p(j)^\sub{syn}(X)  \ar[d]\\
\Z(j)^\bb A(X) \ar[r] & L_\sub{cdh}\bb Z_p(j)^\sub{syn}(X).
}
\end{equation}
(Where the right vertical arrows are cdh sheafification, and the bottom horizontal arrows are cdh versions of de Rham/syntomic cycle class maps: see Remarks \ref{rem:horizontal_map_char0} and \ref{rem:horizontal_map_charp}.)
\end{theorem}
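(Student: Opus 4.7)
The approach is to derive both squares directly from the construction of $\Fil^\star_\sub{mot}\K$ as a pullback of filtered spectra glueing the three already-filtered corners of the fundamental square $(\ref{eq:mainsquare})$. Recall that $(\ref{eq:mainsquare})$ is cartesian for every qcqs scheme, by the Kerz--Strunk--Tamme identification $\KH\simeq L_\sub{cdh}\K$ together with the Dundas--Goodwillie--McCarthy theorem (and Land--Tamme in this generality). Its three outer terms each carry a canonical filtration: the cdh-local motivic filtration $\Fil^\star_\sub{mot}\KH$ of \cite{BachmannElmantoMorrow} on $\KH(X)$; the BMS filtration of \cite{BhattMorrowScholze2} on $\TC(X)$ in characteristic $p$, or Antieau's HKR-type filtration on $\HC^-(X/\Q)\simeq\TC(X)$ in characteristic $0$; and the cdh sheafification of the latter on $L_\sub{cdh}\TC(X)$.

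First I would invoke the compatibility statement recalled just before the theorem, namely that the cdh-sheafified trace map $\KH\to L_\sub{cdh}\TC$ is filtered. This allows the formation of the pullback of multiplicative filtered spectra
\[
\Fil^\star_\sub{mot}\K(X) \;:=\; \Fil^\star\TC(X) \underset{\Fil^\star L_\sub{cdh}\TC(X)}{\times} \Fil^\star_\sub{mot}\KH(X),
\]
whose underlying spectrum is $\K(X)$ by $(\ref{eq:mainsquare})$. This pullback is by design our definition of the motivic filtration on $\K$, so the square is cartesian in filtered spectra by construction.

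Second I would pass to graded pieces. Because $\gr^j$ preserves finite limits of filtered objects (it is computed by a fibre of $\Fil^{j+1}\to\Fil^j$), applying it yields a cartesian square in $\rm D(\bb Z)$ with corners $\gr^j_\sub{mot}\K(X)$, $\gr^j\TC(X)$, $\gr^j L_\sub{cdh}\TC(X)$, and $\gr^j_\sub{mot}\KH(X)\simeq \Z(j)^\sub{cdh}(X)[2j]$. Substituting the known identifications $\gr^j\TC(X)\simeq \bb Z_p(j)^\sub{syn}(X)[2j]$ in characteristic $p$, respectively $R\Gamma(X,\widehat{L\Omega}^{\geq j}_{-/\Q})[2j]$ in characteristic $0$, and shifting by $[-2j]$ to define $\Z(j)^\sub{mot}(X) := \gr^j_\sub{mot}\K(X)[-2j]$, produces the two claimed squares.

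The main subtlety is identifying $\gr^j L_\sub{cdh}\TC$ with the cdh sheafification of $\gr^j\TC$, i.e.\ with $L_\sub{cdh}\bb Z_p(j)^\sub{syn}(X)[2j]$, respectively $R\Gamma_\sub{cdh}(X,\widehat{L\Omega}^{\geq j}_{-/\Q})[2j]$. This follows because cdh sheafification, being a left exact left adjoint on $\infty$-presheaves of filtered spectra, preserves the defining cofibre sequences $\Fil^{j+1}\to\Fil^j\to\gr^j$ and hence commutes with $\gr^j$. A secondary point is that $\TC$ and $\HC^-(-/\Q)$ together with their filtrations are already finitary Nisnevich sheaves, so that the presheaf values of the cdh sheafification are computed by cdh cohomology as in the statement; these facts are standard inputs from \cite{BhattMorrowScholze2, antieau-fil}.
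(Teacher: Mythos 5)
Your proposal is correct and follows essentially the same route as the paper: Theorem \ref{intro_thm_main_squares} is proved there (as Theorems \ref{thm:graded-pieces}(2) and \ref{thm:graded-pieces_charp}(2)) by taking graded pieces of the defining pullback squares (\ref{eq:motfilt}) and (\ref{eq:motfilt_charp}), using exactness of $\gr^j$ and the identifications of the graded pieces of the other three corners from Theorems \ref{thm:cdh}, \ref{thm:hkr}, \ref{thm_BMS2+} and Remarks \ref{rem_cdh_local_HKR}, \ref{rem:cdh-local}. The "subtlety" you flag about commuting $\gr^j$ with $L_\sub{cdh}$ is resolved in the paper exactly as you suggest, since the cdh-local filtrations are defined by levelwise sheafification.
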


The cartesian squares (\ref{eq:fundamental_squares}) encapsulate the central idea of our construction of motivic cohomology. They say that the motivic complex $\bb Z(j)^\sub{mot}(X)$ is a modification of the $\bb A^1$-motivic cohomology $\bb Z(j)^\bb A(X)$ (which is morally governed by algebraic cycles) by derived de Rham/syntomic~cohomology. In particular, in characteristic zero the left square of (\ref{eq:fundamental_squares}) yields a fibre sequence \[\bb Z(j)^\sub{mot}(X)\To \bb Z(j)^\bb A(X)\To {\rm cofib}\left(R\Gamma(X,L\Omega_{-/\Q}^{< j})\to  R\Gamma_\sub{cdh}(X,\Omega_{-/\Q}^{< j})\right)[-1];\] this plays the role of the weight-$j$ motivic component of the well-known fibre sequence \[\K(X)\To \KH(X)\To{\rm cofib}\big(\HC(X)\to L_\sub{cdh}\HC(X)\big)[1]\] arising from (\ref{eq:mainsquare}), which was used throughout Corti\~nas--Haesemeyer(--Schlichting)--Weibel's work \cite{Cortinas2008, Cortinas2008a} on the $K$-theory of singular varieties in characteristic zero. The present paper may in fact be roughly understood as a refinement of their work from the level of $K$-theory to that of motivic cohomology, as well as providing an extension to finite characteristic.

The squares (\ref{eq:fundamental_squares}) also provide a refinement of the trace map and its main property to the level of motivic cohomology:

\begin{corollary}\label{corol_motivic_DGM}
On the category of qcqs schemes over $\bb Q$ (resp.~$\bb F_p$), there exists for each $j\ge0$ a ``weight-$j$ motivic trace map'' (namely the top horizontal arrow in (\ref{eq:fundamental_squares})) \[\bb Z(j)^\sub{mot}\To R\Gamma(-,\widehat{L\Omega}_{-/\Q}^{\geq j}),\qquad\text{resp.~}\bb Z(j)^\sub{mot}\To \bb Z_p(j)^\sub{syn},\] whose fibre is a cdh sheaf.
\end{corollary}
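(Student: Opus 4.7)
The plan is to read off this corollary directly from the cartesian squares of Theorem \ref{intro_thm_main_squares}. In either characteristic, the top horizontal arrow in the square (\ref{eq:fundamental_squares}) is, by definition, the "weight-$j$ motivic trace map" from $\bb Z(j)^\sub{mot}$ to either $R\Gamma(-,\widehat{L\Omega}_{-/\Q}^{\geq j})$ or $\bb Z_p(j)^\sub{syn}$; the existence of this arrow is part of the structure provided by Theorem \ref{intro_thm_main_squares}, and it comes from the refinement, at the level of the motivic filtration, of the usual trace map $\K\to\TC$ together with the identifications $\TC(X)\simeq\HC^-(X/\bb Q)$ in characteristic zero and $\mathrm{gr}^j_\sub{mot}\TC(X)\simeq\bb Z_p(j)^\sub{syn}(X)[2j]$ in characteristic $p$ from items (2) and (1) of the construction of $\Fil^\star_\sub{mot}$ sketched in \S\ref{intro_ss_sketch}.

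Next, since the square (\ref{eq:fundamental_squares}) is cartesian in $\mathrm{D}(\bb Z)$, the fibre of its top horizontal arrow is canonically equivalent to the fibre of its bottom horizontal arrow. Thus it suffices to verify that the fibre of the bottom arrow is a cdh sheaf on qcqs $\bb F$-schemes.

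Finally, both the source and target of the bottom arrow are cdh sheaves: $\bb Z(j)^\sub{cdh}$ is a cdh sheaf by its very construction as reviewed in \S\ref{ss_intro_cdh}, while $R\Gamma_\sub{cdh}(-,\widehat{L\Omega}_{-/\Q}^{\geq j})$ and $L_\sub{cdh}\bb Z_p(j)^\sub{syn}$ are cdh sheaves by construction of the cdh sheafification functor. Since the category of $\mathrm{D}(\bb Z)$-valued cdh sheaves is closed under finite limits inside all presheaves, the fibre of a map between two cdh sheaves is again a cdh sheaf, which concludes the argument. No step presents any real obstacle here; the corollary is essentially a formal unpacking of Theorem \ref{intro_thm_main_squares}, and the only content to check is the stability of cdh sheaves under fibres, which is immediate from the fact that cdh sheafification commutes with finite limits.
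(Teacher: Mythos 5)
Your proposal is correct and is exactly the (implicit) argument the paper intends: the corollary is a formal consequence of the cartesian squares (\ref{eq:fundamental_squares}), since cartesianness identifies the fibre of the top arrow with the fibre of the bottom arrow, and the latter is a map between cdh sheaves ($\bb Z(j)^\sub{cdh}$ and the cdh-sheafified de Rham/syntomic theories are cdh sheaves by construction), whose fibre is again a cdh sheaf because the sheaf condition is stable under limits of presheaves. Nothing further is needed.
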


\begin{remark}[Projective bundle formula]\label{re_into_pmf}
As already stated in Theorem \ref{thm:main}(8), our motivic cohomology satisfies the projective bundle formula. For Grothendieck this was one of the most fundamental desired properties of any cohomology theory, and it means that the motivic cohomology assembles into a motivic spectrum in the sense of Annala--Iwasa \cite{AnnalaIwasa2023}. But it is also an essential input into proving the comparison theorems with classical motivic cohomology (Theorem \ref{thm:main}(9)) and with the $\bb A^1$-invariant theory in the regular case (Theorem \ref{intro_thm_SH}(4)), as we use Gabber's technique \cite{Gabber1994, GrosSuwa1988} axiomatised by Colliot-Th\'el\`ene--Hoobler--Kahn \cite{ColliotThelene-Hoobler-Kahn1997}.

Remarkably, our proof of the projective bundle formula depends on the theory of localising invariants (at least in characteristic $p$ -- in characteristic zero it is sufficient to use strong resolution of singularities). Indeed, exploiting the fact that cdh-motivic cohomology and syntomic cohomology are known to have this property (by \cite[Corollaries~8.23]{BachmannElmantoMorrow} and \cite{BhattLurie2022} respectively; the proof of the former also uses localising invariants), the problem reduces via the right square in (\ref{eq:fundamental_squares}) to showing that cdh-sheafified syntomic cohomology satisfies the projective bundle formula. We prove this in Theorem \ref{thm:cdh-syn-pbf}, using the fact that the square (\ref{eq:mainsquare}) is cartesian and so allows us to upgrade $L_\sub{cdh}\TC$ to a localising invariant.
\end{remark}

\subsection{Relation to Milnor $\K$-theory and lisse motivic cohomology}\label{ss_milnor}
When $F$ is a field, a theorem of Nesterenko--Suslin \cite{Suslin1989}, later reproved by Totaro \cite{Totaro1992}, produces a natural isomorphism between the classical motivic cohomology $H^{j}_{\mot}(F,\Z(j))$ with the Milnor $K$-group $ \K^M_{j}(F)$. On the one hand, this gives a ``cohomological interpretation'' of the Milnor $K$-groups defined via generators and relations. On the other hand it shows that the graded ring $\bigoplus_{j\ge0}H^{j}_{\mot}(F,\Z(j))$ is generated by elements in degree $1$ and with relations in degree $2$.

In his thesis \cite{Kerz2009}, Kerz extended the Nesterenko--Suslin isomorphism to the generality of regular local rings containing an infinite field, thereby settling a conjecture of Beilinson. He later eliminated the hypothesis that the field be infinite, using the improved Milnor $K$-theory $\widehat{\K}^M_j$ which he and Gabber had introduced \cite{Kerz2010}.

Our motivic cohomology satisfies the Nesterenko--Suslin isomorphism for arbitrary local rings containing a field, without any regularity hypotheses:

\begin{theorem}[Nesterenko--Suslin isomorphism; see Thm.~\ref{theorem_NS}]\label{thm:nst} Let $A$ be a local ring containing a field. Then the isomorphism $A^\times\cong H^1_\sub{mot}(A,\bb Z(1))$ of Theorem \ref{thm:main}(6) induces, by multiplicativity, isomorphisms
\begin{equation}\label{eq:nst}
\widehat{\K}^M_{j}(A) \isoto H^{j}_{\mot}(A, \Z(j))
\end{equation}
for all $j\ge1$.
\end{theorem}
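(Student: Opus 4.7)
The plan is to build the Nesterenko--Suslin map from the multiplicative structure and then reduce the theorem to Kerz's work via a comparison with \emph{lisse motivic cohomology} $\bb Z(j)^\sub{lisse}$, defined as the left Kan extension of $\bb Z(j)^\sub{mot}$ from smooth $\bb F$-algebras to all $\bb F$-algebras.

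First I would construct the map. Combining the isomorphism $A^\times \cong H^1_\sub{mot}(A, \bb Z(1))$ from Theorem~\ref{thm:main}(6) with the multiplicative structure on $\bigoplus_j \bb Z(j)^\sub{mot}(A)$ produces a graded ring map $T_\bb Z^*(A^\times) \to \bigoplus_j H^j_\sub{mot}(A, \bb Z(j))$. To factor this through $\widehat{\K}^M_*(A)$ I must verify the Steinberg relation $\{a, 1-a\} = 0$ for $a, 1-a \in A^\times$: but any such pair is pulled back along $\bb Z[t, (t(1-t))^{-1}] \to A$, $t\mapsto a$, from the smooth ring $\bb Z[t, (t(1-t))^{-1}]$, on which Theorem~\ref{thm:main}(9) reduces the verification to the classical Steinberg relation in Bloch's cycle complex. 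The further relations that distinguish Gabber--Kerz's $\widehat{\K}^M$ from naive Milnor K-theory would be handled in the same universal way.

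Next, by Theorem~\ref{thm:main}(9) the presheaf $\bb Z(j)^\sub{lisse}$ agrees with classical motivic cohomology on smooth $\bb F$-algebras, so Kerz's theorem \cite{Kerz2009, Kerz2010} gives $\widehat{\K}^M_j(A) \cong H^j_\sub{lisse}(A, \bb Z(j))$ for any local ring $A$ containing a field. Hence the theorem reduces to showing that the canonical comparison map \[\bb Z(j)^\sub{lisse}(A) \to \bb Z(j)^\sub{mot}(A)\] induces an isomorphism on $H^j$ for local $A$. This last assertion is the main obstacle.

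To attack it, I would form a lisse analogue of the cartesian squares of Theorem~\ref{intro_thm_main_squares} by left Kan extending each corner from smooth $\bb F$-algebras, and then compare with the genuine square corner by corner. The lisse versus genuine comparison for the cdh-local factor is controlled by our joint work with Bachmann \cite{BachmannElmantoMorrow} together with the Gersten-type behaviour of $H^j_\sub{cdh}(-, \bb Z(j))$ in top cohomological degree, which reduces on the residue field to Nesterenko--Suslin. The analogous comparison for the syntomic, respectively Hodge-completed derived de Rham, corner amounts to analysing $\bb Z_p(j)^\sub{syn}(A)$, resp.~$R\Gamma(A, \widehat{L\Omega}^{\geq j}_{-/\bb Q})$, in cohomological degree $j$, where the problem essentially reduces to a module of (logarithmic) differential forms on $A$ on which left Kan extension from smooth algebras is known to be harmless. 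A diagram chase in the long exact sequences of the two cartesian squares then delivers the isomorphism on $H^j$.
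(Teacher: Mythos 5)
Your overall strategy coincides with the paper's: build the symbol map from the weight-one identification and multiplicativity, check the Steinberg relation by pulling back to a smooth $\bb F$-algebra where Corollary \ref{corol_smooth_comparison} and Nesterenko--Suslin/Totaro apply, and reduce to Kerz via the comparison $\bb Z(j)^\sub{lse}(A)\simeq\tau^{\le j}\bb Z(j)^\sub{mot}(A)$ for local $A$, which is precisely Theorem \ref{thm_lke_lej}. Two steps, however, have genuine gaps. First, the relations distinguishing $\widehat{\K}^M_j$ from $\K^M_j$ are \emph{not} ``handled in the same universal way'' as the Steinberg relation: they only arise for finite residue fields and are not pulled back from a fixed smooth $\bb F$-algebra. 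The paper handles them by an ad-hoc transfer — base changing along $\bb F_q\to\bb F_{q^\ell}$ for two coprime values of $\ell$ so that the residue field becomes big and Kerz's $\K^M_j=\widehat{\K}^M_j\isoto H^j_\sub{cla}$ applies — and the same transfer is also what gives \emph{injectivity} of the symbol map for small residue fields, a point your sketch never addresses. Second, ``Kerz's theorem gives $\widehat{\K}^M_j(A)\cong H^j_\sub{lse}(A,\bb Z(j))$ for any local ring $A$ containing a field'' is not a consequence of Kerz alone, whose theorem concerns regular local rings. To pass from the coequalizer $\mathrm{coeq}\big(\widehat{\K}^M_j(P_1)\rightrightarrows\widehat{\K}^M_j(P_0)\big)$ computing $H^j_\sub{lse}(A,\bb Z(j))$ (via an ind-smooth simplicial resolution $P_\bullet\to A$ with henselian face maps) to $\widehat{\K}^M_j(A)$ itself, one needs co-descent for Milnor $K$-theory along such resolutions; this is the result of L\"uders--Morrow cited in the paper's proof and is a nontrivial input you have omitted.

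On what you correctly identify as the main obstacle (Theorem \ref{thm_lke_lej}), your plan of ``left Kan extending each corner of the cartesian square and comparing corner by corner'' does not work as stated: left Kan extension does not preserve cartesian squares, and on smooth algebras both vertical maps of the square are already equivalences, so the left Kan extended square is degenerate and carries no information about $\fib(\bb Z(j)^\sub{lse}\to\bb Z(j)^\sub{mot})$. Moreover the cdh corner genuinely fails to be left Kan extended in degrees $\le j$: already for $j=1$ one has $H^1(\bb Z(1)^\sub{cdh}(A))=H^0_\sub{cdh}(A,\bb G_m)\ne A^\times=H^1(\bb Z(1)^\sub{lse}(A))$ for non-reduced $A$, so there is no ``harmless'' corner-by-corner matching. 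The paper instead argues rationally, mod $\ell$, and mod $p$: rationally the statement follows from connective $K$-theory being left Kan extended; mod $\ell$ from Beilinson--Lichtenbaum and rigidity of \'etale cohomology; mod $p$ from $\bb F_p(j)^\sub{syn}$ being left Kan extended by definition together with rigidity of $\tilde\nu(j)=H^{j+1}(\bb F_p(j)^\sub{syn})$ — the degree-$(j{+}1)$ piece which your reduction to ``(logarithmic) differential forms'' overlooks — plus the surjectivity $H^j_\sub{mot}(A,\bb Z(j))\twoheadrightarrow H^j_\sub{mot}(A,\bb Z/\ell(j))$, itself supplied by the Milnor symbol map.
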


The proof of Theorem~\ref{thm:nst} is intertwined with a comparison theorem relating our motivic cohomology to a more naive cohomology obtained by simply left Kan extending classical motivic cohomology. More precisely, for any $\bb F$-algebra $A$ we define $\bb Z(j)^\sub{lse}(A)\in \rm D(\bb Z)$ to be the left Kan extension, from smooth $\bb F$-algebras, of weight-$j$ classical motivic cohomology.  More explicitly, there exists a simplicial resolution $P_{\bullet} \quis A$ whose terms are ind-smooth $\bb F$-algebras and whose face maps are henselian surjections; then $\bb Z(j)^\sub{lse}(A)$ is given by the totalisation of the simplicial complex $\bb Z(j)^\bb A(P_\bullet)$. We call $\bb Z(j)^\sub{lse}(A)$ the weight-$j$, {\em lisse motivic cohomology} of $A$ to emphasise the fact that it controlled by smooth algebras. The complexes $\bb Z(j)^\sub{lse}(A)[2j]$, for $j\ge0$, appear as the graded pieces of a motivic filtration on the connective algebraic $K$-theory $\K^\sub{cn}(A)$; see \S\ref{subsec_lke} for more details. For any $\bb F$-algebra $A$ there is a natural comparison map \begin{equation}\bb Z(j)^\sub{lse}(A)\To \bb Z(j)^\sub{mot}(A),\label{intro:lke_vs_mot}\end{equation} and we prove the following, which in degree $j$ is the Nesterenko--Suslin isomorphism:

\begin{theorem}[see Theorem~\ref{thm_lke_lej}]\label{thm_into_lke_lej}
For any local $\bb F$-algebra $A$ and $j\ge0$, the map (\ref{intro:lke_vs_mot}) induces an equivalence \[\bb Z(j)^\sub{lse}(A)\quis\tau^{\le j}\bb Z(j)^\sub{mot}(A).\]
\end{theorem}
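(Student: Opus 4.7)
The comparison map $\bb Z(j)^\sub{lse}(A)\to\bb Z(j)^\sub{mot}(A)$ of (\ref{intro:lke_vs_mot}) is induced, via the universal property of left Kan extension, from the equivalence $\bb Z(j)^\sub{lse}\simeq\bb Z(j)^\sub{mot}$ on smooth $\bb F$-algebras furnished by Theorem \ref{thm:main}(9); the content of the statement is that its cofibre lies in cohomological degrees strictly greater than $j$ for every local $\bb F$-algebra $A$. The base case of a smooth (or essentially smooth) local $\bb F$-algebra is immediate: both sides coincide with $\bb Z(j)^\sub{cla}(A)$, which is already concentrated in degrees $\leq j$ by the Gersten conjecture for Bloch's cycle complex over fields.

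For general local $A$, my plan is to attack each corner of the fundamental cartesian squares (\ref{eq:fundamental_squares}) separately. For each of $\bb Z(j)^\sub{cdh}$, $R\Gamma(-,\widehat{L\Omega}_{-/\Q}^{\geq j})$, and $\bb Z_p(j)^\sub{syn}$ (along with its cdh-sheafification), one constructs the natural comparison map from its left Kan extension from smooth $\bb F$-algebras and shows that this comparison has cofibre in cohomological degrees $>j$ on local $\bb F$-algebras. For the cdh-motivic corner this uses the Gersten-type vanishing for cdh-local motivic cohomology established in \cite{BachmannElmantoMorrow}. For the Hodge-completed derived de Rham corner in characteristic zero the comparison is essentially tautological, since $\widehat{L\Omega}^{\geq j}_{-/\Q}$ is by construction left Kan extended from smooth $\bb Q$-algebras. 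For the syntomic corner in characteristic $p$ one invokes the comparison of $\bb Z_p(j)^\sub{syn}$ with the lisse version of the Illusie--Milne logarithmic de Rham--Witt forms in degrees $\leq j$, in the spirit of \cite{BhattMorrowScholze2, AntieauMathewMorrowNikolaus}.

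The main obstacle I expect is that left Kan extension does not commute with the fibre product defining the cartesian square, so one cannot directly build a ``lisse cartesian square'' that identifies $\bb Z(j)^\sub{lse}(A)$ with a fibre product of the lisse corners. The resolution is to work exclusively in the truncated range: the Mayer--Vietoris long exact sequence associated to (\ref{eq:fundamental_squares}), together with the fact that each non-motivic corner has a lisse comparison map whose cofibre lies in degrees $>j$, forces by a diagram chase that the induced map $\bb Z(j)^\sub{lse}(A)\to\bb Z(j)^\sub{mot}(A)$ also has cofibre in degrees $>j$. Once this $\tau^{\leq j}$-equivalence is in place, taking $H^j$ in (\ref{intro:lke_vs_mot}) combined with Kerz's identification of $H^j(\bb Z(j)^\sub{lse}(A))$ with the improved Milnor $K$-group $\widehat{\K}^M_j(A)$ for any local ring containing a field yields the Nesterenko--Suslin isomorphism of Theorem \ref{thm:nst}.
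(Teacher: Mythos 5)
Your base case and the identification of the comparison map are fine, but the corner-by-corner strategy breaks down already at the first corner. The restriction of $\bb Z(j)^\sub{cdh}$ to smooth $\bb F$-algebras is $\bb Z(j)^\sub{cla}$, so its left Kan extension is $\bb Z(j)^\sub{lse}$ itself, and your claim for that corner amounts to saying that $\mathrm{cofib}(\bb Z(j)^\sub{lse}(A)\to\bb Z(j)^\sub{cdh}(A))$ lies in degrees $>j$ for every local $A$. This is false: cdh sheaves are invariant under nil extensions, while the lisse theory is not. Already for $j=1$ and $A=k[\epsilon]/\epsilon^2$ one has $H^1(\bb Z(1)^\sub{lse}(A))=A^\times$ while $H^1(\bb Z(1)^\sub{cdh}(A))=H^0_\sub{cdh}(A,\bb G_m)=k^\times$, so the map fails to be injective on $H^1$ and the cofibre has nonzero $H^0$. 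No Gersten-type vanishing from \cite{BachmannElmantoMorrow} can repair this; the content of the theorem is precisely that the defect of the cdh corner in degrees $\le j$ is cancelled against the syntomic/derived de Rham corner (which records exactly such infinitesimal information), and that cancellation cannot be seen one corner at a time. Relatedly, $R\Gamma(-,\widehat{L\Omega}^{\geq j}_{-/\bb Q})$ is not ``tautologically'' left Kan extended from smooth $\bb Q$-algebras: it is the Hodge \emph{completion} of that left Kan extension, and the completion is an inverse limit over the graded pieces $L^i_{-/\bb Q}[-i]$ for $i\ge j$, all of which contribute to cohomological degrees $\le j$, so truncation does not rescue you. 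Finally, even granting all four corner statements, the Mayer--Vietoris chase does not close the gap you yourself flag: one must still compare $\bb Z(j)^\sub{lse}(A)$ (the left Kan extension of the pullback) with the pullback of the left Kan extensions, and restricting the long exact sequence to degrees $\le j$ gives no control over that map.

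The paper's proof of Theorem \ref{thm_lke_lej} takes a different, arithmetic route that never decomposes the square integrally. It first proves the statement rationally, using the rational splitting of $\Fil^\star_\sub{mot}\K$ together with the fact that $\K^\sub{cn}$ is left Kan extended from smooth algebras, which yields $\bb Q(j)^\sub{lse}\simeq\tau^{\le 2j}\bb Q(j)^\sub{mot}$ and hence the $\tau^{\le j}$-statement for local rings (Corollary \ref{corollary_lej_rational}). It then proves that $\tau^{\le j}(\bb Z(j)^\sub{mot}(-)/\ell)$ is left Kan extended for every prime $\ell$: for $\ell$ invertible via the Beilinson--Lichtenbaum identification with $\tau^{\le j}R\Gamma_\sub{\'et}(-,\mu_\ell^{\otimes j})$ and rigidity of \'etale cohomology (Lemma \ref{lem:rigid-lke}); for $\ell=p$ via the fundamental fibre sequence, the fact that $\bb F_p(j)^\sub{syn}$ is by definition left Kan extended, and rigidity of $\tilde\nu(j)$. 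The remaining, genuinely integral step is to commute $\tau^{\le j}$ past reduction mod $\ell$, i.e.\ to prove that $H^j_\sub{mot}(A,\bb Z(j))\to H^j_\sub{mot}(A,\bb Z/\ell(j))$ is surjective; this is done by lifting along a henselian ind-smooth cover $P\to A$ and using the Milnor $K$-theory symbol map (Lemma \ref{lemma_factors_through_Mil}) together with Kerz's generation results in the smooth case. Your concluding remark about deducing Nesterenko--Suslin also needs this circle of ideas rather than a bare citation of Kerz: identifying $H^j(\bb Z(j)^\sub{lse}(A))$ with $\widehat{\K}^M_j(A)$ for a general (non-regular) local ring is itself part of what Theorem \ref{theorem_NS} proves, via the coequaliser presentations of both sides.
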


The theorem states that, in degrees less than or equal to the weight, our motivic cohomology is Zariski locally controlled by classical motivic cohomology; in particular, in this range it is closely related to algebraic cycles. This is the next topic we discuss.

\subsection{Relations to algebraic cycles}\label{sec:cycles-intro}
One of the key features of the classical motivic cohomology of smooth algebraic varieties $X$ is its  description in terms of algebraic cycles, via Bloch's cycle complex; this yields in particular isomorphisms
\begin{equation}H^{2j}_\sub{mot}(X,\bb Z(j))\cong \textrm{CH}^j(X)\label{eqn_mot_vs_Chow}\end{equation} for each $j\ge0$.

In the case of a singular algebraic variety $X$, various definitions of Chow groups have been proposed. A first possibility is Fulton's \cite{Fulton1998}, but his theory is a Borel--Moore homology theory related more to $G(X)$ than $\K(X)$. Another is Baum--Fulton--Macpherson's \cite{BaumFultonMacPherson1975} theory of cohomological Chow groups, essentially obtained by left Kan extending $\textrm{CH}^j$ from smooth algebraic varieties to arbitrary varieties; it is thus related, at least superficially, to the lisse motivic cohomology $\bb Z(j)^\sub{lse}$ introduced above in \S\ref{ss_milnor}. Levine \cite{Levine1983} refined Baum--Fulton--Macpherson's idea by (roughly speaking) restricting the class of smooth varieties appearing in the left Kan extension procedure to better control the algebraic cycles. The case of zero cycles is particularly well-developed, and the {\em Levine--Weibel} Chow group $\rm CH^\sub{LW}_0(X)$ of zero cycles \cite{Levine1985a} on a singular variety $X$ has found concrete applications towards $K$-theoretic problems such as the splitting of vector bundles on affine varieties \cite{Krishna2002, Murthy1994}. For a modern text on Levine--Weibel's group, we refer the reader to work of Binda--Krishna \cite{BindaKrishna2018, BindaKrishna2022}. 

In \S\ref{section_cf_cycles} we study the relationship between our motivic cohomology and algebraic cycles. We are not sure what to expect in general, but we can show in the case of surfaces that our theory captures the Levine--Weibel group of zero cycles:

\begin{theorem}[See Thm.~\ref{thm:lw-comparison}]\label{thm_intro_LW}
Let $X$ be a reduced, equi-dimensional, quasi-projective surface over an infinite field $k$; then there is a natural isomorphism \begin{equation}H^4_\sub{mot}(X,\bb Z(2))\cong \mathrm{CH}_0^\sub{LW}(X).\label{eqn_mot_vs_Chow2}\end{equation}
\end{theorem}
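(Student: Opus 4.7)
The plan is to match both sides of the asserted isomorphism to the second Zariski sheaf cohomology $H^2_\Zar(X, \widehat{\mathcal{K}}^M_{2,X})$, where $\widehat{\mathcal{K}}^M_{2,X}$ is the Zariski sheaf associated to Kerz's improved Milnor K-theory $U \mapsto \widehat{\K}^M_2(\Gamma(U, \mathcal{O}_U))$. For the left-hand side, I would first apply the Nesterenko--Suslin isomorphism of Theorem~\ref{thm:nst}: for every local $\bb F$-algebra $A$ one has $\widehat{\K}^M_2(A) \cong H^2_\sub{mot}(A, \bb Z(2))$, and Zariski sheafification on $X$ then identifies the motivic cohomology sheaf $\mathcal{H}^2_\Zar(\bb Z(2)^\sub{mot})$ on $X$ with $\widehat{\mathcal{K}}^M_{2,X}$.

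Next I would use the Zariski hypercohomology spectral sequence
\[
E_2^{p,q} = H^p_\Zar(X, \mathcal{H}^q_\Zar(\bb Z(2)^\sub{mot})) \Longrightarrow H^{p+q}_\sub{mot}(X, \bb Z(2))
\]
together with the bound of $2$ on the Zariski cohomological dimension of $X$ coming from $\dim X = 2$. The group $H^4_\sub{mot}(X, \bb Z(2))$ then collapses to the $(p,q) = (2,2)$ edge term $H^2_\Zar(X, \widehat{\mathcal{K}}^M_{2,X})$, provided one first establishes the stalkwise vanishing $H^q_\sub{mot}(\mathcal{O}_{X,x}, \bb Z(2)) = 0$ for $q > 2$ at every point $x \in X$. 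This is the most delicate step, because Theorem~\ref{thm_into_lke_lej} only gives $\bb Z(2)^\sub{lse}(A) \simeq \tau^{\le 2}\bb Z(2)^\sub{mot}(A)$ and does not itself control the higher motivic cohomology on singular stalks. To access those degrees, one invokes the cartesian square~(\ref{eq:fundamental_squares}), whose right-hand column is the Hodge-filtered derived de Rham cohomology (in characteristic zero) or syntomic cohomology (in characteristic $p$) and its cdh sheafification; for a $2$-dimensional local ring $A$ the piece $\widehat{L\Omega}^{\geq 2}_{A/\Q}$ reduces to $L\Omega^2_{A/\Q}[-2]$, and combined with the stalk vanishing of $\bb Z(2)^\sub{cdh}$ in degrees $> 2$ coming from \cite{BachmannElmantoMorrow} this yields the desired vanishing of $\bb Z(2)^\sub{mot}$ above degree $2$.

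For the right-hand side, I would invoke the theorem of Binda--Krishna \cite{BindaKrishna2018, BindaKrishna2022} (extending the Bloch-formula computations of Kato and Kerz in the smooth case) which identifies
\[
\mathrm{CH}_0^\sub{LW}(X) \cong H^2_\Zar(X, \widehat{\mathcal{K}}^M_{2,X})
\]
for every reduced equidimensional quasi-projective surface $X$ over an infinite field. The infinite-field hypothesis of the theorem enters precisely here, both to identify the improved and naive Milnor K-theories on the relevant stalks and to ensure that the moving lemmas for zero cycles meeting the singular locus of $X$ go through. Combining this identification with the previous paragraph gives the asserted isomorphism, and naturality in $X$ can be read off from the naturality of each comparison.

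The main obstacle is the Zariski-local vanishing $H^q_\sub{mot}(A, \bb Z(2)) = 0$ for $q > 2$ on the $2$-dimensional local rings of a reduced surface. This goes strictly beyond Theorem~\ref{thm_into_lke_lej} and relies on the cartesian square~(\ref{eq:fundamental_squares}), exploiting both the exact match between $\dim X$ and the weight $j = 2$ (which forces the Hodge piece on the right of the square to sit in a narrow range) and the vanishing range advertised in the abstract, which refines Weibel's conjecture and Soul\'e's vanishing theorem. Everything else in the argument is formal given this vanishing.
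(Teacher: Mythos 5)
Your overall architecture (Nesterenko--Suslin to identify the top cohomology sheaf, a descent spectral sequence collapsing onto an edge term, and a Bloch--Quillen formula for singular surfaces on the other side) is the same as the paper's, but the execution in the Zariski topology has a genuine gap at exactly the step you flag as delicate. You need $H^q_\sub{mot}(\mathcal{O}_{X,x},\bb Z(2))=0$ for $q=3,4$ at a point with $\dim\mathcal{O}_{X,x}=2$. Degree $3$ is fine Zariski-locally (Corollary \ref{corollary_Hilb_90}), but degree $4$ is the case $i=j+2$, and the paper only proves $H^{j+2}_\sub{mot}(A,\bb Z(j))=0$ for \emph{henselian} local rings (Proposition \ref{proposition_91}); its proof rests on rigidity of \'etale cohomology and of $\tilde\nu(j)$, which fails for non-henselian Zariski local rings. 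The Soul\'e--Weibel bound only kills degrees $>j+\dim A=4$. This is precisely why the paper's Corollary \ref{corol_surfaces} works with the Nisnevich topology, henselian local stalks, and $H^2_\sub{Nis}(X,K_2)$, and then has to separately prove that the change-of-topology map $H^2_\sub{Zar}(X,K_2)\to H^2_\sub{Nis}(X,K_2)$ is an isomorphism by comparing descent spectral sequences.

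Your proposed repair via the square (\ref{eq:fundamental_squares}) does not close this gap. For a singular $2$-dimensional local $\bb Q$-algebra $A$, the complex $\widehat{L\Omega}^{\ge 2}_{A/\bb Q}$ does not reduce to $L\Omega^2_{A/\bb Q}[-2]$: the Hodge filtration is indexed by the wedge powers $L^i_{A/\bb Q}$ of the cotangent complex, which do not vanish for $i>2$ when $A$ is singular (Krull dimension controls neither the length of the Hodge filtration nor the amplitude of $L^i_{A/\bb Q}$). Likewise, the stalkwise vanishing of $\bb Z(2)^\sub{cdh}$ in degrees $>2$ that you invoke is false for a $2$-dimensional local ring: Theorem \ref{thm:cdh}(1) only gives vanishing in degrees $>j+d=4$, and local rings of positive dimension are not cdh points. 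So neither corner of the square gives you the degree-$4$ Zariski-local vanishing. To make your argument work you should either run it Nisnevich-locally (where Proposition \ref{proposition_91} applies and the edge-map identification goes through as in Corollary \ref{corol_surfaces}), or supply an independent proof of the Zariski--Nisnevich comparison for $H^2$ of the relevant $K$-theory sheaf, which is the route the paper takes on the cycle-theoretic side.
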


Whereas one often views (\ref{eqn_mot_vs_Chow}) as a description of motivic cohomology in terms of algebraic cycles, we suggest adopting the alternative point of view on (\ref{eqn_mot_vs_Chow2}), namely it provides a new description of zero cycles on singular surfaces. Indeed, bearing in mind the main idea presented after Theorem \ref{intro_thm_main_squares}, it says that the Levine--Weibel group of zero cycles of a surface is somehow built from cdh-local zero cycles and derived de Rham/syntomic cohomology.

\begin{example}[cdh-local zero cycles on surfaces]
Let $X$ be as in Theorem \ref{thm_intro_LW}. The proof of the Soul\'e--Weibel vanishing Theorem \ref{intro_Weibel_vanishing} below implies in addition that the canonical map \[\textrm{CH}_0^\sub{LW}(X)=H^{4}_\sub{mot}(X,\bb Z(j))\To H^{4}_\bb A(X,\bb Z(2))=H^{2}_\sub{cdh}(X,\hat \K_2^M)\] is surjective, where the right equality follows from a cdh-local version of the Nesterenko--Suslin isomorphism. In other words, any ``cdh-local zero cycle'' comes from an honest zero cycle. Our results also imply that this map is an isomorphism modulo any integer invertible in the base field $k$, and an isomorphism after inverting $p$ if $k$ has characteristic $p>0$. This may well be known to experts: such comparisons have certainly been established previously for {\em projective} varieties in arbitrary dimensions \cite[Theorems.~1.6 \& 1.7]{BindaKrishna2022}.
\end{example}

Another context in which algebraic cycles appear in motivic cohomology is the theory of Chow groups with modulus, building on Bloch--Esnault's earlier notion of additive Chow groups \cite{Bloch2003}. The set-up of the theory varies, but suppose for simplicity that $X$ is a smooth algebraic variety equipped with an effective divisor $D$ such that $D_\sub{red}$ is a simple normal crossing divisor. The theory defines various ``Chow groups on $X$ with modulus $D$'', which it is hoped will ultimately correspond to a piece of the motivic cohomology of $X$ relative to $D$; these Chow groups with modulus should in particular be related to the $K$-theory of $X$ relative to $D$, but at present the evidence of this relation is limited. A common theme in the subject \cite{Bloch2003, Rulling2007, RullingSaito2018}, already present in the original work of Bloch--Esnault, is that Chow groups with modulus, although they are defined purely in terms of algebraic cycles, often contain groups of differential forms, Witt vectors, or more generally de Rham--Witt groups; that is, the theory offers a cycle-theoretic description of the latter objects. The theory presented in this paper provides a systematic framework to obtain similar descriptions, exemplified as follows, for which the reader should recall that lisse motivic cohomology is described by algebraic cycles:

\begin{example}[See Example~\ref{example_dim_0}]
Let $k$ be a perfect field of characteristic $p>0$, and $j,e\ge0$. Then the lisse motivic cohomology of $k[x]/x^e$ relative to its residue field, i.e., the fibre of $\bb Z(j)^\sub{lse}(k[x]/x^e)\to\bb Z(j)^\sub{lse}(k)$, is naturally equivalent to $(\bb W_{ej}(k)/V^e\bb W_j(k))[-1]$.
\end{example}

\subsection{Negative $K$-groups and Soul\'e--Weibel vanishing}
A major stimulus in the development of the algebraic $K$-theory of singular schemes has been the problem of understanding their negative $K$-groups, in which the central conjecture for many years was Weibel's vanishing conjecture: for a Noetherian scheme $X$ of finite dimension, he predicted that the negative $K$-groups $K_{-n}(X)$ would vanish for $n>\dim X$. Following numerous special cases (see the start of Section \ref{section_Weibel} for references), the conjecture was proved in general by Kerz--Strunk--Tamme \cite{KerzStrunkTamme2018}. Meanwhile, concerning the positive $K$-groups of a Noetherian ring $A$, Soul\'e \cite[Corol.~1]{Soule1985} had proved much earlier the vanishing of the Adams eigenspaces $\K_n(A)_\bb Q^{(j)}$ whenever $n>0$ and $j>n+\dim A$. The following integral motivic vanishing theorem strenghtens and unifies these two results in the equicharacteristic case:

\begin{theorem}[Motivic Soul\'e--Weibel vanishing; see Thm.~\ref{theorem_Weibel_vanishing}]\label{intro_Weibel_vanishing}
Let $j\ge0$ and let $X$ be a Noetherian equicharacteristic scheme of finite dimension. Then $H^i_\sub{mot}(X,\bb Z(j))=0$ for all $i> j+\dim X$.
\end{theorem}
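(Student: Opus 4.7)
The plan mirrors Kerz--Strunk--Tamme's proof of Weibel's vanishing conjecture: reduce to a local statement via Nisnevich descent, and then settle the local statement using the fundamental cartesian square of Theorem~\ref{intro_thm_main_squares}. Since $\bb Z(j)^\sub{mot}$ is a finitary Nisnevich sheaf (Theorem~\ref{thm:main}(1)) and the Nisnevich cohomological dimension of a Noetherian scheme of dimension $d$ is at most $d$, the Nisnevich descent spectral sequence
\[
E_2^{p,q} = H^p_\sub{Nis}(X, \scr H^q(\bb Z(j)^\sub{mot})) \Longrightarrow H^{p+q}_\sub{mot}(X, \bb Z(j))
\]
reduces the theorem to showing that the Nisnevich cohomology sheaves $\scr H^q(\bb Z(j)^\sub{mot})$ vanish for all $q > j$; equivalently, it suffices to prove that $\bb Z(j)^\sub{mot}(A)$ is concentrated in cohomological degrees $\le j$ for every henselian local $\bb F$-algebra $A$.

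To prove this local concentration, I would apply the cartesian square of Theorem~\ref{intro_thm_main_squares}, producing the fibre sequence
\[
\fib\bigl(E(A) \to L_\sub{cdh} E(A)\bigr) \To \bb Z(j)^\sub{mot}(A) \To \bb Z(j)^\sub{cdh}(A),
\]
where $E$ denotes $R\Gamma(-,\widehat{L\Omega}_{-/\bb Q}^{\ge j})$ in characteristic zero and $\bb Z_p(j)^\sub{syn}$ in characteristic $p$. It then suffices to show that both $\bb Z(j)^\sub{cdh}(A)$ and the fibre $\fib(E(A) \to L_\sub{cdh} E(A))$ sit in cohomological degrees $\le j$. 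For the cdh-local piece, I would appeal to the description from the joint work with Bachmann \cite{BachmannElmantoMorrow}: on local $\bb F$-algebras, cdh-local motivic cohomology is computed by left Kan extension from smooth $\bb F$-algebras as a simplicial colimit of Bloch cycle complexes $\bb Z(j)^\sub{cla}(P_\bullet)$, each of which is concentrated in cohomological degrees $\le j$, a bound preserved under simplicial colimits.

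The hard part will be establishing the analogous degree bound for the fibre $\fib(E(A) \to L_\sub{cdh} E(A))$ on henselian local rings. In characteristic zero, this amounts to controlling the cdh-sheafification of the Hodge-completed derived de Rham complex on local $\bb Q$-algebras, which one can attack via the Hodge filtration and the fact that the cotangent complex of such a ring sits in non-positive degrees. In characteristic $p$, one handles $\bb Z_p(j)^\sub{syn}$ analogously, for example by first working with finite coefficients modulo $p^r$ and invoking the Beilinson--Lichtenbaum-type comparison of Theorem~\ref{thm:main}(4) to identify the $\tau^{\le j}$-part of $\bb Z(j)^\sub{mot}/p^r$ with étale cohomology of logarithmic de Rham--Witt sheaves, and then extracting the integral vanishing by a limit argument. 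A complementary angle is that the desired concentration of $\bb Z(j)^\sub{mot}(A)$ in degrees $\le j$ is, via Theorem~\ref{thm_into_lke_lej}, equivalent to the comparison map $\bb Z(j)^\sub{lse}(A) \to \bb Z(j)^\sub{mot}(A)$ being an equivalence, which one may try to verify by examining how the lisse theory sits inside the cartesian square and using that its source is manifestly in degrees $\le j$.
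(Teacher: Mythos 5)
There is a genuine gap: the local statement to which you reduce is false. You claim it suffices to show that $\bb Z(j)^\sub{mot}(A)$ is concentrated in degrees $\le j$ for every henselian local Noetherian $\bb F$-algebra $A$, but this fails already in weight $j=0$ for a two-dimensional henselian local ring: $H^2_\sub{mot}(A,\bb Z(0))\cong H^2_\sub{cdh}(A,\bb Z)\cong \K_{-2}(A)$, which is nonzero for many normal surface singularities. More generally, the correct local bound is $i\le j+\dim A$ (this is exactly the theorem, localised), not $i\le j$; the paper's Theorem~\ref{theorem_geometric_cohomology} establishes the stronger concentration in degrees $\le j$ only for henselian local rings of dimension $\le 2$ (and $j\ge1$), and its proof \emph{uses} Soul\'e--Weibel vanishing, so your route is also circular in spirit. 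The same problem infects the other leg of your fibre sequence: $\fib(E(A)\to L_\sub{cdh}E(A))$ is not supported in degrees $\le j$ for henselian local $A$ of dimension $d\ge2$ --- see the analysis in Corollary~\ref{cor:surjections}, where this fibre contributes in degree up to $j+d+1$ via $H^d_\sub{cdh}$ of $\hat K_j^M/p$ and $\tilde\nu(j)$.

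The consequence is that the naive ``stalkwise vanishing plus Nisnevich descent spectral sequence'' scheme cannot close. What the actual argument (Proposition~\ref{prop_axiomatic_Weibel}) does instead is an induction on $\dim X$: the inductive hypothesis gives vanishing of the cohomology sheaves $\scr H^b$ only at points $x$ with $\dim\overline{\{x\}}$ large relative to $b$, and Lemma~\ref{lem:nis-vanish} converts this into the vanishing of $E_2^{a,b}$ for $a+b>d$ with $a>0$; but the column $a=0$ in degrees $b>d$ survives and must be killed by a completely different mechanism, namely: (i) $L_\sub{cdh}W(j)=0$ lets one kill a given top-degree class after pulling back along a modification $X'\to X$, and (ii) pro cdh descent (Theorem~\ref{theorem_pro_cdh_descent}) together with the inductive hypothesis on the (lower-dimensional) centres shows that pullback along that modification is an isomorphism in the relevant degree. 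Your proposal contains neither ingredient --- in particular pro cdh descent, the key input you would need, never enters --- so the top-degree classes are left uncontrolled.
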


Kerz--Strunk--Tamme's proof of Weibel vanishing depended on first establishing that $K$-theory satisfied pro cdh descent on Noetherian schemes, again following various special cases which had been proved earlier. It seems in fact that pro cdh descent, which is an analogue of the formal functions theorem from coherent cohomology, is one of the most fundamental properties of algebraic $K$-theory. In any case, as well as its appearance in the proof of Weibel vanishing, it has applications to the study of algebraic cycles on singular varieties \cite{Krishna2010, Krishna2002, Morrow_zero_cycles}. We prove that our motivic cohomology also has this property:

\begin{theorem}[Pro cdh descent for motivic cohomology; see Thm.~\ref{theorem_pro_cdh_descent}]\label{intro_pro_cdh_descent}
On the category of Noetherian equicharacteristic schemes, the presheaf $\bb Z(j)^\sub{mot}$ satisfies pro cdh descent for each $j\ge0$. That is, given any abstract blowup square of Noetherian equicharacteristic schemes
\[
\xymatrix{
Y'\ar[r]\ar[d] & X'\ar[d] \\
Y\ar[r] & X
}\]
the associated square of pro complexes 
\[\xymatrix{
\bb Z(j)^\sub{mot}(X) \ar[r]\ar[d] & \bb Z(j)^\sub{mot}(X')\ar[d]\\
\{\bb Z(j)^\sub{mot}(rY)\})_r \ar[r] & \{\bb Z(j)^\sub{mot}(rY')\})_r
}\]
is cartesian.
\end{theorem}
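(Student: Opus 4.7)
The plan is to exploit the fundamental cartesian squares of Theorem~\ref{intro_thm_main_squares}, which present $\bb Z(j)^\sub{mot}$ as the pullback of $\bb Z(j)^\sub{cdh}$, the Hodge-completed derived de Rham (respectively syntomic) complex, and its cdh-sheafification. Since pullbacks commute with the formation of pro-objects in $\rm D(\bb Z)$, pro cdh descent for three of the four vertices of either square implies pro cdh descent for the fourth. The task thus reduces to establishing pro cdh descent at the three corners other than $\bb Z(j)^\sub{mot}$.

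The first key observation is that any cdh sheaf $F$ automatically satisfies pro cdh descent on Noetherian schemes. Indeed, if $(X, X', Y, Y')$ is an abstract blowup square, then for every $r\ge1$ the thickened square $(X, X', rY, rY')$ is also an abstract blowup square: $X' \to X$ remains proper, $rY \to X$ is still a closed immersion, and the open complements satisfy $X \setminus rY = X \setminus Y$ (and similarly for $X'$) because $Y$ and $rY$ have the same underlying topological space. A cdh sheaf therefore yields a cartesian square of complexes at every level $r$, and passing to the pro-system preserves cartesianness since finite limits commute with cofiltered limits. This simultaneously disposes of $\bb Z(j)^\sub{cdh}$ in the bottom-left and of its cdh-sheafified Hodge/syntomic partner in the bottom-right.

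It remains to verify pro cdh descent at the top-right vertex. In characteristic zero this is the Hodge-completed derived de Rham complex $R\Gamma(-, \widehat{L\Omega}_{-/\bb Q}^{\ge j})$; I would deduce it from pro cdh descent for negative cyclic homology $\HC^-(-/\bb Q)$, available from prior work, together with the functoriality of the HKR filtration, whose graded pieces are precisely the $\widehat{L\Omega}^{\ge j}[2j]$. In characteristic $p$ the analogous statement concerns $\bb Z_p(j)^\sub{syn}$ and follows from pro cdh descent for $\TC$, a result underlying Kerz--Strunk--Tamme's proof of Weibel's conjecture, combined with the functoriality of the BMS filtration on $\TC$, whose graded pieces are $\bb Z_p(j)^\sub{syn}[2j]$.

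The main obstacle will be confirming that pro cdh descent passes cleanly from the underlying spectra $\HC^-$ or $\TC$ to each individual piece of the HKR or BMS filtration. This reduces to verifying that these filtrations, which are assembled from functorial constructions (connective covers, completions, and fixed points of Frobenius or Adams operations), commute with the formation of the pro-cdh pullback square. Given the continuity of all these ingredients with respect to finite limits, I anticipate this to be a technical rather than a conceptual obstacle.
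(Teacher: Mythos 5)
Your reduction via the fundamental cartesian squares is exactly the paper's strategy, and your observation that cdh sheaves automatically satisfy pro cdh descent (because the thickened squares $(X,X',rY,rY')$ are again abstract blowup squares) is correct and is what disposes of the bottom row. The gap is in your treatment of the remaining corner, and it is concentrated in characteristic $p$: pro cdh descent for $\TC$ does \emph{not} pass to the individual pieces $\bb Z_p(j)^\sub{syn}$ of the BMS filtration. A descent property of a complete, infinitely-layered filtered object says nothing about its graded pieces unless the filtration functorially splits, and the BMS filtration splits only after inverting $p$ (via the Frobenius eigenspace decomposition of Theorem \ref{thm_BMS2+}(2)). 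Your appeal to ``fixed points of Frobenius'' therefore only yields pro cdh descent for $\bb Q_p(j)^\sub{syn}$; the $p$-torsion part, which is where all the content lies, is untouched. This is not a technical obstacle but the heart of the matter, and the paper goes a genuinely different way: it proves pro cdh descent for $\bb F_p(j)^\sub{syn}$ directly, using the finite filtration of Lemma \ref{lem_fin_fil_on_syn} whose graded pieces are shifts of $R\Gamma(-,L^i_{-/\bb F})$, together with pro cdh descent for wedge powers of the cotangent complex (Lemma \ref{lem_pro_cdh_for_cotangent}), which rests on the formal functions theorem rather than on any $K$-theoretic input. The passage from mod $p$ to integral coefficients in Proposition \ref{prop_pro_cdh_syn} is then a separate boundedness argument: each relative group $A_r$ is derived $p$-complete and satisfies $A_r[\tfrac1p]=0$ (this is where Corollary \ref{corol_Qpsyn}, hence $\TC$ and $K$-theory, enters — so your instinct is reflected in the paper, but only for the rational part), whence $A_r$ is bounded $p$-power torsion and the pro-system collapses.

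In characteristic zero your route is closer to viable, because the HKR filtration on $\HC^-(-/\bb Q)$ does split functorially (Weibel's Hodge decomposition, Remark \ref{rema:hkr}), so pro cdh descent for $\HC^-=\TC$ would indeed project onto each factor $R\Gamma(-,\widehat{L\Omega}^{\ge j}_{-/\bb Q})$. But note that the mechanism is the splitting, not ``continuity of connective covers and completions with respect to finite limits''; without the splitting the deduction fails for the same reason as in characteristic $p$. The paper instead argues uniformly via the fundamental fibre sequence of Theorem \ref{thm:graded-pieces}(3): the de Rham contribution is $L\Omega^{<j}_{-/\bb Q}$, which has a finite filtration by $L^i_{-/\bb Q}[-i]$, so Lemma \ref{lem_pro_cdh_for_cotangent} applies directly. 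You should replace the last paragraph of your proposal with these two inputs — the formal functions theorem for the cotangent complex, and the bounded-$p$-torsion argument for the integral syntomic complexes — as neither is a formal consequence of pro cdh descent for the ambient localising invariant.
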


\subsection{Other recent work on motivic cohomology of singular schemes}
\subsubsection{Kelly--Saito's pro cdh-local motivic cohomology}\label{ss_KellySaito}
Kelly and Saito \cite{KellySaito2023} have defined a Grothendieck topology, called the {\em pro cdh topology}, on qcqs schemes (as well as a variant for formal schemes \cite{KellySaito2024}) with the following property: a presheaf $F$ on qcqs schemes, valued in Sp or $\rm{D}(\bb Z)$, is a pro cdh sheaf if and only if it is both a Nisnevich sheaf and, for every abstract blow-up square of qcqs schemes denoted as in Theorem \ref{intro_pro_cdh_descent}, the associated square 
\[\xymatrix{
F(X)\ar[r]\ar[d] & \lim_r F(rY)\ar[d] \\
F(X')\ar[r] & \lim_r F(rY')
}\]
is cartesian. They define {\em pro cdh-local motivic cohomology} \[\bb Z(j)^\sub{pcdh}:\text{Sch}^\sub{qcqs,op}\To\rm{D}(\bb Z)\] to be the pro cdh sheafification of the left Kan extension of classical motivic cohomology from smooth $\bb Z$-schemes to all qcqs schemes. That is, the definition mimics that of $\bb Z(j)^\sub{cdh}$, but replacing the cdh topology by their coarser pro cdh topology. 

For any Noetherian scheme $X$, its pro cdh local motivic cohomology fits into an Atiyah--Hirzebruch spectral sequence converging to $\K(X)$; to prove this one uses that, on Noetherian schemes, $K$-theory is the pro cdh sheafification of connective $\K$-theory.

By combining some of Kelly--Saito's main theorems about their topology (in particular the fact that it has enough points, and the description of the points) with some of our own (including Theorems~\ref{thm:main}(9), \ref{thm_into_lke_lej}, and \ref{intro_pro_cdh_descent}), one obtains the following comparison:

\begin{theorem}[See \cite{KellySaito2023}]\label{theorem_KS}
For any Noetherian equicharacteristic scheme $X$ and $j\ge0$, there is a natural equivalence \begin{equation}\bb Z(j)^\sub{pcdh}(X)\quis \bb Z(j)^\sub{mot}(X).\label{pcdh}\end{equation}
\end{theorem}

Thus, on Noetherian equicharacteristic schemes, Kelly--Saito's approach offers an alternative definition of the same motivic cohomology of this paper; their definition does not require trace methods. On the other hand we are not aware at present whether their approach can be used to establish, for example, the projective bundle formula, the Nesterenko--Suslin isomorphism, the comparisons to $\bb A^1$-invariant motivic cohomology, or the relation to zero cycles on surfaces. In the generality of non-Noetherian schemes, the two theories differ and pro cdh-local motivic cohomology is not finitary. The two sides of (\ref{pcdh}) thus seem to have quite different flavours; we hope that the comparison between then will serve as a useful tool in the future (for example, \cite[\S5]{KellySaito2024} offers a topos-theoretic reinterpretation of the proof of the Soul\'e--Weibel vanishing bound).

\subsubsection{Annala--Hoyois--Iwasa's non-$\bb A^1$-invariant motivic homotopy theory}
Annala, Hoyois, and Iwasa \cite{AnnalaHoyoisIwasa2023, AnnalaHoyoisIwasa2024} have been developing a theory of non-$\bb A^1$-invariant motivic homotopy theory, building on earlier work of Annala--Iwasa \cite{AnnalaIwasa2022, AnnalaIwasa2023}. A theory of motivic cohomology in their framework is provided by forcing the left Kan extension of classical motivic cohomology, from smooth schemes to qcqs schemes, to satisfy the projective bundle bundle, elementary blow-up excision, and Nisnevich descent. We all hope it coincides in the equicharacteristic case with the motivic cohomology constructed in the present paper (and for arbitrary qcqs schemes with the motivic cohomology of Bouis \cite{Bouis2025, Bouis2025a, Bouis2025b}).

\subsubsection{Park's yeni higher Chow groups}
For any algebraic variety $X$, Park \cite{Park2021} has defined complexes of cycles ${\bf z}^j(X,\bullet)$, for $j\ge0$, which coincide with Bloch's cycle complexes $z^j(X,\bullet)$ when $X$ is smooth. His complexes are Zariski locally supported in negative cohomological degrees and therefore their cohomology groups cannot fit into an Atiyah--Hirzebruch spectral sequence converging to the $K$-groups of $X$. From his construction (by locally embedding $X$ into a smooth variety and looking at certain cycles on the formal completion of the embedding), it seems plausible that ${\bf z}^j(X,\bullet)[-2j]$ is Zariski locally an explicit approximation of the lisse motivic cohomology $\bb Z(j)^\sub{lse}(X)$. In more recent work Park offers descriptions of Milnor $K$-groups of certain local rings in terms of algebraic cycles \cite{Park2025a, Park2025b}.

\subsection{Outline of the paper}
We briefly summarize the contents of the paper. After reviewing conventions regarding sheaves and filtrations in \S\ref{sec:notation} we recall previously known constructions of motivic cohomology in~\S\ref{sec:recall}. Of note is the lisse version of motivic cohomology reviewed in \S\ref{subsec_lke} which is produced simply by left Kan extending and the cdh version of motivic cohomology which is jointly produced with Bachmann, recalled in \S\ref{ss_cdh_local}. 

Our construction of motivic cohomology is explained in \S\ref{s_motivic_def}. Specifically, the characteristic zero version is given in Definition~\ref{eq:char0} and the characteristic $p > 0$ version is given in Definition~\ref{def:charp}. We briefly discuss an extension of the theory to derived schemes in \S\ref{ss_cdh_local}. In \S\ref{section_pbf} we prove the projective bundle formula and the blowup formula for motivic cohomology. This is the technical heart of the paper. In particular, we prove a $\P^1$-bundle formula for cdh-sheafified syntomic cohomology in~\S\ref{sec:syn-p1}, adopting techniques that we developed in the joint paper with Bachmann in \cite{BachmannElmantoMorrow}. The projective bundle formula is key in comparing our construction to previous constructions of motivic cohomology in the smooth setting. This is discussed, more generally, when comparing with $\A^1$-invariant versions of motivic cohomology in~\S\ref{section_smooth}. 

The last part of the paper is dedicated to deeper properties of motivic cohomology. In \S\ref{section_lke} we describe a portion of our motivic cohomology using lisse motivic cohomology. Using this, in \S\ref{sec:singular-nst}, we prove the singular Nesterenko--Suslin isomorphism. In \S\ref{section_Weibel} we prove the motivic Soul\'e--Weibel vanishing. The key ingredient is a pro cdh descent result which we establish in \S\ref{sec:pro-cdh}. We then finish off the paper by examining how our theory relates to algebraic cycles in~\S\ref{section_cf_cycles}. 

This paper has two appendices. In Appendix~\ref{app:cdh}, we prove a technical result establishing, under certain hypotheses, that the $\cdh$ sheafification of an \'etale sheaf is an $\eh$ sheaf. In Appendix~\ref{app:chw}, we discuss rational motivic cohomology and prove a spectrum-level, multiplicative refinement of a theorem of Corti\~nas, Haesemeyer and Weibel on the compatibility between Adams operators on rationalized $K$-theory and negative cyclic homology. This appendix is important in controlling the rational parts of our theory. 

\subsection{Acknowledgements}
We are grateful to Toni Annala, Ben Antieau, Tom Bachmann, Bhargav Bhatt, Federico Binda, Dustin Clausen, Frédéric Déglise, Christian Haesemeyer, Lars Hesselholt, Marc Hoyois, Ryomei Iwasa, Shane Kelly, Amalendu Krishna, Marc Levine, Akhil Mathew, Jinhyun Park, Arpon Raksit, Maxime Ramzi, Nick Rozenblyum, Shuji Saito, and Peter Scholze for useful discussions without which this project might not have been realised. We also thank Tomer Schlank for suggesting the terminology ``deflatable.''

Both the authors would like to take this opportunity to thank especially Chuck Weibel. The first author thanks him for his generosity and encouragement, and lack of pretension in mathematics. He has benefitted from conversations with Chuck on motives throughout the years, starting when he was a first year graduate student. For the second author, Chuck's work on $\K$-theory has been a source of motivation to him for many years and has directly inspired many of the main ideas of this article. 

This project has received funding from the European Research Council (ERC) under the European Union's Horizon 2020 research and innovation programme (grant agreement No.~101001474), and from the NSERC grant  RGPIN-2025-07114 ``motivic cohomology: theory and applications.'' 

In the second version of this paper the material on Adams operators has been significantly rewritten to correct some errors and clarify coherent multiplicative structures. We are grateful to Marc Hoyois, Nick Rozenblyum, and Peter Scholze for discussions on this topic.

\section{Some notation and conventions}\label{sec:notation}
In this section, we collect notation and conventions  which we will use throughout the paper. We freely use the language of $\infty$-categories as developed in \cite{LurieHA, Lurie2009}.

\subsection{Sheaves}
Let $(\scr C, \tau)$ be an $\infty$-site and $F:\scr C^{\op} \rightarrow \scr D$ a presheaf, where $\scr D$ is a stable, presentable $\infty$-category with a $t$-structure. The examples of $\scr D$ that will appear in this paper are mainly the category $\textrm{Sp}$ of spectra and the derived category $\rm D(A)$ where $A$ is a discrete coefficient ring, each equipped with the standard $t$-structure; we will also see filtered variants of these categories. One says that $F$ is \emph{discrete} if it factors through the heart $\scr D^{\heartsuit} \subseteq \scr D$; in the previous example $\rm{Sp}$ (resp.~$\rm D(R)$), a discrete presheaf means a discrete presheaf of abelian groups  (resp.~of $R$-modules). We will use the following terminology and notation:
\begin{enumerate}
\item We write $L_{\tau}$ to be the endofunctor $L_{\tau}: \PShv(\scr C) \rightarrow \PShv(\scr C)$ reflecting onto the subcategory of $\tau$-sheaves $\Shv_{\tau}(\scr C)$; this functor is referred to as \emph{sheafification}.
\item If $F$ is a discrete presheaf, then we write $R\Gamma_{\tau}(-,F)$ then for the $\tau$-cohomology of $F$; in other words we have an equivalence of $\tau$-sheaves
\[
L_{\tau}F\simeq R\Gamma_{\tau}(-,F).
\]
\item Given another topology $\tau'$ which is finer than $\tau$ then there is an adjunction
\[
\epsilon^*:\Shv_{\tau} \rightleftarrows \Shv_{\tau'}:\epsilon_*.
\]
whose unit gives rise to a canonical map in $L_{\tau}F \rightarrow \epsilon_*L_{\tau'}F$ in $\Shv_{\tau}(\scr C)$. (We note that in the case $\scr D = \rm D(R)$, then $\epsilon_*$ is often denoted in the literature as $R\epsilon_*$.) Often we regard $L_{\tau}F$ and $L_{\tau'}F$ as presheaves on $\scr C$ and simply write the previous map as $L_{\tau}F \rightarrow L_{\tau'}F$; the context should always make it clear how we are viewing the objects.
\end{enumerate}

\subsection{Filtrations}\label{ss_filtrations}
For $\cal C$ a stable $\infty$-category, the associated stable $\infty$-categories of \emph{filtered objects} and {\em graded objects} are
\[
\Fil\cal C := \Fun((\Z, \geq)^{\op}, \cal C) \qquad \text{and}\qquad \Gr\cal C := \Fun(\Z^{\delta}, \cal C),
\]
where $(\Z, \geq)$ denotes the totally ordered set of the integers and $\Z^{\delta}$ is the discrete category of the integers. Our filtrations are thus, by convention, $\bb Z$-indexed and always decreasing.\footnote{The exception is when we occasionally encounter finite filtrations, in which case we implicitly impose that the filtration be both exhaustive and complete, and we may allow it to be increasing if it makes the indexing easier to follow.} The functor of taking associated graded is written as usual by $
\mathrm{gr}^\star:\Fil\cal C \rightarrow  \Gr\cal C$.

As the notation suggests, we use $\star$ as the placeholder for the variable in graded objects. But at times this appears confusingly pedantic and so, for example, instead of writing $\bb Z(\star)^\sub{mot}$ we sometimes write $\bb Z(j)^\sub{mot}$, it being tacitly understood that $j$ varies.

Similarly, we tend to write filtered objects of $\cal C$ as $\Fil^\star M$, where $M$ is an object of $\cal C$; this notation implicitly means that there is a morphism $\Fil^{-\infty}M:=\colim_{j\to- \infty}\Fil^jM\to M$ in $\cal C$. The filtration is said to be {\em exhaustive} when the latter morphism is an equivalence. The filtration is said to be {\em $\bb N$-indexed} when $\Fil^jM\to M$ is an equivalence for all $j\le 0$ (or, equivalently, the filtration is exhaustive and $\gr^jM=0$ for $j<0$). The filtration is said to be {\em complete} if $\lim_{j\to\infty}\Fil^jM=0$.

When $\cal C=\text D(\bb Z),\,\text{Sp}$, etc., then our filtrations are often complete because they satisfy the stronger property of being {\em bounded} (``uniformly homologically bounded below'' to be more precise): for us this means that there exists $d\ge 0$ such that $\Fil^jM$ is supported in cohomological degrees $\le d-j$ for any $j\in\bb Z$. Then $\gr^jM$ is also supported in cohomological degrees $\le d-j$, and the associated spectral sequence of the filtered complex/spectrum lies in the left half-plane $\{x\le d\}$. Conversely, if the filtration is already known to be complete then boundedness can be checked via the graded pieces: taking the inverse limit, $\gr^jM$ being supported in cohomological degrees $\le d-j$ for all $j\in\bb Z$ implies the same about all $\Fil^jM$.

Assume now that $\cal C$ is presentably symmetric monoidal (our main cases of interest are when $\cal C$ is the $\infty$-category $\Spt$ of spectra or the derived $\infty$-category $\text D(R)$ of modules over some discrete ring $R$). Then $\Fil\cal C$ and $\Gr\cal C$ admit canonical symmetric monoidal structures given by Day convolution, which ensures that taking associated graded promotes to a  strong symmetric monoidal functor. In particular, we have the $\infty$-category of \emph{filtered $\bb E_{\infty}$-algebras} $\CAlg(\Fil\cal C)$ and \emph{graded $\bb E_{\infty}$-algebras} $\CAlg(\Gr\cal C)$ such that $\mathrm{gr}^\star$ promotes to a strong symmetric monoidal functor
$
\mathrm{gr}^\star:\CAlg(\Fil\cal C) \rightarrow  \CAlg(\Gr\cal C).
$
We sometimes summarise this wealth of information by speaking simply of {\em multiplicative filtrations} or {\em multiplicative graded objects}. We also often consider maps between such structured objects and call them maps which are \emph{multiplicative}.

Given an $\bb E_\infty$-algebra $E(\star)\in\Gr\rm D(R)$ in graded complexes, where $R$ is a discrete ring, we may shift the graded pieces to obtain new $\bb E_\infty$-algebras $E(\star)[2\star]$ and $E(\star)[-2\star]$ in $\Gr\rm D(R)$ \cite[Rem.~2.1]{BachmannElmantoMorrow}; we refer to this process as {\em shearing} and will sometimes use it without explicit mention.

\subsection{Left Kan extensions}
Given a fully faithful inclusion of categories $\iota: \cal C\subseteq \cal C'$ and a functor $F:\cal C\to\cal D$ valued in a presentable $\infty$-category $\cal D$, we write $L_{\cal C'/\cal C}F:\cal C'\to\cal D$ for the corresponding left Kan extension. We will use the following standard facts: 
\begin{enumerate}
\item Left Kan extension provides a left adjoint to the restriction functor $\iota^*:\Fun( \cal C', \cal D) \rightarrow \Fun( \cal C, \cal D)$
\[
\iota_!:\Fun( \cal C, \cal D) \rightarrow \Fun( \cal C, \cal D); 
\]
which is furthermore fully faithful  \cite[Prop. 4.3.2.17]{Lurie2009}.
\item Let $R$ be a commutative base ring and consider the special case that $\cal C \subset \cal C'$ is the inclusion $\CAlg^{\Sm}_R \subset \CAlg_R$ of smooth $R$-algebras into all commutative $R$-algebras; we will denote left Kan extension in this context by $L_R^\sub{sm}= L_{\CAlg_R^\sub{sm}/\CAlg_R}$. Given a functor $F:\CAlg^\sub{sm}_R \rightarrow \CAlg(\cal D)$, the left Kan extension $L_R^\sub{sm}F:\CAlg_R\to\cal D$ upgrades to a functor $L_R^\sub{sm}F:\CAlg_R\to\cal \CAlg(\cal D)$. The key point here is that, for an $R$-algebra $S$, the diagram $(\CAlg^\sub{sm}_{R})_{/S}$ is \emph{sifted}, whence the colimit computing the left Kan extension in $\CAlg(\cal D)$ is computed in $\cal D$; see \cite[Corol.~3.2.3.2]{LurieHA} for a reference. This also works for the inclusion $\CAlg^{\Sm}_R \subset \CAlg^\sub{ani}_R$ of smooth $R$-algebra into animated $R$-algebras. Therefore, without further comment, we take for granted that the left Kan extensions appearing in this paper preserves multiplicative structures.
\end{enumerate}

\section{Recollections of other cohomologies}\label{sec:recall}

\subsection{Classical motivic cohomology of smooth schemes}\label{ss_classical}
We briefly recall the $\bb A^1$-invariant theory of motivic cohomology for smooth schemes over a field $k$. We will sometimes call this the {\em classical} case of motivic cohomology for the sake of clarity.

For $j\ge0$ and for any given $X\in \text{Sm}_k$, its weight-$j$ motivic cohomology is in principle given by a shift of Bloch's cycle complex, namely \begin{equation}z^j(X,\bullet)[-2j],\label{eqn_mot=BL}\end{equation} but this is not a good working definition. Indeed, Bloch's cycle complex is a priori only functorial for flat morphisms between smooth $k$-schemes, which is insufficient for our purposes (notably for left Kan extending beyond smooth $k$-schemes), and its multiplicative properties are unclear (especially in mixed characteristic, although that is irrelevant for the present paper). Fortunately it is known how to resolve these problems via motivic homotopy theory as follows; we refer to our joint work with Bachmann \cite[\S4]{BachmannElmantoMorrow} and to Remark \ref{rem:def_of_ZjA} for more details.

Firstly, for any smooth $k$-scheme $X$, Voevodsky's slice filtration \cite{Voevodsky2002} on the category of motivic spectra $\SH(k)$ equips the $K$-theory spectrum $\K(X)$ with a natural, multiplicative, complete $\bb N$-indexed filtration $\Fil^\star_\bb A\K(X)$, which we call the classical motivic filtration. One then defines the weight-$j$ motivic cohomology 
 of $X$ by shifting the graded pieces of this motivic filtration, namely \begin{equation}\bb Z(j)^\bb A(X):=(\gr^j_\bb A\K(X))[-2j]\in\mathrm{D}(\bb Z),\qquad j\in\bb Z.\label{eqn:A1vsBloch}\end{equation} Results of Levine \cite{Levine2008} and Voevodsky \cite{Voevodsky2002} state that, for any fixed $X\in\Sm_k$ and $j\ge0$, there is an equivalence $\bb Z(j)^\bb A(X)\simeq z^j(X,\bullet)[-2j]$; also $\bb Z(j)^\bb A(X)=0$ when $j<0$. However, the functoriality issues of Bloch's cycle complex are now fixed: formal properties of the slice filtration show that this classical motivic cohomology (\ref{eqn:A1vsBloch}) assembles to form an $\bb A^1$-invariant Nisnevich sheaf on smooth $k$-schemes valued in $\bb E_\infty$-algebras in graded complexes, i.e., $\bb Z(\star)^\bb A\in \Shv_{\Nis,\bb A^1}(\Sm_k,\CAlg(\Gr\Sp))$.

The classical motivic filtration induces the multiplicative {\em Atiyah--Hirzebruch}, or {\em motivic}, {\em spectral sequence} \[E_2^{ij}=H^{i-j}_\bb A(X,\bb Z(-j))\implies \K_{-i-j}(X)\] functorially in $X\in\text{Sm}_k$, where $H^i_\bb A(X,\bb Z(j)):=H^i(\bb Z(j)^\bb A(X))$ denotes the classical motivic cohomology groups of $X$. The motivic filtration on $\K(X)$ is bounded in that $\Fil^j_{\bb A}\K(X)$ is supported in homological degrees $\ge \dim X-j$ (in particular, $H^i_\bb A(X,\bb Z(j))=0$ when $i> j+\dim X$). Adams operators imply that the motivic filtration splits rationally, i.e., there is a natural equivalence of filtered spectra $\Fil^\star_{\bb A}\K(X)_\bb Q\quis \bigoplus_{j\ge0}\bb Q(j)^{\bb A}(X)[2j]$, and that the Atiyah--Hirzebruch spectral degenerates rationally; see \cite[\S4.3]{BachmannElmantoMorrow} for details.

As part of his resolution of the Bloch--Kato conjecture, Voevodsky related classical motivic cohomology to \'etale cohomology and established the Beilinson--Lichtenbaum equivalence: namely, for any integer $\ell$ invertible in $k$, there are multiplicative equivalences \[\bb Z(j)^\bb A/\ell\simeq L_\sub{Zar}\tau^{\le j}R\Gamma_\sub{\'et}(-,\mu_\ell^{\otimes j}),\] for $j\ge0$, of sheaves on $\Sm_k$. We refer to \cite[\S6.1]{BachmannElmantoMorrow} for an explanation how to deduce a highly structured such equivalence from the classical results of Geisser, Levine, Suslin, and Voevodsky.

\subsection{Lisse motivic cohomology}\label{subsec_lke}
The simplest way to extend classical motivic cohomology to arbitrary algebras over fields is via left Kan extension of the classical theory:

\begin{definition}\label{def:conn}
Fix a prime field $\bb F$ (i.e., $\bb F_p$ for some prime number $p\ge2$ or $\bb Q$) and $j\in\bb Z$. We define \emph{weight-$j$, lisse motivic cohomology}
\[
\Z(j)^\sub{lse}:=L_\bb F^\sub{sm}\Z(j)^\bb A: \text{CAlg}_\bb F \rightarrow \rm D(\Z)
\]
as the left Kan extension of classical motivic cohomology $\Z(j)^\bb A$ of smooth $\bb F$-algebras along the inclusion $\rm{CAlg}_\bb F^\sub{sm} \subseteq \rm{CAlg}_\bb F$ of smooth $\bb F$-algebras into all $\bb F$-algebras.\footnote{\label{footnote_warning_lisse}Warning: In \cite[Definition~7.1]{BachmannElmantoMorrow} lisse motivic cohomology was defined by left Kan extending $\bb A^1$-motivic cohomology from any base ring $B$ which was either a field or a mixed characteristic Dedekind domain, and was denoted by $\bb Z(j)_B^\sub{lse}$ to emphasise the a priori dependence on $B$; the notation $\bb Z(j)^\sub{lse}$ was used for the most generally defined theory, namely $B=\bb Z$. Thus \[\bb Z(j)^\sub{lse}\textrm{ of this paper }\stackrel{\textrm{by def.}}=\textrm{ }\bb Z(j)_\bb F^\sub{lse}\textrm{ of \cite{BachmannElmantoMorrow} }\stackrel{?}\neq \textrm{ the restriction of }\bb Z(j)^\sub{lse}\textrm{ of \cite{BachmannElmantoMorrow} to $\bb F$-algebras.}\] While this might look like a potential source of terrible confusion, we assure the reader that it is not: we only use lisse motivic cohomology of {\em local} $\bb F$-algebras $A$ (a handful of results are stated in non-local cases, but never used in such generality), and in that case it is known that \[\bb Z(j)^\sub{lse}(A)\textrm{ of this paper }=\textrm{ }\bb Z(j)^\sub{lse}(A)\textrm{ of \cite{BachmannElmantoMorrow}.}\] This was already proved for henselian local $\bb F$-algebras in \cite[Theorem~7.5]{BachmannElmantoMorrow}. To remove the henselian assumption we must check that the statement of \cite[Proposition~7.4]{BachmannElmantoMorrow} remains true if we replace Nis by Zar. But the Nisnevich topology was only used in the final sentence of the proof where, on smooth $\bb F$-schemes, the mod-$p$ Beilinson--Lichtenbaum cohomology $\bb F_p(j)^\sub{BL}$ was tautologically identified Nisnevich locally with the truncated syntomic cohomology $\tau^{\le j}\bb F_p(j)^\sub{syn}$; the key observation is now that this identification actually holds Zariski locally by \cite[Remark~5.53]{BachmannElmantoMorrow}, since here we are working with smooth schemes over the field $\bb F$ (rather than possibly over a mixed characteristic Dedekind domain).
}
\end{definition}

We warn the reader that $\Z(j)^\sub{lse}$ is not in general a Zariski sheaf, already in the case $j=1$, as the following example shows.

\begin{example}[$j=1$]\label{example_lke_1}
On smooth $\bb F$-schemes there is a first Chern class equivalence \begin{equation}c_1^\bb A: R\Gamma_\sub{Zar}(-,\bb G_m)[-1]\quis \bb Z(1)^\bb A,\label{eqn_1st_Chern_A}\end{equation} e.g., \cite[Corollary~6.26]{BachmannElmantoMorrow}, where the left side is the same as $(\tau^{\le1}R\Gamma_\sub{Zar}(-,\bb G_m))[-1]$ on smooth $\bb F$-schemes. Since both units and $\text{Pic}$, as functors $\CAlg_\bb F\to\text D(\bb Z)$, are left Kan extended from smooth $\bb F$-algebras, we deduce that there is a natural equivalence $\bb Z(1)^\sub{lse}(A)\simeq(\tau^{\le1}R\Gamma_\sub{Zar}(A,\bb G_m))[-1]$ for any $\bb F$-algebra $A$. However, the truncated presheaf itself is not a Zariski sheaf and the natural map $\tau^{\le1}R\Gamma_\sub{Zar}(-,\bb G_m) \rightarrow R\Gamma_\sub{Zar}(-,\bb G_m)$ witnesses the target as the Zariski sheafification.
\end{example}

Lisse motivic cohomology occurs as the graded pieces of a motivic filtration on connective $K$-theory $\K^\sub{cn}$:

\begin{proposition}\label{prop:mot-filt} Let $A$ be an $\bb F$-algebra. Then there exists a natural, $\bb N$-indexed, multiplicative filtration $\Fil^\star_\sub{lse}\K^\sub{cn}(A)$ on the connective $K$-theory $\K^\sub{cn}(A)$ with graded pieces given naturally and multplicatively by 
\[
\mathrm{gr}^j_\sub{lse}\K^\sub{cn}(A) \simeq  \Z(j)^\sub{lse}(A)[2j]
\]
for $j\ge0$; moreover $\bb Z(j)^\sub{lse}(A)$ is supported in cohomological degrees $\le 2j$. If $A$ is local then $\bb Z(j)^\sub{lse}(A)$ is supported in cohomological degrees $\le j$ and the filtration is bounded.
\end{proposition}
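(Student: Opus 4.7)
The plan is to construct $\Fil^\star_\sub{lse}\K^\sub{cn}$ by left Kan extending the classical motivic filtration from smooth $\bb F$-algebras. Starting from the multiplicative, $\bb N$-indexed filtration $\Fil^\star_\sub{cla}\K$ recalled in~\S\ref{ss_classical} and restricted to $\CAlg_\bb F^\sub{sm}$, I would set
\[
\Fil^\star_\sub{lse}\K^\sub{cn} := L_{\CAlg_\bb F / \CAlg^\sub{sm}_\bb F}\bigl(\Fil^\star_\sub{cla}\K\bigr) \colon \CAlg_\bb F \To \CAlg(\text{FSp}).
\]
Multiplicativity and the $\bb N$-indexed property persist under left Kan extension, by the sifted colimit discussion in~\S\ref{ss_filtrations}. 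Since $\gr^j$ preserves colimits, the graded pieces are
\[
\gr^j_\sub{lse}\K^\sub{cn} \simeq L_{\CAlg_\bb F / \CAlg^\sub{sm}_\bb F}\bigl(\bb Z(j)^\sub{cla}[2j]\bigr) \simeq \bb Z(j)^\sub{lse}[2j]
\]
directly from Definition~\ref{def:conn}.

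To identify the underlying spectrum with $\K^\sub{cn}$, I would combine two ingredients: on smooth $\bb F$-algebras, $\Fil^0_\sub{cla}\K = \K = \K^\sub{cn}$ (the second equality because such algebras are regular Noetherian and thus have vanishing negative $K$-theory); and connective $K$-theory, viewed as a functor on $\CAlg_\bb F$, is itself the left Kan extension of its restriction to $\CAlg^\sub{sm}_\bb F$. The latter is a standard consequence of the description of $\K^\sub{cn}$ as a group completion of the groupoid of finitely generated projectives together with the fact that every $\bb F$-algebra is a sifted colimit of polynomial $\bb F$-algebras. For the general cohomological bound $\le 2j$: the shift $\bb Z(j)^\sub{cla}[2j]$ is connective on smooth $\bb F$-algebras, since $\bb Z(j)^\sub{cla}$ arises as a shift of Bloch's cycle complex and hence lies in non-negative homological degrees; connective objects of $\rm D(\bb Z)$ are closed under sifted colimits, so the bound propagates.

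The local refinement is the subtlest ingredient. For a local $\bb F$-algebra $A$, I would work with a simplicial resolution $P_\bullet \quis A$ whose terms are henselian local ind-smooth $\bb F$-algebras, constructed by henselizing a polynomial-algebra resolution at the preimage of the maximal ideal of $A$, and verify via a cofinality argument—combined with finitariness of $\bb Z(j)^\sub{cla}$—that it computes the left Kan extension. Granted this, the Nesterenko--Suslin--Kerz theorem that $\bb Z(j)^\sub{cla}$ sits in cohomological degrees $\le j$ on local ind-smooth $\bb F$-algebras passes to the geometric realization, yielding $\bb Z(j)^\sub{lse}(A) \in \rm D(\bb Z)^{\le j}$. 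Boundedness of the filtration then follows from the principle in~\S\ref{ss_filtrations}: each graded piece lives in cohomological degrees $\le -j$, and since these degrees tend to $-\infty$ the filtration is automatically complete, from which one deduces $\Fil^j_\sub{lse}\K^\sub{cn}(A) \in \rm D(\bb Z)^{\le -j}$ (so $d = 0$ works). The places I expect the most friction are the cofinality argument enabling a local resolution, and the citation-level identification of $\K^\sub{cn}$ as the left Kan extension of its restriction to smooth $\bb F$-algebras.
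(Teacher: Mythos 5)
Your construction is the same as the paper's: left Kan extend the classical motivic filtration from $\CAlg_\bb F^\sub{sm}$, identify $\Fil^0$ with $\K^\sub{cn}$ using that connective $K$-theory is left Kan extended from smooth $\bb F$-algebras (the paper simply cites \cite[Ex.~A.0.6]{EHKSY3} for this rather than re-deriving it), propagate the $\le 2j$ bound through the sifted colimit, and use the Gersten bound on local (ind-)smooth algebras for the local statement. The cofinality point you flag (computing the left Kan extension at a local $A$ via a resolution by local ind-smooth algebras) is indeed glossed over in the paper, but it is the standard argument used again in Remark~\ref{remark_lke_as_cycles} and Lemma~\ref{lem:rigid-lke}.

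There is, however, one inference that is wrong as written: you deduce that the filtration is ``automatically complete'' because the graded pieces $\gr^j$ lie in cohomological degrees $\le -j$ with $-j\to-\infty$. Coconnectivity of the graded pieces does not imply completeness -- the constant filtration $\Fil^j M = N$ on a nonzero $N$ has vanishing graded pieces but $\lim_j\Fil^j M = N\neq 0$ -- and \S\ref{ss_filtrations} states the implication only in the opposite direction (completeness plus graded bounds gives filtration bounds). Since completeness is exactly the property not obviously preserved by the colimits computing a left Kan extension, this step needs repair. The fix uses only ingredients you already have: by Gersten and completeness of the \emph{classical} filtration, $\Fil^j_\sub{cla}\K(R)$ itself is supported in homological degrees $\ge j$ for every essentially smooth local $\bb F$-algebra $R$, and this connectivity bound is preserved by the colimit over your local resolution $P_\bullet\to A$. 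Hence $\Fil^j_\sub{lse}\K^\sub{cn}(A)\in\mathrm{D}(\bb Z)^{\le -j}$ directly, which is boundedness (with $d=0$), and completeness then follows as a consequence rather than serving as an input. This is exactly the order of deduction in the paper's proof.
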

\begin{proof}
The desired filtration follows by left Kan extending the motivic filtration of \S\ref{ss_classical}, since $\K^\sub{cn}:\text{CAlg}_\bb F\to\text{Sp}$ is left Kan extended from smooth $\bb F$-algebras \cite[Example~A.0.6]{EHKSY3}. The bound holds in the smooth case and is preserved by left Kan extension.

If $A$ is local then $\Fil_\sub{lse}^j\K^\sub{cn}(A)$ is supported in homological degrees $\ge j$; indeed, this connectivity bound holds Zariski locally on smooth $\bb F$-algebras by the Gersten conjecture in classical motivic cohomology, and is again preserved by left Kan extension.
\end{proof}

\begin{remark}[$\bb Z(j)^\sub{lse}$ is a cycle complex]\label{remark_lke_as_cycles}
Our interest in the cohomology $\bb Z(j)^\sub{lse}$ is not just as an intermediate tool nor because it is the ``easiest'' extension of motivic cohomology beyond smooth schemes, but because it is defined purely in terms of algebraic cycles. This is already clear from the definition, since it is the left Kan extension of the cycle-theoretic $\bb Z(j)^{\bb A}$ from smooth algebras, but we spell it out more explicitly. Given a $\bb F$-algebra $A$, we may pick a simplicial resolution $P_\bullet\to A$ where each term $P_m$ is an ind-smooth $\bb F$-algebra and each face map $P_{m+1}\to P_m$ is a henselian surjection. Then the formalism of left Kan extension from smooth algebras implies that there is a natural equivalence \[\colim_{m\in\Delta^\sub{op}}\bb Z(j)^\bb A(P_m)\quis \bb Z(j)^\sub{lse}(A)\] (in this line and below we implicitly extend $\bb Z(j)^\bb A$ and $z^j(-,\bullet)$ from smooth to ind-smooth $\bb F$-algebras, by taking filtered colimits). Expanding each $\bb Z(j)^\bb A(P_m)=z^j(P_m,\bullet)[-2j]$ as a complex of cycles, we see that the left side of the previous line is the $[-2j]$-shift of the totalisation of the bicomplex (really bisimplicial abelian group)
\begin{equation}\label{eq:lke-formula}
\xymatrix@=5mm{
&\vdots\ar[d] &\vdots\ar[d] &\vdots\ar[d] \\
\cdots\ar[r] & z^j(P_2,2) \ar[r]\ar[d] & z^j(P_2,1) \ar[r] \ar[d]& z^j(P_2,0) \ar[d]\\
\cdots\ar[r] & z^j(P_1,2) \ar[r]\ar[d] & z^j(P_1,1) \ar[r] \ar[d]& z^j(P_1,0) \ar[d]\\
\cdots\ar[r] & z^j(P_0,2) \ar[r] & z^j(P_0,1) \ar[r] & z^j(P_0,0)
}
\end{equation}
In conclusion $\bb Z(j)^\sub{lse}(A)$ admits a description in terms of various algebraic cycles on the affine schemes $\bb A_{P_m}^n$, for $n,m\ge0$
\end{remark}

We will discuss comparisons between lisse motivic cohomology and our new motivic cohomology in \S\ref{section_lke}, mainly in the case of local rings.

\begin{remark}[Variant: Zariski sheafified lisse motivic cohomology]\label{rem_Zariski_lisse}
As already mentioned, we only really need lisse motivic cohomology in the case of local rings. A more useful variant for arbitrary qcqs $\bb F$-schemes is therefore obtained by taking its Zariski sheafification $L_\sub{Zar}\bb Z(j)^\sub{lse}:\Sch^\sub{qcqs,op}_\bb F\to\rm{D}(\bb Z)$.

Zariski sheafifying Proposition \ref{prop:mot-filt} we see, for any qcqs $\bb F$-scheme $X$, that the Zariski sheafification of connective $K$-theory, i.e., $L_\sub{Zar}\K^\sub{cn}(X)$, admits a natural, $\bb N$-indexed, multiplicative filtration $L_\sub{Zar}\Fil^\star_\sub{lse}\K^\sub{cn}(X)$, with graded pieces given naturally and multiplicatively by $L_\sub{Zar}\bb Z(\star)^\sub{lse}(X)[2\star]$.

This Zariski sheafified lisse motivic cohomology will be used in \S\ref{sec:p1}.
\end{remark}

\subsection{$\bb A^1$-invariant and cdh-local motivic cohomology}\label{ss_cdh_local}
This paper builds on our joint work with Bachmann \cite{BachmannElmantoMorrow}, one of whose goals is to extend the $\bb A^1$-invariant theory of classical motivic cohomology from smooth varieties to arbitrary qcqs schemes, while retaining $\bb A^1$-invariance.

We summarise here the main output of \cite{BachmannElmantoMorrow} in the case of schemes over a fixed prime field $\bb F$:

\begin{theorem}[{\cite{BachmannElmantoMorrow}}]\label{thm:cdh}
There exists a multiplicative family of presheaves of complexes \[\bb Z(j)^\bb A:\Sch_\bb F^\sub{qcqs,op}\To\mathrm{D}(\bb Z),\qquad j\ge0\] (i.e., $\bb Z(\star)^\bb A$ is a presheaf valued in $\bb E_\infty$-algebras in graded complexes) with the following properties:
\begin{enumerate}
\item For any qcqs $\bb F$-scheme $X$, there exists a natural, multiplicative, $\bb N$-indexed filtration, the {\em $\bb A^1$-motivic filtration}, $\mathrm{Fil}_\bb A^{\star}\KH(X)$ on the homotopy invariant $K$-theory $\KH(X)$, such that the graded pieces are naturally and multiplicatively given by
$
\mathrm{gr}_\bb A^j\KH(X)\simeq \Z(j)^\bb A[2j],
$
for $j\ge0$. In particular, writing $H^i_\bb A(X,\bb Z(j)):=H^i(\bb Z(j)^\bb A(X))$ for the corresponding {\em $\bb A^1$-motivic cohomology groups}, there exists a natural, multiplicative Atiyah--Hirzebruch spectral sequence
\[
E_2^{ij}=H_\bb A^{i-j}(X, \Z(-j)) \implies \KH_{-i-j}(X).
\]
(Tautologically, or by convention, we have $\bb Z(j)^\bb A=0$ when $j<0$.) If $X$ has finite valuative dimension $\le d$ then this filtration is bounded: more precisely, $\Fil_\bb A^j\KH(X)$ is supported in cohomological degrees $\le d-j$.
\item Finitariness: $\bb Z(j)^\bb A$ is a finitary cdh sheaf for each $j\in\bb Z$.
\item Low weights: There is a unique multiplicative equivalence $\bb Z(0)^\bb A\simeq R\Gamma_\sub{cdh}(-,\bb Z)$ and a first Chern class equivalence $c_1^{\bb A}:R\Gamma_\sub{cdh}(-,\bb G_m)[-1]\quis\bb Z(1)^\bb A$.
\item Beilinson--Lichtenbaum relation to \'etale cohomology: for any integer $\ell>0$ invertible in $\bb F$, there is a unique equivalence 
\[
\Z(\star)^\bb A/\ell \simeq L_{\cdh}\tau^{\leq \star}R\Gamma_{\et}(-,\mu_{\ell}^{\otimes \star}).
\]
of $\bb E_\infty$-algebras in graded presheaves of complexes on $\Sch_\bb F^\sub{qcqs}$ which is compatible with first Chern classes.
\item Relation to syntomic cohomology: if $\bb F=\bb F_p$ then for any $r\ge0$ there is a unique equivalence 
\[
\Z(\star)^\bb A/p^r \simeq R\Gamma_{\cdh}(-,W_r\Omega^\star_{\log})[-\star].
\]
of $\bb E_\infty$-algebras in graded presheaves of complexes on $\Sch_\bb F^\sub{qcqs}$ which is compatible with first Chern classes.
\item\label{item-A1-invar} $\bb A^1$-invariance: for any qcqs $\bb F$-scheme $X$ and $j\ge0$, the map
\[
\Z(j)^\bb A(X) \To \Z(j)^\bb A(X\times \A^1),
\]
induced by the projection $X \times \A^1 \rightarrow X$, is an equivalence.

\item \label{item-PBF} Projective bundle formula: for any $r,j\ge0$, the powers of the first Chern class of the tautological line bundle $c_1^\bb A(\scr O(1)) \in H^{2}_\bb A(\P^r_X, \Z(1))$
induce a natural equivalence
\begin{equation}\bigoplus_{i=0}^r\bb Z(j-i)^\bb A(X)[-2i]\quis \bb Z(j)^\bb A(\bb P^r_X)\label{eqn_intro_PBF}\end{equation}

\item Rational structure via Adams operators: for any qcqs $\bb F$-scheme $X$ and integer $m\in\bb Z\setminus\{0\}$ there exists a natural, multiplicative, filtered endomorphism $\psi^m$ of $\Fil^\star_\bb A\KH(X)[\tfrac1m]$ whose induced action on the associated graded is multiplication-by-$m^\star$; this causes the filtration $\mathrm{Fil}_\bb A^{\star}\KH(X)_{\bb Q}$ to naturally split, i.e., there is a natural, multiplicative equivalence of filtered spectra
\[
\Fil^\star_\bb A\KH(X)_{\bb Q} \simeq \bigoplus_{j \geq \star} \bb Q(j)^\bb A(X)[2j].
\]
\item Comparison to classical motivic cohomology: for any field $k\supseteq\bb F$ the restriction of $\bb Z(\star)^\bb A$ to smooth $k$-schemes coincides with the classical motivic cohomology of \S\ref{ss_classical}; in particular, for any smooth $k$-scheme $X$ there are equivalences
\[
\Z(j)^\bb A(X) \simeq z^j(X,\bullet)[-2j]
\]
for each $j\ge0$.
\item Cdh-locally left Kan extended from smooth schemes: for each $j\ge0$, the presheaves $\bb Z(j)^\bb A$ and $\Fil^j_\bb A\KH$ on $\Sch^\sub{qcqs}_\bb F$ agree with the cdh sheafifications of their left Kan extensions from smooth $\bb F$-schemes. 
\end{enumerate}
\end{theorem}

To discuss the theorem further we must recall the cdh topology, which appears several times in the statement. An \emph{abstract blowup square} is a cartesian square of qcqs $\bb F$-schemes
\begin{equation}\label{eqn_cdh_square}
\begin{tikzcd}
Y' \ar{r} \ar{d} & X' \ar{d}{p} \\
Y \ar[swap]{r}{i} & X,
\end{tikzcd}
\end{equation}
where $i$ is a finitely presented closed immersion and $p$ is a finitely presented, proper morphism inducing an isomorphism $X'\setminus Y'\isoto X\setminus Y$. On $\Sch^{\qcqs}_\bb F$ the \emph{cdh topology} is the Grothendieck topology generated by the pretopology given by maps $\{ Y \rightarrow X, X' \rightarrow X\}$ for all abstract blow up squares as above and by the Nisnevich pretopology. A result of Voevodsky \cite{Voevodsky2010}, generalised in \cite[Proposition~2.1.5]{ElmantoHoyoisIwasaKelly2021} and \cite[Appendix~A]{BachmannHoyois2021} in the non-Noetherian setting, states that cdh sheaves are exactly those presheaves which convert both Nisnevich squares and abstract blowup squares to cartesian squares. See \cite[\S2.2]{BachmannElmantoMorrow} for further details.

Using the cdh topology we may give an explicit description of the $\bb A^1$-motivic cohomology of the theorem in terms of classical motivic cohomology, as stated in Theorem \ref{thm:main}(10). We first extend the lisse motivic cohomologies $\bb Z(j)^\sub{lse}$ from $\bb F$-algebras to qcqs $\bb F$-schemes which are not necessarily affine: this is done simply by defining $\bb Z(j)^\sub{lse}:\Sch_\bb F^\sub{qcqs,op}\to\rm D(\bb Z)$ to be the left Kan extension of classical motivic cohomology $\bb Z(j)^\bb A$ of smooth $\bb F$-schemes; on affines this agrees with Definition \ref{def:conn} by \cite[Proposition~2.23]{BachmannElmantoMorrow}, which proves that left Kan extending from smooth schemes and then evaluating on an affine agrees with the left Kan extension from smooth affines. Secondly, as in \cite[\S7]{BachmannElmantoMorrow}, we define weight-$j$ {\em cdh-motivic cohomology} of qcqs $\bb F$-schemes by cdh-sheafifying $\bb Z(j)^\sub{lse}$, that is \begin{equation}\mathbb Z(j)^\sub{cdh}:=L_\sub{cdh}\bb Z(j)^\sub{lse}:\opp{Sch}_\bb F^\sub{qcqs, op}\To\rm D(\bb Z)\label{eqn:Z(j)cdh}\end{equation} for $j\in\bb Z$ (though note that $\mathbb Z(j)^\sub{cdh}=0$ if $j<0$, since the same holds for classical motivic cohomology of smooth $k$-schemes). In terms of universal properties, $\bb Z(j)^\sub{cdh}:\opp{Sch}_\bb F^\sub{qcqs, op}\to\rm D(\bb Z)$ is initial among cdh sheaves on $\opp{Sch}_\bb F^\sub{qcqs, op}$ whose restriction to smooth $\bb F$-schemes is equipped with a map from classical motivic cohomology $\bb Z(j)^\bb A$. Assuming resolution of singularities, this construction is essentially due to Friedlander and Voevodsky for finite type $\bb F$-schemes \cite{FriedlanderVoevodsky2000} \cite[Remarks 1.12 \& 7.9]{BachmannElmantoMorrow}.

In light of Theorem \ref{thm:cdh}(2), there is an induced counit comparison map \begin{equation}\bb Z(\star)^\sub{cdh}\To \bb Z(\star)^\bb A\label{eqn_cdh_vs_A1}\end{equation} of presheaves of $\bb E_\infty$-algebras in complexes on qcqs $\bb F$-schemes, and with Bachmannn we prove that \eqref{eqn_cdh_vs_A1} is an equivalence.\footnote{In fact, this follows from parts of Theorem \ref{thm:cdh}: rationally from part (8) and with finite coefficients from parts (4) and (5). But this totally misrepresents the order of logic in \cite{BachmannElmantoMorrow}.} Similarly, each step of the $\bb A^1$-motivic filtration $\Fil^j_\bb A\KH$ can be recovered by restricting it to smooth $\bb F$-schemes, left Kan extending to qcqs $\bb F$-schemes, and then cdh sheafifying. We will summarise these results by simply saying that each $\bb Z(j)^\bb A$ and $\Fil^j_{\bb A}\KH$ are {\em cdh-locally left Kan extended} from smooth $\bb F$-schemes. These cdh-local descriptions of $\bb A^1$-motivic cohomology and the $\bb A^1$-motivic filtration will be absolutely crucial to the construction of non-$\bb A^1$-invariant motivic cohomology in this paper.

\begin{remark}[Definition of $\bb Z(j)^\bb A$]\label{rem:def_of_ZjA}
For the purposes of the present article, and in light of the previous paragraph, the reader is welcome to define $\bb Z(\star)^\bb A$ to be $\bb Z(\star)^\sub{cdh}$, or rather replace every occurrence of the former by the latter, so that the equivalence \eqref{eqn_cdh_vs_A1} becomes a tautology. All that will be lost is the relation to motivic homotopy theory. That is even the point of view we adopted in the first version of this article. But, upon further reflection, we feel that the fact that the equivalent cohomologies $\bb Z(\star)^\bb A$ and $\bb Z(\star)^\sub{cdh}$ appear to be the correct theory of $\bb A^1$-invariant motivic cohomology should be reflected in the notation, and that $\bb Z(\star)^\sub{cdh}$ should rather be viewed (at least a priori) as an auxiliary invariant.

Nevertheless, here is the true definition of $\bb A^1$-motivic cohomology via motivic homotopy theory. For any qcqs scheme $X$ let $\SH(X)$ denote its $\infty$-category of motivic spectra, as introduced by Morel and Voevodsky \cite{Voevodsky1998}; for a modern approach see \cite{Robalo2015} \cite[\S4]{BachmannHoyois2021}. Examples of such motivic spectra include the unit object (or motivic sphere) $1_X$, the motivic spectrum $\KGL_X$ representing homotopy invariant $K$-theory of smooth $X$-schemes, and a motivic Eilenberg--Maclane spectrum $\H \bb Z_X$ constructed by Spitzweck \cite{Spitzweck2018}. Any motivic spectrum $E\in \SH(X)$ may be equipped with a functorial {\em slice filtration} \[\cdots \to \mathrm \Fil_\sub{slice}^{j+1}E\to \mathrm \Fil_\sub{slice}^{j}E\to\mathrm \Fil_\sub{slice}^{j-1}E\to \cdots\to E\] in $\SH(X)$, whose graded pieces are denoted by $s^j E:=\mathrm{cofib}(\mathrm \Fil_\sub{slice}^{j+1}E\to \mathrm \Fil_\sub{slice}^j E)$. The slice filtration on $\KGL_X$ induces a natural, multiplicative, complete $\bb N$-indexed filtration $\Fil^\star_\bb A\KH(X)$, which we tend to call the {\em $\bb A^1$-motivic filtration}, and we define the {\em $\bb A^1$-motivic cohomology} as the shifts of its graded pieces: \begin{equation}\bb Z(j)^\bb A(X):=(\gr^j_\bb A\KH(X))[-2j]\in\mathrm{D}(\bb Z),\qquad j\in\bb Z\label{eqn:A1_mot_coh}\end{equation} In the case of a smooth algebraic variety $X$ over a field $k$, this recovers the classical motivic cohomology of \S\ref{ss_classical} because the pullback functor $\SH(k)\to\SH(X)$ commutes with the slice filtration \cite[Remark 3.41]{BachmannElmantoMorrow}

In \cite[\S9]{BachmannElmantoMorrow} (see also \cite[Remark~4.14]{BachmannElmantoMorrow}), also using some previous work of Bachmann \cite{Bachmann2022}, we establish natural equivalences in $\SH(X)$ \begin{equation}\H \bb Z_X\simeq s^0(1_X)\simeq s^0(\KGL_X),\label{intro_V_conj}\end{equation} thereby settling Conjectures 1, 7, and 10 of Voevodsky \cite{Voevodsky2002a} for arbitrary qcqs schemes.  These equivalences, combined with Theorem \ref{eqn:A1_mot_coh}, offer substantial justification that $\bb Z(\star)^\bb A$ is indeed the  ``right'' theory of $\bb A^1$-invariant motivic cohomology.
\end{remark}

\begin{remark}[Leitfaden to proof of Theorem \ref{thm:cdh}]
Particularly since we are restricting attention to qcqs schemes of equal characteristic, Theorem~\ref{thm:cdh} does not require the full force of the results of \cite{BachmannElmantoMorrow}. Here we sketch the necessary arguments. Firstly, and in line with the opening paragraph of Theorem \ref{rem:def_of_ZjA}, we explain how to prove parts (1)--(8) of the theorem after replacing $\bb Z(j)^\bb A$ by $\bb Z(j)^\sub{cdh}$ throughout; this accurately reflects the order of logic in \cite{BachmannElmantoMorrow}.

Part (1) is obtained by cdh sheafifying Propsition \ref{prop:mot-filt}; see \cite[Proposition~7.12(1)]{BachmannElmantoMorrow} for the details. Part (2) is proved by checking that left Kan extension yields a finitary presheaf $\bb Z(j)^\sub{lse}$, and that this property is preserved by cdh sheafification; see \cite[Proposition~2.15]{BachmannElmantoMorrow}.

Parts (4) and (5) correspond to \cite[Theorem~7.16]{BachmannElmantoMorrow} (and the subsequent corollaries), which are obtained by cdh sheafifying the left Kan extension of the highly coherent Beilinson--Lichtenbaum equivalence \cite[Theorem~6.14]{BachmannElmantoMorrow}. The interested reader will also easily check that the proof of the latter theorem simplifies over fields.

For the proof of part (3) we refer to \cite[Proposition~7.12(5)\&(6)]{BachmannElmantoMorrow}, which refers to \cite[Corollary~6.26]{BachmannElmantoMorrow}; over fields the references in the latter proof to \cite[Theorem~6.21(3)]{BachmannElmantoMorrow} can be replaced by \cite[Theorem~6.14]{BachmannElmantoMorrow}, i.e., we do not need \cite[\S6.2--6.3]{BachmannElmantoMorrow}.

Parts (6) and (7) are the deepest parts of the theorem; they are special cases of \cite[Corollaries~8.16 \& 8.23]{BachmannElmantoMorrow}.

The Adams operators in part (8) are obtained by restricting Bachmann--Hopkins' Adams operators \eqref{eqn:BH_adams} to smooth $\bb F$-schemes, left Kan extending, then cdh sheafifying. The resulting rational Adams decomposition follows by the same argument as in Theorem \ref{thm:adams} below.

Having proved parts (1)--(8) for the cdh-motivic cohomology, some yoga of pullbacks in motivic homotopy theory produces the equivalence \eqref{eqn_cdh_vs_A1}: more precisely, parts (6) and (8) show that \cite[Theorem 9.13]{BachmannElmantoMorrow} holds, whence \cite[Theorem 9.1]{BachmannElmantoMorrow} holds, and then the argument of \cite[Corollary 9.4]{BachmannElmantoMorrow} establishes the equivalence \eqref{eqn_cdh_vs_A1}.

For part (9), it was already explained in Remark \ref{rem:def_of_ZjA} that $\bb Z(j)^\bb A$ recovers classical motivic cohomology on smooth varieties (almost tautologically: one must just note that the slice filtration behaves well under smooth maps).
\end{remark}

\begin{remark}[Warning in mixed characteristic]
We defined $\bb A^1$-motivic cohomology in Remark \ref{rem:def_of_ZjA} for arbitrary qcqs schemes. Cdh-locally left Kan extending it from smooth $\bb Z$-schemes, one also obtains an extension of the cdh-motivic cohomology $\bb Z(\star)^\sub{cdh}$ from equicharacteristic qcqs schemes to arbitrary qcqs schemes \cite[\S7]{BachmannElmantoMorrow}. The equivalence $\bb Z(\star)^\sub{cdh}\simeq \bb Z(\star)^\bb A$ of \eqref{eqn_cdh_vs_A1} for equicharacteristic schemes is unfortunately at present unknown in mixed characteristic: more precisely, for $j\ge2$, it is unknown whether $\bb Z(j)^\bb A$, as a presheaf on all qcqs schemes, is cdh-locally left Kan extended from smooth $\bb Z$-schemes. This is only known to hold conditionally on the so-called key hypothesis in \cite{BachmannElmantoMorrow} about the syntomic cohomology of mixed characteristic valuation rings; see \cite[\S9]{BachmannElmantoMorrow} for the conditional results.

In Bouis' extension of this paper to mixed characteristic \cite{Bouis2025, Bouis2025a, Bouis2025b}, the $\bb A^1$-motivic cohomology and cdh-motivic cohomology must therefore be kept separate in the constructions; it then turns out to be the latter which plays the more prominent role, because its compatibility with the trace map can be established.
\end{remark}

\subsection{(Topological) cyclic homology} The constructions of this paper depend on trace methods in algebraic $K$-theory. Recall that if $E$ is a spectrum with $S^1$-action, then we can functorially associate several other spectra: its \emph{homotopy fixed points} $E^{hS^1}$, its \emph{homotopy orbits} $E_{hS^1}$ and its \emph{Tate fixed points} 
\[
E^{tS^1}:= \mathrm{cofib}(\mathrm{Nm}:E_{hS^1}[1] \rightarrow E^{hS^1}).
\]
The same formalism exists if we replace $S^1$ by any of its finite subgroups, for example if $p$ is a prime then one has
\[
E^{tC_p}:= \mathrm{cofib}(\mathrm{Nm}:E_{hC_p} \rightarrow E^{hC_p}).
\]
According to \cite{NikolausScholze2018}, a \emph{($p$-typical) cyclotomic spectrum} is a spectrum with an $S^1$-action $E$ equipped with $S^1$-equivariant maps
\[
\phi_p: E \to E^{tC_p}
\]
for all primes $p$; here $E^{tC_p}$ is given the residual $S^1/C_p \simeq S^1$-action. In the situation of algebraic geometry, we have the functor of topological Hochschild homology landing in the $\infty$-category of cyclotomic spectra:
\[
\THH: \Sch^\sub{qcqs,op} \To \mathrm{CycSp} \qquad X \mapsto \THH(\Perf(X))=:\THH(X).
\]

Let $E$ be a cyclotomic spectrum which is bounded below (which will always be the case in our situations of interest). Firstly, for each prime number $p$ its \emph{$p$-adic topological cyclic homology} is defined to be the $p$-complete spectrum
\[
\TC(E;\Z_p) := \mathrm{fib}\left( \phi_p^{hS^1} - \mathrm{can}: \left(E^\comp_p\right)^{hS^1} \To \left( E^{tC_p} \right)^{hS^1}  \simeq \left( E^\comp_p \right)^{tS^1} \right).
\]
Here we have used that $E^{tC_p}$ is $p$-complete and that $E^{tC_p} \simeq (E^\comp_p)^{tC_p}$ by \cite[Lemma~I.2.9]{NikolausScholze2018}, and $\mathrm{can}$ is the canonical map from fixed points to the Tate construction. We assemble these $p$-adic constructions to define the \emph{integral topological cyclic homology} $\TC(E)$ of $E$ as the pullback
\begin{equation}\label{eq:integral-tc} 
\begin{tikzcd}
\TC(E) \ar{r} \ar{d} & \left( E_{\Q} \right)^{hS^1}\ar{d}\\
\prod_{p} \TC(E;\Z_p) \ar{r} & \prod_{p} \left( E_p^\comp[\tfrac1p] \right)^{hS^1},
\end{tikzcd}
\end{equation}
where the bottom map is the product over $p$ of the compositions $\TC(E;\bb Z_p)\to \left(E^\comp_p\right)^{hS^1} \to (E^\comp_p[\tfrac{1}{p}]))^{hS^1}$.

By a standard abuse of notation, for a scheme $X$, we write $\TC(X)$ in place of $\TC(\THH(X))$, and similarly for the $p$-adic variant.

\begin{remark} The square~\eqref{eq:integral-tc} imitates the original definition of integral topological cyclic homology defined by Goodwillie \cite[Lemma~6.4.3.2]{Dundas2013}. Indeed, \cite[Theorem~II.4.11]{NikolausScholze2018} proves that the definitions agree for bounded below cyclotomic spectra.
\end{remark}

There is a morphism of localizing invariants (in the sense of \cite{BlumbergGepnerTabuada2013}) called the \emph{cyclotomic trace}, or just {\em trace map} for short
\[
\mathrm{tr}: \K \To \TC.
\]
A major result about this map is the Dundas--Goodwillie--McCarthy theorem \cite{Dundas2013}, stating that its fibre $\K^{\inf}$ is insensitive to nilpotent thickenings. In the language of \cite{LandTamme2019}, $\K^\sub{inf}$ is even \emph{truncating}: for any connective $\mathbb{E}_1$-ring $A$, the map $\K^{\inf}(A) \rightarrow \K^{\inf}(\pi_0A)$ is an equivalence. This property implies not only nil-invariance \cite[Corollary~3.5]{LandTamme2019} but even cdh descent, whence one obtains the following fundamental square:

\begin{theorem}[Kerz--Strunk--Tamme, Land--Tamme]\label{thm:mainsq} Let $X$ be a qcqs scheme. The the square of spectra
\begin{equation}\label{eq:mainsq}
\begin{tikzcd}
\K(X) \ar{r}{\mathrm{tr}} \ar{d} & \TC(X) \ar{d} \\
\KH(X) \ar{r}[swap]{L_\sub{cdh}\mathrm{tr}} & L_{\cdh}\TC(X).
\end{tikzcd}
\end{equation}
is cartesian.
\end{theorem}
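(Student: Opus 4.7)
The plan is to reduce the cartesianness of the square to the single assertion that the fibre $\K^{\inf} := \fib(\tr : \K \to \TC)$ is a cdh sheaf on qcqs schemes. First I would take horizontal fibres of the square. The fibre of the top row is tautologically $\K^{\inf}(X)$ by definition. For the bottom row, I would use that cdh sheafification is exact on the stable $\infty$-category of presheaves of spectra, so applying $L_{\cdh}$ to the fibre sequence $\K^{\inf} \to \K \to \TC$ yields a fibre sequence
\[
L_{\cdh}\K^{\inf} \To L_{\cdh}\K \To L_{\cdh}\TC.
\]
Invoking Kerz--Strunk--Tamme (identifying $\KH$ as the cdh sheafification of $\K$), the middle term identifies with $\KH(X)$, so the fibre of the bottom row of \eqref{eq:mainsq} is $L_{\cdh}\K^{\inf}(X)$. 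Consequently the square is cartesian precisely when the natural map $\K^{\inf} \to L_{\cdh}\K^{\inf}$ is an equivalence, i.e.\ when $\K^{\inf}$ is itself a cdh sheaf.

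Next I would verify this cdh-descent property for $\K^{\inf}$ by appealing to the Land--Tamme theory of truncating invariants. The Dundas--Goodwillie--McCarthy theorem says that the trace $\K \to \TC$ becomes an equivalence after nilpotent thickening, and in the refined form proved in \cite{LandTamme2019} one obtains the stronger statement that $\K^{\inf}$ is a \emph{truncating} localizing invariant: for every connective $\mathbb E_1$-ring $A$ the map $\K^{\inf}(A) \to \K^{\inf}(\pi_0 A)$ is an equivalence. The main theorem of Land--Tamme then asserts that any truncating localizing invariant, evaluated on the presheaf of connective $\mathbb E_1$-ring spectra $X \mapsto \mathrm{Perf}(X)$ of qcqs schemes, automatically satisfies cdh descent. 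Hence $\K^{\inf}$ is a cdh sheaf, completing the argument.

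The order of the argument is therefore: (i) reduce to an assertion about the fibre $\K^{\inf}$ via exactness of cdh sheafification and the Kerz--Strunk--Tamme identification $L_{\cdh}\K \simeq \KH$; (ii) establish truncatedness of $\K^{\inf}$ from DGM; (iii) deduce cdh descent from the Land--Tamme theorem. The main obstacle here is not something I would reprove, but rather the deep Land--Tamme input, whose proof requires their fundamental $\pm$-construction relating truncating invariants on a Milnor-type square to an actual pullback; this, together with the Kerz--Strunk--Tamme identification of $\KH$ with $L_{\cdh}\K$ (which itself relies on pro-cdh arguments), is what genuinely drives the equivalence. Everything else is a formal manipulation of fibre sequences and cdh sheafification in a stable $\infty$-categorical setting.
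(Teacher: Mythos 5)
Your proposal is correct and follows essentially the same route as the paper: the paper's proof likewise combines the Kerz--Strunk--Tamme identification $L_{\cdh}\K\simeq\KH$ with the Land--Tamme theorem that the truncating invariant $\K^{\inf}$ satisfies cdh descent, and the formal fibre-sequence manipulation you spell out is exactly what is implicit there. No gaps.
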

\begin{proof} This follows from the facts that the canonical map $L_{\cdh}\K (X)\to \KH(X)$ is an equivalence \cite[Theorem~6.3]{KerzStrunkTamme2018} (see also \cite[Remark~3.4]{KellyMorrow2021}) and that $\K^\sub{inf}$ satisfies $\cdh$ descent \cite{LandTamme2019}.
\end{proof} 

To be clear, the bottom horizontal arrow in the previous diagram is obtained by cdh sheafifying the trace map $\K\to\TC$. Indeed, as we commented in the proof, we have $\KH\simeq L_\sub{cdh}\K$ and therefore there is an induced {\em cdh-local trace map} $L_\sub{cdh}\mathrm{tr}:\KH\to L_\sub{cdh}\TC$; it will play an important role in the construction of our motivic cohomology.

\section{Definition of $\bb Z(j)^\sub{mot}(X)$}\label{s_motivic_def}
In this section we introduce our theory of motivic cohomology and the motivic filtration on algebraic $K$-theory. We also establish a number of other fundamental properties, such as finitariness, to justify that the definition is not unreasonable.

\subsection{Characteristic zero} \label{sec:charzero}
We begin with reminders on cyclic homology. First note that for any $X \in \Sch_{\Q}^\sub{qcqs}$ we have 
\[
\THH(X) \simeq \THH(X) \otimes_{\THH(\Q)} \Q \simeq \HH(X/\Q)
\]
where the second equivalence is formal and the first follows from the fact that $\THH(\Q)\simeq\Q$. Similarly, the integral topological cyclic homology $\TC(X)$, as defined by the pullback square \eqref{eq:integral-tc}, coincides with the {\em negative cyclic homology} of $X$; indeed, the latter is defined by $\HC^-(X/\Q) := \left( \HH(X/\Q) \right)^{hS^1}$ and the square \eqref{eq:integral-tc} collapses to an equivalence \[\TC(X)\quis \HC^-(X/\bb Q)\] (since the bottom terms of the square vanish as $\THH(X)$ has vanishing $p$-completion). The cyclotomic trace becomes the more classical {\em Goodwillie trace} \[\mathrm{tr}:\K(X)\To\HC^-(X/\bb Q),\] and Theorem \ref{thm:mainsq} is rewritten as the cartesian square of spectra
\begin{equation}\label{eq:hc-}
\begin{tikzcd}
\K(X) \ar{d} \ar{r} & \HC^-(X/\Q) \ar{d}\\
\KH(X) \ar{r} & L_{\cdh}\HC^-(X/\Q).
\end{tikzcd}
\end{equation}
We remark that the bottom right term in \eqref{eq:hc-} is poor notation, which we will nevertheless continue to use; it should really be written $(L_\sub{cdh}\HC^-(-/\bb Q))(X)$.

\begin{remark}[Replacing $\bb Q$ by a general case $k$]\label{rem_general_k}
More generally, let $k$ be a discrete commutative ring. For any qcqs $k$-scheme $X$, let $\HC^-(X/k):=\HH(X/k)^{hS^1}$ denote its negative cyclic homology relative to $k$.

Then there is a natural map $\TC(X)\to\HC^-(X/k)$ constructed as follows. Firstly, from square (\ref{eq:integral-tc}) we see that $\TC(X)$ naturally maps to the pull back of
\begin{equation}
\begin{tikzcd}
 \ar[dotted]{r} \ar[dotted]{d} & \left( \THH(X)_{\Q} \right)^{hS^1}\ar{d}\\
\prod_{p} (\THH(X)_p^\comp)^{hS^1} \ar{r} & \prod_{p} \left( \THH(X)_p^\comp[\tfrac1p] \right)^{hS^1}.
\end{tikzcd}
\label{eqn_fixed_points}
\end{equation}
Removing the $hS^1$ from the three corners, the pullback of the square is $\THH(X)$; passing to homotopy fixed points preserves pullback squares (and commutes with products), whence the pullback of the square (\ref{eqn_fixed_points}) is the {\em negative topological cyclic homology} $\TC^-(X):=\THH(X)^{hS^1}$. This constructs a natural map $\TC(X)\to\TC^-(X)$, which may then be composed with $S^1$-fixed points of $\THH(X)\to \HH(X/k)$.

Composing with the cyclotomic trace thereby defines a trace map $\K(X)\to \HC^-(X/k)$ relative to $k$; of course it would have been sufficient to define this in the case $k=\bb Z$ and then compose with the canonical map $\HC^-(X/\bb Z)\to \HC^-(X/k)$.
\end{remark}

To construct our motivic filtration on $K$-theory in characteristic zero we first recall the Hochschild--Kostant--Rosenberg filtration on negative cyclic homology, which relies on the theory of derived de Rham cohomology of lllusie \cite{Illusie1972} and Bhatt \cite{Bhatt2012, Bhatt2012a}. Since the following two results do not require any characteristic zero hypothesis, let $k$ be a commutative ring and recall, for any $k$-algebra $R$, the {\em Hodge-completed derived de Rham cohomology} $\hat{L\Omega}_{R/k}\in \text D(k)$ of $R$ and its complete $\bb N$-indexed {\em Hodge filtration} $\hat{L\Omega}_{R/k}^{\ge\star}$. For $j\ge0$ the cofibre of the map $\hat{L\Omega}_{R/k}^{\ge j}\to \hat{L\Omega}_{R/k}$ is $L\Omega_{R/k}^{<j}$, which admits a finite decreasing filtration with graded pieces (in increasing order) 
\[
R, L_{R/k}[-1],L^2_{R/k}[-2],\dots,L^{j-1}_{R/k}[-j+1]. 
\]
By fpqc descent of $L_{-/k}$ and its wedge powers on $k$-algebras \cite[Theorem~3.1]{BhattMorrowScholze2}, right Kan extension defines a unique fpqc sheaf \[\text{Sch}_k^\sub{qcqs,op}\To\text D(k),\qquad X\mapsto R\Gamma(X,\hat{L\Omega}_{-/k})\] whose value on affines $\Spec(R)$ is $\hat{L\Omega}_{R/k}$; similarly for $\hat{L\Omega}_{-/k}^{\ge j}$, $L\Omega_{-/k}^{<j}$, and each wedge power of $L_{-/k}$. Alternatively, the fpqc sheaf $R\Gamma(-,\hat{L\Omega}_{-/k})$ is equivalent to the Nisnevich sheafification of $X\mapsto \hat{L\Omega}_{\roi_X(X)/k}$, and similarly for the variants.

The following is the HKR filtration on negative cyclic homology:

\begin{theorem}[HKR filtration {\cite{antieau-fil,mrt-hkr,raksit-hkr,ToenVezzosi2011}}]\label{thm:hkr} Let $k$ be a commutative ring. For any qcqs $k$-scheme $X$, there exists a functorial, complete, multiplicative filtration $\mathrm{Fil}_\sub{HKR}^\star\HC^-(X/k)$ on $\HC^-(X/k)$ whose graded pieces are given naturally and multiplicatively
\[
\mathrm{gr}_\sub{HKR}^j\HC^-(X/k) \simeq R\Gamma(X, \widehat{L\Omega}_{-/k}^{\geq j})[2j]
\]
for $j\in\bb Z$. Furthermore, if $X$ is quasisyntomic over $k$\footnote{i.e., for each affine open $\Spec(A)\subseteq X$, the cotangent complex $L_{A/k}\in D(A)$ has Tor amplitude in $[-1,0]$.} then this filtration is exhaustive.
\end{theorem}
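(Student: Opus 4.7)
The plan is to construct the filtration in three stages: on polynomial (or smooth) $k$-algebras via the Postnikov filtration of $\HH(-/k)$, then by left Kan extension to all animated $k$-algebras, and finally by fpqc (or Nisnevich) sheafification to qcqs $k$-schemes. Throughout, one would work in the $\infty$-category of filtered $\mathbb{E}_\infty$-algebras in $\rm D(k)$ so that multiplicativity is built in at each step rather than reconstructed at the end.

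For the first stage, let $R$ be a polynomial $k$-algebra. The classical HKR isomorphism gives $\pi_n\HH(R/k)\cong\Omega^n_{R/k}$, so $\HH(R/k)$ is connective; its Postnikov filtration $\tau_{\ge\star}\HH(R/k)$ is automatically $S^1$-equivariant because $\tau_{\ge\star}$ is a functor on $S^1$-spectra. Applying $(-)^{hS^1}$ then yields a complete, multiplicative filtration
\[
\Fil^\star\HC^-(R/k):=\bigl(\tau_{\ge\star}\HH(R/k)\bigr)^{hS^1}.
\]
Its $j$th graded piece is $(\Omega^j_{R/k}[j])^{hS^1}$, and the homotopy fixed-point spectral sequence — whose differentials are powers of the Connes operator $B$, identified under HKR with the de Rham differential — computes this as $\hat{L\Omega}^{\ge j}_{R/k}[2j]$ (which in the smooth case is just $\Omega^{\ge j}_{R/k}[2j]$).

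Next, one left Kan extends from polynomial $k$-algebras to all animated $k$-algebras. Since $\HH(-/k)$, $\HC^-(-/k)$, and $\hat{L\Omega}^{\ge j}_{-/k}$ are themselves all left Kan extended from polynomials, both the filtration and its graded pieces transport correctly, and from there one passes to $\Sch_k^\sub{qcqs}$ by fpqc sheafification, using that the relevant presheaves are fpqc sheaves in this generality. Completeness is preserved throughout since it holds tautologically for the Postnikov filtration on a connective object. For exhaustiveness in the quasisyntomic case, the hypothesis forces $L_{A/k}$ to have Tor amplitude $[-1,0]$, which keeps $\HH(A/k)$ connective and ensures $\colim_j\tau_{\ge j}\HH(A/k)\simeq\HH(A/k)$; this property descends to schemes.

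The main obstacle is the careful handling of $\mathbb{E}_\infty$-multiplicativity together with $S^1$-equivariance: one must promote the Postnikov filtration on $\HH$ to an $S^1$-equivariant filtration of $\mathbb{E}_\infty$-algebras, show that $(-)^{hS^1}$ is appropriately symmetric monoidal so that the output is filtered multiplicatively, and then preserve all of this under left Kan extension and sheafification. This is delicate with naive tools but has been carried out rigorously in three recent frameworks — Antieau's via the even filtration, Raksit's via filtered derived rings, and Moulinos--Robalo--Toën's via the filtered circle — so in practice one invokes whichever is most convenient rather than rebuilding the formalism from scratch.
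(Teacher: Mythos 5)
Your overall route (polynomial algebras, then left Kan extension, then fpqc sheafification, with the coherence issues handled by the frameworks of Antieau, Raksit, or Moulinos--Robalo--To\"en) is indeed the shape of the construction in the cited references; note that the paper itself offers no proof of this theorem and simply quotes it. However, there is a genuine error at the central step. The filtration $\Fil^\star\HC^-(R/k):=(\tau_{\ge\star}\HH(R/k))^{hS^1}$ obtained by applying $(-)^{hS^1}$ levelwise to the Postnikov/HKR filtration of $\HH(R/k)$ does \emph{not} have graded pieces $\hat{L\Omega}^{\ge j}_{R/k}[2j]$. Its $j$th graded piece is $(\Omega^j_{R/k}[j])^{hS^1}\simeq\prod_{i\ge0}\Omega^j_{R/k}[j-2i]$: the homotopy fixed point spectral sequence of a single Postnikov layer degenerates, since the Connes operator $B$ raises Hodge degree and therefore acts \emph{between} different layers of the filtration, not within one. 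By contrast $\hat{L\Omega}^{\ge j}_{R/k}[2j]$ has $\Omega^{j+i}_{R/k}$ in homological degree $j-i$, connected by the de Rham differential. Already for $R=k$ the discrepancy is unambiguous: $\HH(k/k)=k$ is discrete, so your filtration on $\HC^-(k/k)\simeq k[[u]]$ is concentrated in weight $0$ with $\gr^0=k[[u]]$ and all other graded pieces zero, whereas the filtration of the theorem has $\gr^j=k[2j]$ for every $j\le0$.

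The missing ingredient is a d\'ecalage interleaving the Hodge degree with the cohomological degree coming from $BS^1$. Concretely, one regards the HKR-filtered $\HH(R/k)$ as a filtered spectrum with $S^1$-action, applies $(-)^{hS^1}$ levelwise, and then passes to the Whitehead tower for the Beilinson $t$-structure on filtered complexes (equivalently, one combines your filtration with the $u$-adic filtration on homotopy fixed points and extracts the resulting ``double-speed'' filtration). Only after this step does the de Rham differential appear in the graded pieces and assemble them into $\hat{L\Omega}^{\ge j}_{R/k}[2j]$; this d\'ecalage is the actual mathematical content of Antieau's construction (and its avatar in Raksit's filtered-circle approach). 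So the gap is not merely the multiplicative and equivariant bookkeeping you defer to the references: the naive levelwise construction yields a different filtration with different graded pieces. Relatedly, your closing claims that completeness and exhaustiveness ``descend'' through sheafification deserve more care, since sheafification and global sections need not commute with the colimits and limits involved; this is precisely why the quasisyntomic hypothesis enters the exhaustiveness statement.
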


\begin{remark}\label{rema:hkr} If $k$ is a $\bb Q$-algebra and $X$ is smooth over $k$, then this result is essentially due to Loday \cite{Loday1989}. Dropping the hypothesis that $X$ be smooth, but remaining in characteristic zero, the product decomposition of the previous theorem is due to Weibel, under the name of the ``Hodge decomposition'' and written in Adams operator type notation as ``$\text{HN}(X/k) \simeq \prod_j\text{HN}^{(j)}(X/k)$'' in \cite{Weibel1997, Cortinas2008a} (but see Remark \ref{rem:discuss-adams} for a warning about indexing conventions).
\end{remark}

\begin{remark}[Variant: cdh-local HKR filtration]\label{rem_cdh_local_HKR}
Cdh sheafifying the HKR filtration levelwise we see that, for any qcqs $k$-scheme $X$, there exists a functorial, multiplicative filtration 
\[
\mathrm{Fil}_\sub{HKR}^{\star}L_\sub{cdh}\HC^-(X/k):= L_{\cdh}\mathrm{Fil}_\sub{HKR}^{\star}\HC^-(-/k)(X),
\] on $L_\sub{cdh}\HC^-(X/k)$ whose graded pieces are given naturally and multiplicatively by \[\mathrm{gr}_\sub{HKR}^jL_\sub{cdh}\HC^-(X/k) \simeq R\Gamma_\sub{cdh}(X, \widehat{L\Omega}_{-/k}^{\geq j})[2j]\] for $j\in\bb Z$. Here we denote by \[\text{Sch}_k^\sub{qcqs,op}\To\text D(k), \qquad X\mapsto R\Gamma_\sub{cdh}(X,\hat{L\Omega}_{-/k})\] the cdh sheafification of the presheaf $R\Gamma(-,\hat{L\Omega}_{-/k})$, or equivalently the cdh sheafification of the presheaf $X\mapsto \hat{L\Omega}_{\roi_X(X)/k}$.

Similar notation will be used for $L\Omega_{-/k}^{<j}$ and each wedge power of $L_{-/k}$.
\end{remark}

The following is probably known to experts but we could not find a standalone reference in the required degree of generality:

\begin{lemma}[Cdh descent of derived de Rham cohomology in characteristic zero]\label{lemma_cdh_descent_HP}
For any $\bb Q$-algebra $k$, the two presheaves
\begin{align*}
\Sch^\sub{qcqs,op}_k&\To{\rm D}(k)\\
X&\mapsto R\Gamma(X,\hat{L\Omega}_{-/k})\\
X&\mapsto\HC^-(X/k)/\Fil^0_\sub{HKR}\HC^-(X/k)
\end{align*}
satisfy cdh descent.
\end{lemma}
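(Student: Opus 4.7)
The plan is to reduce the second statement to the first via the HKR filtration, then to reduce the first statement to cdh descent for Hochschild homology, which is classical in characteristic zero.

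First, I would deduce the second statement from the first. The HKR filtration of Theorem~\ref{thm:hkr} induces on the quotient $\HC^-(X/k)/\Fil^0_\sub{HKR}\HC^-(X/k)$ a filtration whose graded pieces for $j<0$ are $R\Gamma(X, \widehat{L\Omega}^{\geq j}_{-/k})[2j]$. Since the Hodge filtration is non-negatively indexed, $\widehat{L\Omega}^{\geq j}_{-/k}\simeq \widehat{L\Omega}_{-/k}$ for $j\leq 0$, so each graded piece is a shift of $R\Gamma(X, \widehat{L\Omega}_{-/k})$. The quotient is then a filtered colimit of finite iterated extensions of these pieces; since cdh sheaves on qcqs $k$-schemes are closed under limits and, because the cdh topology on qcqs schemes is finitary, under filtered colimits, the first statement implies the second.

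For the first statement, I would use the complete Hodge filtration on $\widehat{L\Omega}_{-/k}$, whose graded pieces are $L^j_{-/k}[-j]$ for $j\geq 0$. Closure of cdh sheaves under limits reduces the task to showing that $R\Gamma(-, L^j_{-/k})$ is a cdh sheaf for each $j\geq 0$. To this end, I would invoke the classical splitting of the HKR filtration on $\HH(-/k)$ in characteristic zero via $\lambda$-operations: it produces a natural decomposition $\HH(X/k) \simeq \bigoplus_{j\geq 0} R\Gamma(X, L^j_{-/k})[j]$ for any qcqs $k$-scheme $X$, so that cdh descent for $\HH(-/k)$ implies cdh descent for each summand.

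The final input is cdh descent for $\HH(-/k)$ in characteristic zero, the classical theorem of Cortiñas--Haesemeyer--Weibel and Haesemeyer, proved for Noetherian schemes using resolution of singularities (available in characteristic zero). Its extension to qcqs $k$-schemes follows from the finitariness of both $\HH(-/k)$ and of cdh sheafification on qcqs schemes. The hardest step will be this last one: making precise the reduction from qcqs to the Noetherian setting via finitariness, and from arbitrary $\bb Q$-algebras $k$ to the universal case via base change, while also verifying that the Adams-type splitting of $\HH$ survives the relevant limits and commutes with cdh sheafification.
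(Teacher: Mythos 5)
Your proposal has a fatal gap in the treatment of the first presheaf, and a second (less central) gap in the reduction of the second presheaf to the first.

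The main problem is that neither $\HH(-/k)$ nor the individual presheaves $R\Gamma(-,L^j_{-/k})$ satisfy cdh descent, so the final reduction cannot work. A cdh sheaf must in particular be invariant under nilpotent thickenings (take the abstract blowup square with $Y=X_\sub{red}$ and $X'=Y'=\emptyset$), and $\HH(k[x]/x^2/k)\not\simeq\HH(k/k)$ already in degree $0$; the same failure occurs for each $L^j_{-/k}$. More tellingly within this paper: if each $R\Gamma(-,L^j_{-/\bb Q})$ were a cdh sheaf, then so would be $R\Gamma(-,L\Omega^{<j}_{-/\bb Q})$, and the fundamental fibre sequence of Theorem \ref{thm:graded-pieces}(3) would force $\bb Z(j)^\sub{mot}\simeq\bb Z(j)^\sub{cdh}$ on all qcqs $\bb Q$-schemes, contradicting the failure of $\bb A^1$-invariance of $K$-theory. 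The invariant that genuinely satisfies cdh descent is \emph{periodic} cyclic homology $\HP(-/k)$, because it is a truncating invariant in the sense of Land--Tamme (this is the Corti\~nas--Haesemeyer--(Schlichting--)Weibel input, correctly stated). The paper's proof then uses the characteristic-zero splitting of the HKR filtration on $\HP$ (Bals), $\HP(X/k)\simeq\prod_{n\in\bb Z}R\Gamma(X,\widehat{L\Omega}_{-/k})[2n]$, to exhibit the Hodge-\emph{completed} derived de Rham cohomology as a direct summand of a cdh sheaf. The completion is essential: $R\Gamma(-,\widehat{L\Omega}_{-/k})$ is a cdh sheaf even though none of its Hodge-graded pieces are, so no argument running through the graded pieces can succeed.

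The second gap: your reduction of the statement about $\HC^-/\Fil^0_\sub{HKR}$ to the first statement treats the quotient as a filtered colimit of the finite-stage quotients $\Fil^j/\Fil^0$ for $j\to-\infty$. That identification requires the HKR filtration on $\HC^-(X/k)$ to be exhaustive, which Theorem \ref{thm:hkr} guarantees only for $X$ quasisyntomic over $k$, not for general qcqs $X$. The paper instead observes that $\HC^-/\Fil^0\to\HP/\Fil^0$ is an equivalence and that the latter is, via the splitting, the \emph{product} $\prod_{n\le-1}R\Gamma(-,\widehat{L\Omega}_{-/k})[2n]$ --- a limit of cdh sheaves, hence a cdh sheaf, with no exhaustiveness needed.
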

\begin{proof}
The cited references for Theorem \ref{thm:hkr} also construct an HKR filtration on periodic cyclic homology: for a qcqs $k$-scheme $X$, this is a functorial, complete, multiplicative filtration $\mathrm{Fil}_\sub{HKR}^\star\HP(X/k)$ on $\HP(X/k)$ whose graded pieces for $j\in\bb Z$ are given by
\[
\mathrm{gr}_\sub{HKR}^j\HP(X/k) \simeq R\Gamma(X, \widehat{L\Omega}_{-/k})[2j].
\]
The references show that the canonical map $\HC^-(X/k)\to \HP(X/k)$ respects the HKR filtrations, i.e., naturally upgrades to a filtered map, given on graded pieces by the canonical maps $R\Gamma(X,\hat{L\Omega}^{\ge j}_{-/k})\to R\Gamma(X,\hat{L\Omega}_{-/k})$.

Since $k$ is a $\bb Q$-algebra, the HKR filtration on $\HP(X/k)$ is naturally split \cite{Bals2023}, i.e., there is a natural equivalence $\HP(X/k)\simeq\prod_{n\in\bb Z}R\Gamma(X, \widehat{L\Omega}_{-/k})[2n]$ such that the HKR filtration $\Fil^j_\sub{HKR}$ on the left matches the product filtration $\prod_{n\le -j}$ on the right.

The presheaf $R\Gamma(-, \widehat{L\Omega}_{-/k}): \Sch^\sub{qcqs,op}_k\to\text D(k)$ is thus a direct summand of the presheaf $\HP(-/k)$; but the latter is a cdh sheaf thanks to the theory of truncating invariants \cite[Corol.~3.13]{Cortinas2008} \cite[Cor.~A.6]{LandTamme2019}, so therefore the former is also a cdh sheaf.

The cited references for Theorem~\ref{thm:hkr} also implicity prove that the canonical map $\HC^{-}(-/k) \rightarrow \HP(-/k)$ induces an equivalence $\HC^-(-/k)/\Fil^0_\sub{HKR}\HC^-(-/k) \xrightarrow{\simeq} \HP(-/k)/\Fil^0_\sub{HKR}\HP(-/k)$. By the aforementioned splitting the latter is equivalent to $\prod_{n\leq -1}R\Gamma(-, \widehat{L\Omega}_{-/k})[2n]$ and is therefore a cdh sheaf since we have shown that $R\Gamma(-, \widehat{L\Omega}_{-/k})$ is a cdh sheaf.
\end{proof}

We next prove the following compatibility, informally stating that for any smooth $k$-scheme $X$ the trace map $\K(X)\to\HC^-(X/k)$ naturally carries the classical motivic filtration on the left to the HKR filtration on the right. In fact, it is rather the cdh-local analogue below (Corollary \ref{corol_cdh_filtered_trace}) which is crucial to our construction, but the smooth case is required for the proof of the cdh case and also to formulate the comparison map to classical motivic cohomology (Construction \ref{cons:cla-vs-new}):

\begin{proposition}\label{prop:compat} Let $k_0\to k$ be a quasismooth\footnote{i.e., the cotangent complex $L_{k/k_0}$ is supported in degree $0$ and $\Omega^1_{k/k_0}$ is a flat $k$-module. In this paper we only require the trivial situation that $\bb Q=k_0=k$, but we record the more general statement for future use.} map of rings, where $k$ is a field. Then the trace map $\mathrm{tr}:\K\to\HC^-(-/k_0)$, viewed as a map between spectra-valued presheaves on $\Sm_k$, admits a unique, multiplicative extension to a map of filtered presheaves of spectra $\Fil_\bb A^{\star}\K\to \Fil_\sub{HKR}^{\star}\HC^-(-/k_0)$ on $\Sm_k$.
\end{proposition}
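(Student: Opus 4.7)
The plan is an obstruction argument combining completeness of the HKR filtration with the $j$-connectivity of the classical motivic filtration on essentially smooth local rings. Since $\Fil^\star_\sub{HKR}\HC^-(-/k_0)$ is complete, constructing the desired filtered multiplicative lift amounts to constructing compatible lifts $\Fil^j_\sub{cla}\K\to \Fil^j_\sub{HKR}\HC^-(-/k_0)$ for each $j\ge 0$; such a lift exists uniquely (with contractible space of choices) if and only if the composite
\[
\Fil^j_\sub{cla}\K \To \K \xrightarrow{\tr} \HC^-(-/k_0) \To \HC^-(-/k_0)/\Fil^j_\sub{HKR}\HC^-(-/k_0)
\]
is canonically null with contractible space of nullhomotopies.

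Both presheaves are Nisnevich sheaves of spectra on $\Sm_k$, so we check the required vanishing in the Nisnevich $t$-structure by passing to Nisnevich stalks, namely to essentially smooth local $k$-algebras $A$. On the source side, the Gersten property of classical motivic cohomology gives $\bb Z(i)^\sub{cla}(A)$ in cohomological degrees $\le i$, so each $\gr^i_\sub{cla}\K(A)\simeq \bb Z(i)^\sub{cla}(A)[2i]$ for $i\ge j$ is in homological degrees $\ge i \ge j$. Completeness together with a Mittag--Leffler argument (the fibres in the tower $\Fil^j/\Fil^{j+n+1}\to \Fil^j/\Fil^{j+n}$ are $(j+1)$-connective, hence the $\pi_j$-tower is eventually constant) then yields that $\Fil^j_\sub{cla}\K(A)$ itself is in homological degrees $\ge j$; this is the analogue for classical motivic cohomology of the bound in Proposition~\ref{prop:mot-filt}. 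On the HKR side, quasismoothness of $k_0\to k$ together with smoothness of $k\to A$ give $\hat{L\Omega}^{\ge i}_{A/k_0}\simeq \Omega^{\ge i}_{A/k_0}$ in cohomological degrees $\ge\min(i,0)$; inspecting shifts shows that each $\gr^i_\sub{HKR}\HC^-(A/k_0)$ has top homological degree $\le i$, whence the extension $\HC^-(A/k_0)/\Fil^j_\sub{HKR}\HC^-(A/k_0)$ of graded pieces $\gr^i$ for $i<j$ is in homological degrees $\le j-1$. The $t$-structure orthogonality then provides the required unique canonical nullhomotopy, and Nisnevich descent yields the global filtered lift.

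For the multiplicative upgrade, the same connectivity and orthogonality analysis applied to Day-convolution tensor powers $(\Fil^\star_\sub{cla}\K)^{\otimes n}$ shows that the space of filtered $\bb E_\infty$-lifts of the $\bb E_\infty$-trace is contractible, so the filtered lift inherits a unique $\bb E_\infty$-refinement compatible with the $\bb E_\infty$-structures on both filtrations and on the trace. The main obstacle is the $j$-connectivity of $\Fil^j_\sub{cla}\K(A)$ on essentially smooth local rings; this packages the Gersten-type structure of classical motivic cohomology and is the essential input. Once it is granted, the remainder of the argument is formal.
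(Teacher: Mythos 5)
Your proposal is correct and follows essentially the same route as the paper: a $t$-structure orthogonality argument on sheaves over $\Sm_k$, with $j$-connectivity of $\Fil^j_\sub{cla}\K$ deduced from the Gersten bounds on $\bb Z(i)^\sub{cla}$ together with completeness, and $(j-1)$-coconnectivity of $\HC^-(-/k_0)/\Fil^j_\sub{HKR}$ deduced from quasismoothness and exhaustiveness of the HKR filtration. The only place the paper is more precise is the multiplicative upgrade, which it implements not by a bare obstruction argument on Day-convolution tensor powers but via the lax symmetric monoidal Postnikov truncation $\tau^P_{\ge0}$ on filtered Zariski sheaves (Raksit), producing an explicit multiplicative zig-zag $\Fil^\star_\sub{cla}\K\xleftarrow{\simeq}\tau^P_{\ge0}\Fil^\star_\sub{cla}\K\to\tau^P_{\ge0}\Fil^\star_\sub{HKR}\HC^-\to\Fil^\star_\sub{HKR}\HC^-$; your connectivity estimates are exactly the inputs that argument needs.
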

\begin{proof}
There is a $t$-structure on $\Shv_{\Zar}(\Sm_k,\Spt)$, which denotes the stable $\infty$-category of Zariski sheaves of spectra on smooth $k$-schemes. This $t$-structure is described as follows:
\begin{itemize}
\item its non-negative part $\Shv_{\Zar}(\Sm_k,\Spt)_{\geq 0}$ is given by those sheaves of spectra $\scr F$ such that the homotopy sheaves $\underline{\pi}_n\scr F$ vanish for all $n < 0$;
\item its non-positive part $\Shv_{\Zar}(\Sm_k,\Spt)_{\le 0}$ is given by those sheaves of spectra $\scr F$ such that $\pi_n(\cal F(X))$ vanishes for all $X\in\Sm_k$ and all $n > 0$.
\end{itemize}
This is a specialization of a much more general result on $\infty$-topoi as in \cite[Proposition~1.3.2.7]{LurieSAG}.

Now let $j\in\bb Z$ and observe the following facts about the connectivity of the filtrations on $\K$ and $\HC^-$ with respect to the above $t$-structure:

\begin{enumerate}
\item $\Fil^{\geq j}_\bb A\K$ is $j$-connective. This follows from standard vanishing bounds in motivic cohomology, though a little care is required since a priori taking homotopy sheaves might not commute with viewing $K$-theory as a complete filtered object. Let $X$ be a smooth $k$-scheme and set $d:=\dim X$.

Recall first that, for any $i\ge0$, the cohomology sheaves $\cal H^n(\bb Z(i)^\bb A)=\ul\pi_{-n}\bb Z(i)^\bb A$ on $X_\sub{Zar}$ vanish for $n>i$ (by Gersten injectivity to reduce to the case of a field); therefore, the motivic complex $\bb Z(j)^\bb A(X)$ of $X$ itself vanishes in cohomological degrees $> 2j$ (by the Gersten resolution), and also in cohomological degrees $>j+d$ (for dimension reasons). Using the dimension bound we see, for any affine open $\Spec(R)\subseteq X$ in place of $X$ and $i\ge j+d$, that $\gr^i_\bb A\K(R)=\bb Z(i)^\bb A(R)[2i]$ is supported in cohomological degrees $\le -j$; by completeness of the motivic filtration we deduce the same for $\Fil^{\geq j+d}_\bb AK(R)$, for all open affines $\Spec(R)\subseteq X$. In particular, $\Fil^{\geq j+d}_\bb A\K$ is $j$-connective on $X_\sub{Zar}$.

But now the problem reduces, by a finite induction, to checking that $\gr_\bb A^i\K$ is $i$-connective for each $i\ge0$ (in fact, just for $i=j,\dots,j+d-1$), or in other words that the Zariski cohomology sheaves $\cal H^n(\bb Z(i)^\bb A)$ vanish for $n>i$. But this was already explained in the previous paragraph and so completes the proof.

\item On the other hand, $\Fil^{<j}_\sub{HKR}\HC^-(-/k_0):=\HC^-(-/k_0)/\Fil^{j}_\sub{HKR}\HC^-(-/k_0)$ is $j-1$-truncated for the $t$-structure. Indeed, for any smooth $k$-algebra $R$ and $i\in\bb Z$, the $i^\sub{th}$ graded piece of the HKR filtration on $\HC^-(R/k_0)$ is given by
\[
\mathrm{gr}^i_\sub{HKR}\HC^-(R/k_0) = \hat{L\Omega}_{R/k_0}^{\geq i}[2i]\simeq \Omega_{R/k_0}^{\geq i}[2i]
\]
since the composition $k_0\to k\to R$ is quasismooth; the graded piece therefore vanishes in cohomological degrees $< -i$. By induction it follows, for any $i<j$, that the cofibre of $\Fil_\sub{HKR}^{j}\HC^-(R/k_0)\to \Fil_\sub{HKR}^{i}\HC^-(R/k_0)$ vanishes in cohomological degrees $\le -j$. Finally let $i\to\infty$, recalling from Theorem \ref{thm:hkr}(1) that the filtration is exhaustive in this case, to deduce that $\Fil_\sub{HKR}^{< j}\HC^-(R/k_0)$ vanishes in cohomological degrees $\le - j$ (i.e., homological degrees $>j-1$).
\end{enumerate}
Therefore, by general results on $t$-structures, the mapping space \[\Map_{\Shv_{\Zar}(\Sm_k,\Spt)}(\Fil^{\geq j}_\bb A\K, \Fil^{<j}_\sub{HKR}\HC^-(-/k_0))\] is contractible for each $j$. By induction, the trace map $\K \rightarrow \HC^-(-/k_0)$ therefore uniquely refines to compatible maps $\Fil^{j}_\bb A\K\to\Fil^{j}_\sub{HKR}\HC^-(-/k_0)$ for all $j\ge0$, as desired.

To ensure multiplicativity, one uses the Postnikov $t$-structure on Zariski sheaves of filtered spectra as introduced in \cite[Construction~3.3.6-7]{raksit-hkr}. This is a $t$-structure which wraps together the $t$-structure on Zariski sheaves and the $t$-structure on the filtered derived category. The (co-)connective part consists of filtered Zariski sheaves $F^{\star}$ such that $F^j \in \Shv_{\Zar}(\Sm_k;\Spt)_{\geq j}$ (resp. $F^j\in \Shv_{\Zar}(\Sm_k;\Spt)_{\leq j}$) for all $j\in\bb Z$. Furthermore, the truncation functor $\tau^P_{\geq 0}$  admits a lax symmetric monoidal structure such that the counit map $\tau^P_{\geq 0} \rightarrow \id$ is a morphism of lax symmetric monoidal functors. In particular, if $F^\star$ is a filtered, multiplicative sheaf then the map $\tau^P_{\geq 0}F^{\star} \rightarrow F^{\star}$ is multiplicative.

Our proof shows that, for the Postnikov $t$-structure, firstly $\Fil^\star_\bb A\K$ is connective, and secondly the cofibre $\text{cofib}(\Fil^\star_\sub{HKR}\HC^-(-/k_0)\to\HC^-(-/k_0))$ is $-1$-truncated where the target is given the constant filtration; therefore the map $\Fil^\star_\sub{HKR}\HC^-(-/k_0)\to\HC^-(-/k_0)$ is a $\tau^P_{\geq 0}$-equivalence. So we obtain a multiplicative map of filtered objects
\[
\Fil^\star_\bb A\K \xleftarrow{\simeq} \tau^P_{\geq 0}\Fil^\star_\bb A\K \rightarrow \tau^P_{\geq 0}\HC^-(-/k_0) \simeq  \tau^P_{\geq 0}\Fil^\star_\sub{HKR}\HC^-(-/k_0) \rightarrow \Fil^\star_\sub{HKR}\HC^-(-/k_0),
\]
as desired. 
\end{proof}

\begin{remark}\label{rem:promote-compat} In fact the $t$-structure argument in the above proposition proves slightly more: with the same hypotheses as in Proposition~\ref{prop:compat}, {\em any} multiplicative morphism $\K \rightarrow \HC^-(-/k_0)$ on $\Sm_k$ promotes uniquely to a multiplicative filtered map intertwining the classical motivic and HKR filtrations. This will be used in Step 2 of the proof of Theorem \ref{thm:adams} to prove that the Adams operations on $K$-theory and negative cyclic homology are compatible in a filtered sense.
\end{remark}

It now follows that the cdh-local trace map $L_\sub{cdh}\mathrm{tr}:\KH(X)\to L_\sub{cdh}\HC^-(X/\bb Q)$, for qcqs $\bb Q$-schemes $X$, also naturally carries the $\bb A^1$-motivic filtration on the left to the cdh-local HKR filtration on the right:

\begin{corollary}\label{corol_cdh_filtered_trace}
The cdh-local trace map $L_\sub{cdh}\mathrm{tr}:\KH\to L_\sub{cdh}\HC^-(-/\bb Q)$, viewed as a map between spectra-valued presheaves on $\Sch_\bb Q^\sub{qcqs}$, admits a unique multiplicative extension to a map of filtered presheaves \[\Fil^{\star}_{\bb A}\KH\To \Fil^{\star}_\sub{HKR}L_\sub{cdh}\HC^-(-/\bb Q)\] (the filtration on the left being the $\bb A^1$-motivic filtration of Theorem \ref{thm:cdh}(1); the filtration on the right is the cdh-local HKR filtration of Remark \ref{rem_cdh_local_HKR}).
\end{corollary}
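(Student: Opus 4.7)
The plan is to derive the corollary from Proposition~\ref{prop:compat} (specialised to $k_0 = k = \bb Q$) by applying the two universal operations that define the filtrations appearing in the statement: left Kan extension from smooth $\bb Q$-algebras, followed by levelwise cdh sheafification. Recall that $\Fil^\star_\sub{mot}\KH$ is by construction the cdh sheafification of $\Fil^\star_\sub{lse}\K^\sub{cn}$, which is itself the left Kan extension of $\Fil^\star_\sub{cla}\K$ from smooth $\bb Q$-algebras (Prop.~\ref{prop:mot-filt} together with Thm.~\ref{thm:cdh}(1)); similarly, $\Fil^\star_\sub{HKR}L_\sub{cdh}\HC^-(-/\bb Q)$ is by definition the levelwise cdh sheafification of $\Fil^\star_\sub{HKR}\HC^-(-/\bb Q)$ (Rem.~\ref{rem_cdh_local_HKR}).

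For existence, I would first left-Kan-extend the multiplicative filtered map of Proposition~\ref{prop:compat} from $\CAlg^\sub{sm}_\bb Q$ to $\CAlg_\bb Q$; multiplicativity is preserved using the sifted-colimit formalism reviewed in \S\ref{ss_filtrations}. Post-composing with the counit of the left Kan extension--restriction adjunction applied to $\Fil^\star_\sub{HKR}\HC^-(-/\bb Q)$ yields a multiplicative filtered map of presheaves $\Fil^\star_\sub{lse}\K^\sub{cn} \to \Fil^\star_\sub{HKR}\HC^-(-/\bb Q)$ on qcqs $\bb Q$-schemes. Levelwise cdh sheafification then delivers the desired multiplicative filtered map $\Fil^\star_\sub{mot}\KH \to \Fil^\star_\sub{HKR}L_\sub{cdh}\HC^-(-/\bb Q)$, whose underlying map of spectra is the cdh-sheafified trace by construction.

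For uniqueness I would chain two adjunctions. Since the target is a cdh sheaf of filtered spectra, the universal property of $L_\sub{cdh}$ identifies multiplicative filtered extensions of the cdh-local trace from $\Fil^\star_\sub{mot}\KH = L_\sub{cdh}\Fil^\star_\sub{lse}\K^\sub{cn}$ with multiplicative filtered extensions out of the presheaf $\Fil^\star_\sub{lse}\K^\sub{cn}$. The universal property of left Kan extension then identifies those with multiplicative filtered extensions on $\CAlg^\sub{sm}_\bb Q$ from $\Fil^\star_\sub{cla}\K$ into $\Fil^\star_\sub{HKR}L_\sub{cdh}\HC^-(-/\bb Q)|_\sub{Sm}$. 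Provided this restriction equals $\Fil^\star_\sub{HKR}\HC^-(-/\bb Q)|_\sub{Sm}$, uniqueness reduces directly to Proposition~\ref{prop:compat}.

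The main obstacle is verifying that equality: that $\Fil^\star_\sub{HKR}\HC^-(-/\bb Q)$ is already a cdh sheaf of filtered spectra on smooth $\bb Q$-algebras. Each graded piece restricted to smooth algebras is $R\Gamma(-,\Omega^{\ge j}_{-/\bb Q})[2j]$, which is a cdh sheaf by Lemma~\ref{lemma_cdh_descent_HP} applied to the Hodge-filtered derived de Rham cohomology (using that it agrees with the honest Hodge-filtered de Rham complex in the smooth case, and standard cdh descent of coherent cohomology in characteristic zero). Completeness of the HKR filtration then propagates cdh descent from the graded pieces to each $\Fil^j$ through the inverse limit. Compatibility of the underlying traces---that on smooth $\bb Q$-algebras the cdh-local trace really coincides with the classical trace, so that Proposition~\ref{prop:compat} applies---follows since $\K = \KH$ and $\HC^-(-/\bb Q) = L_\sub{cdh}\HC^-(-/\bb Q)$ on smooth $\bb Q$-schemes, the latter again from the cdh-sheaf property just established.
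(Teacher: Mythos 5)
Your proof is correct and follows essentially the same route as the paper: restrict to smooth $\bb Q$-schemes where Proposition~\ref{prop:compat} applies, observe that there both $\Fil^\star_\sub{cla}\K\to\Fil^\star_\sub{cdh}\KH$ and $\Fil^\star_\sub{HKR}\HC^-\to\Fil^\star_\sub{HKR}L_\sub{cdh}\HC^-$ are equivalences, and then obtain existence and uniqueness by left Kan extension followed by cdh sheafification. One small correction: Lemma~\ref{lemma_cdh_descent_HP} only treats $\hat{L\Omega}_{-/\bb Q}$ and $\HC^-/\Fil^0_\sub{HKR}$, not the Hodge-truncated pieces $\hat{L\Omega}^{\ge j}_{-/\bb Q}$ for $j\ge1$ (whose failure of cdh descent on general qcqs schemes is the whole point of the construction); the comparison you need on smooth schemes is instead the cdh descent of $R\Gamma(-,\Omega^i_{-/\bb Q})$ restricted to $\Sm_\bb Q$, which — as your parenthetical correctly indicates and as the paper states — is a standard consequence of strong resolution of singularities.
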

\begin{proof}
The map of filtered spectra $\Fil_\sub{HKR}^{\star}\HC^-(X/\bb Q)\to \Fil_\sub{HKR}^{\star}L_\sub{cdh}\HC^-(X/\bb Q)$ is an equivalence for any smooth $\bb Q$-scheme $X$; this is a standard consequence of strong resolution of singularities; we refer to \cite[Lemma.~4.7]{Voevodsky2010} for the key idea, and to \cite[Theorem 2.4 \& Corollary 2.5]{Cortinas2008a} for precise proofs.

Consequently, given any filtered upgrade of the cdh-local trace map, its restriction to smooth $\bb Q$-schemes is necessarily the unique filtered upgrade of the trace map from Proposition \ref{prop:compat}. Conversely, the desired filtered upgrade of the cdh-local trace map is therefore necessarily given by the following composition: \[\Fil_{\bb A}^{\star}\KH\To L_\sub{cdh}L_{\sub{Sch}_{\bb Q}^\sub{qcqs,op}/\sub{Sm}_{\bb Q}^\sub{op}}\Fil^\star_\sub{HKR}\HC^-(-/\bb Q)\To \Fil^\star_\sub{HKR}L_\sub{cdh}\HC^-(-/\bb Q).\] 
Here the first map is obtained by cdh-locally left Kan extending the filtered trace map for smooth $\bb Q$-schemes, using the fact that $\Fil_{\bb A}^{\star}\KH$ is cdh-locally left Kan extended by Theorem~\ref{thm:cdh}(10). The second map is the cdh sheafification of the counit map $L_{\sub{Sch}_{\bb Q}^\sub{qcqs,op}/\sub{Sm}_{\bb Q}^\sub{op}}\Fil^\star_\sub{HKR}\HC^-(-/\bb Q)\to \Fil^\star_\sub{HKR}\HC^-(-/\bb Q)$. Multiplicativity follows from the fact that both left Kan extension and sheafification are lax monoidal.
\end{proof}

We may now construct our motivic cohomology and motivic filtration on qcqs $\bb Q$-schemes:

\begin{definition}[Motivic cohomology in characteristic zero]\label{eq:char0}
For a qcqs $\bb Q$-scheme $X$, let $\Fil^\star_\sub{mot}\K(X)$ be the pullback in $\mathbb{E}_{\infty}$-algebras in filtered spectra of the diagram
\begin{equation}\label{eq:motfilt}
\begin{tikzcd}
\mathrm{Fil}^{\star}_{\mot}\K(X) \ar[dotted]{d} \ar[dotted]{r}    & \mathrm{Fil}^{\star}_\sub{HKR}\HC^-(X/\bb Q)\ar{d} \\
\mathrm{Fil}^\star_{\bb A}\KH(X) \ar{r} & \Fil^{\star}_\sub{HKR}L_\sub{cdh}\HC^-(X/\bb Q).
\end{tikzcd}
\end{equation}
Here the bottom horizontal arrow is the unique multiplicative filtered upgrade of the cdh-local trace map provided by Corollary \ref{corol_cdh_filtered_trace}, and the right vertical arrow is the canonical map.

For $j\in\bb Z$, define the {\em weight-$j$ motivic cohomology} of $X$ to be \[\bb Z(j)^\sub{mot}(X):=(\gr^j_\sub{mot}\K(X))[-2j],\] which we will see in Theorem \ref{thm:graded-pieces} lies in $\rm D(\bb Z)$ and vanishes for $j<0$; in particular $\bb Z(\star)^\sub{mot}(X)$ is naturally an $\bb E_\infty$-algebra in $\rm D(\bb Z)$. The associated motivic cohomology groups, for $i\in\bb Z$, are denoted by $H^i_\sub{mot}(X,\bb Z(j)):=H^i(\bb Z(j)^\sub{mot}(X))$.
\end{definition}

In the following theorem we collect some of the immediate, but fundamental, properties of this motivic cohomology theory for qcqs $\bb Q$-schemes:

\begin{theorem}\label{thm:graded-pieces}
Let $j\in\bb Z$. For any qcqs $\bb Q$-scheme $X$, the weight-$j$ motivic cohomology has the following properties:
\begin{enumerate}
\item $\bb Z(j)^\sub{mot}(X)=0$ for $j<0$.
\item There is a natural pullback square
\[
\begin{tikzcd}
\bb Z(j)^\sub{mot}(X) \ar{r} \ar{d} & R\Gamma(X,\widehat{L\Omega}_{-/\Q}^{\geq j})  \ar{d}\\
\bb Z(j)^{\bb A}(X) \ar{r} & R\Gamma_\sub{cdh}(X,\widehat{L\Omega}_{-/\Q}^{\geq j}).
\end{tikzcd}
\]
(Varying $j$, this is a pullback of $\bb E_\infty$-algebras in graded complexes; see Example \ref{example_00}.)
\item Fundamental fibre sequence: there is a natural fibre sequence \[\bb Z(j)^\sub{mot}(X)\To \bb Z(j)^{\bb A}(X)\To {\rm cofib}\left(R\Gamma(X,L\Omega_{-/\Q}^{< j})\to  R\Gamma_\sub{cdh}(X,\Omega_{-/\Q}^{< j})\right)[-1]\] 
\item For any integer $m\ge1$, the map $\bb Z(j)^\sub{mot}(X)/m\to\bb Z(j)^{\bb A}(X)/m$ is an equivalence.
\item The presheaf $\bb Z(j)^\sub{mot}:\rm{Sch}_\bb Q^\sub{qcqs,op}\to\rm D(\bb Z)$ is a finitary Nisnevich sheaf.
\end{enumerate}
\end{theorem}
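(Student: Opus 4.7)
My plan is to derive all five items from the defining pullback square \eqref{eq:motfilt} by passing to graded pieces. The associated-graded functor on filtered $\bb E_\infty$-algebras is exact in the underlying stable $\infty$-category, so it carries \eqref{eq:motfilt} to a pullback square in $\rm D(\Z)$ for each $j$. Substituting the known formulas for $\gr^j_\sub{cdh}\KH(X)$, $\gr^j_\sub{HKR}\HC^-(X/\Q)$, and $\gr^j_\sub{HKR}L_\sub{cdh}\HC^-(X/\Q)$ from Theorems~\ref{thm:cdh}(1) and~\ref{thm:hkr} together with Remark~\ref{rem_cdh_local_HKR}, and shifting by $[-2j]$, immediately yields the square in item~(2).

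Item~(1) is then an easy consequence: when $j<0$ the cdh corner $\bb Z(j)^\sub{cdh}(X)$ vanishes (since $\Fil^\star_\sub{cdh}\KH$ is $\bb N$-indexed), while $\widehat{L\Omega}^{\geq j}_{-/\Q}=\widehat{L\Omega}_{-/\Q}$ because the Hodge filtration is $\bb N$-indexed; by Lemma~\ref{lemma_cdh_descent_HP} the remaining horizontal map is an equivalence, so the pullback vanishes. For item~(3), I would apply $R\Gamma(X,-)$ and $R\Gamma_\sub{cdh}(X,-)$ to the natural fibre sequence $\widehat{L\Omega}^{\geq j}_{-/\Q}\to\widehat{L\Omega}_{-/\Q}\to L\Omega^{<j}_{-/\Q}$, where the right-hand term is a length-$j$ object whose graded pieces are wedge powers of the cotangent complex. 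Since Lemma~\ref{lemma_cdh_descent_HP} forces the map on middle terms to be an equivalence, comparing cofibres column-by-column yields a natural equivalence
\[
\cof\!\left(R\Gamma(X,\widehat{L\Omega}^{\geq j}_{-/\Q})\to R\Gamma_\sub{cdh}(X,\widehat{L\Omega}^{\geq j}_{-/\Q})\right)\simeq \cof\!\left(R\Gamma(X,L\Omega^{<j}_{-/\Q})\to R\Gamma_\sub{cdh}(X,L\Omega^{<j}_{-/\Q})\right)[-1];
\]
rotating the pullback of~(2) into a fibre sequence then gives~(3). Item~(4) is immediate from~(3), since the right-hand cofibre is a complex of $\Q$-vector spaces and so vanishes after reduction modulo any positive integer.

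For item~(5) it suffices, by stability of the relevant properties under pullback, to check that each of the four presheaves appearing in \eqref{eq:motfilt} is, after passage to $\gr^j$, a finitary Nisnevich sheaf. Nisnevich descent is tautological for the cdh corners and is built into the definition of $R\Gamma(-,\widehat{L\Omega}^{\geq j}_{-/\Q})$ via right Kan extension from affines. Finitariness of $\bb Z(j)^\sub{cdh}$ is Theorem~\ref{thm:cdh}(2); for the cdh-sheafified derived de Rham corner, finitariness follows by cdh-sheafifying the finitary presheaf $R\Gamma(-,\widehat{L\Omega}^{\geq j}_{-/\Q})$ and invoking that cdh sheafification preserves finitary presheaves in the qcqs setting.

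The main obstacle I anticipate is finitariness of $R\Gamma(-,\widehat{L\Omega}_{-/\Q}^{\geq j})$ itself, since Hodge completion does not commute with filtered colimits in general. I would handle it by reducing along the fibre sequence $\widehat{L\Omega}^{\geq j}_{-/\Q}\to\widehat{L\Omega}_{-/\Q}\to L\Omega^{<j}_{-/\Q}$ to finitariness of $\widehat{L\Omega}_{-/\Q}$ (the right-hand term being plainly finitary, as a finite iterated extension of wedge powers of the cotangent complex). Finitariness of $\widehat{L\Omega}_{-/\Q}$ itself can be extracted from the characteristic-zero splitting of the HKR filtration on $\HP(-/\Q)$ used in Lemma~\ref{lemma_cdh_descent_HP}, exhibiting $R\Gamma(-,\widehat{L\Omega}_{-/\Q})$ as a direct summand of $\HP(-/\Q)$, which is finitary on qcqs $\Q$-schemes because $\HH(-/\Q)$ is finitary with uniformly bounded-below homotopy groups.
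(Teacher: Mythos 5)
Items (1), (2) and (4) are handled exactly as in the paper, and your derivation of (3) by comparing the fibre sequence $\widehat{L\Omega}^{\geq j}_{-/\Q}\to\widehat{L\Omega}_{-/\Q}\to L\Omega^{<j}_{-/\Q}$ with its cdh-sheafified version, using Lemma~\ref{lemma_cdh_descent_HP} for the middle term, is also the paper's argument. The one omission in (3) is the final identification $R\Gamma_\sub{cdh}(X,L\Omega^{<j}_{-/\Q})\simeq R\Gamma_\sub{cdh}(X,\Omega^{<j}_{-/\Q})$ needed to match the statement as written; the paper obtains this either from resolution of singularities or from Gabber--Ramero's results on the cotangent complex of valuation rings.

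The genuine gap is in item (5). Your strategy of proving that all four corners of the square in (2) are finitary cannot work, because $R\Gamma(-,\widehat{L\Omega}^{\geq j}_{-/\Q})$ is not a finitary presheaf: Hodge completion is an inverse limit over the weight and does not commute with filtered colimits of rings. The proposed rescue via $\HP(-/\Q)$ fails for the same reason: the characteristic-zero splitting exhibits $\HP(X/\Q)$ as the infinite product $\prod_{n\in\Z}R\Gamma(X,\widehat{L\Omega}_{-/\Q})[2n]$ of non-finitary terms, and more fundamentally the Tate construction $(-)^{tS^1}$ (like $(-)^{hS^1}$) is an infinite limit on connective inputs --- each $\pi_k\HP$ receives contributions from $\pi_{k+2i}\HH$ for infinitely many $i$ --- so connectivity of $\HH(-/\Q)$ does not give finitariness of $\HP(-/\Q)$. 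This is exactly parallel to the characteristic-$p$ situation, where Remark~\ref{rem:fibre-finitary} records that $\bb Z_p(j)^\sub{syn}$ and $L_\sub{cdh}\bb Z_p(j)^\sub{syn}$ are not finitary even though their difference is. The correct route, and the one the paper takes, is to deduce finitariness of $\bb Z(j)^\sub{mot}$ from the fibre sequence of item (3): its third term is built from $R\Gamma(-,L\Omega^{<j}_{-/\Q})$ and its cdh sheafification, and $L\Omega^{<j}_{-/\Q}$ is a \emph{finite} extension of wedge powers of the cotangent complex, which do commute with filtered colimits; cdh sheafification preserves finitariness, and $\bb Z(j)^\sub{cdh}$ is finitary by Theorem~\ref{thm:cdh}(2).
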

\begin{proof}
We obtain the pullback square of part (2) by taking graded pieces in the cartesian square \eqref{eq:motfilt}: as a reminder, the graded pieces of $\Fil^\star_\bb A\KH(X)$ are $\bb A^1$-motivic cohomology by Theorem \ref{thm:cdh}(1); those of $\HC^-(X/\bb Q)$ are Hodge-completed derived de Rham cohomology by Theorem \ref{thm:hkr}; and those of its cdh sheafification are cdh sheafified Hodge-completed derived de Rham cohomology by Remark \ref{rem_cdh_local_HKR}. 

In particular, when $j<0$ (in which case $\bb Z(j)^\bb A=0$ by Theorem~\ref{thm:cdh}(1)) we have established the existence of a cartesian square 
\[
\begin{tikzcd}
\bb Z(j)^\sub{mot}(X) \ar{r} \ar{d} & R\Gamma(X,\widehat{L\Omega}_{-/\Q})  \ar{d}\\
0 \ar{r} & R\Gamma_\sub{cdh}(X,\widehat{L\Omega}_{-/\Q}).
\end{tikzcd}
\]
But the right vertical arrow is an equivalence because Hodge-completed derived de Rham cohomology satisfies cdh descent in characteristic zero by Lemma~\ref{lemma_cdh_descent_HP}. Therefore $\bb Z(j)^\sub{mot}(X)=0$ for $j<0$.

To obtain the fundamental fibre sequence, compute the cofibre of the right vertical arrow in part (2) as follows: compare the fibre sequence \[R\Gamma(X,\widehat{L\Omega}_{-/\Q}^{\ge j})\To R\Gamma(X,\widehat{L\Omega}_{-/\Q})\To R\Gamma(X,L\Omega_{-/\Q}^{<j})\] to its cdh sheafified version, and use the following two facts: firstly, cdh sheafifying the middle term of the fibre sequence does not change it, by Lemma \ref{lemma_cdh_descent_HP}; secondly, the canonical map $R\Gamma_\sub{cdh}(X,L\Omega_{-/\Q}^{<j})\to R\Gamma_\sub{cdh}(X,\Omega_{-/\Q}^{<j})$ is an equivalence, either by resolution of singularities or by Gabber--Ramero's results on the cotangent complex of valuation rings \cite[Theorem~6.5.12 \& Corollary~6.5.21]{GabberRamero2003}.

Part (4) follows from the pullback square of (2), since the complexes on the right side of the square are rational.

For part (5), recall that wedge powers $L^i_{-/\bb Q}$ of the cotangent complex commute with filtered colimits of rings; therefore, by Zariski descent and a finite induction, $R\Gamma(X,L\Omega_{-/\Q}^{<j})$ is finitary. Cdh sheafifying preserves finitariness \cite[Proposition 2.15(2)]{BachmannElmantoMorrow}, so $R\Gamma_\sub{cdh}(-,L\Omega_{-/\Q}^{<j})$ is also finitary. Finally, $\bb Z(j)^{\bb A}$ is finitary by Theorem \ref{thm:cdh}(2). We now deduce finitariness of $\bb Z(j)^\sub{mot}$ from the fundamental fibre sequence.
\end{proof}

\begin{example}[Weight $0$ and $\bb Z$-algebra structure]\label{example_00}
The right vertical arrow in Theorem \ref{thm:graded-pieces}(2) is an equivalence when $j=0$, by cdh descent of Hodge-completed derived de Rham cohomology; therefore the same is true of the left vertical arrow. That is, there is a natural equivalence \[\bb Z(0)^\sub{mot}(X)\quis \bb Z(0)^{\bb A}(X)\simeq R\Gamma_\sub{cdh}(X,\bb Z)\] for any qcqs $\bb Q$-scheme $X$ (the equality in the previous line being Theorem \ref{thm:cdh}(3)). These are equivalences of $\bb E_\infty$-algebras in spectra, thereby upgrading $\bb Z(0)^\sub{mot}(X)$ into an $\bb E_\infty$-algebra in $\rm D(\bb Z)$. By shearing the indexing of the associated graded $\gr_\sub{mot}^\star\K(X)$ as in \cite[Remark~2.1]{BachmannElmantoMorrow}, we naturally upgrade $\bb Z(\star)^\sub{mot}(X)$ to be an $\bb E_\infty$-algebra in $\Gr\rm D(\bb Z)$. We will see in Example \ref{example_0p} that the same works in finite characteristic.
\end{example}

\begin{remark}[Definition of $\bb Z(j)^\sub{mot}$ without trace map]\label{rem:horizontal_map_char0}
The bottom horizontal arrow in Theorem~\ref{thm:graded-pieces}(2) (which by definition is the $j^\sub{th}$ graded piece of the filtered multiplicative upgrade of the cdh sheafified trace map provided by Corollary \ref{corol_cdh_filtered_trace}) admits the following explicit description as a cdh-local de Rham cycle class map. 

Namely, on smooth $\bb Q$-schemes we consider the composition \[\bb Z(j)^\bb A\To R\Gamma_\sub{Zar}(-,\K_j^M)[-j]\xto{\sub{dlog}}R\Gamma(-,\Omega^{\ge j}_{-/\bb Q}),\] where the first map is induced by the facts that $\bb Z(j)^\bb A|_{\Sm_\bb Q}$ is Zariski locally supported in cohomological degrees $\le j$ (by Gersten vanishing), with $j^\sub{th}$ cohomology locally being $\K_j^M$ by Nesterenko--Suslin \cite{Suslin1989}, and the second map is induced by the dlog map $\K_j^M\to\Omega^j_{-/\bb Q}$ landing in closed forms. Left Kan extending to qcqs $\bb Q$-schemes, cdh sheafifying, and composing along the counit back to $R\Gamma(-,\hat{L\Omega}^{\ge j}_{-/\bb Q})$ defines a map of presheaves on qcqs $\bb Q$-schemes \[\bb Z(j)^\bb A\To R\Gamma(-,\hat{L\Omega}^{\ge j}_{-/\bb Q}), \] which we claim is precisely the bottom horizontal arrow in Theorem \ref{thm:graded-pieces}(2).

To check this one reduces by multiplicativity to the case $j=1$, and then runs through the proofs of Proposition \ref{prop:compat} (with $\bb Q=k_0=k$) and Corollary \ref{corol_cdh_filtered_trace} to deduce the claim from the fact that the composition \[\K_1(\bb Q[t^{\pm1}])\xto{\mathrm{tr}}\HC^-_1(\bb Q[t^{\pm1}]/\bb Q)\stackrel\simeq\leftarrow \pi_1(\Fil^1_\sub{HKR}\HC^-(\bb Q[t^{\pm1}]/\bb Q))\isoto \pi_1(\gr^1_\sub{HKR}\HC^-(\bb Q[t^{\pm1}]/\bb Q))=\Omega^1_{\bb Q[t^{\pm1}]/\bb Q)}\] sends $t$ to $\tfrac{dt}{t}$.\footnote{This calculation of the trace map on units in fact concerns only the Dennis trace, since the canonical map $\pi_1(\gr^1_\sub{HKR}\HC^-(\bb Q[t^{\pm1}]/\bb Q))\to \pi_1(\gr^1_\sub{HKR}\HH(\bb Q[t^{\pm1}]/\bb Q))=\Omega^1_{\bb Q[t^{\pm1}]/\bb Q)}$ is an isomorphism; then it follows from the classical explicit definition of the Dennis trace map, e.g., \cite{Loday}.}
\end{remark}

Next we state some of the fundamental properties of the motivic filtration: namely, $\Fil^\star_\sub{mot}\K(X)$ is indeed a filtration on $\K(X)$, as suggested by the notation, and so there is the desired Atiyah--Hirzebruch spectral sequence:

\begin{theorem}\label{theorem_AH_SS_0}
Let $X$ be a qcqs $\bb Q$-scheme. Then the $\bb E_\infty$-algebra in filtered spectra $\Fil^\star_\sub{mot}\K(X)$ is $\bb N$-indexed and satisfies $\Fil^0_\sub{mot}\K(X)=\K(X)$. Moreover:
\begin{enumerate}
\item If $X$ has finite valuative dimensional then the filtration is bounded and it induces a bounded multiplicative Atiyah--Hirzebruch spectral sequence \[E_2^{ij}=H^{i-j}_\sub{mot}(X,\bb Z(-j))\implies \K_{-i-j}(X)\] which degenerates rationally.
\item The filtration naturally splits rationally, i.e., there is a natural, multiplicative equivalence of filtered spectra \[\Fil^\star_\sub{mot}\K(X)_\bb Q\simeq \bigoplus_{j\ge\star}\bb Q(j)^\sub{mot}(X)[2j].\]
\end{enumerate}
\end{theorem}
\begin{proof}
Theorem \ref{thm:graded-pieces}(1) already shows that the filtered spectrum $\Fil^\star_\sub{mot}\K(X)$ is $\bb N$-graded. By definition $\Fil^0_\sub{mot}\K(X)$ is defined via a pullback square
\[\begin{tikzcd}
\Fil^0_\sub{mot}\K(X) \ar{r} \ar{d} & \Fil^0_\sub{HKR}\HC^-(X/\bb Q)  \ar{d}\\
\Fil^0_{\bb A}\KH(X) \ar{r} & L_\sub{cdh}\Fil^0_\sub{HKR}\HC^-(X/\bb Q),
\end{tikzcd}
\]
which admits a map to the pullback square (\ref{eq:hc-}). Since $\Fil^0_{\bb A}\KH(X)\quis\KH(X)$, the claim reduces to checking that the square
\[\begin{tikzcd}
\Fil^0_\sub{HKR}\HC^-(X/\bb Q) \ar{r} \ar{d} & \HC^-(X/\bb Q)  \ar{d}\\
L_\sub{cdh}\Fil^0_\sub{HKR}\HC^-(X/\bb Q) \ar{r} & L_\sub{cdh}\HC^-(X/\bb Q),
\end{tikzcd}
\]
is a pullback, i.e., that the cofibre $\HC^-(-/\bb Q)/\Fil^0_\sub{HKR}\HC^-(-/\bb Q)$ satisfies cdh descent on $\text{Sch}_\bb Q^\sub{qcqs}$. This was explained in Lemma~\ref{lemma_cdh_descent_HP}.

We now prove part (1) apart from the rational degeneration; so assume that $X$ has finite valuative dimension $\le d$. We know from Theorem \ref{thm:cdh}(1) that $\Fil^j_{\bb A}\KH(X)$ is supported in homological degrees $\ge j-d$. Now, $\widehat{L\Omega}^{\geq j}_{-/\bb Q}$ is supported in cohomological degrees $\le j$; by Zariski or cdh sheafifying, it follows that $\Fil^j_\sub{HKR}\HC^-(X/\bb Q)$ and $\Fil^j_\sub{HKR}L_\sub{cdh}\HC^-(X/\bb Q)$ are both supported in homological degrees $\ge j-d$. From the defining pullback square (\ref{eq:motfilt_charp}), we then see that $\Fil^j_\sub{mot}\K(X)$ is supported in homological degrees $\ge j-d-1$, which is good enough to prove the desired boundedness (but not the optimal bound: see \S\ref{section_Weibel}). The spectral sequence follows from standard machinary of filtered spectra (e.g. \cite[\S 1.2.2]{LurieHA}). Its rational degeneration will follow from part (2).

We now drop the hypothesis that $X$ has finite valuation dimension and turn to (2). The splitting follows from the result on Adams operators in Theorem~\ref{thm:adams} by an argument which is standard but looks technically complicated in our context since we are keeping track of multiplicative structure. Fix $m\ge 2$. We start with the multiplicative maps of graded spectra 
\[
\gr^\star_{\mot}\K(X) \longleftarrow \Fil^{\star}_{\mot}\K(X) \longrightarrow \K(X),
\]
where the right-most term is regarded as a constant graded object and the middle term is the graded spectrum $j\mapsto \Fil^{j}_{\mot}\K(X)$ (i.e., $R(\Fil^{\star}_{\mot}\K(X))$ in terms of the upcoming notation). Rationalising and taking Adams eigenspectra, we obtain a diagram of graded $\bb E_\infty$-algebras
\begin{equation}\label{eq:gr-to-k}
\begin{tikzcd}
  (\gr^\star_{\mot}\K(X)_{\Q})^{\psi^{m} \simeq m^\star} \ar{d} &  \ar[swap]{l}{\simeq} (\Fil^{\star}_{\mot}\K(X)_{\Q})^{\psi^{m} \simeq m^\star} \ar{r} & \K(X)_{\Q}\\
\ar[dotted, bend left]{u} \gr^\star_{\mot}\K(X)_{\Q} \ar[bend right, swap]{d}{m^{\star}} \ar[bend left]{d}{\psi^m}& & \\
\gr^\star_{\mot}\K(X)_{\Q}. 
\end{tikzcd}
\end{equation}
Here the superscript $^{\psi^m\simeq m^\star}$ denotes the equaliser of $\psi^m$ and multiplication-by-$m^\star$ on the indicated object (so the column is an equaliser diagram) and we must explain the equivalence and the dotted arrow. The equivalence is the assertion that the canonical map $(\Fil^j_\sub{mot}\K(X)_\bb Q)^{\psi^m\simeq m^j}\to (\gr^j_\sub{mot}\K(X)_\bb Q)^{\psi^m\simeq m^j}$ is an equivalence of complexes for all $j\ge0$, or in other words that $(\Fil^{j+1}_\sub{mot}\K(X)_\bb Q)^{\psi^m\simeq m^j}$ vanishes. To prove this we may reduce by taking a filtered colimit (using Theorem \ref{thm:graded-pieces}(5)) to the case that $X$ has finite valuative dimension; then the motivic filtration on $\K(X)_\bb Q$ is bounded by part (1), hence complete, and so the vanishing follows from the fact that $\psi^m$ acts as $m^i$ on $\gr^i_\sub{mot}\K(X)_\bb Q$ for all $i\ge j+1$ (Theorem~\ref{thm:adams}). Similarly, the natural homotopy between $\psi^m$ and multiplication-by-$m^\star$ on $\gr^\star_\sub{mot}\K(X)_\bb Q$ induces the dotted arrow: it is a natural map of graded $\bb E_\infty$-algebras which is split by the vertical arrow.

Composing from the middle of the column to the top right defines maps $\gr^j_\sub{mot}\K(X)_\bb Q\to\K(X)_\bb Q$ for all $j\ge0$, which will yield the desired splitting; but additional effort is required to pay attention to multiplicative structure.

For any presentably symmetric monoidal $\infty$-category $\scr C$ we have a monoidal adjunction\footnote{More precisely this adjunction is obtained as follows: we have an inclusion of wide subcategories $\iota:\bb Z^{\delta} \subset (\bb Z, \geq)^{\op}$ which induces the restriction functor $\iota^*: \Fil\scr C \rightarrow \Gr \scr C$. The adjunction of interest is given by
\[
\iota_!: \Gr \scr C \rightleftarrows \Fil \scr C: \iota^*.
\] This description makes the strong monoidality of the left adjoint (and therefore the lax monoidality of the right adjoint) evident.} $L:\Gr\scr C \rightleftarrows \Fil \scr C: R$ where $L$ send a graded object  $M^{\star}$ to the split filtered object \[\bigoplus_{n \geq \star} M^n = (\cdots \to\bigoplus_{n \geq \star+1} M^n \xrightarrow{\rm incl.} \bigoplus_{n \geq \star} M^n \cdots\to),\] and $R$ send a filtered object $F^{\star}M$ to the graded object \[F^{\star}M = (\cdots, F^{-1}M, F^0M, F^1M, \cdots)\] i.e., forget the transition maps. We compose the dotted arrow of \eqref{eq:gr-to-k} with the ``inclusion'' map $(\Fil^{\star}_{\mot}\K(X)_{\Q})^{\psi^{m} \simeq m^\star}  \rightarrow \Fil^{\star}_{\mot}\K(X)_{\Q}$, then apply $L$ everywhere, and finally compose along the counit back to $\Fil^{\star}_{\mot}\K(X)_{\Q}$ as a filtered $\bb E_\infty$-algebra. The conclusion is a natural map of $\bb N$-indexed filtered $\bb E_{\infty}$-algebras
\[
\bigoplus_{n \geq \star} \gr^n_{\mot}\K(X)_{\Q}  \To \bigoplus_{n \geq \star} \Fil^{n}_{\mot}\K(X)_{\Q} \xrightarrow{\rm counit} \Fil^{\star}_{\mot}\K(X)_{\Q}.
\]
which is an equivalence on graded pieces. We wish to show it is an equivalence. But both sides commute with filtered colimits by Theorem \ref{thm:graded-pieces} (note that finitariness of $\gr^j_\sub{mot}\K$ for all $j$ and of $\K$ itself implies that of $\Fil^j_\sub{mot}\K$ for all $j$), so we may assume that $X$ has finite valuative dimension; then the filtrations on both sides are bounded by part (1), whence complete, and so the desired equivalence follows from the equivalence on graded pieces.
\end{proof}

The splitting and degeneration parts of the parts of the previous theorem were consequences of a finer result, namely the existence of Adams operators acting in the expected way on motivic cohomology as asserted in the following theorem. We note that this result depends on Appendix~\S\ref{app:chw} about Adams operators.

\begin{theorem}[Existence of filtered Adams operators on $K$-theory of $\bb Q$-schemes]\label{thm:adams}
Fix $m\in\bb Z\setminus\{0\}$. For any qcqs $\bb Q$-scheme $X$, there exists a natural endomorphism $\psi^m$ on the $\bb E_\infty$-algebra in filtered spectra $\Fil^\star_\sub{mot}\K(X)[\tfrac1m]$ with the property that the induced endomorphism of the $\bb E_\infty$-algebra in graded complexes $\gr^\star_\sub{mot}\K(X)[\tfrac1m]=\bb Z(\star)^\mot(X)[\tfrac1m][2\star]$ is naturally homotopic to its endomorphism\footnote{The reader should see Remark~\ref{rem:mult_by_m}  of Appendix B for a precise definition of this multiplicative endomorphism.} multiplication-by-$m^\star$.

Moreover, the Adams operator $\psi^m$ is compatible with existing Adams operators on $K$-theory and negative cyclic homology, in the following ways:
\begin{enumerate}
\item The diagram
\[
\begin{tikzcd}
\Fil^\star_\sub{mot}\K[\tfrac{1}{m}] \ar[swap]{d}{\psi^m} \ar{r} & \Fil^\star_{\bb A}\KH[\tfrac{1}{m}] \ar{d}{\psi^m}\\
\Fil^\star_\sub{mot}\K[\tfrac{1}{m}] \ar[swap]{r}  & \Fil^\star_{\bb A}\KH[\tfrac{1}{m}],
\end{tikzcd}
\]
in $\PShv(\Sch^\sub{qcqs}_\bb Q,\CAlg(\Fil\Sp))$ commutes, where the $\psi^m$ on the right vertical arrow is Bachmann--Hopkins' Adams operator \eqref{eqn:BH_adams} on $\KH$-theory with its $\bb A^1$-motivic filtration.
\item The diagram
\[
\begin{tikzcd}
\Fil^\star_\sub{mot}\K[\tfrac{1}{m}] \ar[swap]{d}{\psi^m} \ar{r}{\mathrm{tr}} & \Fil^\star_\sub{HKR}\HC^-(-/\bb Q) \ar{d}{\psi^m}\\
\Fil^\star_\sub{mot}\K[\tfrac{1}{m}] \ar[swap]{r}{\mathrm{tr}}  & \Fil^\star_\sub{HKR}\HC^-(-/\bb Q).
\end{tikzcd}
\]
in $\PShv(\Sch^\sub{qcqs}_\bb Q,\CAlg(\Fil\Sp))$ commutes, where the $\psi^m$ on the right vertical arrow is Raksit's Adams operator on filtered $\HC^-(-/\bb Q)$ (see Appendix \ref{ss_Raksit}), and the horizontal arrows are the top horizontal arrow of \eqref{eq:motfilt}.

\item Given any qcqs $\bb Q$-scheme $X$, the induced endomorphism $\psi^m$ on $\K[\tfrac1m]|_{\Sm_X}$ (by passing to $\Fil^0_\sub{mot}$ and restricting to smooth $X$-schemes) agrees with the Adams operator for smooth $X$-schemes arising from the Annala--Iwasa theorem (see Construction \ref{constr:adams-k}).
\end{enumerate}
\end{theorem}
\begin{proof}
We begin by constructing the Adams operator $\psi^m$ on $\Fil^\star_\sub{mot}\K(X)[\tfrac1m]$, then explain how to verify the claimed compatibilities.

We restrict Bachmann--Hopkins' Adams operator \eqref{eqn:BH_adams} to smooth $\bb Q$-schemes to obtain $\psi^m: \Fil_{\bb A}^{\star}\K[\tfrac{1}{m}] \to \Fil_{\bb A}^{\star}\K[\tfrac{1}{m}]$. Similarly we restrict Raksit's Adams operator \eqref{eq:filtered-hc-} to smooth $\bb Q$-schemes to obtain $\psi^m:\Fil^\star_\sub{HKR}\HC^-(-/\bb Q)\to\Fil^\star_\sub{HKR}\HC^-(-/\bb Q)$. These are both filtered multiplicative maps on $\Sm_\bb Q$.

Recalling from Proposition~\ref{prop:compat} that the trace map promotes uniquely to a filtered multiplicative map $\mathrm{tr}:\Fil_{\bb A}^{\star}\K[\tfrac1m]\to \Fil_{\bb A}^{\star}\K[\tfrac1m]$ on $\Sm_\bb Q$, we claim that the diagram 
\begin{equation}\label{eq:chw-filt}
\begin{tikzcd}
\Fil_{\bb A}^{\star}\K[\tfrac1m] \ar[swap]{d}{\psi^m} \ar{r}{\mathrm{tr}} &\Fil_\sub{HKR}^{\star}\HC^-(-/\bb Q) \ar{d}{\psi^m}\\
\Fil_{\bb A}^{\star}\K[\tfrac1m] \ar{r}[swap]{\mathrm{tr}} &\Fil_\sub{HKR}^{\star}\HC^-(-/\bb Q)
\end{tikzcd}
\end{equation}
in $\PShv(\Sm^\sub{qcqs}_\bb Q,\CAlg(\Fil\Sp))$ commutes. Indeed, the circuits $\tr\circ\psi^m$ and $\psi^m\circ\tr$ are homotopic if we forget the filtrations, by Corollary \ref{corol:key-q}, and therefore are also homotopic if we keep the filtrations by Remark \ref{rem:promote-compat}; more precisely, the commutative diagram~\eqref{corol:key-q} (with $k = \bb Q$) uniquely promotes to a to a commutative diagram in $\PShv(\Sm^\sub{qcqs}_\bb Q,\CAlg(\Fil\Sp))$.

Left Kan extending and cdh sheafifying, as in the proof of Corollary \ref{corol_cdh_filtered_trace}, then shows that the filtered multiplicative upgrade of the cdh-local trace map $L_\sub{cdh}\mathrm{tr}:\Fil_\bb A^\star\KH\to \Fil^{\star}_\sub{HKR}L_\sub{cdh}\HC^-(-/\bb Q)$ on qcqs $\bb Q$-schemes is also compatible with Adams operators on each side; here the left side is equipped with Bachmann--Hopkins' Adams operator \eqref{eqn:BH_adams}, while the right side is equipped with the cdh sheafification of Raksit's \eqref{eq:filtered-hc-}.

Working in the category of presheaves of filtered $\bb E_\infty$-algebras equipped with an endomorphism on qcqs $\bb Q$-schemes, we now pull back the diagram
\begin{equation}
\begin{tikzcd}
   & \mathrm{Fil}^{\star}_\sub{HKR}\HC^-(-/\bb Q)\ar{d} \\
\mathrm{Fil}^\star_{\bb A}\KH[\tfrac1m] \ar{r} & \Fil^{\star}_\sub{HKR}L_\sub{cdh}\HC^-(-/\bb Q)
\end{tikzcd}
\end{equation}
to define our desired Adams operator $\psi^m$ on $\Fil^{\star}_{\mot}\K[\tfrac{1}{m}]$.

Next we prove that the induced endomorphism on $\gr^\star_\sub{mot}\K[\tfrac1m]$ is homotopic to multiplication-by-$m^\star$. Informally this holds because it holds for the other Adams operators appearing in the construction, but care is required because we need to know that the various homotopies appearing are compatible. For each of the above Adams operators on a filtered object, we will continue to denote by $\psi^m$ the induced operator on the associated graded. More precisely, restricting to smooth $\bb Q$-schemes, the operator $\psi^m$ on $\gr^{\star}_{\bb A}\K(X)[\tfrac{1}{m}]$ (resp.~on $\gr_\sub{HKR}^{\star}\HC^{-}(X/\bb Q)$) is homotopic to multiplication-by-$m^\star$ by Lemma \ref{lemma_adams_on_smooth} (resp.~\cite[Proposition 6.4.12]{raksit-hkr}). We need a homotopy between these homotopies along the map $\mathrm{tr}: \gr^{\star}_{\bb A}\K[\tfrac{1}{m}] \rightarrow \gr^{\star}_\sub{HKR}\HC^{-}(-/\bb Q)$, namely $\gr^{\star}$ of the filtered enhancement of the trace map on $\Sm_{\bb Q}$; this homotopy between the homotopies will even turn out to be unique. We consider the following diagram of spaces:
\[
\begin{tikzcd}
 & \Omega_{\psi^m}\Map(\gr_\sub{HKR}^{\star}\HC^{-}(-/\bb Q),\gr_\sub{HKR}^{\star}\HC^{-}(-/\bb Q)) \ar{d}{-\circ\mathrm{tr}}\\
 & \Omega_{\psi^m\circ\mathrm{tr}}\Map( \gr^{\star}_{\bb A}\K[\tfrac{1}{m}], \gr^{\star}_\sub{HKR}\HC^{-}(-/\bb Q))\ar{d}{\simeq}\\
\Omega_{\psi^m}\Map(\gr_{\bb A}^{\star}\K[\tfrac{1}{m}],\gr_{\bb A}^{\star}\K[\tfrac{1}{m}])  \ar{r}{\mathrm{tr}\circ-} & \Omega_{\mathrm{tr} \circ \psi^m}\Map( \gr^{\star}_{\bb A}\K[\tfrac{1}{m}], \gr^{\star}_\sub{HKR}\HC^{-}(-/\bb Q)), 
\end{tikzcd}
\]
\begin{enumerate}[(a)]
\item The mapping spaces are taken in the $\Shv_{\Zar}(\Sm_{\bb Q}, \CAlg(\Gr\Spt))$. We have indicated the base points of the loop spaces, and the equivalence is induced by the homotopy $\psi^m\circ\mathrm{tr}\simeq \mathrm{tr} \circ \psi^m$ obtained by passing to associated gradeds in \eqref{eq:chw-filt}.

\item The other maps are pre/postcomposition with the trace map.

\item The two homotopies $\psi^m\simeq m^\star$ from the paragraph above the diagram correspond to points in the bottom left and top right. The ``homotopy between homotopies'' which we seek is any point in the pullback of the diagram which sits over these existing points.

\item The bottom right corner is a single point. Indeed, the mapping space $\Map( \gr^{\star}_{\bb A}\K[\tfrac{1}{m}], \gr^{\star}_\sub{HKR}\HC^{-}(-/\bb Q))$ is discrete by the graded analog of the proof of Proposition~\ref{prop:compat}, where the analogous discreteness was proved at the filtered level; therefore the loop space is a point.
\end{enumerate}
The pullback of the diagram is therefore equivalent to the product
\[
 \Omega_{\psi^m}\Map(\gr_\sub{HKR}^{\star}\HC^{-}(-/\bb Q),\gr_\sub{HKR}^{\star}\HC^{-}(-/\bb Q)) \times \Omega_{\psi^m}\Map(\gr_{\bb A}^{\star}\K[\tfrac{1}{m}],\gr_{\bb A}^{\star}\K[\tfrac{1}{m}]).
 \] 
and so the desired homotopy between the two homotopies $\psi^m\simeq m^\star$ is just the cartesian product of these two points. Left Kan extending, cdh sheafifying, and forming the pull back, one obtains an induced homotopy $\psi^m \simeq m^{\star}$ on $\gr^{\star}_\sub{mot}\K[\tfrac{1}{m}]$, as desired.

It remains to check the compatibilities (1)--(3). Parts (1) and (2) are immediate from the construction. To prove part (3), will use the techniques of Appendices \ref{sec:ai-thm} \& \ref{sec:adj}. Thanks to the comparison between Bachmann--Hopkins' Adams operator and that of Construction~\ref{constr:adams-k} on smooth $\bb Q$-schemes, there is a commutative diagram
\[
\begin{tikzcd}
\bb S_\bb Q[\Pic] \ar[swap]{d}{\psi^m} \ar{r} &  \K[\tfrac{1}{m}] \ar{d}{\psi^m}\\
\bb S_\bb Q[\Pic] \ar{r} &   \K[\tfrac{1}{m}],
\end{tikzcd}
\]
on smooth $\bb Q$-schemes, where the left vertical arrow is by definition induced by the $m$-power map $\roi^\times\to\roi^\times$. Left Kan extending to qcqs $\bb Q$-schemes, Zariski sheafifying, and composing with the counit back to $K[\tfrac1m]$, we obtain a commutative diagram
\begin{equation}
\begin{tikzcd}
\bb S[\Pic] \ar[swap]{d}{\psi^m} \ar{r} &  \K[\tfrac{1}{m}] \ar{d}{\psi^m}\\
\bb S[\Pic] \ar{r} &   \K[\tfrac{1}{m}]
\end{tikzcd}
\end{equation}
on all qcqs $\bb Q$-schemes, where the $\psi^m$ on the right is the Adams operator constructed in this proof. Here $\bb S[\Pic]$ is defined to be the Zariski sheafification of $Y\mapsto\mathbb{S}[\scr O(Y)^{\times}[1]]$, which we have used is Zariski locally left Kan extended from smooth $\bb Q$-schemes (since $\Pic$ is).

Finally, fixing any qcqs $\bb Q$-scheme $X$, we may restrict the previous diagram to smooth $X$-schemes to obtain a commutative diagram
\[
\begin{tikzcd}
\bb S_X[\Pic] \ar[swap]{d}{\psi^m} \ar{r} &  \K[\tfrac{1}{m}] \ar{d}{\psi^m}\\
\bb S_X[\Pic] \ar{r} &   \K[\tfrac{1}{m}]
\end{tikzcd}
\]
of presheaves on $\Sm_X$. But according to the characterising property of Construction~\ref{constr:adams-k}, that means that our $\psi^m$ agrees with that arising on smooth $X$-schemes from Annala--Iwasa's theorem.
\end{proof}

\begin{remark}[Adams operators on $K$-theory and $\HC^-$ -- historical comments]\label{rem:chw}
Forgetting filtrations in Theorem \ref{rem:discuss-adams}(1) says that, for any qcqs $\bb Q$-scheme $X$, the Adams operators on $\K(X)$ and $\HC^-(X/\bb Q)$ are naturally compatible at the level of spectra, even as $\bb E_\infty$-algebras. The question of this compatibility has a long history which we briefly review here. The reader who consults any of the earlier work on the subject should be aware of Remark~\ref{rem:discuss-adams}, especially its final paragraph on previous incompatible terminology surrounding Adams operations.

Cathelineau in 1990 \cite{Cathelineau1990} proved the compatibility for the relative $K$-/cyclic homology groups of a nilpotent ideal, modulo some later corrections by Corti\~nas--Haesemeyer--Weibel \cite[Appendix~B]{Cortinas2009}. The latter also extended the compatibility in this case to the level of spectra, having defined the Adams operator on the relative $K$-theory via the stack $\rm BGL$ (which is allowed since the relative $K$-theory of a nilpotent ideal is connective).

Geller--Weibel in 1994 \cite[Remark 0.4.1]{GellerWeibel1994} explicitly expressed the expectation that the Adams operators on $K$-/negative cyclic homology groups should be compatible. Kantorovitz in 1999 \cite{Kantorovitz1999} proved the compatibility at the level of the $K$-/Hochschild homology groups.

Corti\~nas--Haesemeyer--Weibel in 2009 proved the compatibility at the level of $K$-/negative cyclic homology groups in the case of finite type schemes over any characteristic zero field. They commented that the classical Adams operators on $K$-groups and negative cyclic homology were defined in ``very different ways,'' which posed a challenge in establishing their compatibility. The key point of our approach is that, given the results of Annala--Iwasa \cite{AnnalaIwasa2023}, we can reduce the problem to checking compatibility on the sheaf of units $\roi^\times$. Bearing in mind the tight analogy between the endomorphisms $\roi^{\times} \rightarrow \roi^{\times}, z \mapsto z^m$ in algebra and $S^1 \rightarrow S^1, z \mapsto z^m$ in topology, we see that the two Adams operators are not so different after all. In particular, our proof is very different from the one of \cite{Cortinas2009}. 
\end{remark}

\begin{example}[Affine cones]\label{example_cones_0}
Let $Y$ be a smooth, geometrically connected, projective variety over a field $k$ of characteristic zero, equipped with a fixed embedding $Y\into \bb P^N_k$ into some projective space, and let $R$ be the associated homogenous coordinate ring. The $K$-groups of $R$ were calculated in \cite{CortinasHaesemeyerWalkerWeibel2013}, and here we explain how to view their calculations in terms of our motivic cohomology group.

We recall the geometry of the situation: the blow-up $X$ of $R$ at its ``irrelevant'' maximal ideal is a line bundle over $Y$, equipped with $0$-section $i:Y\into X$ and fitting into an abstract blow-up square
\[\xymatrix{
Y\ar[r] \ar[d]\ar[r]^{i}& X\ar[d]\\
\Spec(k)\ar[r]&\Spec(R)
}\]
(see, for example, \cite[Lemma~2.2]{CortinasHaesemeyerWalkerWeibel2013}). Since $\bb Z(j)^{\bb A}$ is both $\bb A^1$-invariant and a cdh sheaf by Theorem \ref{thm:cdh}, it follows that $\bb Z(j)^{\bb A}(R)\quis \bb Z(j)^{\bb A}(k)$, which in turn is the same as $\bb Z(j)^\sub{mot}(k)$ since fields are points for the cdh topology (so that the right vertical arrow in Theorem \ref{thm:graded-pieces} is an equivalence, hence also the left arrow). The fundamental fibre sequence of Theorem \ref{thm:graded-pieces}(3) may therefore be rewritten as a calculation of the relative motivic cohomology, i.e., \begin{equation}\opp{fib}(\bb Z(j)^\sub{mot}(R)\to \bb Z(j)^\sub{mot}(k))\simeq C(j)[-2]\label{eqn_cones_0}\end{equation} where $C(j):=\opp{cofib}(L\Omega^{<j}_{R/\bb Q}\to R\Gamma_\sub{cdh}(R,\Omega^{<j}_{-/\bb Q}))$.

Writing $\tilde K(R):=\opp{fib}(K(R)\to K(k))$ for the relative $K$-theory, it now follows from the rational degeneration of the Atiyah--Hirzebruch spectral sequence (Theorem \ref{theorem_AH_SS_0}) that $\tilde K(R)$ is already a rational spectrum, with homotopy groups given by \[\tilde \K_n(R):=\bigoplus_{j\ge0}H^{2j-2-n}(C(j))\] for $n\in\bb Z$. Calculating cdh cohomologies of differential forms as in \cite[\S2]{CortinasHaesemeyerWalkerWeibel2013}, much of the previous line can be calculated explicitly, in particular all $\tilde \K_n(R)$ for $n\le 0$. We note that this is just a slight repackaging of the main results of \cite{CortinasHaesemeyerWalkerWeibel2013}, where the authors instead start with the observation that $\tilde \K(R)\simeq {\rm cofib}\big(\HC(X)\to L_\sub{cdh}\HC(X)\big)$ and then decompose cyclic homology into Adams summands.

In particular, writing $d:=\dim R=\dim Y+1$, which we assume is $\ge 2$, consider the group $H^{2d}_\sub{mot}(R,\bb Z(d))$ of ``zero cycles'' on $\Spec(R)$ (see Remark \ref{rem:zcyc}); one can show that this is given by
\[H^{2d}_\sub{mot}(R,\bb Z(d))\cong\opp{ker}\left(H^{2d-2}_\sub{Zar}(X,\Omega^{<d}_{-/\bb Q})\to H^{2d-2}_\sub{Zar}(Y,\Omega^{<d}_{-/\bb Q})\right)\cong\bigoplus_{i\ge1}H^{d-1}_\sub{Zar}(Y,\Omega^{d-1}_{-/\bb Q}(i)),\] where the first isomorphism is obtained from (\ref{eqn_cones_0}) and the second as in \cite[Proposition~2.10]{CortinasHaesemeyerWalkerWeibel2013}. If $k$ is algebraic over $\bb Q$ then the right side of the previous line vanishes by Serre duality and we deduce that $H^{2d}_\sub{mot}(R,\bb Z(d))=0$.
\end{example}

\subsection{Characteristic $p > 0$}\label{sec:charp}
We first give a with a quick overview of syntomic cohomology in characteristic $p$, in the sense of \cite{BhattMorrowScholze2}. The reader should refer to \cite[\S 8]{BhattMorrowScholze2}, \cite[\S6.2]{AntieauMathewMorrowNikolaus}, and \cite[\S5.1]{BachmannElmantoMorrow} for more details.

For any $\bb F_p$-algebra $A$, let $W_r\Omega^j_{A,\sub{log}}$ denote the global sections of the subsheaf $W_r\Omega^j_\sub{log}$ of the de Rham--Witt sheaf $W_r\Omega^j_{\Spec(A)}$ which is generated \'etale locally (or, equivalently, Zariski locally \cite[Corollary~4.2(i)]{Morrow_pro_GL2}) by $\tfrac{d[f_1]}{f_1}\wedge\cdots\wedge \tfrac{d[f_j]}{f_j}$ for units $f_1,\dots,f_j$. Alternatively \cite[Corollary~4.2(iii)]{Morrow_pro_GL2}, $W_r\Omega^j_{A,\sub{log}}$ is the kernel of the Artin--Schreier map \begin{equation}C^{-1}-1:W_r\Omega^j_A\To W_r\Omega^j_A/dV^{r-1}\Omega^j_A.\label{eqn_WOmegalog}\end{equation} Since the de Rham-Witt sheaves have no higher cohomology on affines and the Artin--Schreier map is \'etale locally surjective, the previous observations may alternatively be expressed as a fibre sequence \[R\Gamma_\sub{\'et}(A,W_r\Omega^j_\sub{log})\To W_r\Omega^j_A\stackrel{C^{-1}-1}\To W_r\Omega^j_A/dV^{r-1}\Omega^j_A;\]
in particular the cohomology of $R\Gamma_\sub{\'et}(A,W_r\Omega^j_\sub{log})$ is concentrated in degrees zero and one. 

\begin{definition}\label{def:syn}
For any $r\ge1$ and $j\ge0$, the {\em mod-$p^r$, weight-$j$ syntomic cohomology} of $\bb F_p$-algebras \[\bb Z_p(j)^\sub{syn}(-)/p^r:\text{CAlg}_{\bb F_p}\to \rm{D}(\bb Z)\] is defined to be the left Kan extension of $R\Gamma_\sub{\'et}(-,W_r\Omega^j_\sub{log})[-j]$ along the inclusion $\text{CAlg}_{\bb F_p}^\Sigma\subseteq \text{CAlg}_{\bb F_p}$; here $\text{CAlg}^{\Sigma}_{\bb F_p}$ denotes the category of finitely generated polynomial $\bb F_p$-algebras. When $r=1$ we will often write $\bb F_p(j)^\sub{syn}$ to simplify notation. For $j<0$ we set $\bb Z_p(j)^\sub{syn}(-)/p^r:=0$.

Taking the inverse limit over $r$, the weight-$j$ syntomic cohomology of an $\bb F_p$-algebra $A$ is defined by 
\[
\bb Z_p(j)^\sub{syn}(A):=\lim_r\bb Z_p(j)^\sub{syn}(A)/p^r.
\]
Multiplication of differential forms induces a natural $\bb E_\infty$-algebra structure on $\bb Z_p(\star)^\sub{syn}(A)\in\Gr\rm D(\bb Z)$. By convention we declare that $\bb Z_p(j)^\sub{syn}(A)=0$ when $j<0$.
\end{definition}

\begin{remark}\label{remark_WOmegalog}
\begin{enumerate}
\item Taking $\bb Z_p(j)^\sub{syn}(A)$ modulo $p^r$ does recover $\bb Z_p(j)^\sub{syn}(A)/p^r$ as it was initially defined, thanks to Illusie's short exact sequence of \'etale sheaves $0\to W_{s}\Omega^j_\sub{log}\xto{p^r}W_{r+s}\Omega^j_\sub{log}\to W_r\Omega^j_\sub{log}\to 0$ on smooth $\bb F_p$-schemes \cite[\S I.5.7]{Illusie1979}.
\item For any $\bb F_p$-algebra $A$ and $r\ge 1$ there is, by construction, a natural comparison map \[\bb Z_p(j)^\sub{syn}(A)/p^r\To R\Gamma_\sub{\'et}(A,W_r\Omega^j_\sub{log})[-j].\] It is an equivalence whenever $A$ is regular Noetherian, or more generally Cartier smooth \cite[Prop.~5.1]{KellyMorrow2021}.
\end{enumerate}
\end{remark}

Syntomic cohomology can be loosely controlled via the cotangent complex through the following lemma:

\begin{lemma}\label{lem_fin_fil_on_syn}
For any $\bb F_p$-algebra $A$, the complex $\bb F_p(j)^\sub{syn}(A)$ admits a natural finite increasing filtration in $\textrm{D}(\bb F_p)$, of length $2(j+1)$, with graded pieces given in increasing order by
\begin{align*}
&L_{A/\bb F_p}^j[-j-1], L_{A/\bb F_p}^{j-1}[-j],L_{A/\bb F_p}^{j-2}[-j+1],\dots, L_{A/\bb F_p}^0[-1],\\
& L_{A/\bb F_p}^0[0], L_{A/\bb F_p}^1[-1], L_{A/\bb F_p}^2[-2],\dots, L_{A/\bb F_p}^j[-j].
\end{align*}
\end{lemma}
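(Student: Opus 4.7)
The plan is to realise $\bb F_p(j)^\syn(A)$, for any $\bb F_p$-algebra $A$, as the fibre of an Artin--Schreier-type endomorphism of a conjugate-filtered truncation of derived de Rham cohomology, and then to combine the conjugate filtrations on source and target to produce the desired filtration.

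First I would establish a natural fibre sequence of the form
\[
\bb F_p(j)^\syn(A) \to \tau^{\leq j} L\Omega_{A/\bb F_p} \to \tau^{\leq j} L\Omega_{A/\bb F_p}
\]
for every $\bb F_p$-algebra $A$. To do so, I would start in the smooth case, where the Artin--Schreier sequence (\ref{eqn_WOmegalog}) at level $r=1$, together with the Cartier isomorphism, identifies $R\Gamma_\et(A,\Omega^j_\log)[-j]$ with the fibre of an endomorphism of $\tau^{\leq j}\Omega^\bullet_{A/\bb F_p}$ induced by $C^{-1}-1$; this is essentially the characteristic-$p$ incarnation of the syntomic fibre sequence in the frameworks of \cite{BhattMorrowScholze2, AntieauMathewMorrowNikolaus}. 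Left Kan extending from smooth $\bb F_p$-algebras to all $\bb F_p$-algebras, and using that both $\bb F_p(j)^\syn$ (by Definition~\ref{def:syn}) and $\tau^{\leq j} L\Omega_{-/\bb F_p}$ arise as such left Kan extensions, then produces the fibre sequence in general.

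Next I would invoke the conjugate filtration on $\tau^{\leq j} L\Omega_{A/\bb F_p}$: a natural finite increasing filtration of length $j+1$ with graded pieces $L^i_{A/\bb F_p}[-i]$ for $i=0,1,\dots,j$, arising from the Postnikov truncation together with the derived Cartier isomorphism $\mathcal H^i(L\Omega_{-/\bb F_p}) \simeq L^i_{-/\bb F_p}$ (after Frobenius twist), and itself obtained by left Kan extension from the smooth case. Stacking the conjugate filtration on the source and the $[-1]$-shifted conjugate filtration on the target of the fibre sequence, via the standard construction that filters the fibre of a map between filtered objects, then yields a natural increasing filtration on $\bb F_p(j)^\syn(A)$ of total length $2(j+1)$ with graded pieces as listed.

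The main obstacle is establishing the fibre sequence in Step~1 in precisely the stated form: both source and target of the Artin--Schreier map must be identified with $\tau^{\leq j} L\Omega_{A/\bb F_p}$ itself (as opposed to some other natural-looking truncation such as $\tau^{\leq j}\Omega^\bullet_A/\tau^{\leq j-1}\Omega^\bullet_A$). In the smooth case this requires unwinding the identification of $\Omega^j_A/d\Omega^{j-1}_A$ with a conjugate-filtered piece of the de Rham complex via the Cartier isomorphism, and then checking that this identification is compatible with the left Kan extension procedure. Bookkeeping the correct ordering of the $2(j+1)$ graded pieces (first half decreasing in $i$, second half increasing) is a secondary matter that falls out of the chosen convention for stacking the two conjugate filtrations.
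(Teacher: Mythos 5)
There is a genuine gap: the fibre sequence at the heart of your plan is false as stated. For a smooth $\bb F_p$-algebra $R$ the Artin--Schreier presentation reads $\bb F_p(j)^\sub{syn}(R)\simeq\fib\bigl(\Omega^j_R\xto{C^{-1}-1}\Omega^j_R/d\Omega^{j-1}_R\bigr)[-j]$, and neither side of that map is $\tau^{\le j}\Omega^\bullet_{R}$. The source $\Omega^j_R[-j]$ is a single graded piece (it has cohomology only in degree $j$), whereas $\tau^{\le j}\Omega^\bullet_R$ has cohomology in degrees $0,\dots,j$; and the target $(\Omega^j_R/d\Omega^{j-1}_R)[-j]$ is $\cofib\bigl(\tau^{\le j-1}\Omega^\bullet_R\to\Omega^{\le j}_R\bigr)$, which mixes the conjugate (canonical) truncation with the Hodge (brutal) truncation and is not a canonical truncation of $\Omega^\bullet_R$ at all. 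A concrete obstruction: for $j\ge1$ and $R$ smooth, connected and affine, $\bb F_p(j)^\sub{syn}(R)$ is concentrated in degrees $[j,j+1]$, so $H^0=0$; but $H^0$ of the fibre of any endomorphism of $\tau^{\le j}\Omega^\bullet_R$ ``induced by $C^{-1}-1$'' is $\ker\bigl(C^{-1}-1\colon H^0_\sub{dR}(R)\to H^0_\sub{dR}(R)\bigr)$, which contains the Artin--Schreier kernel $\bb F_p$ of the constants and is nonzero. The fact that your (incorrect) sequence happens to produce the right multiset of graded pieces $\{L^i[-i]\}_{i=0}^{j}\cup\{L^i[-i-1]\}_{i=0}^{j}$ is what makes the error invisible at the level of the conclusion; and the ordering discrepancy you dismiss as bookkeeping is actually a symptom of having the wrong filtration on the target.

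The fix is to replace the target by the Hodge truncation: the correct clean presentation is $\bb F_p(j)^\sub{syn}(A)\simeq\fib\bigl(\Fil^\sub{conj}_j L\Omega_{A/\bb F_p}\to L\Omega^{<j+1}_{A/\bb F_p}\bigr)$, with the conjugate filtration on the source (graded pieces $L^i[-i]$, $i$ increasing — the second half of the statement) and the Hodge filtration on the target (graded pieces $L^i[-i]$ with $i$ decreasing in the increasing direction of the filtration, whose $[-1]$-shift is exactly the first half). With that change your strategy of stacking the two filtrations and left Kan extending does work and is arguably a cleaner packaging than the paper's. The paper instead avoids any such global identification: it constructs, by induction on $j$, a length-$(2j+1)$ filtration directly on $\Omega^j_R/d\Omega^{j-1}_R$ by repeatedly pulling back along the connecting map of the extension $0\to\Omega_R^{j-1}/\ker d\to\Omega_R^j\to\Omega_R^j/d\Omega_R^{j-1}\to0$, using the Cartier isomorphism only to identify the top graded piece $H^{j-1}_\sub{dR}(R)[2]\cong\Omega^{j-1}_R[2]$, and then takes the fibre against the untouched source $\Omega^j_R$.
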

\begin{proof}
The key is to show the following claim: for $R$ any smooth $\bb F_p$-algebra, then $\Omega_R^j/d\Omega^{j-1}_R$ admits a natural finite increasing filtration (in $\mathrm D(\bb F_p)$, not as submodules) of length $2j+1$ with graded pieces in increasing order
\begin{align*}
&\Omega_R^j,\Omega_R^{j-1}[1],\Omega_R^{j-2}[2],\dots,\Omega_R^0[j],\\
&\Omega_R^0[j+1],\Omega_R^1[j],\Omega_R^2[j-1],\dots,\Omega_R^{j-1}[2].
\end{align*}
The case $j=0$ (when the bottom row of the listed graded pieces is empty) is trivial; we proceed by induction to treat the case $j>0$, so assume that we already have the filtration on $\Omega_R^{j-1}/d\Omega^{j-2}_R$, i.e., \[\Fil_0(\Omega_R^{j-1}/d\Omega^{j-2}_R)\to \Fil_1(\Omega_R^{j-1}/d\Omega^{j-2}_R)\to\cdots\to \Fil_{2j-1}(\Omega_R^{j-1}/d\Omega^{j-2}_R)=\Omega_R^{j-1}/d\Omega^{j-2}_R,\] with the desired graded pieces. Then we define, for $i=1,\dots,2j$, the filtered step $\Fil_i(\Omega_R^j/d\Omega^{j-1}_R)$ to be the pullback
\[\xymatrix{
\Fil_{i-1}(\Omega_R^{j-1}/d\Omega^{j-2}_R)[1]\ar[r] & \Omega_R^{j-1}/d\Omega^{j-2}_R[1]\ar[r]^{\pi[1]} & \Omega_R^{j-1}/\ker d[1]\\
\Fil_i(\Omega_R^j/d\Omega^{j-1}_R)\ar@{-->}[u]\ar@{-->}[rr]&&\ar[u]_{\delta}\Omega_R^j/d\Omega^{j-1}_R
}\]
Here $\pi$ is the canonical quotient map with kernel $H^{j-1}_\sub{dR}(R)$, and $\delta$ is the connecting map associated to the short exact sequence 
\begin{equation}0\To\Omega_R^{j-1}/\ker d\xto{d}\Omega_R^j\To\Omega_R^j/d\Omega_R^{j-1}\To 0\label{eqn_fil0}.\end{equation}

Since pulling back a filtration does not change the graded pieces, we see at once that $\gr_i(\Omega_R^j/d\Omega^{j-1}_R)$ is as desired for $i=1,\dots,2j-1$.

We now set $\Fil_0(\Omega_R^j/d\Omega^{j-1}_R):=0$ and $\Fil_{2j+1}(\Omega_R^j/d\Omega^{j-1}_R)=\Omega_R^j/d\Omega^{j-1}_R$; we must show that $\gr_0(\Omega_R^j/d\Omega^{j-1}_R)$ and $\gr_{2j}(\Omega_R^j/d\Omega^{j-1}_R)$ are as desired. Firstly, $\gr_0(\Omega_R^j/d\Omega^{j-1}_R)=\Fil_1(\Omega_R^j/d\Omega^{j-1}_R)$, which was defined to be the pullback of $\Fil_0(\Omega_R^{j-1}/d\Omega^{j-2}_R)=0\to\Omega^{j-1}/\ker d[1]$ along $\delta$; that is, it is given by $\opp{fib}(\delta)$, which is indeed $\Omega^j_R$ thanks to (\ref{eqn_fil0}). Secondly, $\gr_{2j}(\Omega_R^j/d\Omega^{j-1}_R)$ is precisely $\opp{cofib}(\pi)[1]=H^{j-1}_\sub{dR}(R)[2]$, which identifies with $\Omega_R^{j-1}[2]$ via the Cartier isomorphism.

This completes the proof of the existence of the filtration on $\Omega_R^j/d\Omega^{j-1}_R$ when $R$ is smooth. We then obtain the desired filtration on $\bb F_p(j)^\sub{syn}(R)$ by recalling that $\bb F_p(j)^\sub{syn}(R)=\text{fib}(\Omega_R^j\xto{C^{-1}-1}\Omega_R^j/d\Omega_R^{j-1})[-j]$. Finally the desired filtration on $\bb F_p(j)^\sub{syn}(A)$, for arbitrary $\bb F_p$-algebras $A$, is obtained by left Kan extension from the smooth case.
\end{proof}

As a consequence of the previous lemma and fpqc descent for the cotangent complex, we deduce the following:

\begin{corollary} The presheaves $\bb Z_p(j)^\sub{syn}$ satisfy fpqc descent on the category of $\bb F_p$-algebras.
\end{corollary}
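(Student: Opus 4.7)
The strategy is to propagate fpqc descent through three successive reductions: from wedge powers of the cotangent complex, to mod-$p$ syntomic cohomology, to mod-$p^r$, and finally to the integral inverse limit.

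First, I would observe that, as stated just before the lemma, the presheaves $L^i_{-/\bb F_p}: \text{CAlg}_{\bb F_p} \to \mathrm D(\bb F_p)$ of wedge powers of the cotangent complex satisfy fpqc descent by \cite[Thm.~3.1]{BhattMorrowScholze2}. Since fpqc sheaves form a stable full subcategory of presheaves (closed under finite limits, shifts, and extensions), Lemma~\ref{lem_fin_fil_on_syn} immediately implies that $\bb F_p(j)^\sub{syn}$ is an fpqc sheaf: one just iterates through the $2(j+1)$ steps of the filtration, at each stage using that the extension of two fpqc sheaves by a fibre sequence is again an fpqc sheaf.

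Next I would bootstrap from $r=1$ to general $r$ by induction, using the short exact sequences of constant coefficients
\[
0 \to \bb Z/p \to \bb Z/p^{r+1} \to \bb Z/p^r \to 0,
\]
which induce natural fibre sequences
\[
\bb Z_p(j)^\sub{syn}(-)/p \to \bb Z_p(j)^\sub{syn}(-)/p^{r+1} \to \bb Z_p(j)^\sub{syn}(-)/p^r
\]
(here one uses Remark~\ref{remark_WOmegalog}(1) to identify the two outer terms with the syntomic cohomology defined via left Kan extension, so that the inductive hypothesis applies). Two out of three in the fibre sequence being fpqc sheaves forces the third to be one as well.

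Finally, I would pass to the inverse limit. By definition $\bb Z_p(j)^\sub{syn} = \lim_r \bb Z_p(j)^\sub{syn}/p^r$, and the limit of fpqc sheaves in $\mathrm D(\bb Z)$ is an fpqc sheaf (limits commute with limits). This yields the desired fpqc descent. No step is substantively hard; the only delicate point is verifying that the filtration of Lemma~\ref{lem_fin_fil_on_syn}, which is natural in $A$, genuinely allows one to assemble descent from the graded pieces---this is automatic because the filtration is finite and each step is a fibre sequence whose outer terms are known to be sheaves.
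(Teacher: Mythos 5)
Your proof is correct and follows exactly the route the paper intends: the paper gives no written proof beyond the phrase ``as a consequence of the previous lemma and fpqc descent for the cotangent complex,'' and your three reductions (finite filtration with graded pieces $L^i_{-/\bb F_p}[\ast]$ for $r=1$, induction on $r$ via Illusie's sequences as in Remark~\ref{remark_WOmegalog}(1), then the inverse limit) are precisely the implicit steps. The only cosmetic imprecision is attributing the fibre sequences to the constant-coefficient sequences $0\to\bb Z/p\to\bb Z/p^{r+1}\to\bb Z/p^r\to0$ rather than directly to Illusie's exact sequences of sheaves $W_s\Omega^j_{\log}\to W_{r+s}\Omega^j_{\log}\to W_r\Omega^j_{\log}$, but your parenthetical citation fixes this.
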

Therefore, by right Kan extension, they extend uniquely to fpqc sheaves
\[
\bb Z_p(j)^\sub{syn}:\text{Sch}_{\bb F_p}^\sub{qcqs,op}\To\rm D(\bb Z),
\] thereby defining syntomic cohomology in the non-affine case; they again assemble into a presheaf valued in $\bb E_\infty$-algebras in graded complexes. Just as derived de Rham cohomology appeared in characteristic zero through the HKR filtration on negative cyclic homology, syntomic cohomology similarly appears through topological cyclic homology:

\begin{theorem}[BMS filtration \cite{BhattMorrowScholze2}]\label{thm_BMS2+}
For any qcqs $\bb F_p$-scheme $X$, its topological cyclic homology $\TC(X)$ admits a natural, multiplicative, complete, $\bb N$-indexed filtration $\Fil^\star_\sub{BMS} \TC(X)$ with graded pieces naturally and multiplicatively given by \[\gr^j_\sub{BMS}\TC(X)\simeq\bb Z_p(j)^\sub{syn}(X)[2j]\] for $j\ge0$. Moreover,
\begin{enumerate}
\item The filtration is bounded, i.e., there exists $d\ge0$ (depending on $X$) such that, for any $j\ge0$, the filtered step $\Fil^j_\sub{BMS}\TC(X)$ is supported in homological degrees $\ge j-d$ (and so the syntomic cohomology $\bb Z_p(j)^\sub{syn}(X)$ is supported in cohomological degrees $\le j+d$).
\item The induced filtration on $\TC(X)[\tfrac1p]$ is naturally split, so that there is a natural multiplicative equivalence of filtered spectra
\[
\Fil^\star_\sub{BMS}\TC(X)[\tfrac1p]\simeq\bigoplus_{j\ge \star}\bb Q_p(j)^\sub{syn}(X)[2j]
\] where $\bb Q_p(j)^\sub{syn}(X):=\bb Z_p(j)^\sub{syn}(X)[\tfrac1p]$.
\end{enumerate}
\end{theorem}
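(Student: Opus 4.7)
The plan is to follow the template of \cite{BhattMorrowScholze2}, which constructs the filtration first on a well-chosen basis of the quasisyntomic site and then extends by descent, sheafification, and left Kan extension. The starting point is the observation that on \emph{quasiregular semiperfect} $\bb F_p$-algebras $S$ (quotients of perfect algebras by regular sequences up to derived equivalence), the homotopy groups $\TC_*(S;\bb Z_p)$ are concentrated in even degrees. For $\THH(S;\bb Z_p)$ this is a direct computation using that $S$ is semiperfect, and the lift to $\TC$ uses the Nikolaus--Scholze formula together with the fact that the cyclotomic Frobenius on $\THH(S;\bb Z_p)$ behaves controllably in this context. On such $S$, I would define $\Fil^\star_\sub{BMS}\TC(S)$ to be the double-speed Postnikov filtration $\tau_{\geq 2\star}\TC(S;\bb Z_p)$ (there is no non-$p$-complete part for $\bb F_p$-algebras, so integrally and $p$-adically this agrees), and identify the graded pieces $\pi_{2j}\TC(S;\bb Z_p)[2j]$ with $\bb Z_p(j)^\sub{syn}(S)[2j]$ by direct comparison with the generators-and-relations description of $W_r\Omega^j_\sub{log}$ on semiperfects.

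Next I would extend this definition to all quasisyntomic $\bb F_p$-algebras using quasisyntomic descent: $\TC(-;\bb Z_p)$ is a quasisyntomic sheaf of filtered spectra, and the quasiregular semiperfect algebras form a basis, so both the filtration and its graded pieces descend (the latter agreeing with $\bb Z_p(j)^\sub{syn}$ by its definition via fpqc descent from the semiperfect case). To extend to arbitrary $\bb F_p$-algebras, I would left Kan extend from the subcategory of quasisyntomic algebras; the compatibility with $\bb Z_p(j)^\sub{syn}$ at the level of graded pieces is then automatic from Definition \ref{def:syn} (or rather its enlargement obtained by the same left Kan extension procedure). Finally, for non-affine qcqs $\bb F_p$-schemes, one uses Nisnevich (or even fpqc) sheafification, and observes that completeness, multiplicativity, and the $\bb N$-indexing are all preserved by each of these operations.

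For the boundedness statement (1), the plan is to argue at the level of the graded pieces and use completeness to lift to the filtered object. For a qcqs $\bb F_p$-scheme $X$ of finite Krull dimension $d$, Lemma \ref{lem_fin_fil_on_syn} filters $\bb F_p(j)^\sub{syn}(A)$, on affine opens $\Spec(A)\subseteq X$, by shifts of wedge powers of the cotangent complex sitting in cohomological degrees $\leq j+1$; combining with the $p$-cohomological dimension bound on the Zariski or Nisnevich site then bounds $\bb Z_p(j)^\sub{syn}(X)$ in cohomological degrees $\leq j+d'$ for some $d'$ depending only on $X$. Completeness of the filtration propagates this bound from the associated graded to each $\Fil^j_\sub{BMS}\TC(X)$, giving the stated homological bound.

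For the splitting (2) after inverting $p$, my approach is to use Adams operations $\psi^\ell$ (for $\ell$ coprime to $p$) acting multiplicatively on $\Fil^\star_\sub{BMS}\TC(X)[\tfrac1p]$ as multiplication by $\ell^j$ on the weight-$j$ graded piece; then a standard eigenspace decomposition produces the natural multiplicative splitting $\TC(X)[\tfrac1p]\simeq\bigoplus_{j\geq 0}\bb Q_p(j)^\sub{syn}(X)[2j]$. The hard part here is to construct the Adams operations compatibly with the filtration; once this is done, the decomposition is formal. Alternatively, since $\bb Q_p(j)^\sub{syn}$ has a clean description via rational étale cohomology of $\mu_{p^\infty}^{\otimes j}$ (or, on perfect base rings, via Frobenius eigenspaces on crystalline cohomology), one can exhibit the splitting directly and check compatibility levelwise. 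I expect the main technical obstacle across the whole proof to be the verification of even-concentration of $\TC_*(S;\bb Z_p)$ on quasiregular semiperfect $S$, since this underwrites the entire construction and is where the geometric input (the structure of $\THH$ of semiperfect $\bb F_p$-algebras) is crucial.
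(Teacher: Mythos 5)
Your construction of the filtration follows essentially the same route as the paper, which simply cites \cite{BhattMorrowScholze2} and \cite{AntieauMathewMorrowNikolaus} for it: double-speed Postnikov filtration on quasiregular semiperfect rings, quasisyntomic descent, ($p$-completed) left Kan extension to all $\bb F_p$-algebras, and right Kan extension to qcqs schemes. Two corrections to part (1), however. First, a qcqs scheme need not have finite Krull dimension; the paper instead uses that a qcqs scheme has finite cohomological dimension for quasi-coherent sheaves, which combined with Lemma \ref{lem_fin_fil_on_syn} bounds the graded pieces. Second, and more seriously, your assertion that completeness is ``preserved by each of these operations'' fails at the left Kan extension step: left Kan extension is a colimit and does not preserve completeness (a limit). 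The paper's fix is to first establish, on smooth $\bb F_p$-algebras, the uniform connectivity bound that $\Fil^j_\sub{BMS}\TC$ is supported in homological degrees $\ge j-1$ (using that the graded pieces there live in only two degrees, plus completeness coming from descent); this connectivity bound, unlike completeness, \emph{is} preserved by left Kan extension and implies completeness on all affines. Right Kan extension to schemes then preserves completeness, after which your step of checking boundedness on graded pieces is legitimate.

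For part (2) there is a genuine gap. Adams operations $\psi^\ell$ acting on $\TC$ of $\bb F_p$-schemes compatibly with the BMS filtration are not available off the shelf (the paper only constructs filtered Adams operations on $\HC^-$ in characteristic zero, in Appendix \ref{app:chw}), and your fallback description of $\bb Q_p(j)^\sub{syn}$ via \'etale cohomology of $\mu_{p^\infty}^{\otimes j}$ is false in characteristic $p$, where $\mu_{p^r}$ is infinitesimal: mod $p^r$ syntomic cohomology is logarithmic de Rham--Witt cohomology, not $\mu_{p^r}^{\otimes j}$-cohomology. The operator you actually want is the absolute Frobenius $\phi$ of $X$: by functoriality it acts on $\TC(X)$ compatibly with the filtration, and it acts as multiplication by $p^j$ on $\gr^j_\sub{BMS}\TC(X)\simeq\bb Z_p(j)^\sub{syn}(X)[2j]$. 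After inverting $p$ the filtration is bounded by part (1), hence complete, so $\phi-p^j$ acts invertibly on $\Fil^{j+1}_\sub{BMS}\TC(X)[\tfrac1p]$ and on $\TC(X)[\tfrac1p]/\Fil^{j}_\sub{BMS}\TC(X)[\tfrac1p]$; taking $\phi=p^j$ fixed points splits off $\bb Q_p(j)^\sub{syn}(X)[2j]$, and completeness assembles these into the asserted direct sum decomposition.
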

\begin{proof}
We first recall that $\TC$ and its $p$-completion $\TC^\comp_p \simeq \TC(-;\bb Z_p)$ agree on qcqs $\bb F_p$-schemes; this will used throughout the rest of the paper without further mention.

The existence of a filtration on $\TC$ with graded pieces given by shifts of syntomic cohomology is one of the main theorems of \cite{BhattMorrowScholze2}, in the case of quasisyntomic $\bb F_p$-algebras. It was extended, by $p$-completed left Kan extension, to all $\bb F_p$-algebras in \cite{AntieauMathewMorrowNikolaus}. It is then obtained for arbitrary qcqs $\bb F_p$-schemes by right Kan extension. It remains to explain (1) and (2).

The proof of part (1) proceeds via several cases. Firstly, for any quasisyntomic $\bb F_p$-algebra $R$, the BMS filtration on $\TC(R)$ is defined by descent from quasiregular semiperfect rings of the two-speed Postnikov filtration; the latter is manifestly complete, which is preserved by the descent. In particular, for smooth $\bb F_p$-algebras $R$, the BMS filtration on $\TC(R)$ is complete and each of its graded pieces $\gr_\sub{BMS}^j\TC(R)\simeq \text{lim}_rR\Gamma_\sub{et}(R,W_r\Omega_\sub{log}^r)[j]$ is supported in cohomological degrees $[-j,-(j+1)]$; using the completeness it follows in this case that $\Fil_\sub{BMS}^j\TC(R)$ is supported in homological degrees $\ge j-1$. By left Kan extending we see that $\Fil_\sub{BMS}^j\TC(A)$ is supported in homological degrees $\ge j-1$ for all $\bb F_p$-algebras $A$; so the filtration is bounded on affines. Finally, right Kan extending preserves completeness, so we deduce that the BMS filtration on $\TC(X)$, for any qcqs $\bb F_p$-scheme $X$, is at least complete. Therefore it is enough to check boundedness of the filtration on graded pieces, as mentioned in \S\ref{ss_filtrations}; we do this next.

Since $X$ is qcqs, it has finite cohomological dimension for quasi-coherent sheaves (even if it does not have finite Krull dimension), and we take $d$ to be one plus this dimension. Then the Zariski sheafification of each graded piece of Lemma \ref{lem_fin_fil_on_syn} has global sections supported in cohomological degrees $\le j+d$, as required.

For part (2) note first that the absolute Frobenius $\phi:X\to X$ induces a natural endomorphism of $\TC(X)$, compatible (by functoriality) with the BMS filtration; its action on the graded piece $\bb Z_p(j)^\sub{syn}[2j]$ is as multiplication by $p^j$ (this follows by left Kan extending the same statement for $R\Gamma_\sub{\'et}(-,W_r\Omega^j_\sub{log})$ of finitely generated polynomial algebras, for all $r\ge0$). Next observe that the BMS filtration on $\TC(X)[\tfrac1p]$ is bounded thanks to part (1), therefore complete. Since $\phi-p^j$ acts invertibly on $\bb Q_p(i)^\sub{syn}(X)$for $i\ge j+1$, we deduce that it acts invertibly on $\Fil^{j+1}_\sub{BMS}\TC(X)[\tfrac1p]$. Similarly it acts invertibly on $\TC(X)[\tfrac1p]/\Fil^{j}\TC(X)[\tfrac1p]$. Taking $\phi-p^j$-fixed points, we have shown that the maps \[\bb Z_p(j)^\sub{syn}(X)[2j]= \gr^j_\sub{BMS}\TC(X)\longleftarrow (\Fil^{j}_\sub{BMS}\TC(X))^{\phi=p^j}\To \TC(X)^{\phi=p^j}\] are equivalences after inverting $p$. Repeating the same arguments in the last three paragraphs of the proof of Theorem~\ref{theorem_AH_SS_0} gives us the equivalence stated in (2).
\end{proof}

\begin{remark}[Variant: cdh-local BMS filtration] \label{rem:cdh-local}
Cdh sheafifying the BMS filtration levelwise we see that, for any qcqs $\bb F_p$-scheme $X$, there exists a functorial, multiplicative, $\bb N$-indexed filtration 
\[
\Fil^\star_\sub{BMS}L_\sub{cdh}\TC(X):=L_{\cdh}\Fil^\star_\sub{BMS}\TC(X),
\] on $L_\sub{cdh}\TC(X)$ whose graded pieces are naturally and multiplicatively given by \[\gr^j_\sub{BMS}L_\sub{cdh}\TC(X)\simeq L_\sub{cdh}\bb Z_p(j)^\sub{syn}(X)[2j]\]  for $j\ge0$. Here $L_\sub{cdh}\bb Z_p(j)^\sub{syn}$ is the cdh sheafification of the presheaf $\bb Z_p(j)^\sub{syn}:\Sch^\sub{qcqs,op}_{\bb F_p}\to\text D(\bb Z)$.

We record the following vanishing bound which will be needed frequently: for a qcqs $\bb F_p$-scheme $X$ of finite valuative dimension $\le d$, each $\Fil^j_\sub{BMS}L_\sub{cdh}\TC(X)$ is supported in homological degrees $\ge j-d-1$, and the cdh sheafified syntomic cohomology $L_\sub{cdh}\bb Z_p(j)^\sub{syn}(X)$ is supported in cohomological degrees $\le j+d+1$). Indeed, on any affine the vanishing bounds appearing in Theorem \ref{thm_BMS2+}(1) may be taken to be $\ge j-1$ and $\le j+1$ (see the proof: affines have cohomological dimension $0$ for quasi-coherent sheaves), whence the desired bound follows from cdh cohomological dimension being bounded by valuative dimension \cite[Proposition 2.4.3]{ElmantoHoyoisIwasaKelly2021}.
\end{remark}

\begin{remark}[éh motivic cohomology]\label{remark_eh}
Cdh sheafified syntomic cohomology appears in Geisser's theory of éh motivic cohomology and arithmetic cohomology \cite{Geisser2006}. More precisely, we claim that there are multiplicative equivalences
\begin{equation}L_\sub{cdh}\bb Z_p(j)^\sub{syn}/p^r\xrightarrow{\simeq} L_\sub{cdh}R\Gamma_\sub{\'et}(-,W_r\Omega^j_\sub{log})[-j]\xrightarrow{\simeq} R\Gamma_\sub{\'eh}(-,W_r\Omega^j_\sub{log})[-j]\label{eqn_Geisser_eh}\end{equation} of presheaves on qcqs $\bb F_p$-schemes. Indeed, globalising Remark \ref{remark_WOmegalog}(2) defines comparison maps \[\bb Z_p(j)^\sub{syn}/p^r\To R\Gamma_\sub{\'et}(-,W_r\Omega^j_\sub{log})[-j]\] for all $j\ge0$, which are isomorphisms on any valuation ring since they are Cartier smooth \cite[\S2]{KellyMorrow2021} \cite[\S5.1]{LuedersMorrow2023}; cdh sheafifying therefore defines the first equivalence in \eqref{eqn_Geisser_eh}. The second equivalence follows from Theorem~\ref{theorem:eh}.
\end{remark}

We warn the read that, since sheafification does not commute with cofiltered limits in general, there is no reason that $L_\sub{cdh}\bb Z_p(j)^\sub{syn}$ should land in $p$-complete complexes.\footnote{For example, letting $A=\bb F_p[t^{1/p^\infty}]/(t-1)$, one can show that $H^1$ of $\opp{fib}(\bb Z_p(1)^\sub{syn}(A)\to L_\sub{cdh}\bb Z_p(1)^\sub{syn}(A))$ is the principal units $\opp{ker}(A^\times\to\bb F_p^\times)$, which is not bounded $p$-power torsion. This also shows that the cdh sheaf $L_\sub{cdh}\bb Z_p(1)^\sub{syn}$ is not invariant for the nil (but not nilpotent) ideal $\opp{ker}(A\to\bb F_p)$.

In fact, such an issue already occurred in characteristic zero: $R\Gamma_\sub{cdh}(-,\hat{L\Omega}_{-/\bb Q})=R\Gamma(-,\hat{L\Omega}_{-/\bb Q})$ was not necessarily complete with respect to the cdh sheafification of the Hodge filtration.
} It is therefore not always sufficient to analyse it modulo $p$. Fortunately, after inverting $p$, the cdh sheafification disappears; this will be required to control our motivic cohomology after inverting $p$:

\begin{lemma}\label{lemma_Qpsyn}
The presheaf $\bb Q_p(j)^\sub{syn} :\mathrm{Sch}^\sub{qcqs}_{\bb F_p}\to \mathrm D(\bb Z)$ is a cdh sheaf, i.e., the map $\bb Z_p(j)^\sub{syn}\to L_\sub{cdh}\bb Z_p(j)^\sub{syn}$ is an equivalence after inverting $p$.
\end{lemma}
\begin{proof}
Since $\bb Q_p(j)^\sub{syn}$ is a direct summand of $\TC[\tfrac1p]$ by Theorem \ref{thm_BMS2+}(2), it is sufficient to check that the latter is a cdh sheaf. In other words, since cdh sheafification commutes with inverting $p$, we must show that the map $\TC(X)[\tfrac1p]\to (L_\sub{cdh}\TC(X))[\tfrac1p]$ is an equivalence for any qcqs $\bb F_p$-scheme $X$. But this follows from Theorem \ref{thm:mainsq} and the result of Weibel that $\K(X)[\tfrac1p]\quis\KH(X)[\tfrac1p]$ \cite{Weibel1989a}.
\end{proof}

Occasionally Lemma \ref{lemma_Qpsyn} will be insufficient and we will need to know that the failure of $\bb Z_p(j)^\sub{syn}$ to satisfy cdh descent is controlled by a bounded power of $p$ in each degree; this happens precisely when $L_\sub{cdh}\bb Z_p(j)^\sub{syn}$ happens to take a $p$-complete value:

\begin{lemma}\label{lemma_derived_p_complete}
For any fixed qcqs $\bb F_p$-scheme $X$ and $j\ge0$ the following are equivalent:
\begin{enumerate}[(a)]
\item Each cohomology group of $\opp{fib}(\bb Z_p(j)^\sub{syn}(X)\to L_\sub{cdh}\bb Z_p(j)^\sub{syn}(X))$ is bounded $p$-power torsion.
\item $L_\sub{cdh}\bb Z_p(j)^\sub{syn}(X)$ is derived $p$-complete.
\end{enumerate}
These equivalent conditions hold for all $j\ge0$ in each of the following cases:
\begin{enumerate}
\item $X$ a quasi-excellent Noetherian $\bb F_p$-scheme of finite Krull dimension;
\item $X$ a qcqs $\bb F_p$-scheme of finite valuative dimension which admits a smooth map to the spectrum of some valuation ring;
\item $X=\Spec(R)$ where $R$ is an $\bb F_p$-algebra of finite valuative dimension which is weakly regular and stably coherent;
\item $X$ a qcqs $\bb F_p$-scheme of finite valuative dimension for which the fibre of $\K(X)\to\KH(X)$ is $p$-complete.
\end{enumerate}
\end{lemma}
\begin{proof}
The equivalence of (a) and (b) follows from the following fact, where the non-trivial implication is a result of Bhatt \cite{Bhatt2019}: an abelian group which vanishes after inverting $p$ is derived $p$-complete if and only if each of its cohomology groups is bounded $p$-power torsion.

We claim that condition 4 on $X$ implies (b). Indeed, Theorem \ref{thm_BMS2+} and its cdh-local analogue in Remark \ref{rem:cdh-local} provide us with a bounded spectral sequence
\[E_2^{ij}=H^{i-j}(\fib(\bb Z_p(-j)^\sub{syn}(X)\to L_\sub{cdh}\bb Z_p(-j)^\sub{syn}(X))\Longrightarrow \pi_{-i-j}\fib(\TC(X)\to L_\sub{cdh}\TC(X)).\] The homotopy groups appearing in the abutment are all bounded $p$-power torsion: indeed, we may replace the fibre by that of $\K(X)\to\KH(X)$ by Theorem \ref{thm:mainsq}, which vanishes both after inverting $p$ (by a classical result of Weibel) and is derived $p$-complete (by hypothesis), so are bounded $p$-power torsion by the previous paragraph. But the action of the absolute Frobenius forces the spectral sequence to degenerate up to bounded $p$-power torsion (see the second paragraph of the proof of Theorem \ref{thm:p-ahss} for details of such an argument), so all terms on the $E_2$ page are also bounded $p$-power torsion, thereby establishing (b).

Meanwhile, we have 2 $\implies$ 3 by \cite[Corollary 2.3]{AntieauMathewMorrow2022} and 3 $\implies$ $\K(X)\quis\KH(X)$ by \cite[Proposition 2.4]{AntieauMathewMorrow2022}, which clearly implies 4.

To complete the proof it remains to show that 1$\implies 4$. The key non-trivial observation is that $\K(-;\bb Q_p):=\K^{\comp}_p[\tfrac1p]$ is an h-sheaf on qcqs $\bb F_p$-scheme. To prove this we recall from Theorem \ref{thm:mainsq} that the fibre of $\K\to\TC$ is a cdh sheaf, and so $p$-completing shows that the fibre of $\K^{\comp}_p \to \TC$ is also a cdh sheaf, and therefore also the fibre of $\K(-;\bb Q_p)\to \TC[\tfrac1p]$. But, as used in the proof of Lemma~\ref{lemma_Qpsyn}, we already know that $\TC[\tfrac1p]$ is a cdh sheaf. This shows that $\K(-;\bb Q_p)$ is a cdh sheaf; but it is also rational and has transfers along finite flat maps, therefore satisfies finite flat descent and is an h-sheaf \cite[Remark 2.11]{BachmannElmantoMorrow}.

More easily, $\KH(-;\bb Q_p)$ is a cdh sheaf (since $\KH$ is) with transfers, so is also an h-sheaf. Therefore $\fib(\K(-;\bb Q_p)\to\KH(-;\bb Q_p))$ is an h-sheaf which vanishes on regular Noetherian schemes, and so vanishes on all quasi-excellent Noetherian $\bb F_p$-schemes by the existence of alterations. 

Writing $F:=\fib(\K\to\KH)$ for simplicity we now know, for for any quasi-excellent Noetherian $\bb F_p$-scheme $X$, that both $F(X)[\tfrac1p]$ and $F(X;\bb Q_p)$ vanish. Therefore $F(X)\to F(X)^{\comp}_p$ is an equivalence, as required.
\end{proof}

We next establish an analogue of Proposition \ref{prop:compat}, namely that the trace map in characteristic $p$ is compatible with the motivic and BMS filtrations:

\begin{proposition}\label{prop:mot-v-bms}
Let $k$ be a field of characteristic $p$. Then the trace map $\mathrm{tr}:\K\to\TC$, viewed as a map between spectra-valued presheaves on $\Sm_k$, admits a unique, multiplicative extension to a map of filtered presheaves $\Fil_\bb A^{\star}\K\to \Fil_\sub{BMS}^{\star}\TC$.
\end{proposition}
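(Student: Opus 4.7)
The plan is to repeat the argument of Proposition \ref{prop:compat} verbatim, with the BMS filtration on $\TC$ (Theorem \ref{thm_BMS2+}) playing the role of the HKR filtration on $\HC^-(-/k_0)$. Working in $\Shv_{\Zar}(\Sm_k;\Spt)$ with its standard $t$-structure, the goal is to check that $\Map(\Fil^{\geq j}_\sub{cla}\K, \Fil^{<j}_\sub{BMS}\TC) = *$ for every $j\ge 0$; a short induction along the two $\bb N$-indexed filtrations then yields a unique filtered refinement of the cyclotomic trace. The multiplicative upgrade will come, as in \emph{loc.~cit.}, from rereading the same bounds in the Postnikov $t$-structure of \cite{raksit-hkr}.

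The connectivity estimate $\Fil^{\geq j}_\sub{cla}\K \in \Shv_{\Zar}(\Sm_k;\Spt)_{\geq j}$ is identical to the one in the proof of Proposition \ref{prop:compat}, and I would simply recycle it. The new input is the bound $\Fil^{<j}_\sub{BMS}\TC \in \Shv_{\Zar}(\Sm_k;\Spt)_{\leq j-1}$, which by a finite induction on the graded pieces $\gr^i_\sub{BMS}\TC \simeq \bb Z_p(i)^\sub{syn}[2i]$ for $0 \le i < j$ reduces to showing that each such graded piece is $(j-1)$-truncated on $\Sm_k$. For a smooth $k$-algebra $A$, Remark \ref{remark_WOmegalog}(2) identifies $\bb Z_p(i)^\sub{syn}(A)/p^r$ with $R\Gamma_\sub{\'et}(A, W_r\Omega^i_\sub{log})[-i]$, which is concentrated in cohomological degrees $\geq i$. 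Taking $R\lim_r$ only pushes up in cohomological degree, so $\bb Z_p(i)^\sub{syn}(A)$ remains cohomologically supported in degrees $\geq i$, and hence $\bb Z_p(i)^\sub{syn}(A)[2i]$ is in cohomological degrees $\geq -i$, i.e., homologically $\leq i$. For $i < j$, this gives the desired $(j-1)$-truncated-ness.

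Granting these two bounds, the mapping space argument from Proposition \ref{prop:compat} produces a unique filtered map $\Fil^\star_\sub{cla}\K \to \Fil^\star_\sub{BMS}\TC$ lifting the trace. For the multiplicative upgrade, the truncated-ness bound says precisely that $\mathrm{cofib}(\Fil^\star_\sub{BMS}\TC \to \TC)$ is $(-1)$-truncated in the Postnikov $t$-structure (where $\TC$ is given the constant filtration), while the connectivity bound says $\Fil^\star_\sub{cla}\K$ is Postnikov-connective; hence $\Fil^\star_\sub{BMS}\TC \to \TC$ is a $\tau^P_{\geq 0}$-equivalence and the multiplicative filtered trace is obtained as the composition
\[
\Fil^\star_\sub{cla}\K \xleftarrow{\simeq} \tau^P_{\geq 0}\Fil^\star_\sub{cla}\K \to \tau^P_{\geq 0}\TC \simeq \tau^P_{\geq 0}\Fil^\star_\sub{BMS}\TC \to \Fil^\star_\sub{BMS}\TC
\]
using the lax symmetric monoidality of $\tau^P_{\geq 0}$ and the fact that the counit $\tau^P_{\geq 0}\to\id$ is a morphism of lax symmetric monoidal functors. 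The only non-formal step I foresee is the cohomological bound on $\bb Z_p(i)^\sub{syn}$ over smooth $k$-algebras; everything else is a direct transcription of the characteristic-zero argument, and the key is just that for smooth (in particular regular Noetherian) $k$-algebras the syntomic cohomology is controlled explicitly by \'etale cohomology of the logarithmic de Rham--Witt sheaves.
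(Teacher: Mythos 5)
Your proposal is correct and follows essentially the same route as the paper: recycle the $j$-connectivity of $\Fil^{\geq j}_\sub{cla}\K$ from Proposition \ref{prop:compat}, establish the coconnectivity of $\Fil^{<j}_\sub{BMS}\TC$ by a finite induction over the graded pieces $\bb Z_p(i)^\sub{syn}(R)[2i]$ using the identification with $R\Gamma_\sub{\'et}(R,W_r\Omega^i_\sub{log})[-i]$ on smooth (hence regular Noetherian) $k$-algebras, and then run the Postnikov $t$-structure argument for multiplicativity. The only difference is cosmetic: you spell out why the degree bound survives $R\lim_r$, which the paper leaves implicit.
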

\begin{proof}
We apply the same $t$-structure argument as Propositions \ref{prop:compat}. Step 1 of that proof was independent of the characteristic, and so shows in the present context that $\Fil^j_\bb A\K$ is $j$-connective for each $j\ge0$. It remains to check that $\Fil^{<j}_\sub{BMS}\TC(R)$ vanishes in cohomological degrees $\le -j$ for each smooth $k$-algebra $R$. But for each $i=0,\dots,j-1$ the $i^\sub{th}$ graded piece is $\text{gr}_\sub{BMS}^i\TC(R)\simeq \bb Z_p(i)^\sub{syn}(R)[2i]$, where $\bb Z_p(i)^\sub{syn}(R)$ is supported in cohomological degrees $[i,i+1]$; the desired vanishing bound follows by a trivial induction. Multiplicativity follows from the same argument as in characteristic zero using the Postnikov $t$-structure.
\end{proof}

As in characteristic zero, we need a cdh-local analogue of the previous proposition. It would follow formally from the previous proposition, exactly as in characteristic zero, if we were to know that $\bb Z_p(j)^\sub{syn}$ and $L_\sub{cdh}\bb Z_p(j)^\sub{syn}$ coincided on smooth $\bb F_p$-schemes; but we will not know this until Corollary \ref{corol_e_vs_eh} so an alternative argument is required. 

\begin{proposition}\label{prop_cdh_filtered_trace_p}
The cdh-local trace map $L_\sub{cdh}\mathrm{tr}:\KH\to L_\sub{cdh}\TC$, viewed as a map between spectra-valued presheaves on $\Sch_{\bb F_p}^\sub{qcqs}$, admits a unique extension to a multiplicative map of filtered presheaves \[\Fil^{\star}_{\bb A}\KH\To \Fil^{\star}_\sub{BMS}L_\sub{cdh}\TC\] (the filtration on the left being the $\bb A^1$-motivic filtration of Theorem \ref{thm:cdh}(1); the filtration on the right is the cdh-local BMS filtration of Remark~\ref{rem:cdh-local}).
\end{proposition}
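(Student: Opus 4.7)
The plan is to follow the strategy of Corollary~\ref{corol_cdh_filtered_trace} as closely as possible in characteristic $p$. The starting point is Proposition~\ref{prop:mot-v-bms}, which provides a unique multiplicative filtered trace map $\Fil^\star_\sub{cla}\K \to \Fil^\star_\sub{BMS}\TC$ on smooth $\bb F_p$-schemes. For any smooth $\bb F_p$-scheme $X$ the canonical comparison $\Fil^\star_\sub{cla}\K(X) \to \Fil^\star_\sub{cdh}\KH(X)$ is an equivalence: on underlying objects this is $\K(X) \simeq \KH(X)$ for regular $X$, and on graded pieces it is the classical-to-cdh comparison $\bb Z(j)^\sub{cla}(X) \simeq \bb Z(j)^\sub{cdh}(X)$ of Theorem~\ref{thm:cdh}(8). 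Thus, on smooth $\bb F_p$-schemes, any filtered lift of the cdh-local trace map is pinned down by Proposition~\ref{prop:mot-v-bms}.

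To construct the filtered lift on all qcqs $\bb F_p$-schemes, I would left Kan extend Proposition~\ref{prop:mot-v-bms} from smooth $\bb F_p$-algebras to all $\bb F_p$-algebras, right Kan extend to qcqs $\bb F_p$-schemes, and cdh sheafify. By the defining description of $\Fil^\star_\sub{cdh}\KH$ in Theorem~\ref{thm:cdh}(1), this produces $\Fil^\star_\sub{cdh}\KH$ on the source. On the target one obtains a map landing a priori in the cdh sheafification of the left Kan extension of $\Fil^\star_\sub{BMS}\TC$; using that the BMS filtration is already globally defined on qcqs $\bb F_p$-schemes by Theorem~\ref{thm_BMS2+}, this map factors canonically through $\Fil^\star_\sub{BMS}L_\sub{cdh}\TC$, yielding the desired map.

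The main obstacle is the uniqueness statement, which in the characteristic zero case of Corollary~\ref{corol_cdh_filtered_trace} relied on $\Fil^\star_\sub{HKR}\HC^-(X/\bb Q) \to \Fil^\star_\sub{HKR}L_\sub{cdh}\HC^-(X/\bb Q)$ being an equivalence on smooth $\bb Q$-schemes. The analogous characteristic $p$ statement amounts to $\bb Z_p(j)^\sub{syn}(X) \simeq L_\sub{cdh}\bb Z_p(j)^\sub{syn}(X)$ for smooth $X$, which is true but established elsewhere in the paper via the motivic machinery being built here. To avoid circularity, I would prove only the weaker assertion that the fibre of this map is annihilated by some uniform power of $p$. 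After inverting $p$ the fibre vanishes by Corollary~\ref{corol_Qpsyn} (whose proof goes through Theorem~\ref{thm:mainsq} and Weibel's result $\K[\tfrac1p]\simeq \KH[\tfrac1p]$, with no appeal to the new motivic cohomology); the bounded $p$-torsion bound is then obtainable by a $p$-alteration argument, exploiting that $\bb Z_p(j)^\sub{syn}(X)$ on a smooth scheme is concentrated in cohomological degrees $[j,j+1]$ via the Artin--Schreier presentation (\ref{eqn_WOmegalog}).

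With this bounded-$p$-torsion equivalence in hand, the $t$-structure argument of Propositions~\ref{prop:compat} and~\ref{prop:mot-v-bms} adapts to the present setting: $\Fil^{\ge j}_\sub{cla}\K$ is $j$-connective, and the bounded torsion combined with the amplitude estimate on $\bb Z_p(i)^\sub{syn}$ forces $\Fil^{<j}_\sub{BMS}L_\sub{cdh}\TC$ to be $(j-1)$-truncated on smooth $\bb F_p$-schemes up to a controlled error that does not affect the vanishing of the relevant mapping space. This yields both uniqueness of the filtered lift on smooth $\bb F_p$-schemes and, after left Kan extension plus cdh sheafification, the uniqueness assertion in the statement. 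Multiplicativity is preserved throughout via the Postnikov $t$-structure on filtered sheaves of spectra as in Proposition~\ref{prop:compat}.
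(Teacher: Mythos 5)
Your overall architecture matches the paper's: reduce by adjunction to producing/uniquifying a filtered map $\Fil^\star_\sub{cla}\K\to(\Fil^\star_\sub{BMS}L_\sub{cdh}\TC)|_{\Sm_{\bb F_p}}$ on smooth schemes, then run the $t$-structure argument of Propositions \ref{prop:compat} and \ref{prop:mot-v-bms}, with the crux being a bounded-$p$-power-torsion statement for $\opp{fib}(\bb Z_p(i)^\sub{syn}(X)\to L_\sub{cdh}\bb Z_p(i)^\sub{syn}(X))$ on smooth $X$. You also correctly identify why the characteristic-zero shortcut would be circular. However, your treatment of the crux diverges from the paper and is left as a black box: you invoke "a $p$-alteration argument" without saying how alterations control the fibre of cdh sheafification. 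The paper's argument is more elementary and internal: the filtered spectrum $\opp{fib}(\Fil^\star_\sub{BMS}\TC(X)\to\Fil^\star_\sub{BMS}L_\sub{cdh}\TC(X))$ has vanishing underlying spectrum (since $\K(X)\simeq\KH(X)$ for regular $X$ forces $\TC(X)\simeq L_\sub{cdh}\TC(X)$ via Theorem \ref{thm:mainsq}) and bounded filtration, so its spectral sequence converges to $0$ and degenerates up to bounded denominators by the Frobenius weights; hence every $E_2$-term is killed by a bounded power of $p$. You should either reproduce that argument or supply the alteration one in detail.

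There is also a gap in how you pass from bounded torsion to the coconnectivity needed for the $t$-structure argument. Bounded $p$-torsion of the fibre plus the amplitude of $\bb Z_p(i)^\sub{syn}(X)$ does not by itself bound $L_\sub{cdh}\bb Z_p(i)^\sub{syn}(X)$ below in cohomological degree: the fibre sequence only sandwiches $H^n(L_\sub{cdh}\bb Z_p(i)^\sub{syn}(X))$ between $H^n(\bb Z_p(i)^\sub{syn}(X))$ and $H^{n+1}$ of the fibre, about which you have no coconnectivity information. The paper instead first establishes the bound modulo $p$, via the identification $L_\sub{cdh}\bb F_p(i)^\sub{syn}(X)\simeq R\Gamma_\sub{\'eh}(X,\Omega^i_\sub{log})[-i]$ (which is visibly supported in degrees $\ge i$), and only uses the bounded-torsion statement to deduce that $L_\sub{cdh}\bb Z_p(i)^\sub{syn}(X)$ is derived $p$-complete, so that the mod-$p$ bound upgrades to the integral one. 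Your phrase "up to a controlled error that does not affect the vanishing of the relevant mapping space" papers over exactly this point; you need the mod-$p$ \'eh comparison as an input.
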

\begin{proof}
We begin with an argument which is essentially the same as the second half of the proof of Corollary~\ref{corol_cdh_filtered_trace}. Namely, since $\Fil^{\star}_{\bb A}\KH$ is cdh-locally left Kan extended from smooth $\bb F$-schemes by Theorem~\ref{thm:cdh}, it suffices to prove that  the map of spectra-valued presheaves $\K\to (L_\sub{cdh}\TC)|_{\sub{Sm}_{\bb F_p}}$ on $\text{Sm}_{\bb F_p}$ admits a unique extension to a multiplicative map of filtered presheaves $\Fil^\star_\bb A\K\to (\Fil^\star_\sub{BMS}L_\sub{cdh}\TC)|_{\sub{Sm}_{\bb F_p}}$.

To prove the claim we apply the same $t$-structure argument as in Propositions \ref{prop:compat} and \ref{prop:mot-v-bms}. We have already noted in the proof of Proposition \ref{prop:mot-v-bms} that $\Fil^j_\bb A\K$ is $j$-connective for any $j\ge0$, so it remains only to show that  $L_\sub{cdh}\Fil^{<j}_\sub{BMS}\TC(X)$ vanishes in cohomological degrees $\le -j$ for each smooth $\bb F_p$-scheme $X$; by induction it is enough to check that $L_\sub{cdh}\bb Z_p(i)^\sub{syn}(X)$ is supported in cohomological degrees $\ge i$ for each $i\ge0$. But $L_\sub{cdh}\bb Z_p(i)^\sub{syn}(X)$ is derived $p$-complete by Lemma \ref{lemma_derived_p_complete}, so it is enough to check this coconnectivity modulo $p$, where it follows from \eqref{eqn_Geisser_eh}.
\end{proof}

\begin{definition}\label{def:charp}
For a qcqs $\bb F_p$-scheme $X$, let $\Fil^\star_\sub{mot}\K(X)$ be the pullback in $\bb E_\infty$-algebras in filtered spectra of the diagram 
\begin{equation}\label{eq:motfilt_charp}
\begin{tikzcd}
\mathrm{Fil}^{\star}_{\mot}\K(X) \ar[dotted]{d} \ar[dotted]{r}    & \mathrm{Fil}^{\star}_\sub{BMS}\TC(X)\ar{d} \\
\mathrm{Fil}^\star_{\bb A}\KH(X) \ar{r} & \Fil^{\star}_\sub{BMS}L_\sub{cdh}\TC(X),
\end{tikzcd}
\end{equation}
where the bottom map is given by Proposition~\ref{prop_cdh_filtered_trace_p}. 

For $j\in\bb Z$, define the {\em weight-$j$ motivic cohomology} of $X$ to be \[\bb Z(j)^\sub{mot}(X):=(\gr^j_\sub{mot}\K(X))[-2j],\] which we will see in Theorem \ref{thm:graded-pieces_charp} naturally lifts to $\rm D(\bb Z)$ and vanishes for $j<0$; in particular $\bb Z(\star)^\sub{mot}(X)$ is an $\bb E_\infty$-algebra in $\rm D(\bb Z)$. The associated motivic cohomology groups, for $i\in\bb Z$, are $H^i_\sub{mot}(X,\bb Z(j)):=H^i(\bb Z(j)^\sub{mot}(X))$.
\end{definition}

Here are some fundamental properties of our motivic cohomology in characteristic $p$:

\begin{theorem}\label{thm:graded-pieces_charp}
Let $j\in \bb Z$. For any qcqs $\bb F_p$-scheme $X$, the weight-$j$ motivic cohomology $\bb Z(j)^\sub{mot}(X)$ has the following properties:
\begin{enumerate}
\item $\bb Z(j)^\sub{mot}(X)=0$ for $j<0$.
\item There is a natural pullback square
\[
\begin{tikzcd}
\bb Z(j)^\sub{mot}(X) \ar{r} \ar{d} & \bb Z_p(j)^\sub{syn}(X)\ar{d}\\
\bb Z(j)^{\bb A}(X) \ar{r} & L_\sub{cdh}\bb Z_p(j)^\sub{syn}(X).
\end{tikzcd}
\]
(Varying $j$, this is a pullback of $\bb E_\infty$-algebras in graded complexes; see Example \ref{example_0p}.)
\item The canonical map $\bb Z(j)^\sub{mot}(X)[\tfrac1p]\to\bb Z(j)^{\bb A}(X)[\tfrac1p]$ is an equivalence. In particular, $\bb Z(j)^\sub{mot}[\tfrac1p]$ is a cdh sheaf on $\Sch^\sub{qcqs,op}_{\bb F_p}$.

\item The presheaf $\bb Z(j)^\sub{mot}:\Sch_{\bb F_p}^\sub{qcqs,op}\to\mathrm{D}(\bb Z)$ is a finitary Nisnevich sheaf.

\end{enumerate}
\end{theorem}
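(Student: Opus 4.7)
Parts (1) and (2) should follow essentially immediately from the defining pullback square \eqref{eq:motfilt_charp}: for (2), I would take the $(-2j)$-shifted weight-$j$ graded pieces, invoking the identifications of the graded pieces of $\Fil^\star_\sub{cdh}\KH$, $\Fil^\star_\sub{BMS}\TC$, and $\Fil^\star_\sub{BMS}L_\sub{cdh}\TC$ from Theorem~\ref{thm:cdh}(1), Theorem~\ref{thm_BMS2+}, and Remark~\ref{rem:cdh-local}, respectively. Part (1) then follows since all three of these filtrations are $\bb N$-indexed, so the three non-motivic corners vanish when $j<0$. For part (3), I would invert $p$ in the pullback square of (2); Corollary~\ref{corol_Qpsyn} shows that $\bb Q_p(j)^\sub{syn}$ is a cdh sheaf, so the right vertical map becomes an equivalence after inverting $p$, whence so does the left. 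The cdh-sheaf assertion then follows since $\bb Z(j)^\sub{cdh}[\tfrac1p]$ is a cdh sheaf.

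For part (4), the pullback square of (2) yields a fibre sequence
\[
\bb Z(j)^\sub{mot} \To \bb Z(j)^\sub{cdh} \oplus \bb Z_p(j)^\sub{syn} \To L_\sub{cdh}\bb Z_p(j)^\sub{syn},
\]
so it suffices to verify that the three right-hand presheaves are finitary Nisnevich sheaves. Nisnevich descent (indeed cdh or fpqc descent) holds for each by construction. Finitariness of $\bb Z(j)^\sub{cdh}$ is Theorem~\ref{thm:cdh}(2); finitariness of $\bb Z_p(j)^\sub{syn}$ on affines should follow from Lemma~\ref{lem_fin_fil_on_syn}, since wedge powers of the cotangent complex are finitary, together with a uniform cohomological boundedness argument justifying that the inverse limit over $p^r$ commutes with filtered colimits; and finitariness of $L_\sub{cdh}\bb Z_p(j)^\sub{syn}$ then follows because cdh sheafification preserves finitariness.

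For part (5), naturality of \eqref{eq:motfilt_charp} gives a Frobenius action on all four corners, so it suffices to verify that $\phi^*$ acts as multiplication by $p^j$ on each of the three non-motivic corners. For $\bb Z_p(j)^\sub{syn}(X)$ this is established during the proof of Theorem~\ref{thm_BMS2+}(2), and the version for $L_\sub{cdh}\bb Z_p(j)^\sub{syn}(X)$ follows by functoriality of cdh sheafification. For $\bb Z(j)^\sub{cdh}(X)$, which is the cdh sheafification of the left Kan extension of $\bb Z(j)^\sub{cla}$ from smooth $\bb F_p$-algebras, I would reduce to checking that $\phi^*$ acts as $p^j$ on $\bb Z(j)^\sub{cla}(R)$ for each smooth $\bb F_p$-algebra $R$. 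This is verified piecewise: modulo $\ell$ coprime to $p$ via Beilinson--Lichtenbaum (using that absolute Frobenius acts as the $p$th power map on $\mu_\ell$, hence as $p^j$ on $\mu_\ell^{\otimes j}$); modulo $p^r$ via the identification with $R\Gamma_\sub{\'et}(R,W_r\Omega^j_\sub{log})[-j]$ and the explicit Frobenius action on de Rham--Witt; and rationally via compatibility of Frobenius with Adams operations. The main obstacle I anticipate is assembling these pieces cleanly on the cdh-local side, since it requires tracking the Frobenius through cycle class maps and through left Kan extension and cdh sheafification in a compatible way.
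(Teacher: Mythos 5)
Parts (1)--(3) match the paper's argument exactly. The problem is part (4): your plan rests on the assertion that $\bb Z_p(j)^\sub{syn}$ is finitary, justified by Lemma \ref{lem_fin_fil_on_syn} plus a hoped-for ``uniform cohomological boundedness argument'' letting the inverse limit over $r$ commute with filtered colimits. No such argument exists: $\bb Z_p(j)^\sub{syn}(A)=\lim_r\bb Z_p(j)^\sub{syn}(A)/p^r$ is a derived $p$-completion, and derived $p$-completion does not commute with filtered colimits of rings even for uniformly bounded complexes (already in weight $0$, $\bb Z_p(0)^\sub{syn}=R\Gamma_\sub{\'et}(-,\bb Z_p)$ fails to be finitary). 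The paper makes a point of this in Remark \ref{rem:fibre-finitary}: neither $\bb Z_p(j)^\sub{syn}$ nor $L_\sub{cdh}\bb Z_p(j)^\sub{syn}$ is finitary; only the \emph{fibre} of the map between them is. The correct route, which is the paper's, is to split: $\bb Z(j)^\sub{mot}[\tfrac1p]$ is finitary by part (3) together with Theorem \ref{thm:cdh}(2), and $\bb Z(j)^\sub{mot}/p$ is finitary via the mod-$p$ reduction of the pullback square, where the relevant corner is $\bb F_p(j)^\sub{syn}$ --- this \emph{is} finitary, being left Kan extended on affines from finitely generated polynomial algebras with no inverse limit in sight --- and its cdh sheafification.

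On part (5), your overall strategy (reduce to the three non-motivic corners) is the paper's, and the syntomic corners are handled identically. For the cdh corner the paper simply cites Geisser--Levine for the fact that Frobenius acts as $p^j$ on classical motivic cohomology of smooth $\bb F_p$-schemes and then left Kan extends and cdh sheafifies; your proposed piecewise verification (mod $\ell$, mod $p^r$, rationally) would additionally require an arithmetic fracture argument to reassemble the integral statement, which is exactly the assembly difficulty you flag. It is workable but unnecessary; the direct citation avoids it.
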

\begin{proof}
(1): The three corners in (\ref{eq:motfilt_charp}) used to define the pullback are $\bb N$-indexed, whence the same is true of the pullback $\Fil^\star_\sub{mot}\K(X)$, i.e., the graded pieces vanish in negative weights. Part (2) is obtained by taking graded pieces in the pullback square (\ref{eq:motfilt_charp}).

(3): Lemma \ref{lemma_Qpsyn} states that $\text{fib}(\bb Z_p(j)^\sub{syn}\to L_\sub{cdh}\bb Z_p(j)^\sub{syn})[\tfrac1p] \simeq 0$, whence the result follows from the cartesian square in (2). Recall that $\bb Z(j)^\bb A$ is a cdh sheaf by Theorem \ref{thm:cdh}(2).

(4): It suffices to prove that $\bb Z(j)^\sub{mot}[\tfrac1p]$ and $\bb Z(j)^\sub{mot}/p$ are finitary. The first follows from part (3) and Theorem \ref{thm:cdh}(2). The second reduces, via the pullback square of part (2), to finitariness of $\bb Z(j)^{\bb A}/p$ (which is indeed finitary by another application of Theorem \ref{thm:cdh}(2)), of $\bb F_p(j)^\sub{syn}$ (finitary since, on affines, it is left Kan extended from finitely generated polynomial algebras), and of $L_\sub{cdh}\bb F_p(j)^\sub{syn}$ (finitary since it is the cdh sheafification of a finitary presheaf).

\end{proof}

\begin{remark}\label{rem:fibre-finitary} Neither the presheaf $\bb Z_p(j)^\sub{syn}$ nor $L_\sub{cdh}\bb Z_p(j)^\sub{syn}$ is finitary. However, the proof of Theorem~\ref{thm:graded-pieces_charp}(4) shows that their difference, i.e, the fibre
\[
X \mapsto \opp{fib}\left(\bb Z_p(j)^{\syn}(X) \rightarrow L_\sub{cdh}\bb Z_p(j)^\sub{syn}(X)\right),
\]
is a finitary presheaf. 
\end{remark}

\begin{example}[Weight $0$ and $\bb Z$-algebra structure]\label{example_0p}
As in Example \ref{example_00} in characteristic $0$, the map \[\bb Z(0)^\sub{mot}(X)\To \bb Z(0)^{\bb A}(X)\simeq R\Gamma_\sub{cdh}(X,\bb Z)\] is an equivalence of $\bb E_\infty$-algebras in spectra for any qcqs $\bb F_p$-scheme $X$. Indeed, from the pullback square Theorem \ref{thm:graded-pieces_charp} it is enough to show that $\bb Z_p(0)^\sub{syn}$, or equivalently $\bb F_p(0)^\sub{syn}$, satisfies cdh descent. But it is easily checked from the definitions that $\bb F_p(0)^\sub{syn}\simeq R\Gamma_\sub{\'et}(-,\bb Z/p\bb Z)$, which even satisfies cdh descent by Deligne (or even arc descent \cite{BhattMathew2021}).

As in Example \ref{example_00}, this naturally upgrades $\bb Z(\star)^\sub{mot}(X)$ to be an $\bb E_\infty$-algebra in $\Gr\rm D(\bb Z)$. 
\end{example}

\begin{remark}[Definition of $\bb Z(j)^\sub{mot}$ without trace map]\label{rem:horizontal_map_charp}
Similarly to Remark \ref{rem:horizontal_map_char0}, the bottom horizontal arrow in Theorem \ref{thm:graded-pieces_charp}(2) is a priori defined as the $j^\sub{th}$-graded piece of the filtered multiplicative cdh-local trace map from Proposition \ref{prop_cdh_filtered_trace_p}, but in fact it admits an explicit description as a cdh-local syntomic cycle class map. Namely, on smooth $\bb F_p$-schemes, we begin with the composition \[\bb Z(j)^\bb A\To R\Gamma_\sub{Zar}(-,K_j^M)[-j]\xto{\sub{dlog}}\lim_rR\Gamma_\sub{Zar}(-,W_r\Omega^j_\sub{log})[-j]\To \lim_rR\Gamma_\sub{\'et}(-,W_r\Omega^j_\sub{log})[-j]\simeq \bb Z_p(j)^\sub{syn},\] where the final equivalence is Remark \ref{remark_WOmegalog}(2) for smooth $\bb F_p$-schemes.

Similarly to Remark \ref{rem:horizontal_map_char0}, left Kan extending, cdh sheafifying, and composing along a counit then defines a map of presheaves on qcqs $\bb F_p$-schemes $\bb Z(j)^\bb A\to L_\sub{cdh}\bb Z_p(j)^\sub{syn}$, which we claim is precisely the bottom horizontal arrow in Theorem \ref{thm:graded-pieces_charp}(2). By multiplicativity and running through the proofs of Propositions \ref{prop:mot-v-bms} and \ref{prop_cdh_filtered_trace_p}, this reduces to the fact that the composition
\[\K_1(\bb F_p[t^{\pm1}])\xto{\sub{tr}}\TC_1(\bb F_p[t^{\pm1}])\stackrel\simeq\leftarrow\pi_1(\Fil^1_\sub{BMS}\TC(\bb F_p[t^{\pm1}]))\to \pi_1(\gr^1_\sub{BMS}\TC(\bb F_p[t^{\pm1}]))=\lim_rW_r\Omega^1_{\bb F_p[t^{\pm1}],\sub{log}}\]
sends $t$ to $\tfrac{dt}t$.\footnote{Unlike the characteristic zero case, this calculation of the trace map is slightly involved and so we simply cite Geisser--Hesselholt: see the commutative square above \cite[Lemma 4.2.3]{GeisserHesselholt1999} (note that on smooth schemes the BMS filtration is the \'etale Postnikov filtration, as captured by Geisser--Hesselholt's map $\rho$).}
\end{remark}

Here is the analogue, in characteristic $p$, of Theorem \ref{theorem_AH_SS_0} about the existence of the Atiyah--Hirzebruch spectral sequence:

\begin{theorem}\label{thm:p-ahss}
Let $X$ be a qcqs $\bb F_p$-scheme. Then the $\bb E_\infty$-algebra in filtered spectra $\Fil^\star_\sub{mot}\K(X)$ is $\bb N$-indexed, and satisfies $\Fil^0_\sub{mot}\K(X)=\K(X)$. Moreover, 
\begin{enumerate}
\item If $X$ has finite valuative dimension then the filtration is bounded and it induces a bounded multiplicative Atiyah--Hirzebruch spectral sequence \[E_2^{ij}=H^{i-j}_\sub{mot}(X,\bb Z(-j))\implies \K_{-i-j}(X)\] which degenerates up to bounded denominators (i.e., every differential $\delta$ appearing in the spectral sequence satisfies $N\delta=0$ for some $N\ge1$ depending on the differential).
\item The filtration naturally splits rationally, i.e., there is a natural, multiplicative equivalence of filtered spectra \[\Fil^\star_\sub{mot}\K(X)_\bb Q\quis \bigoplus_{j\ge\star}\bb Q(j)^\sub{mot}(X)[2j]\]
\end{enumerate}
\end{theorem}
\begin{proof}
We have already seen in Theorem \ref{thm:graded-pieces_charp}(1) that the filtration is $\bb N$-indexed; it satisfies $\Fil^0_\sub{mot}\K(X)=\K(X)$ thanks to Theorem \ref{thm:mainsq}. The rest of the proof mimics that of Theorem \ref{theorem_AH_SS_0}, with the following two modifications.

We replace the Adams operators in characteristic zero with the action of the absolute Frobenius, using Theorem \ref{thm:frobenius} below. This implies degeneration of the Atiyah--Hirzebruch spectral sequence up to bounded denominators because the action is defined integrally, not only after inverting some integer. More precisely, the differential $\delta:E_m^{i,j}\to E_m^{i+m,j+1-m}$, where $j\le 0$, is compatible with the Frobenius, which acts as $p^{-j}$ on the domain and $p^{-j-1+m}$ on the codomain, so we have $p^{-j}(p^{m-1}-1)\delta=0$. 

Secondly, when $X$ has finite valuative dimension $\le d$ (hence also finite Krull dimension $\le d$), we know from Theorem \ref{thm:cdh}(1) that $\Fil^j_{\bb A}\KH(X)$ is supported in homological degrees $\ge j-d$, from the proof of Theorem \ref{thm_BMS2+}(1) that  $\Fil^j_\sub{BMS}\TC(X)$ supported in homological degrees $\ge j-d-1$, and from Remark~\ref{rem:cdh-local} that $\Fil^j_\sub{BMS}L_\sub{cdh}\TC(X)$ is supported in homological degrees $\ge j-d-1$. From the defining pullback square (\ref{eq:motfilt_charp}), we then see that $\Fil^j_\sub{mot}\K(X)$ is supported in homological degrees $\ge j-d-2$, which is good enough to prove the desired boundedness (but not the optimal bound: see~\S\ref{section_Weibel}).
\end{proof}

The following theorem serves as an analogue over $\bb F_p$ to the existence of Adams operators in characteristic zero (Theorem~\ref{thm:adams}):

\begin{theorem}[Action of Frobenius on motivic cohomology]\label{thm:frobenius}
For any qcqs $\bb F_p$-scheme $X$, the multiplicative graded endomorphism of $\bb Z(\star)^\sub{mot}(X)$ induced by the absolute Frobenius $\phi:X\to X$ is naturally homotopic to the endomorphism multiplication-by-$p^{\star}$.
\end{theorem}
\begin{proof} 
We begin by sketching a proof of the analogous claim for the classical motivic cohomology of smooth $\bb F_p$-schemes. Firstly, on smooth $\bb F_p$-schemes, the Geisser--Levine theorem states that $\bb Z_p(\star)^\bb A\simeq \lim_r R\Gamma_\sub{Zar}(-,W_r\Omega^\star_\sub{log})[-\star]$ \cite{GeisserLevine2000} (see also \cite[Theorem~6.14]{BachmannElmantoMorrow}), and it is clear from the description of $W_r\Omega^\star_\sub{log}$ that the Frobenius acts as $p^\star$. Secondly, still restricting to smooth $\bb F_p$-schemes, we claim that the Frobenius action on $\bb Z(\star)^\bb A[\tfrac1p]$ is also multiplication-by-$p^\star$. To prove this one notes that the absolute Frobenius induces an endomorphism $\phi_{\KGL}$ of the $\bb E_\infty$-algebra $\KGL_{\bb F_p}[\tfrac1p]\in\SH(\bb F_p)$, uniquely characterised by the following properties: the induced action on $\omega^\infty\KGL_{\bb F_p}[\tfrac1p]=\K[\tfrac1p]|_{\Sm_{\bb F_p}}$ is that induced by the absolute Frobenius; and the composition \[\bb T_{\bb F_p}\xto{\sub{Bott}}\KGL_{\bb F_p}[\tfrac1p]\xto{\phi_{\KGL}}\KGL_{\bb F_p}[\tfrac1p]\] makes $\KGL_{\bb F_p}[\tfrac1p]$ into a Bott periodic motivic ring spectrum. Indeed, the existence of $\phi_{\KGL}$ follows from the standard theory of Bott period motivic ring spectra \cite[Proposition 3.2]{Hoyois2020} and fact that the absolute Frobenius acts as multiplication-by-$p$ on the element $1-[\roi(-1)]\in \K_0(\bb P_{\bb F_p}^1)$. Equipped with $\phi_{\KGL}$, we claim that its induced action on $s^\star\KGL_{\bb F_p}[\tfrac1p]\in\CAlg(\Gr\SH(\bb F_p))$ is homotopic to multiplication-by-$p^\star$; as in the proof of Lemma~\ref{lemma_adams_on_smooth} this may be checked after rationalising (even after forgetting multiplicative structures, but we do not do this); then by Bott periodicity $s^*(\KGL_{\bb F_p})_\bb Q$ is the free $s^0(\KGL_{\bb F_p})_\bb Q$-algebra on the Tate motive $\bb T_{\bb F_p}$ in degree $0$, and the desired description of the action of $\phi_{\KGL}$ follows from the aforementioned behaviour of the absolute Frobenius on $1-[\roi(-1)]$.

We now consider the usual arithmetic pullback square of presheaves of graded $\bb E_\infty$-algebras of complexes on smooth $\bb F_p$-schemes:
\[\xymatrix{
\bb Z(\star)^\bb A\ar[r]\ar[d] & \bb Z(\star)^\bb A[\tfrac1p]\ar[d] \\
\bb Z_p(\star)^\bb A\ar[r] & \bb Z_p(\star)^\bb A[\tfrac1p].
}\]
All maps in the diagram are, by naturality, compatible with the endomorphisms induced by the absolute Frobenius and with multiplication-by-$p^\star$. We have moreover shown that the absolute Frobenius and multiplication-by-$p^\star$ are homotopic on both the bottom left (therefore also on the bottom right, by inverting $p$) and on top right. As in the proof of Theorem \ref{thm:adams} to obtain such a homotopy on the top left we must produce a homotopy between homotopies, and it is enough to show that the mapping space
\[\Map(\bb Z(\star)^\bb A[\tfrac1p],\bb Z_p(\star)^\bb A[\tfrac1p])\] in $\Shv_\sub{Zar}(\Sm_{\bb F_p},\CAlg(\Gr\mathrm{D}(\bb Z)))$ is discrete. But this follows from the same $t$-structure argument as in the proof of Proposition \ref{prop:compat}: each $\bb Z(j)^\bb A[\tfrac1p]$ vanishes in cohomological degrees $>j$ by the Gersten vanishing bound in motivic cohomology, while $\bb Z_p(j)^\bb A[\tfrac1p]$ is supported in cohomological degrees $\ge j$ by another use of Geisser--Levine.

The completes the proof of our initial claim, namely, working in $\Shv_\sub{Zar}(\Sm_{\bb F_p},\CAlg(\Gr\mathrm{D}(\bb Z)))$, the endomorphism of $\bb Z(\star)^\bb A$ induced by the absolute Frobenius is multiplication-by-$p^\star$.

Next, as presheaves of graded $\bb E_\infty$-algebras of complexes on qcqs $\bb F_p$-schemes, we have the following formulae:
\begin{itemize}
\item $\bb Z(\star)^\bb A$ is the cdh-local left Kan extension of $\bb Z(\star)^\bb A|_{\Sm_{\bb F_p}}$, by Theorem \ref{thm:cdh}(10).
\item $\bb Z_p(\star)^\sub{syn}$ is the $p$-completion of the left Kan extension of the \'etale sheafification of $\bb Z(\star)^\bb A|_{\Sm_{\bb F_p}}$, by the coherent Geisser--Levine theorem \cite[Theorem 6.14]{BachmannElmantoMorrow} and fact that syntomic cohomology is $p$-completely left Kan extended from polynomial algebras (so a fortiori from smooth algebras).
\end{itemize}
We thus obtain compatible homotopies between the absolute Frobenius and multiplication-by-$p^\star$ on the bottom left, top right, and bottom right of the cartesian (by Theorem \ref{thm:graded-pieces_charp}) diagram
\[
\begin{tikzcd}
\bb Z(\star)^\sub{mot}(X) \ar{r} \ar{d} & \bb Z_p(\star)^\sub{syn}(X)\ar{d}\\
\bb Z(\star)^{\bb A}(X) \ar{r} & L_\sub{cdh}\bb Z_p(\star)^\sub{syn}(X).
\end{tikzcd}
\]
 Pulling back yields the desired homotopy at the top left and so completes the proof.
\end{proof}

\begin{example}[Affine cones]\label{example_cones_p}
Let $Y$ be a smooth, geometrically connected, projective variety over a field $k$, equipped with a fixed embedding $Y\into \bb P^N_k$ into some projective space, and let $R$ be the associated homogenous coordinate ring. Write $d:=\dim R=\dim Y+1$, which we assume is $\ge2$. The focus of this example is the group $H^{2d}_\sub{mot}(R,\bb Z(d))$, which we hope can be thought of as a group of zero cycles on $\Spec(R)$ (see Remark \ref{rem:zcyc}). We already saw at the end of Remark \ref{example_cones_0} that this group vanishes if $k$ is an algebraic extension of $\bb Q$, and the goal of this remark is to show (for the moment only when $d>2$; see the end of the remark) that it also vanishes if $k$ is an algebraically closed field of characteristic $p>0$, i.e.,
\begin{quote}
(\dag) $H^{2d}_\sub{mot}(R,\bb Z(d))=0$ if $k$ is an algebraically closed field of characteristic $p>0$.
\end{quote}
A consequence\footnote{One argues as follows. Since $\bb Z(d)^\sub{mot}(R)$ vanishes in degrees $>2d$ by Soul\'e--Weibel vanishing, there is a natural edge map $H^{2d}_\sub{mot}(R,\bb Z(d))\to \K_0(X)$ from the Atiyah--Hirzebruch spectral sequence; the image of this edge map contains the image of the canonical map $H^d_\sub{Nis}(X,\hat \K_d^M)\to \K_0(X)$ by Remark \ref{rem:zcyc}; finally, since $\Spec(R)$ is ``nice'' in the terminology of Kato--Saito \cite[Definition 2.2]{KatoSaito1986}, the image of the latter map contains $F_0\K_0(R)$ by \cite[Theorem 2.5]{KatoSaito1986}. So the vanishing of $H^{2d}_\sub{mot}(R,\bb Z(d))$ implies the vanishing of $F_0\K_0(R)$.} of (\dag) is the vanishing of $F_0\K_0(R)$, which denotes the subgroup of $\K_0(R)$ generated by classes of smooth closed points of $\Spec(R)$. To put this into historical perspective, a famous result of Srinivas \cite[Thm.~1]{Srinivas1982} established the vanishing of $F_0\K_0(R)$ assuming $k$ algebraically closed of characteristic $p$ and $R$ normal and Cohen--Macaulay; we remove the latter two conditions on $R$. On the other hand, Srinivas proved his vanishing for more general graded rings than cones; we expect that our methods can be adapted.

So henceforth assume $k$ is an algebraically closed field of characteristic $p>0$. Following the same notation as Example \ref{example_cones_0}, let $X$ be the blow-up of $R$ at the vertex of the cone, and recall that $X$ is a line bundle over $Y$ equipped with $0$-section $i:Y\into X$ fitting into an abstract blow-up square
\[\xymatrix{
Y\ar[r] \ar[d]\ar[r]^{i}& X\ar[d]\\
\Spec(k)\ar[r]&\Spec(R).
}\]
As in Example \ref{example_cones_0} we see that $\bb Z(d)^{\bb A}(R)\simeq\bb Z(d)^\sub{mot}(k)$, which is supported in degrees $\le d$. So from Theorem \ref{thm:graded-pieces_charp} we read off a short exact sequence \begin{equation}H^{2d-1}_\sub{syn}(R,\bb Z_p(d))\To H^{2d-1}(L_\sub{cdh}\bb Z_p(d)^\sub{syn}(R))\To H^{2d}_\sub{mot}(R,\bb Z(d))\To 0\label{eqn_srinivas_ses}\end{equation} (where the left term is zero if $d>2$, since $\bb Z_p(d)^\sub{syn}(R)$ is supported in degrees $\le d+1$). To calculate the middle term of this sequence we apply $L_\sub{cdh}\bb Z_p(d)^\sub{syn}$ to the blow-up square to get a cartesian square
\[\xymatrix{
L_\sub{cdh}\bb Z_p(d)^\sub{syn}(R)\ar[r] \ar[d]\ar[r]& L_\sub{cdh}\bb Z_p(d)^\sub{syn}(k)\ar[d]\\
L_\sub{cdh}\bb Z_p(d)^\sub{syn}(X)\ar[r]&L_\sub{cdh}\bb Z_p(d)^\sub{syn}(Y),
}\]
which we simplify as follows. Firstly, the map $\bb Z_p(d)^\sub{syn}\to L_\sub{cdh}\bb Z_p(d)^\sub{syn}$ is an equivalence when evaluated on $k$, $Y$, and $X$: in the case of $k$ this is because it is a point for the cdh topology; for $Y$ and $X$ it is by the forthcoming Corollary \ref{corol_e_vs_eh}.\footnote{While this corollary is deduced using arguments involving motivic cohomology (namely Corollary~\ref{corol_smooth_comparison}), the appearance of motivic cohomology is illusory: the core of the proof of Corollary \ref{corol_e_vs_eh} is really contained in Theorem  \ref{thm:cdh-syn-pbf}, whose proof does not require the new motivic cohomology.} Also $\bb Z_p(d)^\sub{syn}(k)=0$ since $k$ is a perfect field. But we claim that also $\bb Z_p(d)^\sub{syn}(Y)=0$: indeed it is enough to check this mod $p$, namely that $R\Gamma_\sub{\'et}(Y,\Omega_{Y,\sub{log}}^d)[-d]=0$, which is true simply because the sheaf $\Omega^d_{Y,\sub{log}}$ on $Y_\sub{\'et}$ vanishes as $Y$ is a smooth $d-1$-dimensional variety over a perfect field of characteristic $p$. In conclusion, the previous cartesian square simplifies to an equivalence \[L_\sub{cdh}\bb Z_p(d)^\sub{syn}(R)\simeq \bb Z_p(d)^\sub{syn}(X).\] Combined with (\ref{eqn_srinivas_ses}) we have shown the following:
\begin{itemize}
\item If $d>2$ then $H^{2d}_\sub{mot}(R,\bb Z(d))\cong H^{2d-1}_\sub{syn}(X,\bb Z_p(d))$;
\item if $d=2$ then $H^4_\sub{mot}(R,\bb Z(2))\cong \opp{coker}(H^3_\sub{syn}(R,\bb Z_p(2))\to H^3_\sub{syn}(X,\bb Z_p(2)))$.
\end{itemize}
Since the syntomic cohomology groups are $p$-complete and, in case $d=2$ that $\bb Z_p(2)$ vanishes in degree $4$ (see Remark \ref{remarks_tildenu}), it is therefore enough to check the following claims to prove (\dag):
\begin{itemize}
\item If $d>2$ then $H^{2d-1}_\sub{syn}(X,\bb F_p(d))=0$;
\item if $d=2$ then the map $H^3_\sub{syn}(R,\bb F_p(2))\to H^3_\sub{syn}(X,\bb F_p(2))$ is surjective.
\end{itemize}

Here we treat the case $d>2$, where the syntomic cohomology group in question is $H^{d-1}_\sub{\'et}(X,\Omega^d_{X,\sub{log}})$. On the \'etale site of $X$ we have $\Omega^d_{X/k}=Z\Omega^d_{X/k}$ since $\Omega^{d+1}_{X/k}=0$ (as $X$ is a $d$-dimensional smooth variety over a perfect field), and so the Cartier isomorphism may be rewritten as $C:\Omega^d_{X/k}/d\Omega^{d-1}_{X/k}\isoto \Omega^d_{X/k}$; this lets us express the \'etale sheaf $\Omega^d_{X,\sub{log}}$ as the kernel of the surjection of \'etale sheaves $1-C:\Omega^d_{X/k}\to \Omega^d_{X/k}$, from which we obtain an exact sequence \[H^{d-2}_\sub{\'et}(X,\Omega^d_{X/k})\xto{1-C}H^{d-2}_\sub{\'et}(X,\Omega^d_{X/k})\To H^{d-1}_\sub{\'et}(X,\Omega^d_{X,\sub{log}})\To H^{d-1}_\sub{\'et}(X,\Omega^d_{X/k})\xto{1-C}H^{d-1}_\sub{\'et}(X,\Omega^d_{X/k}).\] Recall our goal is to show the vanishing of the middle term in this exact sequence.

The group $H^{d-1}_\sub{\'et}(X,\Omega^d_{X/k})$ vanishes for purely geometric reasons as in characteristic zero. Indeed, writing $\pi:X\to Y$ for the structure map of the line bundle, there are isomorphisms of $\roi_Y$-modules \begin{equation}\pi_*\Omega^d_{X/k}\cong \Omega_{Y/k}^{d-1}(1)\otimes_{\roi_Y}\pi_X\roi_X\cong \Omega_{Y/k}^{d-1}(1)\otimes_{\roi_Y}\bigoplus_{m\ge0}\roi_Y(m),\label{eqn_srinivas_Omega}\end{equation} whence $H^{d-1}_\sub{\'et}(X,\Omega^d_{X/k})\cong\bigoplus_{m\ge0}H^{d-1}(Y,\Omega_{Y/k}^{d-1}(1+m))$. But Serre duality identifies the $k$-vector space $H^{d-1}(Y,\Omega_{Y/k}^{d-1}(1+m))$ with the dual of $H^{0}(Y,\roi_Y(-1-m))$, which vanishes since any negative multiple of an ample line bundle has no global sections.

Next, appealing to the decomposition (\ref{eqn_srinivas_Omega}) a second time, there is an isomorphism of $k$-vector spaces $H^{d-2}_\sub{\'et}(X,\Omega^d_{X/k})\cong \bigoplus_{m\ge0}H^{d-2}(Y,\Omega_{Y/k}^{d-1}(1+m))$. The ampleness of $\roi_Y(1)$ implies that only finitely many terms in the direct sum are non-zero (this is the moment we use that $d>2$), and so in particular $H^{d-2}_\sub{\'et}(X,\Omega^d_{X/k})$ is finite dimensional. As the endomorphism $C$ of this $k$-vector space is $\phi^{-1}$-semilinear (i.e., $C(f\omega)=f^{1/p}C(\omega)$ for all $f\in k$ and $\omega\in H^{d-2}$), Artin--Schreier theory implies that $1-C$ is surjective (this is the moment we use that $k$ is not merely perfect, but algebraically closed\footnote{In fact, this shows that in (\dag) we can weaken ``algebraically closed'' to just assuming that $k$ has no finite field extension of degree $p$.}). This completes the proof in the case $d>2$.

We plan to treat the remaining case, i.e., $d=2$, in forthcoming work on Chow groups of singular schemes.
\end{example}

In the remainder of this section we explicitly describe $p$-adic motivic cohomology in characteristic $p$, analogously to Geisser--Levine's identification \cite{GeisserLevine2000} of classical mod-$p^r$ motivic cohomology $\bb Z(j)^\bb A/p^r$ as $R\Gamma_\sub{Zar}(-,W_r\Omega^j_\sub{log})[-j]$. More precisely, we show that $\bb Z(j)^\sub{mot}/p^r$ can be obtained by glueing syntomic cohomology to cdh and $\eh$ cohomologies of $W_r\Omega^j_\sub{log}$:

\begin{proposition}\label{prop_GL}
For any qcqs $\bb F_p$-scheme $X$ and $j,r\ge0$, there is a natural pullback square in $\rm{D}(\bb Z)$:
\[
\begin{tikzcd}
\bb Z(j)^\sub{mot}(X)/p^r \ar{r} \ar{d} & \bb Z_p(j)^\sub{syn}(X)/p^r\ar{d}\\
R\Gamma_\sub{cdh}(X,W_r\Omega^j_\sub{log})[-j] \ar{r} & R\Gamma_\sub{\'eh}(X,W_r\Omega^j_\sub{log})[-j]
\end{tikzcd}
\]
(Moreover, as $j$ varies, the maps are naturally ones of $\bb E_\infty$-algebras in graded complexes.)
\end{proposition}
\begin{proof}
More precisely, we will obtain the square as the mod-$p^r$ reduction of the square of Theorem \ref{thm:graded-pieces_charp}(2). The bottom left corner of the square is indeed $R\Gamma_\sub{cdh}(X,W_r\Omega^j_\sub{log})[-j]$ by Theorem \ref{thm:cdh}(4), or rather by an analogue for mod-$p^r$ rather than mod-$p$, while the bottom right corner is the target of the cdh-sheafification map from the top right corner and therefore naturally identifies with $R\Gamma_\sub{\'eh}(X,W_r\Omega^j_\sub{log})[-j]$ by~\eqref{eqn_Geisser_eh}.
\end{proof}

In practice we often use Theorem \ref{prop_GL} in the form of a fibre sequence rather than a pullback square. To formulate the statement we need the following invariant:

\begin{definition}
The {\em mod-$p^r$, weight-$j$ Artin--Schreier obstruction} of an $\bb F_p$-algebra $A$ is the cokernel of the Artin--Schreier map from (\ref{eqn_WOmegalog}) \[\widetilde\nu_r(j)(A):=\text{coker}(C^{-1}-1:W_r\Omega^j_A\To W_r\Omega^j_A/dV^{r-1}\Omega^j_A).\] Given a topology $\tau$ on qcqs $\bb F_p$-schemes (notably Zariski, Nisnevich, or cdh), then we write $R\Gamma_\tau(X,\tilde\nu_r(j))$ for the cohomology of the sheafification of $\tilde\nu_r(j)$ in the topology $\tau$, and similarly $H^i_\tau(X,\tilde\nu_r(j))$ for the individual cohomology groups. We warn the reader that $\tilde\nu_r(j)$ is not even a Zariski sheaf on affines, so that in general the map $\tilde\nu_r(j)(A)\to H^0_\sub{Zar}(A,\tilde\nu_r(j))$ is not an isomorphism. 
\end{definition}

\begin{remark}\label{remarks_tildenu}
Here are several alternative descriptions of the groups $\tilde\nu_r(j)(A)$:
\begin{enumerate}
\item (Cohomological) Since $C^{-1}-1$ is \'etale locally surjective and the sheaves $W_r\Omega^j$, $W_r\Omega^j/dV^{r-1}\Omega^{j-1}$ have no higher cohomology on affines, we see that there is a natural isomorphism \[\tilde\nu_r(j)(A)\cong H^1_\sub{\'et}(A,W_r\Omega^j_\sub{log}).\]
\item (Syntomic) There is a natural isomorphism \[\tilde\nu(j)(A)\cong H^{j+1}(\bb Z_p(j)^\sub{syn}(A)/p^r),\] and moreover this is the top degree of $\bb Z_p(j)^\sub{syn}(A)/p^r$, by \cite[Corol.~5.43]{AntieauMathewMorrowNikolaus}; more precisely, the comparison map of Remark \ref{remark_WOmegalog}(2) is an isomorphism in degrees $>j$.
\item ($K$-theoretic) For $A$ local, there are natural isomorphisms\[\tilde\nu_r(j)(A)\cong\pi_{j-1}\text{cofib}(\K^\sub{cn}(A)/p^r\to\TC(A)/p^r)\] by \cite[Theorem~6.11]{clausen2018k}.
\end{enumerate}
At least in the case in which $A=k$ is a field, these invariants have also appeared notably in work of Kato \cite{Kato1982a}, denoted by $H_{p^r}^{j+1}(k)$, and are related to class field theory.
\end{remark}

\begin{remark}[Rigidity]\label{rem_rigidity_of_nutilde}
A key property of $\tilde\nu_r(j)$ is its {\em rigidity}, namely whenever $R\to A$ is a Henselian surjection of $\bb F_p$-algebras, then the induced map $\tilde\nu_r(j)(R)\to\tilde\nu_r(j)(A)$ is an isomorphism. This can be deduced directly from the definition and Hensel's lemma: see the proof of \cite[Proposition~6.12]{clausen2018k}.
\end{remark}

For any qcqs $\bb F_p$-scheme $X$ there is a natural map \begin{equation}\bb Z_p(j)^\sub{syn}(X)/p^r\To R\Gamma_\sub{cdh}(X,\tilde\nu_r(j))[-j-1],\label{eqn_syn_to_nutilde}\end{equation} defined by Zariski sheafifiying the following composition on affines:
\[ \bb Z_p(j)^\sub{syn}(A)/p^r\xto{\sub{Rem.~\ref{remarks_tildenu}(2)}}\tilde\nu_r(j)(A)[-j-1]\xto{\sub{can.~map}} R\Gamma_\sub{cdh}(A,\tilde\nu_r(j))[-j-1].\] Our mod-$p^r$ motivic cohomology identifies with the fibre of the map (\ref{eqn_syn_to_nutilde}):

\begin{corollary}[Fundamental fibre sequence in characteristic $p$]\label{corol_fundamental_p}
For any qcqs $\bb F_p$-scheme $X$ and $j,r\ge0$, there is a natural fibre sequence
\[\bb Z(j)^\sub{mot}(X)/p^r\To \bb Z_p(j)^\sub{syn}(X)/p^r\stackrel{\sub{(\ref{eqn_syn_to_nutilde})}}{\To} R\Gamma_\sub{cdh}(X,\tilde\nu_r(j))[-j-1].\]
\end{corollary}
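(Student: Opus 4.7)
The plan is to extract the fibre sequence directly from the pullback square of Theorem~\ref{theorem_GL}. Since that square is cartesian, the fibre of $\bb Z(j)^\sub{mot}(X)/p^r \to \bb Z_p(j)^\sub{syn}(X)/p^r$ is canonically identified with the fibre of its bottom map $\beta \colon R\Gamma_\sub{cdh}(X, W_r\Omega^j_\sub{log})[-j] \to R\Gamma_\sub{eh}(X, W_r\Omega^j_\sub{log})[-j]$. Rotating, it thus suffices to produce a natural equivalence $\mathrm{cofib}(\beta) \simeq R\Gamma_\sub{cdh}(X, \tilde\nu_r(j))[-j-1]$ and to check that the resulting composite $\bb Z_p(j)^\sub{syn}(X)/p^r \to R\Gamma_\sub{eh}(X, W_r\Omega^j_\sub{log})[-j] \to \mathrm{cofib}(\beta)$ agrees with the map~(\ref{eqn_syn_to_nutilde}).

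To identify $\mathrm{cofib}(\beta)$ I would work at the level of presheaves of complexes on $\Sch_{\bb F_p}^\sub{qcqs}$. Computing the fibre of $C^{-1}-1 \colon W_r\Omega^j \to W_r\Omega^j/dV^{r-1}\Omega^{j-1}$ in the derived category of presheaves of abelian groups yields a complex whose cohomology presheaves are $W_r\Omega^j_\sub{log}$ in degree $0$ and $\tilde\nu_r(j)$ in degree $1$, so its Postnikov tower gives a fibre sequence of presheaves
\[
W_r\Omega^j_\sub{log} \To \fib(C^{-1}-1) \To \tilde\nu_r(j)[-1].
\]
Applying derived cdh- and eh-sheafifications and then evaluating at $X$ produces two compatible fibre sequences related by the canonical comparison from cdh to eh cohomology.

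Two observations then collapse the eh row. Firstly, $C^{-1}-1$ is \'etale-surjective, so $\tilde\nu_r(j)$ has vanishing \'etale (and hence eh) sheafification, giving $R\Gamma_\sub{eh}(X, \tilde\nu_r(j)) = 0$. Secondly, Theorem~\ref{theorem:eh} of Appendix~\ref{app:cdh} asserts that the cdh-sheafification of a bounded-below \'etale sheaf is automatically an eh-sheaf; applied to the \'etale sheaves $W_r\Omega^j$ and $W_r\Omega^j/dV^{r-1}\Omega^{j-1}$ this shows that their cdh- and eh-cohomologies coincide, and consequently that $R\Gamma_\sub{cdh}(X, \fib(C^{-1}-1)) \simeq R\Gamma_\sub{eh}(X, \fib(C^{-1}-1))$. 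Comparing the cdh and eh fibre sequences via these two facts identifies $\mathrm{cofib}(\beta)$ with $R\Gamma_\sub{cdh}(X, \tilde\nu_r(j))[-j-1]$, as desired.

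Matching the resulting connecting map with~(\ref{eqn_syn_to_nutilde}) should then be essentially tautological: on an affine $\Spec A$, both maps arise by projecting $\bb Z_p(j)^\sub{syn}(A)/p^r$ onto its top cohomology group $\tilde\nu_r(j)(A) \cong H^{j+1}(\bb Z_p(j)^\sub{syn}(A)/p^r)$ via the identification of Remark~\ref{remarks_tildenu}(2) and then applying the canonical map to cdh cohomology, both being incarnations of the same Postnikov section of $\fib(C^{-1}-1)$. The main conceptual step is the appeal to the black-box Theorem~\ref{theorem:eh} from the appendix; once that is granted, the remainder is a formal manipulation with Postnikov towers and fibre sequences, and I expect no essential obstacle beyond careful bookkeeping.
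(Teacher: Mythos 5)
Your proposal is correct and follows essentially the same route as the paper: both proofs reduce, via the cartesian square of Theorem~\ref{theorem_GL}, to showing that the bottom row is a fibre sequence, and both obtain this by cdh-sheafifying the affine fibre sequence $W_r\Omega^j_{A,\log}\to R\Gamma_\sub{\'et}(A,W_r\Omega^j_\sub{log})\to\tilde\nu_r(j)(A)[-1]$ (your $\fib(C^{-1}-1)$ is exactly $R\Gamma_\sub{\'et}(-,W_r\Omega^j_\sub{log})$) together with the appendix identification $L_\sub{cdh}R\Gamma_\sub{\'et}\simeq R\Gamma_\sub{\'eh}$. The only cosmetic difference is that you apply Theorem~\ref{theorem:eh} to the two sheaves $W_r\Omega^j$ and $W_r\Omega^j/dV^{r-1}\Omega^{j-1}$ separately rather than to the middle term of the fibre sequence directly.
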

\begin{proof}
In terms of the pullback square of Theorem \ref{prop_GL}, the map (\ref{eqn_syn_to_nutilde}) is the dotted composition:
\[\hspace{-26mm}
\xymatrix@C=3mm{
\bb Z(j)^\sub{mot}/p^r \ar[r] \ar[d] & \bb Z_p(j)^\sub{syn}/p^r\ar[d]\ar@{-->}[drrr]&&\\
R\Gamma_\sub{cdh}(-,W_r\Omega^j_\sub{log})[-j] \ar[r] & R\Gamma_\sub{eh}(-,W_r\Omega^j_\sub{log})[-j]\ar@{=}[r]&L_\sub{cdh}R\Gamma_\sub{et}(-,W_r\Omega^j_\sub{log})[-j]\ar[r] &(L_\sub{cdh}\tau^{\ge1}R\Gamma_\sub{et}(-,W_r\Omega^j_\sub{log}))[-j]\ar@{=}[r] & R\Gamma_\sub{cdh}(-,\tilde\nu_r(j))[-j-1]
}
\]
(the middle bottom equality having been explained at the end of the proof of Theorem \ref{prop_GL}). The claim to be proved is therefore that the bottom row is a fibre sequence; but this follows from exactness of cdh sheafification and the fibre sequence $W_r\Omega^j_{A,\sub{log}}\to R\Gamma_\sub{\'et}(A,W_r\Omega^j_\sub{log})\to \tilde\nu_r(j)(A)[-1]$ on affines.
\end{proof}

\subsection{A Beilinson--Lichtenbaum equivalence}
The classical Beilinson--Lichtenbaum conjecture states that motivic cohomology with finite coefficients is given by \'etale cohomology, in the range where cohomological degree is less than or equal to the weight. We refer to \cite[\S2]{HaesemeyerWeibel2019} for a discussion of the conjecture in the smooth case and exactly how it relates to the other main conjectures, such as Bloch--Kato. Here we record that such a Beilinson--Lichtenbaum equivalence holds for our motivic cohomology, including at the characteristic (where the correct replacement for \'etale cohomology is syntomic cohomology):

\begin{theorem}\label{thm_BL}
Let $\bb F$ be a prime field, $X$ a qcqs $\bb F$-scheme, and $j\ge0$.
\begin{enumerate}
\item For any integer $\ell>0$ prime to the characteristic of $\bb F$, there is a natural map \[\bb Z(j)^\sub{mot}(X)/\ell\To R\Gamma_\sub{\'et}(X,\mu_\ell^{\otimes j}),\] whose cofibre is supported in degrees $>j$.
\item If $\bb F=\bb F_p$ then for any $r\ge0$ there is a natural map \[\bb Z(j)^\sub{mot}(X)/p^r \To \bb Z_p(j)^\sub{syn}(X)/p^r,\] whose cofibre is supported in degrees $>j$.
\end{enumerate}
\end{theorem}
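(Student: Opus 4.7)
The plan handles the two parts separately, each leveraging a fundamental cartesian square already established in the paper.

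For part (2), the statement is essentially a restatement of the fundamental fibre sequence in Corollary~\ref{corol_fundamental_p}, which gives
\[
\bb Z(j)^\sub{mot}(X)/p^r \To \bb Z_p(j)^\sub{syn}(X)/p^r \To R\Gamma_\sub{cdh}(X,\tilde\nu_r(j))[-j-1].
\]
The first arrow is the required natural map and its cofibre is the third term. Since $\tilde\nu_r(j)$ is a presheaf of abelian groups, $R\Gamma_\sub{cdh}(X,\tilde\nu_r(j))$ is concentrated in cohomological degrees $\ge 0$, so the shift by $[-j-1]$ places the cofibre in degrees $\ge j+1$, which is strictly greater than $j$ as required.

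For part (1), I would first reduce to the cdh-local theory. In characteristic zero this is immediate from Theorem~\ref{thm:graded-pieces}(4), while in characteristic $p$ with $\ell$ prime to $p$ it follows from the cartesian square of Theorem~\ref{thm:graded-pieces_charp}(2), whose right column vanishes modulo $\ell$ because $\bb Z_p(j)^\sub{syn}$ is $p$-complete. In either case, this yields $\bb Z(j)^\sub{mot}(X)/\ell\simeq \bb Z(j)^\sub{cdh}(X)/\ell$. Theorem~\ref{thm:cdh}(3) then identifies this further with $L_\sub{cdh}\tau^{\le j}R\Gamma_\sub{\'et}(X,\mu_\ell^{\otimes j})$.

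The comparison map is constructed by cdh-sheafifying the canonical inclusion of presheaves $\tau^{\le j}R\Gamma_\sub{\'et}(-,\mu_\ell^{\otimes j})\hookrightarrow R\Gamma_\sub{\'et}(-,\mu_\ell^{\otimes j})$ and identifying the cdh-sheafified target with ordinary \'etale cohomology. The latter identification uses Appendix~\ref{app:cdh}: Theorem~\ref{theorem:eh} gives $L_\sub{cdh}R\Gamma_\sub{\'et}\simeq R\Gamma_\sub{\'eh}$, and for $\mu_\ell^{\otimes j}$ with $\ell$ invertible the natural map $R\Gamma_\sub{\'et}\to R\Gamma_\sub{\'eh}$ yields the comparison. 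The cofibre of the resulting map identifies with $L_\sub{cdh}\tau^{>j}R\Gamma_\sub{\'et}(-,\mu_\ell^{\otimes j})$, which remains supported in cohomological degrees $>j$: the hypercohomology spectral sequence $H^p_\sub{cdh}(X,\cal H^q)\Rightarrow H^{p+q}L_\sub{cdh}(-)$ applied to $\tau^{>j}R\Gamma_\sub{\'et}$ has vanishing cohomology sheaves in degrees $\le j$, so all contributions in total degree $\le j$ vanish.

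The main obstacle is the careful identification of the target as $R\Gamma_\sub{\'et}$ rather than the a priori coarser $R\Gamma_\sub{\'eh}$, which is what lets us state the theorem in its standard \'etale form; this step is precisely the content of the appendix together with cdh-descent properties of \'etale cohomology with $\mu_\ell^{\otimes j}$ coefficients for $\ell$ invertible on $X$.
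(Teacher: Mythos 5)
Your proof is correct and follows essentially the same route as the paper: part (2) is read off from the fundamental fibre sequence of Corollary~\ref{corol_fundamental_p}, and part (1) reduces mod $\ell$ to $\bb Z(j)^\sub{cdh}/\ell\simeq L_\sub{cdh}\tau^{\le j}R\Gamma_\sub{\'et}(-,\mu_\ell^{\otimes j})$ and compares with $R\Gamma_\sub{\'et}(-,\mu_\ell^{\otimes j})$ using that the latter is a cdh sheaf. Your detour through Theorem~\ref{theorem:eh} and the \'eh topology is harmless but unnecessary: the paper instead directly invokes cdh descent of $R\Gamma_\sub{\'et}(-,\mu_\ell^{\otimes j})$ (Example~\ref{example_0p}), which is in any case the fact you also need to conclude.
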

\begin{proof}
(1): Recall that $\bb Z(j)^\sub{mot}/\ell\to \bb Z(j)^\sub{cdh}/\ell$ is an equivalence, by Theorems \ref{thm:graded-pieces}(4) and \ref{thm:graded-pieces_charp}(3), and the latter is given by $L_\sub{cdh}\tau^{\le j}R\Gamma_\sub{\'et}(-,\mu_\ell^{\otimes j})$ by Theorem \ref{thm:cdh}(3). It remains only to use that the fiber of the canonical map $L_\sub{cdh}\tau^{\le j}R\Gamma_\sub{\'et}(-,\mu_\ell^{\otimes j})\to R\Gamma_{\et}(-,\mu_{\ell}^{\otimes j})$ is supported in degrees $>j$, since $R\Gamma_{\et}(-,\mu_{\ell}^{\otimes j})$ satisfies cdh descent as recalled in Example \ref{example_0p}.

(2): This is clear from Corollary \ref{corol_fundamental_p}.
\end{proof}

\begin{remark}[Uniqueness and multiplicative structure]
The proof of Theorem \ref{thm_BL} and \cite[Theorem~7.16]{BachmannElmantoMorrow} really establishes the existence of a unique map of $\bb E_\infty$-algebras in graded presheaves of complexes \[\bb Z(\star)^\sub{mot}(X)/\ell\To R\Gamma_\sub{\'et}(X,\mu_\ell^{\otimes \star})\] which is compatible with the first Chern class map (\ref{eq:c1-mot}). Similarly at the characteristic (in which case one argues by viewing $\bb Z(\star)^\sub{mot}/p^r$ on finitely presented $\bb F$-schemes as the pro cdh-local left Kan extension of its restriction to smooth $\bb F$-schemes, as in Theorem \ref{theorem_KS}).
\end{remark}

\subsection{Comparison maps}
Let $\bb F$ be a prime field. In this subsection we explicitly record the canonical comparison maps between the classical motivic cohomology of \S\ref{ss_classical}, the lisse motivic cohomology of \S\ref{subsec_lke}, the $\bb A^1$-motivic cohomology of \S\ref{ss_cdh_local}, and our new motivic cohomology. These comparisons are induced by various filtered maps between $K$-theory, $\KH$-theory, and connective $K$-theory.

\begin{construction}[Classical vs new motivic cohomology of smooth varieties]\label{cons:cla-vs-new}
We claim, for any smooth $\bb F$-scheme $X$, that there is a natural multiplicative comparison map of filtered spectra \begin{equation}\Fil_\bb A^\star \K(X)\To\Fil^\star_\sub{mot}\K(X)\label{eqn_cla_to_mot}\end{equation} given on $\Fil^0$ by $\Fil^0_\bb A\K(X)=\K(X)=\Fil^0_\sub{mot}\K(X)$. On shifted graded pieces this induces natural multiplicative comparison maps \begin{equation}\bb Z(j)^\bb A(X)\To\bb Z(j)^\sub{mot}(X)\label{eqn_cla_to_mot_j}\end{equation} for $j\ge0$.

We define (\ref{eqn_cla_to_mot}) as follows. When $\bb F=\bb Q$ (resp.~$\bb F_p$), the filtered cdh-local trace map of Corollary \ref{corol_cdh_filtered_trace} (resp.~Proposition \ref{prop_cdh_filtered_trace_p}) was constructed so as to fit into a commutative diagram
\[
\begin{tikzcd}
\mathrm{Fil}^{\star}_\bb A\K(X) \ar[swap]{d}{} \ar{r}    & \mathrm{Fil}^{\star}_\sub{HKR}\HC^-(X/\bb Q)\ar{d} \\
\mathrm{Fil}^\star_{\bb A}\KH(X) \ar{r} & \Fil^{\star}_\sub{HKR}L_\sub{cdh}\HC^-(X/\bb Q).
\end{tikzcd}
\qquad\qquad\mathrm{resp. }
\begin{tikzcd}
\mathrm{Fil}^{\star}_\bb A\K(X) \ar[swap]{d}{} \ar{r}    & \mathrm{Fil}^{\star}_\sub{BMS}\TC(X)\ar{d} \\
\mathrm{Fil}^\star_{\bb A}\KH(X) \ar{r} & \Fil^{\star}_\sub{BMS}L_\sub{cdh}\TC(X),
\end{tikzcd}
\]
where the top horizontal arrow is the filtered trace map of Proposition \ref{prop:compat} (resp.~\ref{prop:mot-v-bms}) for the smooth $\bb F$-scheme $X$. From the pullback Definition \ref{eq:char0} (resp.~\ref{def:charp}) of $\Fil_\sub{mot}^\star \K(X)$, there is therefore a natural induced map (\ref{eqn_cla_to_mot}) as desired.

In Corollary \ref{corol_smooth_comparison} we will prove that (\ref{eqn_cla_to_mot}) is an equivalence for every smooth $\bb F$-scheme $X$.
\end{construction}

\begin{construction}[LKE vs new motivic cohomology of affines]\label{cons_lke_to_mot}
Restricting (\ref{eqn_cla_to_mot}) to smooth $\bb F$-algebras and then left Kan extending to all $\bb F$-algebras defines, for any $\bb F$-algebra $A$, a natural multiplicative comparison map of filtered spectra
\begin{equation}\Fil_\sub{lse}^\star \K^\sub{cn}(A)\To\Fil^\star_\sub{mot}\K(A)\label{eqn_lke_to_mot}\end{equation}
given on $\Fil^0$ by the canonical map $\Fil^0_\sub{lse}\K^\sub{cn}(A)=\K^\sub{cn}(A)\to \K(A)=\Fil^0_\sub{mot}\K(A)$. On shifted graded pieces this induces natural multiplicative comparison maps \[\bb Z(j)^\sub{lse}(A)\To\bb Z(j)^\sub{mot}(A)\] for $j\ge0$. We will study these comparison maps in detail in Section \ref{section_lke}.
\end{construction}

\begin{construction}[New vs $\bb A^1$-motivic cohomology]\label{cons_mot_to_cdh}
Tautologically from the pullback definition of $\Fil_\sub{mot}^\star \K(X)$, there is a natural comparison map of filtered spectra \[\Fil_\sub{mot}^\star \K(X)\To \Fil_{\bb A}^\star \KH(X)\] for any qcqs $\bb F$-scheme $X$, given on $\Fil^0$ by the canonical map $\Fil_\sub{mot}^0 \K(X)=\K(X)\to \KH(X)=\Fil_{\bb A}^\star \KH(X)$. On shifted graded pieces this induces the natural comparison maps $\bb Z(j)^\sub{mot}(X)\to\bb Z(j)^{\bb A}(X)$, for $j\ge0$, which have already appeared in Theorems \ref{thm:graded-pieces}(2) and \ref{thm:graded-pieces_charp}(2). We will study these maps further in Section \ref{section_smooth}.
\end{construction}

\begin{remark}[(\ref{eqn_cla_to_mot}) is split]\label{remark_split}
While the various comparison maps are displayed, we point out the following: for any smooth $\bb F$-scheme $X$ and $j\ge0$, the composition \[\bb Z(j)^\bb A(X)\stackrel{\sub{Cons.~\ref{cons_lke_to_mot}}}{\To} \bb Z(j)^\sub{mot}(X)\stackrel{\sub{Cons.~\ref{cons_mot_to_cdh}}}{\To}\bb Z(j)^{\bb A}(X)\] (in other words, the the left vertical arrow in the commutative squares of Construction \ref{cons:cla-vs-new})  is an equivalence. This is the content of Theorem \ref{thm:cdh}(9), namely that $\bb A^1$-motivic cohomology of smooth $\bb F$-schemes is classical motivic cohomology. We will see in Corollary \ref{corol_smooth_comparison} that, not only is the composition an equivalence, but in fact each map is an equivalence.
\end{remark}

\subsection{Derived schemes and beyond}\label{sec:derived}
We finish this section by briefly explaining that our motivic cohomology extends to derived schemes, though we do not require the theory in such generality in the present article.

We write $\CAlg^\sub{ani}_\bb F$ for the $\infty$-category of \emph{animated $\bb F$-algebras}, i.e., the  subcategory of $\Fun(\CAlg^{\Sigma}_\bb F, \Spc)$ which preserves finite products, where $\CAlg^{\Sigma}_\bb F$ denotes the category of finitely generated polynomial $\bb F$-algebras. Animated $\bb F$-algebras are derived affine schemes, out of which we build the $\infty$-category of  \emph{derived $\bb F$-schemes} $\rm dSch_{\bb F}$; see \cite[Chapter 25]{LurieSAG} for more details. 

\begin{construction}\label{constr:anim-mot}
Let $\bb F$ be a prime field and $X$ a qcqs derived $\bb F$-scheme. Note that the HKR filtration of Theorem \ref{thm:hkr} and the BMS filtration of Theorem \ref{thm_BMS2+} extend to the generality of derived $\bb F$-schemes. Indeed, in the case of the HKR filtration the references \cite{antieau-fil,raksit-hkr,mrt-hkr} work in this degree of generality; for the BMS filtration one $p$-completely left Kan extends the filtration from discrete algebras, as in \cite[Construction~5.33]{AntieauMathewMorrowNikolaus}

By naturality of these filtrations, there are natural multiplicative comparison maps of filtered spectra $\Fil^\star_{\rm HKR}\HC^-(X/\Q) \rightarrow \Fil^\star_{\rm HKR}\HC^-(X^\sub{cla}/\Q)$ if $\bb F=\bb Q$, and $\Fil^\star_{\BMS}\TC(X) \rightarrow \Fil^\star_{\BMS}\TC(X^\sub{cla})$ and if $\bb F=\bb F_p$, where $X^\sub{cla} \hookrightarrow X$ is the classical locus of $X$.

The motivic filtration on the $K$-theory of $X$ is then defined by the following cartesian square 
\[
\begin{tikzcd}
\mathrm{Fil}^{\star}_{\mot}\K(X) \ar[dotted]{d} \ar[dotted]{r}    & \mathrm{Fil}^{\star}_\sub{HKR}\HC^-(X)\ar{d} \\
\mathrm{Fil}^{\star}_{\mot}\K(X^\sub{cla}) \ar{r} & \mathrm{Fil}^{\star}_\sub{HKR}\HC^-(X^\sub{cla}),
\end{tikzcd}
\qquad\text{resp.~}
\begin{tikzcd}
\mathrm{Fil}^{\star}_{\mot}\K(X) \ar[dotted]{d} \ar[dotted]{r}    & \mathrm{Fil}^{\star}_\sub{BMS}\TC(X)\ar{d} \\
\mathrm{Fil}^{\star}_{\mot}\K(X^\sub{cla}) \ar{r} & \mathrm{Fil}^{\star}_\sub{BMS}\TC(X^\sub{cla}),
\end{tikzcd}
\]
(the first if $\bb F = \bb Q$; the second if $\bb F = \bb F_p$). In both cases $\mathrm{Fil}^{\star}_{\mot}\K(X^\sub{cla})$ refers to the motivic filtration which we have defined earlier on the $K$-theory of the classical qcqs $\bb F$-scheme $X^\sub{cla}$.

The \emph{weight-$j$ motivic cohomology} of $X$ is then defined to be
\[
\bb Z(j)^{\mot}(X) := \gr^j_{\mot}\K(X)[-2j],\qquad j\in\bb Z.
\]
\end{construction}

It is not our intention to present here an exhaustive account of the motivic filtration on derived schemes; we content ourselves with stating the following summary of the main properties:

\begin{theorem}[Motivic filtration for derived schemes] 
For any qcqs derived $\bb F$-scheme, there exists a natural $\bb N$-indexed, multiplicative filtration $\Fil^\star_{\mot}\K(X)$ on $\K(X)$ satisfying $\Fil^0_{\mot}\K(X) \simeq \K(X)$. If $X$ is classical then this filtration agrees with the earlier motivic filtration of Definitions \ref{eq:char0} and \ref{def:charp}.
\end{theorem}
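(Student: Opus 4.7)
The construction given in \ref{constr:anim-mot} already defines $\Fil^\star_\sub{mot}\K(X)$ as a pullback in filtered $\bb E_\infty$-algebras, so the plan is simply to verify the three asserted properties of this pullback. The $\bb N$-indexing and multiplicativity of $\Fil^\star_\sub{mot}\K(X)$ follow immediately from the corresponding properties of the three known corners of the defining square: the HKR filtration on $\HC^-(-/\bb Q)$ from Theorem~\ref{thm:hkr} or the BMS filtration on $\TC$ from Theorem~\ref{thm_BMS2+} (both extended to derived schemes as indicated in the construction), together with the previously established motivic filtration on $\K(X^\sub{cla})$ from Theorems~\ref{theorem_AH_SS_0} and~\ref{thm:p-ahss}. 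The filtered maps into the bottom-right corner are multiplicative by functoriality of $\HC^-$, $\TC$, and of the filtered trace, so pullback in $\CAlg(\text{FSp})$ produces a multiplicative filtration with the claimed indexing.

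For the identification $\Fil^0_\sub{mot}\K(X)\simeq\K(X)$, I would take $\Fil^0$ of the defining pullback. Since each of the three corners has $\Fil^0$ equal to its underlying object, the result is the pullback
\[
\Fil^0_\sub{mot}\K(X)\simeq \K(X^\sub{cla})\times_{\TC(X^\sub{cla})}\TC(X)
\]
in characteristic $p$, with the analogous statement involving $\HC^-(-/\bb Q)$ in characteristic zero. The problem thus reduces to showing that
\[
\begin{tikzcd}
\K(X) \ar{d} \ar{r} & \TC(X) \ar{d}\\
\K(X^\sub{cla}) \ar{r} & \TC(X^\sub{cla})
\end{tikzcd}
\]
is cartesian for every qcqs derived $\bb F$-scheme $X$. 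This is the geometric globalisation of the Land--Tamme refinement of the Dundas--Goodwillie--McCarthy theorem: the fibre $\K^\sub{inf}:=\fib(\K\to\TC)$ is truncating, so for any connective $\bb E_1$-ring $A$ the natural map $\K^\sub{inf}(A)\to \K^\sub{inf}(\pi_0 A)$ is an equivalence. Since both $\K$ and $\TC$ satisfy Nisnevich descent on qcqs (derived) schemes, the affine statement globalises by descending along an affine Nisnevich cover of $X$. In characteristic zero the identical argument applies, using that $\TC\simeq \HC^-(-/\bb Q)$ on animated $\bb Q$-algebras.

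Finally, compatibility with the earlier construction when $X$ is classical is immediate from the pullback definition: if $X=X^\sub{cla}$, the right vertical arrow of the defining square is the identity, so the pullback is canonically equivalent to its bottom-left corner, which is the classical motivic filtration of Definitions~\ref{eq:char0} and~\ref{def:charp}. The only step requiring any genuine care is the globalisation of Land--Tamme from connective $\bb E_1$-rings to qcqs derived schemes, but this presents no essential difficulty because $\K$ and $\TC$ are both finitary localising invariants of $\Perf(-)$ satisfying Nisnevich descent; the hard input is Land--Tamme itself, which we invoke as a black box.
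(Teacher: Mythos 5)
Your overall strategy matches the paper's (which is itself terse): reduce to affines by descent, identify $\Fil^0$ of the defining pullback, and invoke the Dundas--Goodwillie--McCarthy theorem for simplicial/animated rings to see that $\K(X)\to\TC(X)$ over $\K(X^\sub{cla})\to\TC(X^\sub{cla})$ is cartesian. In characteristic $p$ your computation of $\Fil^0$ is correct, since the BMS filtration is $\bb N$-indexed.

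There is, however, a genuine gap in characteristic zero: your assertion that ``each of the three corners has $\Fil^0$ equal to its underlying object'' is false for the HKR filtration. By Theorem \ref{thm:hkr} the HKR filtration on $\HC^-(-/\bb Q)$ is $\bb Z$-indexed with nonzero graded pieces $R\Gamma(-,\widehat{L\Omega}_{-/\bb Q})[2j]$ in all negative weights $j$, so $\Fil^0_\sub{HKR}\HC^-(X/\bb Q)\neq\HC^-(X/\bb Q)$. Taking $\Fil^0$ of the defining square therefore yields $\K(X^\sub{cla})\times_{\Fil^0_\sub{HKR}\HC^-(X^\sub{cla}/\bb Q)}\Fil^0_\sub{HKR}\HC^-(X/\bb Q)$, and to identify this with the DGM pullback $\K(X^\sub{cla})\times_{\HC^-(X^\sub{cla}/\bb Q)}\HC^-(X/\bb Q)$ you need the additional input that the square
\[
\begin{tikzcd}
\Fil^0_\sub{HKR}\HC^-(X/\bb Q)\ar{r}\ar{d} & \HC^-(X/\bb Q)\ar{d}\\
\Fil^0_\sub{HKR}\HC^-(X^\sub{cla}/\bb Q)\ar{r} & \HC^-(X^\sub{cla}/\bb Q)
\end{tikzcd}
\]
is cartesian, i.e.\ that the cofibre $\HC^-(-/\bb Q)/\Fil^0_\sub{HKR}\HC^-(-/\bb Q)$ is insensitive to $X\to X^\sub{cla}$. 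This is exactly the analogue of the step in the proof of Theorem \ref{theorem_AH_SS_0}, where the corresponding cofibre had to be shown to be a cdh sheaf (Lemma \ref{lemma_cdh_descent_HP}); here it follows because that cofibre identifies with $\prod_{n\le -1}R\Gamma(-,\widehat{L\Omega}_{-/\bb Q})[2n]$, a direct summand of the truncating invariant $\HP(-/\bb Q)$, which satisfies $\HP(A/\bb Q)\simeq\HP(\pi_0A/\bb Q)$. With this supplement your argument goes through; the remaining points (multiplicativity, $\bb N$-indexing, and the comparison when $X$ is classical, where the right vertical map of the defining square is the identity) are fine as you state them.
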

\begin{proof}
We just explain the claim that $\Fil^0_\sub{mot}\K(X)=\K(X)$, the other statements being clear. By Zariski descent for derived $\bb F$-schemes, it suffices to prove the result for $X = \Spec(A)$ when $A$ is an animated $\bb F$-algebra. In this case, the result follows from the same arguments as in the classical case and part of the Dundas--Goodwillie--McCarthy theorem \cite{Dundas2013}, stating that for a simplicial ring $A$ the square of spectra
\[
\begin{tikzcd}
\K(A) \ar{r} \ar{d}    & \TC(A) \ar{d} \\
\K(\pi_0A) \ar{r} & \TC(\pi_0A)
\end{tikzcd}
\]
is cartesian.
\end{proof}

\begin{remark}[Motivic cohomology of ring spectra]
The Dundas--Goodwillie--McCarthy theorem also applies to the $K$-theory of connective $\bb E_{\infty}$-ring spectra $R$. Assuming in addition that $R$ is chromatically quasisyntomic in the sense of Hahn--Raksit--Wilson \cite{HahnRaksitWilson2025}, they equip its topological cyclic homology with the so-called even filtration which serves as a good candidate for a motivic filtration in this setting. Equipping $\K(\pi_0(R))$ with its motivic filtration (of this paper if $\pi_0(R)$ is equicharacteristic, or using Bouis' work \cite{Bouis2025, Bouis2025a, Bouis2025b} in general), one may then mimic the above pullback recipe to equip $\K(R)$ with a motivic filtration, as envisaged for arbitrary $\bb E_\infty$-algebras by Rognes in his ICM talk \cite{Rognes2014}.
\end{remark}

\section{The projective bundle formula and regular blowup squares}\label{section_pbf}
As usual $\bb F$ continues to be a prime field. The goal of this section is to prove that motivic cohomology of qcqs $\bb F$-schemes has two good geometric properties, namely that it satisfies the projective bundle and regular blow-up formulae. The statement of the former uses the first Chern class for motivic cohomology, which we now define.

\begin{lemma}\label{lem:c1-map}
There exists a unique map of presheaves $R\Gamma_\sub{Nis}(-,\bb G_m)[-1]\to\bb Z(1)^\sub{mot}$ on $\Sch_\bb F^\sub{qcqs}$ which is given on smooth $\bb F$-schemes by the composition 
\[R\Gamma_\sub{Zar}(-,\bb G_m)[-1]\stackrel{c_1^\bb A\,\sub{\eqref{eqn_1st_Chern_A}}}\To \bb Z(1)^\bb A\stackrel{\sub{\eqref{eqn_cla_to_mot_j}}}\To\bb Z(1)^\sub{mot}.\]
\end{lemma}
\begin{proof}
The left Kan extension of the indicated composition from smooth $\bb F$-algebras gives us a map $(\tau^{\le 1}R\Gamma_\sub{Zar}(-,\bb G_m))[-1]\to \bb Z(1)^\sub{mot}$ on affine $\bb F$-schemes. Nisnevich sheafifying this defines the desired first Chern class map. Uniqueness holds since $R\Gamma_\sub{Nis}(-,\bb G_m)[-1]$ is Nisnevich locally left Kan extended from smooth schemes.
\end{proof}

\begin{definition}\label{def:c1-mot}
For any qcqs $\bb F$-scheme $X$, we refer to the map of the previous lemma 
\begin{equation} \label{eq:c1-mot}
c_1:R\Gamma_\sub{Nis}(X,\bb G_m)[-1]\To\bb Z(1)^\sub{mot}(X)
\end{equation} as the {\em first Chern class map} for motivic cohomology. We will often refer to the induced natural map on $H^2$, namely \[c_1:\text{Pic}(X)\To H^2_\sub{mot}(X,\bb Z(1)),\] in the same way.
\end{definition}

Now, for any $X \in \Sch_{\bb F}^\sub{qcqs}$, thanks to the multiplicative structure of the graded presheaf $\bb Z(\star)^\sub{mot}$, multiplication by the first Chern class of $\roi(1)\in\mathrm{Pic}(\bb P_X^r)$ defines maps
\[
\Z(j)^{\mot}(\P^r_X) \xrightarrow{c_1(\scr O(1))} \Z(j+1)^{\mot}(\P^r_X)[2]
\]
for $j\in\bb Z$.

We may now state the main goal of this section:

\begin{theorem}\label{thm:pbf-blowup}
Let $X$ be a qcqs $\bb F$-scheme and $j\in\bb Z$.
\begin{enumerate}
\item Projective bundle formula: for any $r \ge 0$, the map 
\begin{equation}\label{eq:pbf-r}
\sum_{i=0}^r c_1(\scr O(1))^i\pi^{\ast}\colon\bigoplus_{i=0}^r\bb Z(j-i)^\sub{mot}(X)[-2i]\To \bb Z(j)^\sub{mot}(\bb P_X^r)
\end{equation} is an equivalence, were $\pi:\bb P^r_X\to X$ is the projection map.
\item More generally, if $\scr E$ is a locally free sheaf on $X$ of rank $r+1$, then the map
\[\sum_{i=0}^r c_1(\scr O(1))^i\pi^{\ast}\colon\bigoplus_{i=0}^r\bb Z(j-i)^\sub{mot}(X)[-2i]\To \bb Z(j)^\sub{mot}(\bb P_X(\scr E)),\]
is an equivalence.
\item Regular blow-up formula: Let $Y\to X$ be a regular closed immersion (i.e., $X$ admits an open affine cover such that, on each such affine, $Y$ is defined by a regular sequence); then applying $\bb Z(j)^\sub{mot}$ to the square of schemes
\[
\begin{tikzcd}
Y\times_X\mathrm{Bl}_Y(X) \ar{r} \ar{d} &\mathrm{Bl}_Y(X) \ar{d}\\
Y \ar{r}& X\\
\end{tikzcd}
\]
yields a cartesian square in $\mathrm{D}(\bb Z)$.
\end{enumerate}
\end{theorem}

Theorem \ref{thm:pbf-blowup} fits within recent developments in the theory of non-$\A^1$-invariant motives as developed in \cite{AnnalaHoyoisIwasa2023, AnnalaIwasa2023}. In this theory, the projective bundle and elementary blow-up formulae are isolated as the key properties of cohomology theories, in lieu of $\A^1$-invariance. In particular, our results imply that the motivic cohomology $\Z(\star)^{\mot}$ assembles into a motivic spectrum in the sense of \cite{AnnalaIwasa2023}; see \cite[Appendix C]{AnnalaShin2025}.

\subsection{Generalities on $\bb P^1$-bundle formulae for cohomologies and $\K^\sub{cn}$-modules}\label{sec:p1}
In this subsection we set up some general machinery which allows us to formulate, and sometimes prove, $\bb P^1$-bundle formulae in two general contexts: modules over Zariski sheafified lisse motivic cohomology and modules over connective $K$-theory. We will interpolate between them using modules over the Zariski sheafification of connective $K$-theory equipped with its lisse motivic filtration. As usual $\bb F$ denotes a prime field.\footnote{Though a lot of the machinery of this subsection works verbatim over $\bb Z$.}

This subsection is required only to prove the crucial Proposition \ref{proposition_payoff} below. So we suggest the following reading plan: the honest reader should first look at the statement of Proposition \ref{proposition_payoff}, then read the final paragraph of this subsection to see the proof strategy, and only then look at the details of this subsection. The reckless cheat should skip this subsection. The thrill seeker should take Theorem~\ref{thm:pbf-blowup} for granted and jump ahead to \S\ref{section_smooth}.

\begin{remark}[Variant]
A more attractive and conceptual variant of the machinery of this subsection, which the reader might wish to keep in mind, is obtained by replacing Zariski sheafified lisse motivic cohomology $L_\sub{Zar}\bb Z(\star)^\sub{lse}$ by the new motivic cohomology $\bb Z(\star)^\sub{mot}$ of \S\ref{s_motivic_def}, and the filtered Zariski sheafified connective $K$-theory $L_\sub{Zar}\Fil^\star_\sub{lse}\K^\sub{cn}$ by motivically filtered $K$-theory $\Fil_\sub{mot}^\star\K$. One reason we do not work with this variant is because it is logically important to keep the proof of Theorem \ref{thm:cdh-syn-pbf} independent of the new motivic cohomology: see Remark \ref{rem_not_circular}.
\end{remark}

First we formulate the $\bb P^1$-bundle formula for modules over Zariski sheafified lisse motivic cohomology, in the following sense:

\begin{definition}[$L_\sub{Zar}\bb Z(\star)^\sub{mot}$-modules]\label{def:P1bf_cohomology}
As in Remark \ref{rem_Zariski_lisse}, let $L_\sub{Zar}\bb Z(\star)^\sub{lse}$ be the presheaf of $\rm{D}(\bb Z)$-valued $\bb E_\infty$-algebras on qcqs $\bb F$-schemes obtained by Zariski sheafifiying the lisse motivic cohomology. Similarly to Example \ref{example_lke_1}, the first Chern class in classical motivic cohomology induces an equivalence 
\[c_1:R\Gamma_\sub{Zar}(-,\bb G_m))[-1]\quis L_\sub{Zar}\bb Z(1)^\sub{lse},\] which we will also refer to as a first Chern class.

The $\infty$-category of {\em modules over Zariski sheafified lisse motivic cohomology}, or {\em $L_\sub{Zar}\bb Z(\star)^\sub{mot}$-modules}, on qcqs $\bb F$-schemes is 
\[L_\sub{Zar}\bb Z(\star)^\sub{mot}\opp{-Mod}:=\mathrm{Mod}_{L_\sub{Zar}\bb Z(\star)^\sub{mot}}(\PShv(\Sch^\sub{qcqs}_\bb F,\Gr\mathrm{D}(\bb Z))).\]
A $L_\sub{Zar}\bb Z(\star)^\sub{lse}$-module $F(\star)$ is thus a graded presheaf of complexes on qcqs $\bb F$-schemes equipped with coherent compatible action maps $L_\sub{Zar}\bb Z(j)^\sub{lse}\otimes F(i)\to F(j+i)$ for all $i,j\in\bb Z$; in particular, the first Chern class of $\roi(1)$ induces a map $c_1(\roi(1)):F(j-1)(\bb P^1_X)[-2]\to F(j+1)(\bb P^1_X)$ for any qcqs $\bb F$-scheme $X$. We say that $F(\star)$ satisfies the {\em $\bb P^1$-bundle formula} if the map
\begin{equation}\label{eqn:P1_for_cohomologies}
F(j)(X)\oplus F(j-1)(X)[-2] \xrightarrow{\pi^* \oplus c_1(\scr O(1))\circ \pi^* } F(j)(\P^1_X)
\end{equation}
is an equivalence for all $X\in\Sch^\sub{qcqs}_\bb F$ and $j\in\bb Z$. Denote by
\[L_\sub{Zar}\bb Z(\star)^\sub{mot}\opp{-Mod}_\sub{pbf}\subseteq L_\sub{Zar}\bb Z(\star)^\sub{mot}\opp{-Mod}\] the full subcategory of $L_\sub{Zar}\bb Z(\star)^\mot$-modules satisfying the $\bb P^1$-bundle formula; it is clearly closed under all limits and colimits.
\end{definition}

\begin{remark}[$\bb P^1$-bundle formula via reduced cohomology]\label{rem_reduced_P1}
Let $F(\star)$ be a $L_\sub{Zar}\bb Z(\star)^\sub{lse}$-module and $X$ a qcqs $\bb F$-scheme. The map $c_1(\roi(1))\circ\pi^*$ in \eqref{eqn:P1_for_cohomologies} naturally factors through the fibre of $\infty^*$ (by functoriality, since $\infty^*\roi(1)$ is trivial in $\mathrm{Pic}(X)$), and the $\bb P^1$-bundle formula is equivalent to asking that
\begin{equation}\label{eqn:P1_for_cohomologies2}
c_1(\roi(1))\circ\pi^*:F(j-1)(X)[-2]\To \fib\big(F(j)(\bb P_X^1)\xto{\infty^\star} F(j)(X)\big)
\end{equation}
be an equivalence for all $j\in\bb Z$.
\end{remark}

\begin{example}[Derived de Rham cohomology]\label{exam:dr_pbf}
In this example we take $\bb F=\bb Q$. Arguing as in the second paragraph of Remark \ref{rem:horizontal_map_char0}, the dlog map and Zariski-local left Kan extension induce a map of presheaves of graded $\bb E_\infty$-algebras of complexes $L_\sub{Zar}\bb Z(\star)^\sub{lse}\To R\Gamma(-,\widehat{L\Omega}_{-/\Q}^{\geq \star})$ on qcqs $\bb Q$-schemes. Forgetting the multiplicative structure on the target we thus obtain \[R\Gamma(-,\widehat{L\Omega}_{-/\Q}^{\geq \star})\in L_\sub{Zar}\bb Z(\star)^\sub{mot}\opp{-Mod}\] which we claim satisfies the $\bb P^1$-bundle formula.

Precomposing the aforementioned map of algebras in weight one with the first Chern class, we note that the composition
\[R\Gamma_\sub{Zar}(X,\bb G_m)[-1]\stackrel\sim\To L_\sub{Zar}\bb Z(1)^\sub{lse}\To R\Gamma(-,\widehat{L\Omega}_{-/\Q}^{\geq 1})\]
is by construction the usual first Chern class map for derived de Rham cohomology, defined by the dlog map. 
 The claimed $\bb P^1$-bundle formula therefore reduces to the usual $\bb P^1$-bundle formulae for $R\Gamma(-,\widehat{L\Omega}_{-/\Q}^{\geq \star})$, which is proved by reducing to wedge powers of the cotangent complex over $\bb Q$, then using K\"unneth to reduce to wedge powers of the cotangent complex over $X$, and there it is the standard calculation of the cohomology of $\Omega^\star_{\bb P^1_X/X}$; see \cite[Lemma 9.1.3]{BhattLurie2022} for details. 
\end{example}

\begin{example}[Syntomic cohomology]\label{exam:syn_pbf}
In this example we take $\bb F=\bb F_p$. Arguing as in Remark \ref{rem:horizontal_map_charp}, the dlog map and Zariski-local left Kan extension again induce a map of presheaves of graded $\bb E_\infty$-algebras of complexes $L_\sub{Zar}\bb Z(\star)^\sub{lse}\to \bb Z_p(\star)^\sub{syn}$ on qcqs $\bb F_p$-schemes. Forgetting the multiplicative structure on the target we thus obtain \[\bb Z_p(\star)^\sub{syn}\in L_\sub{Zar}\bb Z(\star)^\sub{lse}\opp{-Mod}\] which we claim satisfies the $\bb P^1$-bundle formula.

Similarly to the previous example, the Chern classes in Zariski sheafified lisse motivic and syntomic cohomology are compatible by definition, so we reduce to the usual $\bb P^1$-bundle formula in syntomic cohomology. That is proved by reducing to Nygaard filtered prismatic cohomology, then as in the previous example to wedge powers of the cotangent complex and Hodge cohomology; a proof (for arbitrary schemes, not only over $\bb F_p$) may be found in \cite[Theorem 9.1.1]{BhattLurie2022}.
\end{example}

\begin{remark}[Derived category of motives]
The category $L_\sub{Zar}\bb Z(\star)^\sub{mot}\opp{-Mod}_\sub{pbf}$ should vaguely be thought of as first order approximation to a big derived category of non-$\bb A^1$-invariant motives over $\bb F$. A precise manifestation of this philosophy (which we do not need in what follows) may be found in \cite[Remark 9.11]{BachmannElmantoMorrow}: on any qcqs scheme $X$, the category of $\bb A^1$-invariant Nisnevich sheaves of modules over $\bb Z(\star)^\bb A$ on $\Sm_X$ is equivalent to $\mathrm{DM}(X):=$~modules over the motivic Eilenberg--Maclane spectrum $H\bb Z_X^\bb A$ in $\SH(X)$.
\end{remark}

We now turn to the second context for the $\bb P^1$-bundle formula, namely $\K^\sub{cn}$-modules. Some of the discussion here overlaps with that in our joint work with Bachmann \cite[\S 9.3.5]{BachmannElmantoMorrow} and another variant is discussed in \S\ref{sec:ai-thm}. Here we work over $\bb F$, rather than more generally over $\bb Z$, just because that is our current context of interest.

\begin{definition}[$\K^\sub{cn}$-modules]\label{def:kcn}
The $\infty$-category of {\em modules over connective $K$-theory}, or {\em $\K^\sub{cn}$-modules}, on qcqs $\bb F$-schemes is \[\K^\sub{cn}\opp{-Mod}:=\opp{Mod}_{\K^\sub{cn}}(\opp{PSh}(\Sch^\qcqs_\bb F,\Spt)).\] Given a $\K^\sub{cn}$-module $E$, the class $[\roi(1)]\in K_0(\bb P^1_\bb F)$ defines a natural map ``multiplication by $[\roi(-1)]$'' $[\roi(-1)]:E(\bb P^1_X)\to E(\bb P^1_X)$ for any qcqs $\bb F$-scheme $X$. We say that $E$ satisfies the {\em $\bb P^1$-bundle formula} if the map of spectra \begin{equation}
E(X)\oplus E(X)\xto{\pi^* \oplus [\roi(-1)] \circ \pi^*} E(\bb P_X^1)\end{equation} is an equivalence for all $X\in\Sch^\sub{qcqs}_\bb F$, where $\pi: \bb P^1_X \rightarrow X$ denotes the projection map. Denote by \[\K^\sub{cn}\opp{-Mod}_\sub{pbf}\subseteq \K^\sub{cn}\opp{-Mod}\] the full subcategory of $\K^\sub{cn}$-modules satisfying the $\bb P^1$-bundle formula; it is clearly closed under all limits and colimits.
\end{definition}

\begin{remark}[Reduced formulation of $\bb P^1$-bundle formula]
Similarly to Remark \ref{rem_reduced_P1}, the $\bb P^1$-bundle formula for a $\K^\sub{cn}$-module $E$ is equivalent to the difference of the two maps, as a map to the fibre of $\infty^*$, i.e.,
\begin{equation}\label{eqn:P1_for_Kcnmods2}
\pi^*- [\roi(-1)] \circ \pi^*:E(X)\To\fib\big(E(\bb P^1_X)\xto{\infty^*}E(X)\big),
\end{equation}
being an equivalence for all $X\in\Sch^\sub{qcqs}_\bb F$.
\end{remark}

\begin{construction}[Additive invariants]\label{cons_add_invariant}
Similarly to how $L_\sub{Zar}\bb Z(\star)^\sub{lse}\opp{-Mod}_\sub{pbf}$ is to be thought of as an approximation to a derived category of motives, here we explain how to construct objects of $\K^\sub{cn}\opp{-Mod}_\sub{pbf}$ from the theory of {\em non-commutative} motives.

Let $\opp{Cat}^\sub{perf}_\bb F$ denote the $\infty$-category of idempotent-complete $\bb F$-linear stable $\infty$-categories, and let $\opp{Fun}_\sub{add}(\opp{Cat}^\sub{perf}_\bb F,\Spt)$ be the $\infty$-category of $\Spt$-valued additive invariants on them; we call such functors {\em $\bb F$-linear additive invariants}. Unlike our main reference \cite[5.4]{HoyoisScherotzkeSibilla2017} for this formalism, we do not require our additive invariants to be finitary. 

The functor category $\opp{Fun}(\opp{Cat}^\sub{perf}_\bb F,\Spt)$ is symmetric monoidal under Day convolution, with $\bb E_{\infty}$-algebras given by lax monoidal functors \cite{Glasman2015}. Its full subcategory $\opp{Fun}_\sub{add}(\opp{Cat}^\sub{perf}_\bb F,\Spt)$ is closed under Day convolution and contains the lax monoidal functor $\K^\sub{cn}$. In conclusion $\opp{Fun}_\sub{add}(\opp{Cat}^\sub{perf}_\bb F,\Spt)$ is thus a symmetric monoidal category with $\K^\sub{cn}$ being an $\bb E_\infty$-algebra,\footnote{In fact it is expected that $\K^\sub{cn}$ is the unit of $\opp{Fun}_\sub{add}(\opp{Cat}^\sub{perf}_\bb F,\Spt)$. This is known to be true if one further requires that additive invariants be finitary, by \cite[Theorem 5.24]{HoyoisScherotzkeSibilla2017}, though we do not need this result.} and so we can form the symmetric monoidal $\infty$-category \[\opp{Mod}_{\K^\sub{cn}}(\opp{Fun}_\sub{add}(\opp{Cat}^\sub{perf}_\bb F,\Spt))\] of $\bb F$-linear additive invariants which are modules over $\K^\sub{cn}$.

Precomposing along the functor $\Perf:\Sch_\bb F^\sub{qcqs,op}\to \opp{Cat}^\sub{perf}_\bb F$ defines a symmetric monoidal functor
\begin{equation}
 \opp{Mod}_{\K^\sub{cn}}(\opp{Fun}_\sub{add}(\opp{Cat}^\sub{perf}_\bb F,\Spt))\To \K^\sub{cn}\opp{-Mod}_\sub{pbf}\subseteq \K^\sub{cn}\opp{-Mod},\qquad E\mapsto E\circ\Perf,
\label{eq:restrict_perf}\end{equation}
where we are using that the $\K^\sub{cn}$-module $E\circ\Perf$ satisfies the $\bb P^1$-bundle formula in the sense of Definition~\ref{def:kcn}; this holds because of the additivity of $E$ and the semiorthogonal decomposition \[\Perf(\bb P_X^1) \simeq \Perf(\bb P^1_\bb F) \otimes \Perf(X) \simeq \langle \scr O_X(-1), \scr O_X \rangle\] (see, for example, \cite[Theorem~4.25]{CisinskiKhan2020} for a more general result). 

In what follows, we will often speak of a $\K^\sub{cn}$-module ``extending to'' an $\bb F$-linear additive (or, even better, a localising or truncating) invariant. By this we mean that it is in the essential image of \eqref{eq:restrict_perf}, and in practice we will have explicitly given an additive/localising/truncating invariant lifting it.
\end{construction}

The follow lemma is an important example of such phenomena:

\begin{lemma}\label{lem:lcdh-tc} The $\K^\sub{cn}$-module $L_{\cdh}\TC\in\K^\sub{cn}\opp{-Mod}$ extends to an $\bb F$-linear additive invariant.
\end{lemma}
\begin{proof}
First we remark that $\KH$ extends to an additive invariant of $\bb F$-linear categories by the procedure in \cite[Definition~3.13]{LandTamme2019} which is stated for $\bb Z$ but works for more general commutative rings. We define $L_{\cdh}\TC$ by taking the following pushout in $\opp{Mod}_{\K^\sub{cn}}(\opp{Fun}_\sub{add}(\opp{Cat}^\sub{perf}_{\bb F},\Spt))$:
\begin{equation}\label{eq:lax-mon}
\begin{tikzcd}
\K   \ar{r} \ar{d} & \TC \ar[dotted]{d}  \\
\KH  \ar[dotted]{r} & L_{\cdh}\TC.
\end{tikzcd}
\end{equation}
Since the forgetful functor to $\opp{Fun}_\sub{add}(\opp{Cat}^\sub{perf}_\bb F,\Spt)$ preserves finite colimits, the above diagram is also a pushout in $\bb F$-linear additive invariants, whence the fact that the pushout agrees on qcqs $\bb F$-schemes with $L_{\cdh}\TC$ follows from the bicartesian square of Theorem~\ref{thm:mainsq}. 
\end{proof}

\begin{remark}
The pushout in~\eqref{eq:lax-mon} has no reason to promote to a pushout of lax monoidal functors. However it does so after restricting via $\Perf$ to qcqs $\bb F$-schemes, because cdh sheafification is lax monoidal.
\end{remark}

Combining Construction \ref{cons_add_invariant} and Lemma~\ref{lem:lcdh-tc}, we obtain the following non-obvious $\bb P^1$-bundle formulae, which for $\bb F_p$-schemes we do not know how to prove without appealing to the above machinery (for $\bb Q$-schemes one could instead use strong resolution of singularities):

\begin{corollary}\label{cor:lcdh-pbf} 
The $\K^\sub{cn}$-module $L_{\cdh}\TC\in\K^\sub{cn}\opp{-Mod}$ satisfies the $\bb P^1$-bundle formula. In other words, for any qcqs $\bb Q$-scheme $X$ the natural map
\[
L_{\cdh}\HC^-(X/\bb Q)\oplus L_{\cdh}\HC^-(X/\bb Q) \xrightarrow{\pi^* \oplus [\scr O(-1)]\circ\pi^*}L_{\cdh}\HC^-(\bb P^1_X/\bb Q)
\]
is an equivalence, and 
for any qcqs $\bb F_p$-scheme $X$, the natural map
\[
L_{\cdh}\TC(X) \oplus L_{\cdh}\TC(X)  \xrightarrow{\pi^* \oplus [\scr O(-1)]\circ\pi^*}L_{\cdh}\TC(\bb P^1_X)
\]
is an equivalence.
\end{corollary}

At this point we have formulated the $\bb P^1$-bundle formula for $L_\sub{Zar}\bb Z(\star)^\sub{lse}$-modules and for $\K^\sub{cn}$-modules. We now interpolate between these two contexts by explaining how in our cases of interest the map \eqref{eqn:P1_for_Kcnmods2} appearing in the $\bb P^1$-bundle formula for $\K^\sub{cn}$-modules respects filtrations on each side, with the induced map on associated gradeds being the map \eqref{eqn:P1_for_cohomologies2} appearing in the $\bb P^1$-bundle formula for $L_\sub{Zar}\bb Z(\star)^\sub{lse}$-modules. This interpolation uses $L_\sub{Zar}\Fil^\star_\sub{lse}\K^\sub{cn}$, namely the presheaf of filtered $\bb E_\infty$-algebras on qcqs $\bb F$-schemes obtained by Zariski sheafifying the filtration of Proposition \ref{prop:mot-filt}; as in Remark \ref{rem_Zariski_lisse}, its underlying presheaf is the Zariski sheafification $L_\sub{Zar}K^\sub{cn}$ of connective $K$-theory and its associated graded is $L_\sub{Zar}\bb Z(\star)^\sub{lse}[2\star]$.

The first step is to show that the first Chern class of Definition \ref{def:P1bf_cohomology} lifts to an ``orientation'' of $L_\sub{Zar}K^\sub{cn}$ landing in $L_\sub{Zar}\Fil^1_\sub{lse}\K^\sub{cn}$; we summarise the situation in the following diagram of presheaves on qcqs $\bb F$-schemes, after which we will explain the notation and constructions:
\begin{equation}
\label{eqn_orientation}
\xymatrix{
L_\sub{Zar}\bb Z(1)^\sub{lse}[2]&L_\sub{Zar}\Fil^1_\sub{lse}\K^\sub{cn}\ar[r] \ar[l]_-{\sub{edge}}& L_\sub{Zar}\K^\sub{cn}\\
(\tau^{\le 1}R\Gamma_\sub{Zar}(-,\bb G_m))[1]\ar[u]^{c_1[2]}&\Sigma^\infty\Omega^\infty\Pic\ar[l]\ar[r]_-{1-(\cdot)^\vee}\ar@{..>}[u]_{\exists!\,1-(\cdot)^\vee} & \K^\sub{cn}\ar[u]
}
\end{equation}
Here, for any qcqs $\bb F$-scheme $X$ we write $\Pic(X)$ for the spectrum corresponding to the symmetric monoidal $1$-groupoid of line bundle on $X$ and isomorphisms between them; in other words $\Pic(X)=(\tau^{\le 1}R\Gamma_\sub{Zar}(X,\bb G_m))[1]$ \cite[Remark 3.4]{BachmannElmantoMorrow}. Then $\Omega^\infty\Pic(X)$ is its underlying pointed space, and  the counit map $\Sigma^\infty\Omega^\infty\Pic(X)\to\Pic(X)=(\tau^{\le 1}R\Gamma_\sub{Zar}(X,\bb G_m))[1]$ appears as the bottom left of \eqref{eqn_orientation}.

There is a unique map $1-(\cdot)^\vee:\Sigma^\infty\Omega^\infty\Pic\to \K^\sub{cn}$ of presheaves on qcqs $\bb F$-schmes which classifies $1-[\roi(-1)]\in \K_0(\bb P^m_\bb F)$ for all $m\ge1$. See \cite[Definition 4.4]{BachmannElmantoMorrow}.

The key point is to establish the existence of the dotted arrow in \eqref{eqn_orientation}, which we will continue to denote by $1-(\cdot)^\vee$:

\begin{lemma}
The composition $\Sigma^\infty\Omega^\infty\Pic \xto{1-(\cdot)^\vee}\K^\sub{cn}\to L_\sub{Zar}\K^\sub{cn}$ of presheaves on qcqs $\bb F$-schemes factors uniquely through $L_\sub{Zar}\Fil^1_\sub{lse}\K^\sub{cn}$. Moreover, the resulting left square in \eqref{eqn_orientation} commutes.
\end{lemma}
\begin{proof}
We reduce to the case of smooth $\bb F$-schemes as follows. The top row of the diagram consists of Zariski sheaves and the presheaf $L_\sub{Zar}\Sigma^\infty\Omega^\infty\Pic$ is Zariski locally left Kan extended from smooth $\bb F$-schemes (use \cite[Proposition 2.23]{BachmannElmantoMorrow} to reduce to the case of local rings, then commutation of $\Sigma^\infty$ and $\Omega^\infty$ with sifted colimits to reduce to the fact that $\bb G_m$ of $\bb F$-algebras is left Kan extended from smooth $\bb F$-algebras).

So by the adjunction of restriction and Zariski-local left Kan extension, the lemma becomes equivalent to the following: the map $1-(\cdot)^\vee:\Sigma^\infty\Omega^\infty\Pic \to \K$ of presheaves on smooth $\bb F$-schemes factors uniquely through $\Fil^1_\bb A\K$, and the resulting square
\begin{equation}
\xymatrix{
\bb Z(1)^\bb A[2]&\Fil^1_\bb A\K \ar[l]_-{\sub{edge}}\\
R\Gamma_\sub{Zar}(-,\bb G_m)[1]\ar[u]^{c_1^{\bb A}[2]}&\Sigma^\infty\Omega^\infty\Pic\ar[l]\ar[u]_{1-(\cdot)^\vee} 
}
\end{equation}
on $\Sm_\bb F$ commutes. We may now quote our work with Bachmann: the unique factoring was proved in \cite[Lemma 4.33]{BachmannElmantoMorrow}, and the resulting square commutes by \cite[Construction 4.35]{BachmannElmantoMorrow}.
\end{proof}

This finishes our discussion of the diagram \eqref{eqn_orientation}. We now define the map of\ presheaves of spectra \[1-[\roi(-1)]:\Sigma^\infty(\bb P_\bb F^1,\infty)\To L_\sub{Zar}\Fil^1_\sub{lse}\K^\sub{cn}\] to be the precomposition of $1-(\cdot)^\vee:\Sigma^\infty\Omega^\infty\Pic\to L_\sub{Zar}\Fil^1_\sub{lse}\K^\sub{cn}$ along $\Sigma^\infty$ of the map of pointed spaces $[\roi(1)]:(\bb P_\bb F^1,\infty)\to \Omega^\infty\Pic$.

\begin{definition}[$L_\sub{Zar}\Fil^\star_\sub{lse}\K^\sub{cn}$-modules]\label{def_LzarKlse}
Recalling from above that $L_\sub{Zar}\Fil^\star_\sub{lse}\K^\sub{cn}$ is a presheaf of $\bb E_\infty$-algebras in filtered spectra on qcqs $\bb F$-schemes, we may define the $\infty$-category of {\em $L_\sub{Zar}\Fil^\star_\sub{lse}\K^\sub{cn}$-modules} 
\[L_\sub{Zar}\Fil^\star_\sub{lse}\K^\sub{cn}\opp{-Mod}:=\mathrm{Mod}_{L_\sub{Zar}\Fil^\star_\sub{lse}\K^\sub{cn}}(\PShv(\Sch^\sub{qcqs}_\bb F,\Fil\Spt)).\] Given $E\in L_\sub{Zar}\Fil^\star_\sub{lse}\K^\sub{cn}\opp{-Mod}$, the adjoint to
\[
\Sigma^{\infty}(\bb P^1_\bb F, \infty) \otimes \Fil^{\star-1}E \xrightarrow{(1 - [\scr O(-1)]) \otimes \id} L_\sub{Zar}\Fil^1_\sub{lse}\K^\sub{cn}\otimes \Fil^{\star-1}E \xrightarrow{\rm act} \Fil^{\star}E
\]
defines a morphism ``pullback and multiply by $1-[\roi(-1)]$''
\begin{equation}
\mathrm{``}(1-[\roi(-1)])\pi^*\mathrm{"}:\Fil^{\star-1} E\to\fib\big(\Fil^{\star}E(\bb P^1_-)\xto{\infty^*}\Fil^\star E\big)
\label{eqn_mult_by1-O}\end{equation}
in $L_\sub{Zar}\Fil^\star_\sub{lse}\K^\sub{cn}\opp{-Mod}$.
\end{definition}

\begin{example}\label{example_LcdhTC_as_Kcn}
Zariski locally left Kan extending the filtered trace map of Propositions \ref{prop:compat} and \ref{prop:mot-v-bms} defines maps of presheaves of filtered $\bb E_\infty$-algebras $L_\sub{Zar}\Fil^\star_\sub{lse}\K^\sub{cn}\to \Fil^\star_\sub{HKR}\HC^-(-/\bb Q)$ (when $\bb F=\bb Q$) and $L_\sub{Zar}\Fil^\star_\sub{lse}\K^\sub{cn}\to \Fil^\star_\sub{BMS}\TC$ (when $\bb F=\bb F_p)$. These may also be composed with the canonical maps to the cdh sheafifications of the targets. Forgetting algebra structure we thus obtain $L_\sub{Zar}\Fil^\star_\sub{lse}\K^\sub{cn}$-modules \[\Fil^\star_\sub{HKR}\HC^-(-/\bb Q),\quad\Fil^\star_\sub{HKR}L_\sub{cdh}\HC^-(-/\bb Q),\quad\Fil^\star_\sub{BMS}\TC,\quad\Fil^\star_\sub{BMS}L_\sub{cdh}\TC,\] of which the last one will be particularly relevant to us.
\end{example}

We now note that $L_\sub{Zar}\Fil^\star_\sub{lse}\K^\sub{cn}$-modules interpolate between $L_\sub{Zar}\bb Z(\star)^\sub{lse}$-modules and $\K^\sub{cn}$-modules, in the sense that there are lax symmetric monoidal functors
\begin{equation}
\xymatrix@R=3mm{
{L_\sub{Zar}\bb Z(\star)^\sub{lse}}\opp{-Mod} & {L_\sub{Zar}\Fil^\star_\sub{lse}\K^\sub{cn}}\opp{-Mod}\ar[l]\ar[r] & {\K^\sub{cn}}\opp{-Mod}\\
\gr^\star E[2\star] &\Fil^\star E\ar@{|->}[r] \ar@{|->}[l] & \colim_{j\to-\infty}\Fil^j E
}
\label{eqn:interpolation}
\end{equation}
Here the leftwards functor takes graded pieces and then shears the indexing; the rightwards functor takes the underlying $L_\sub{Zar}\K^\sub{cn}$-module $\colim_{j\to-\infty}\Fil^j E$ and then restricts along $\K^\sub{cn}\to L_\sub{Zar}\K^\sub{cn}$. These functors are moreover compatible with $\bb P^1$-bundle formulae as follows:

\begin{lemma}\label{lemma:interpolation}
Let $\Fil^\star E\in {L_\sub{Zar}\Fil^\star_\sub{lse}\K^\sub{cn}}\opp{-Mod}$, with associated $L_\sub{Zar}\bb Z(\star)^\sub{lse}$-module $\gr^\star E[2\star]\in L_\sub{Zar}\bb Z(\star)^\sub{lse}\opp{-Mod}$ and associated $\K^\sub{cn}$-module $\colim_{j\to-\infty}\Fil^j E\in \K^\sub{cn}\opp{-Mod}$ given by the functors \eqref{eqn:interpolation}
\begin{enumerate}
\item The leftwards functor of \eqref{eqn:interpolation} sends \eqref{eqn_mult_by1-O} to the map \[c_1(\roi(1))\circ\pi^*:\gr^{\star-1} E[2\star-2]\to \fib\big(\gr^\star E(\bb P^1_-)[2\star]\xto{\infty^*}\gr^\star E[2\star]\big),\] i.e., \eqref{eqn:P1_for_cohomologies2} for the  $L_\sub{Zar}\bb Z(\star)^\sub{lse}$-module $\gr^\star E[2\star]$.
\item The rightwards functor of \eqref{eqn:interpolation} sends \eqref{eqn_mult_by1-O} to the map \[\pi^*- [\roi(-1)] \circ \pi^*: \colim_{j\to-\infty}\Fil^jE\to \fib\big(\colim_{j\to-\infty}\Fil^jE(\bb P^1_-)\xto{\infty^*}\colim_{j\to-\infty}\Fil^jE\big),\] i.e., \eqref{eqn:P1_for_Kcnmods2} for the $\K^\sub{cn}$-module $\colim_{j\to-\infty}\Fil^jE$.
\end{enumerate}
\end{lemma}
\begin{proof}
Part (1) follows from the left commutative square in \eqref{eqn_orientation}, while part (2) follows from the right commutative square.
\end{proof}

\begin{remark}
We record some consequences of Lemma \ref{lemma:interpolation} which are not necessary for what follows, but whose absence would perhaps cause confusion. Let us say that a $L_\sub{Zar}\Fil^\star_\sub{lse}\K^\sub{cn}$-module $\Fil^\star E$ satisfies the $\bb P^1$-bundle formula if the map of filtered presheaves of spectra \eqref{eqn_mult_by1-O} is an equivalence. Then Lemma \ref{lemma:interpolation} shows that the functors \eqref{eqn:interpolation} restrict to the subcategories of modules satisfying $\bb P^1$-bundle formulae:
\[\xymatrix@R=3mm{
L_\sub{Zar}\bb Z(\star)^\sub{lse}\opp{-Mod}_\sub{pbf} &L_\sub{Zar}\Fil^\star_\sub{lse}\K^\sub{cn}\opp{-Mod}_\sub{pbf}\ar[l]\ar[r] & \K^\sub{cn}\opp{-Mod}_\sub{pbf}
}
\]
Furthermore, suppose $\Fil^\star E$ is a $L_\sub{Zar}\Fil^\star_\sub{lse}\K^\sub{cn}$-module and $X$ is a qcqs $\bb F$-scheme for which the following hold: the filtrations $\Fil^\star E(X)$ and $\Fil^\star E(\bb P^1_X)$ are $\bb N$-indexed and complete, and the underlying $\K^\sub{cn}$-module satisfies the $\bb P^1$-bundle formula on $X$. Then $\Fil^\star$ satisfies the $\bb P^1$-bundle formula on $X$ if and only if the same is true of the associated $L_\sub{Zar}\bb Z(\star)^\sub{lse}$-module $\gr^\star E[2\star]$. (It is exactly for this reason that we will not use again the terminology of a $L_\sub{Zar}\Fil^\star_\sub{lse}\K^\sub{cn}$-module satisfying the $\bb P^1$-bundle formula.)
\end{remark}

We now have the necessary tools to explain a general strategy which can sometimes be used to prove the $\bb P^1$-bundle formula for $L_\sub{Zar}\bb Z(\star)^\sub{lse}$-modules $F(\star)$. Suppose that $F(\star)=\gr^\star E[2\star]$ occurs as the sheared graded pieces of some $L_\sub{Zar}\Fil^\star_\sub{lse}\K^\sub{cn}$-module $\Fil^\star E$ such that the underlying $\K^\sub{cn}$-module $\colim_{j\to-\infty}\Fil^jE$ comes from an $\bb F$-linear additive invariant. Then, evaluating on some fixed qcqs $\bb F$-scheme $X$, the map \eqref{eqn_mult_by1-O} yields a map of filtered spectra
\[\mathrm{``}(1-[\roi(-1)])\pi^*\mathrm{"}:\Fil^{\star-1} E(X)\to\fib\big(\Fil^{\star}E(\bb P^1_X)\xto{\infty^*}\Fil^\star E(X)\big)\]
which is an equivalence on underlying $\K^\sub{cn}$-modules (by the compatibility of Lemma \ref{lemma:interpolation}(2) and Construction \ref{cons_add_invariant}). Depending on $X$ and on the extensions occurring in the filtered objects, it will sometimes be possible to deduce from the equivalence on underlying spectra that the map is also an equivalence on associated gradeds. (The most extreme example to have in mind is if we knew that the filtrations on both sides were the Postnikov filtration) By the compatibility of Lemma \ref{lemma:interpolation}(1), this would establish the $\bb P^1$-bundle formula for $F(\star)$ on~$X$.

We will use exactly this strategy in the next subsection to prove Theorem \ref{thm:cdh-syn-pbf}.

\subsection{$\P^1$-bundle formula for cdh sheafified syntomic cohomology}\label{sec:syn-p1}
As will become clear in \S\ref{ss_proof_of_Pn_bundle}, the main new part of Theorem~\ref{thm:pbf} is establishing it $p$-adically in characteristic $p > 0$. For this subsection we fix $\bb F = \F_p$. Although syntomic cohomology satisfies the $\bb P^1$-bundle formula as explained in Example~\ref{exam:syn_pbf}, there is no a priori reason that this property should be inherited by its cdh sheafification; nevertheless, in this subsection we will show that it is true. More precisely, we are interested in the presheaf of $\bb E_\infty$-algebras in graded complexes $L_{\cdh}\bb Z_p(\star)^\sub{syn}$ on $\Sch^\sub{qcqs}_{\bb F_p}$, together with its first Chern class map \[c_1:R\Gamma_\sub{Nis}(-,\bb G_m)[-1]\To L_\sub{cdh}\bb Z_p(1)^\sub{syn}\] (defined either by cdh sheafifying the usual syntomic first Chern class, or composing the first Chern class of Definition \ref{def:P1bf_cohomology} along $L_\sub{Zar}\bb Z(1)^\sub{lse}\to \bb Z_p(1)^\sub{syn}\to L_{\cdh}\bb Z_p(1)^\sub{syn}$; these coincide by the argument of Example~\ref{exam:syn_pbf}), and our goal is to establish the following $\bb P^1$-bundle formula:

\begin{theorem}\label{thm:cdh-syn-pbf}
For any qcqs $\bb F_p$-scheme $X$ and $j\ge0$, the map
\begin{equation}\label{eq:lcdh-pbf}
L_{\cdh}\Z_p(j)^{\syn}(X)\oplus L_{\cdh}\Z_p(j-1)^{\syn}(X)[-2] \xrightarrow{\pi^* \oplus c_1(\scr O(1))\circ\pi^* } L_{\cdh}\Z_p(j)^{\syn}(\P^1_X)
\end{equation}
is an equivalence.
\end{theorem}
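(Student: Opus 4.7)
The strategy, indicated already in Remark~\ref{re_into_pmf}, is to upgrade $L_{\cdh}\TC$ to a localizing invariant via the cartesian square of Theorem~\ref{thm:mainsq} (Lemma~\ref{lem:lcdh-tc}); at the level of spectra $L_{\cdh}\TC$ then inherits the $\P^1$-bundle decomposition coming from the semiorthogonal decomposition of $\Perf(\P^1)$ (Corollary~\ref{cor:lcdh-pbf}), and the task reduces to refining this through the BMS filtration and passing to graded pieces.

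First I promote the $\K_{\conn}$-module structure on $L_{\cdh}\TC$ from Construction~\ref{cons_add_invariant} to a filtered module structure. By left Kan extending the filtered trace map of Proposition~\ref{prop:mot-v-bms} from smooth $\F_p$-algebras and then cdh sheafifying, as in the proof of Proposition~\ref{prop_cdh_filtered_trace_p}, one obtains a multiplicative filtered map $\Fil^{\star}_{\sub{lse}}\K_{\conn} \to \Fil^{\star}_{\sub{BMS}} L_{\cdh}\TC$, witnessing $\Fil^{\star}_{\sub{BMS}} L_{\cdh}\TC$ as a filtered $\Fil^{\star}_{\sub{lse}}\K_{\conn}$-module. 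Applying Construction~\ref{constr:filtp1} with $E = L_{\cdh}\TC$ then produces a filtered morphism
\[
\alpha_X^{\star}:\Fil^{\star}_{\sub{BMS}} L_{\cdh}\TC(X)\oplus \Fil^{\star-1}_{\sub{BMS}} L_{\cdh}\TC(X) \To \Fil^{\star}_{\sub{BMS}} L_{\cdh}\TC(\P^1_X),
\]
whose underlying spectrum map coincides (up to the invertible shearing $\gamma$ of Construction~\ref{constr:filtp1}) with the $\P^1$-bundle equivalence of Corollary~\ref{cor:lcdh-pbf}, and whose associated graded in weight $j$ is identified, via Lemma~\ref{lem:gr-compat} and Remark~\ref{rem:gr-compat}, with the $[2j]$-shift of the desired map~\eqref{eq:lcdh-pbf}.

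The core of the proof is then to verify that $\alpha_X^{\star}$ is an equivalence of filtered spectra. Suppose first that $X$ is of finite Krull dimension. The proof of Theorem~\ref{thm:p-ahss} shows that both sides of $\alpha_X^{\star}$ are bounded complete $\mathbb{N}$-indexed filtered spectra; so the cofibre $C_X^{\star}$ of $\alpha_X^{\star}$ is itself a bounded complete $\mathbb{N}$-indexed filtered spectrum whose underlying spectrum vanishes by Corollary~\ref{cor:lcdh-pbf}. The associated spectral sequence lies in a fixed half-plane and converges strongly to zero, forcing $\gr^j C_X^{\star} = 0$ for all $j$, which is~\eqref{eq:lcdh-pbf} in the finite-dimensional case. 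To extend to arbitrary qcqs $X$, I consider the map of fibre sequences
\[
F_j := \mathrm{fib}\bigl(\Z_p(j)^{\syn} \to L_{\cdh}\Z_p(j)^{\syn}\bigr) \To \Z_p(j)^{\syn} \To L_{\cdh}\Z_p(j)^{\syn}
\]
under~\eqref{eq:lcdh-pbf}. Since $F_j$ is finitary by Remark~\ref{rem:fibre-finitary} and the $\P^1$-bundle formula for $\Z_p(j)^{\syn}$ is known from~\cite{BhattLurie2022}, the already-established finite-dimensional case of~\eqref{eq:lcdh-pbf} gives the $\P^1$-bundle formula for $F_j$ on every finite-type $\F_p$-scheme, hence on every qcqs $\F_p$-scheme by finitariness; two-out-of-three in the fibre sequence then gives~\eqref{eq:lcdh-pbf} for every qcqs $X$.

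The main obstacle is the passage from an underlying-spectrum equivalence to a filtered equivalence, which rests on boundedness of the BMS filtration on $L_{\cdh}\TC$; this is automatic on finite-dimensional schemes and is propagated to the qcqs setting via the finitariness of $F_j$.
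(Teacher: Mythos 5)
There is a genuine gap, and it sits at the heart of the theorem. You claim that because the cofibre $C_X^{\star}$ of $\alpha_X^\star$ is a bounded complete filtered spectrum whose underlying spectrum is zero, "the associated spectral sequence converges strongly to zero, forcing $\gr^j C_X^{\star}=0$ for all $j$." This is false: strong convergence to zero forces the $E_\infty$-page to vanish, not the $E_2$-page. A bounded, complete, exhaustive filtration on the zero spectrum can perfectly well have nonzero graded pieces — e.g.\ take $\Fil^0 M = M = 0$, $\Fil^1 M = A \neq 0$, $\Fil^{\geq 2}M = 0$; then $\gr^1 = A$ and $\gr^0 = A[1]$, and the spectral sequence is killed by a differential, not by vanishing. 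This is exactly the situation here, and it is why the paper's proof of Theorem~\ref{thm:cdh-syn-pbf} occupies all of \S\ref{sec:syn-p1}: in the basic case of a field $F$, the differentials of the spectral sequence $H^i(C_r(j)(F))\Rightarrow 0$ really are isomorphisms $H^0(j)\cong H^2(j+1)$ rather than zero maps a priori, and one must separately show these groups vanish.

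What the paper actually does after the setup you correctly reproduce (filtered $\K_{\conn}$-module structure, Construction~\ref{constr:filtp1}, Lemma~\ref{lem:gr-compat}) is: (i) use the Frobenius action to show the spectral sequence degenerates only \emph{up to bounded denominators}, so each $H^i(C(j)(X))$ is bounded $p$-torsion and $C(j)$ is derived $p$-complete (Lemma~\ref{lem:rational}); (ii) identify $C(j)$ mod $p^r$ with a cofibre of $\eh$-cohomologies of $W_r\Omega^j_{\log}$ and exploit cohomological dimension bounds to pin down the shape of the spectral sequence; (iii) reduce, via finitariness, hypercompleteness and hv-excision, to henselian valuation rings of rank $\leq 1$ (Lemma~\ref{lem:cj-finitary}); and (iv) kill the surviving groups by showing they are simultaneously bounded $p$-torsion and $p$-torsion-free, the latter via Gersten injectivity for valuation rings, rigidity of $\tilde\nu_r(j)$, and surjectivity of $W_{r+1}\Omega^j_{\log}\to W_r\Omega^j_{\log}$ on henselian valuation rings (Lemmas~\ref{lem:fields-case}, \ref{lem:p-tf-0}, \ref{lem:vanish}, \ref{lem:p-tf}). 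None of this is supplied or replaceable by your boundedness argument, so the proposal does not prove the theorem. (Your final finitariness reduction via $F_j$ is fine in spirit — it mirrors Lemma~\ref{lem:cj-finitary}(1) — but it rests on the invalid finite-dimensional case.)
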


\begin{remark}[No circular reasoning]\label{rem_not_circular}
As already explained in \S\ref{ss_cdh_local}, the motivic cohomology developed in this paper depends on some of our joint work with Bachmann \cite{BachmannElmantoMorrow}. On the other hand, {\em op.~cit.} makes use of Theorem \ref{thm:cdh-syn-pbf} when establishing the $\bb P^1$-bundle formula for mod-$p$ cdh-motivic cohomology on $\bb F_p$-schemes (see the proof of \cite[Lemma 9.25]{BachmannElmantoMorrow}).

We assure the reader that there is no circular reasoning, and have tried to make this clear in the exposition. The proof of Theorem \ref{thm:cdh-syn-pbf} in this subsection depends only on the techniques of \S\ref{sec:p1} (which makes use of classical motivic cohomology of smooth $\bb F$-schemes -- where admittedly we sometimes quote early sections of \cite{BachmannElmantoMorrow} for foundational results, but these are totally independent of Theorem \ref{thm:cdh-syn-pbf}), and isolated results of \S\ref{sec:charp} about syntomic cohomology.
\end{remark}

We will prove the theorem by implementing the strategy outlined at the end of the previous subsection; the techniques overlap with \cite[\S 8]{BachmannElmantoMorrow} but differ considerably in details. For each qcqs $\bb F_p$-scheme $X$ and $j\ge0$, let $C(j)(X)$ denote the cofiber of \eqref{eq:lcdh-pbf}, or equivalently the cofiber of \[c_1(\scr O(1))\circ\pi^* :L_{\cdh}\Z_p(j-1)^{\syn}(X)[-2]\To\fib\big(L_{\cdh}\Z_p(j)^{\syn}(\P^1_X)\xto{\infty^*}L_{\cdh}\Z_p(j)^{\syn}(X)\big)\]
Our goal is to establish the vanishing of the $C(j)(X)$. The following proposition is the main payoff of the previous subsection, showing that the $C(j)(X)$ appear up to shifting as the graded pieces of a filtration on the zero spectrum:

\begin{proposition}\label{proposition_payoff}
For any qcqs $\bb F_p$-scheme $X$ of finite valuative dimension, the spectrum $0$ admits a natural, bounded, $\bb N$-indexed filtration with graded pieces $C(j)(X)[2j]$ for $j\ge0$.
\end{proposition}
\begin{proof}
By cdh sheafifying in Example \ref{exam:syn_pbf}, we view $L_{\cdh}\bb Z_p(\star)^\sub{syn}$ as a $L_\sub{Zar}\bb Z(\star)^\sub{lse}$-module, so that Theorem~\ref{thm:cdh-syn-pbf} is the assertion that it satisfies the $\bb P^1$-bundle formula in the sense of Definition \ref{def:P1bf_cohomology}. We also view $\Fil^{\star}_\sub{BMS}L_{\cdh}\TC$ as a $L_\sub{Zar}\Fil^{\star}_\sub{lse}\K^\sub{cn}$-module as in Example \ref{example_LcdhTC_as_Kcn}. These are compatible via the functors \eqref{eqn:interpolation}, i.e.,
\[
\xymatrix{
L_{\cdh}\bb Z_p(\star)^\sub{syn}[2\star] &\Fil_\sub{BMS}^\star L_\sub{cdh}\TC \ar@{|->}[r] \ar@{|->}[l] & \colim_{j\to-\infty}\Fil_\sub{BMS}^\star L_\sub{cdh}\TC \simeq L_\sub{cdh}\TC
}
\]
(for the leftwards functor we are using the compatibility of the trace map and dlog maps, as in Remark \ref{rem:horizontal_map_charp}) and so from the compatibilities of Lemma \ref{lemma:interpolation} we obtain the following conclusion for any qcqs $\bb F_p$-scheme $X$: The natural $\bb N$-indexed filtered spectrum given by the cofiber of
\begin{equation}\label{eq:filtered-cdh-tc}
\mathrm{``}(1-[\roi(-1)])\pi^*\mathrm{"}:\Fil^{\star-1}_\sub{BMS}L_{\cdh}\TC (X)\to\fib\big(\Fil^{\star}_\sub{BMS}L_{\cdh}\TC(\bb P^1_X)\xto{\infty^*}\Fil^\star_\sub{BMS}L_{\cdh}\TC(X)\big )
\end{equation}
(i.e., \eqref{eqn_mult_by1-O} for $\Fil^{\star}_\sub{BMS}L_{\cdh}\TC$) has associated graded $C(j)(X)[2\star]$; moreover its underlying spectrum is the cofiber of
\[
\pi^*- [\roi(-1)] \circ \pi^*:L_{\cdh}\TC(X)  \to \fib\big(L_{\cdh}\TC(\bb P^1_X)\xto{\infty^*}L_{\cdh}\TC(X)\big)
\]
(i.e., \eqref{eqn:P1_for_Kcnmods2} for $L_\sub{cdh}\TC$), which vanishes by Corollary \ref{cor:lcdh-pbf}. 

Ignoring the boundedness claim this establishes the statement of the proposition for arbitrary qcqs $\bb F_p$-schemes. In the case when $X$ has finite valuative dimension, the boundedness claim follows from the boundedness of the filtration $\Fil^{\star}_\sub{BMS}L_{\cdh}\TC(X)$, which was explained in Remark \ref{rem:cdh-local}.
\end{proof}

In order to show that the presheaves $C(j)$ vanish by exploiting the previous proposition, we first need to know that they satisfy a number of favourable properties:

\begin{proposition}\label{prop:C-properties}
The following hold for all $j\ge0$:
\begin{enumerate}
\item $C(j)$ is a finitary cdh sheaf on qcqs $\bb F_p$-schemes, and $C(j)[\tfrac1p]\simeq 0$.
\item Let $X$ be an $\bb F_p$-scheme which is smooth over a valuation ring, and assume that $X$ has finite valuative dimension $\le d$. Then $C(j)(X)$ is derived $p$-complete and supported in cohomological degrees $[j+1,j+d+2]$.
\item $C(j)$ satisfies henselian v-excision. (We will recall this terminology in the course of the proof.)
\item $C(j)(k)\simeq 0$ for any field $k$ of characteristic zero.
\end{enumerate}
\end{proposition}
\begin{proof} 
(1): As the cofiber of a map of cdh sheaves, $C(j)$ is a cdh sheaf. Furthermore, $C(j)[\tfrac1p]$ vanishes since $\bb Q(j)^\sub{syn}\quis L_\sub{cdh}\bb Q_p(j)^\sub{syn}$ by Lemma \ref{lemma_Qpsyn}, which satisfies the $\bb P^1$-bundle formula by inverting $p$ in Example \ref{exam:syn_pbf}. To prove the finitariness claim it is therefore enough to prove it modulo $p$, which follows from the same property of $L_\sub{cdh}\bb F_p(j)^\sub{syn}$, already explained at the end of the proof of Theorem \ref{thm:graded-pieces_charp}.

(2): Now let $X$ be an $\bb F_p$-scheme of finite valuative dimension $\le d$ which is smooth over a valuation ring. Then Lemma \ref{lemma_derived_p_complete}(2) implies that $L_\sub{cdh}\bb Z_p(j)^\sub{syn}(X)$ and $L_\sub{cdh}\bb Z_p(j)^\sub{syn}(\bb P_X^1)$ are derived $p$-complete, whence the same is true of $C(j)(X)$. To establish the range of support it is therefore enough to show that $C(j)(X)/p$ is supported in cohomological degrees $[j,j+d+2]$. From Remark \ref{remark_eh} we know that $L_\sub{cdh}\bb F_p(j)^\sub{syn}$ is equivalent to the cdh sheafification of $R\Gamma_\sub{\'et}(-,\Omega^j_\sub{log})[-j]$. On any affine the latter is supported in cohomological degrees $[j,j+1]$ (c.f., Remark \ref{remarks_tildenu}), so after cdh sheafifiying and evaluating on $X$ we deduce that $L_\sub{cdh}\bb F_p(j)^\sub{syn}(X)$ is supported in $[j,j+d+1]$, and that $L_\sub{cdh}\bb F_p(j)^\sub{syn}(\bb P^1_X)$ is supported in $[j,j+d+2]$; recall here that the $\cdh$ cohomological dimension of a qcqs scheme is bounded by its valuative dimension \cite[Proposition 2.4.3]{ElmantoHoyoisIwasaKelly2021}. We may now read off from the definition of $C(j)(X)$ that it is supported in degrees $[j-1,j+d+2]$ and that \[H^{j-1}(C(j)(X))=\ker(H^j(L_\sub{cdh}\bb F_p(j)^\sub{syn}(X))\to H^j(L_\sub{cdh}\bb F_p(j)^\sub{syn}(\bb P^1_X));\] but this map is split by $\infty^*$, so injective, thereby completing the proof of the range of support.

(3): Recall from \cite[Definition 3.3.2]{ElmantoHoyoisIwasaKelly2021} that a functor $F$ on $\bb F_p$-algebras, valued in $\mathrm{Sp}$ or $\rm{D}(\bb Z)$, is said to satisfy henselian v-excision if, for every valuation ring $V$ of characteristic $p$ and prime ideal $\frak p\subseteq V$, the functor $F$ carries the Milnor square
\begin{equation}\label{eq:hv}
\begin{tikzcd}
V \ar{d} \ar{r} & V_{\mathfrak{p}} \ar{d}\\
V/\mathfrak{p} \ar{r} & \kappa(\mathfrak{p}) 
\end{tikzcd}
\end{equation}
to a cartesian square. This holds for $\bb F_p(j)^\syn$ by \cite[Proposition 7.25]{BachmannElmantoMorrow} (it reduces to henselian v-excision for the cotangent complex by the increasing filtration in Lemma~\ref{lem_fin_fil_on_syn}), hence also for $L_\sub{cdh}\bb F_p(j)^\sub{syn}$ since cdh sheafification does not change the value of a presheaf on henselian valuation rings. It now follows from \cite[Lemma~3.3.7]{ElmantoHoyoisIwasaKelly2021} that $L_\sub{cdh}\bb F_p(j)^\sub{syn}(\bb P^1_-)$ also satisfies henselian v-excision; to apply that lemma we note that $L_\sub{cdh}\bb F_p(j)^\sub{syn}$ is finitary by (1) and lands in $\rm{D}(\bb Z)$, where compact objects are cotruncated. Taking a cofiber we deduce that $C(j)/p$ satisfies henselian v-excision, which suffices to complete the proof of (3) since $C(j)[\tfrac1p]$ vanishes by (1).

(4): The comparison maps $\bb F_p(j)^\sub{syn}(k)\to L_\sub{cdh}\bb F_p(j)^\sub{syn}(k)$ and $\bb F_p(j)^\sub{syn}(\bb P^1_k)\to L_\sub{cdh}\bb F_p(j)^\sub{syn}(\bb P^1_k)$ are equivalences for all $j$: the first because fields are points for the cdh topology; the second because the presheaves are both sides are Nisnevich sheaves and the Nisnevich local rings of $\bb P_k^1$ are henselian discrete valuation rings, which are again points for the cdh topology. Combined with (1), this reduces part (4) to the $\bb P^1$-bundle formula for mod-$p$ syntomic cohomology, which we already saw holds in Example \ref{exam:syn_pbf}.
\end{proof}

We now have the necessary tools to prove Theorem~\ref{thm:cdh-syn-pbf}:

\begin{proof}[Proof of Theorem~\ref{thm:cdh-syn-pbf}]
Our goal is to establish the vanishing of $C(j)$ of arbitrary qcqs $\bb F_p$-schemes.

{\bf Step 1.} Proposition \ref{prop:C-properties} allows us to carry out some initial reductions. Indeed, by part (1) it is enough to prove vanishing of $C(j)/p$; but this is a finitary cdh sheaf satisfying henselian v-excision by parts (1) and (3), so by \cite[Corollary 2.4.19]{ElmantoHoyoisIwasaKelly2021} it is enough to check vanishing on henselian valuation rings of rank $\le 1$. For the rest of the proof we fix a henselian valuation ring $V\supseteq \bb F_p$ of rank $\le 1$, and we will show that $C(j)(V)$ vanishes for all $j\ge0$.

{\bf Step 2.} We will show that $H^j(C(j)(V)/p)=0$ for all $j\ge0$. We remind the reader that, as in the proof of Proposition \ref{prop:C-properties}, $L_\cdh\bb F_p(j)^\sub{syn}$ is the cdh sheafification of $R\Gamma_\sub{\'et}(-,\Omega^j_\sub{log})[-j]$, and in particular its value on any qcqs $\bb F_p$ scheme of valuation dimension $\le d$ is supported in $[j,j+d+1]$. We will sometimes use this without mention.

Denote by 
\[
\tilde{H}^n(L_{\cdh}\bb F_p(j)^\sub{syn}(\bb P^1_V)):=\ker( H^n(L_{\cdh}\bb F_p(j)^\sub{syn}(\bb P^1_V)) \xto{\infty^*} H^n(\bb F_p(j)^\sub{syn}(V)) ))
\]
(which is isomorphic to the cokernel of $H^n(\bb F_p(j)^\sub{syn}(V))\to H^n(L_{\cdh}\bb F_p(j)^\sub{syn}(\bb P^1_V))$) the reduced cdh syntomic cohomology of $\bb P^1_V$. From the definition of $C(j)$ there is an exact sequence
\begin{equation}\label{eq:h0-eh}
0 \rightarrow \tilde{H}^j(L_{\cdh}\bb F_p(j)^\sub{syn}(\bb P^1_V)) \rightarrow H^j(C(j)(V)/p) \rightarrow H^{j-1}(\bb F_p(j-1)^\sub{syn}(V)) \xrightarrow{c_1(\roi(1))\circ\pi^*} \tilde{H}^{j+1}(L_{\cdh}\bb F_p(j)^\sub{syn}(\bb P^1_V))).
\end{equation}
From the exact sequence~\eqref{eq:h0-eh}, to complete step 2 it is enough to prove the following two claims:
\begin{enumerate}
\item[(1)] $\tilde{H}^j(L_{\cdh}\bb F_p(j)^\sub{syn}(\bb P^1_V)) = 0$, and
\item[(2)] the map $c_1(\roi(1))\circ\pi^*$ in~\eqref{eq:h0-eh} is injective. 
\end{enumerate}

Let us first prove (1); we may assume $V$ has rank exactly $1$, as otherwise we may appeal to Proposition~\ref{prop:C-properties}(4). Let $\kappa$ be the residue field of $V$, and $F$ the field of fractions, and consider the commutative diagram
\begin{equation}\label{eq:part-exact}
\begin{tikzcd}
H^j(L_{\cdh}\bb F_p(j)^{\syn}(\P_V^1)) \ar{r}{\infty^{\ast}} \ar{d} &  H^j(\bb F_p(j)^{\syn}(V)) \ar{d}\\
H^j(L_{\cdh}\bb F_p(j)^{\syn}(\P_{\kappa}^1)) \times H^j(L_{\cdh}\bb F_p(j)^{\syn}(\P_{F}^1)) \ar{r}{\infty^{\ast}} & H^j(\bb F_p(j)^{\syn}(\kappa)) \times H^j(\bb F_p(j)^{\syn}(F)) 
\end{tikzcd}
\end{equation}
We want to prove that the top horizontal is injective. By Proposition \ref{prop:C-properties}(4), we know that $C(j)$ vanishes on $\kappa$ and $F$, whence the left term of sequence~\eqref{eq:h0-eh} vanishes if we replace $V$ by $F$ or $\kappa$; in other words, we have shown that the bottom horizontal arrow of \eqref{eq:part-exact} is injective. It now suffices to prove that the left vertical map is injective. But the bound of Proposition \ref{prop:C-properties} implies that the abelian presheaf $H^j(L_{\cdh}\bb F_p(j)(-))$ on qcqs $\bb F_p$-schemes is separated for the cdh topology, and so the first map in the composite
\begin{equation}\label{eq:separated}
H^j(L_{\cdh}\bb F_p(j)^\sub{syn}(\bb P^1_V)) \To \prod_{\Spec(W) \to \bb P^1_V} H^j(\bb F_p(j)^\sub{syn}(W))  \To \prod_{\Spec(W)\to \bb P^1_V} H^j(\bb F_p(j)^\sub{syn}(\mathrm{Frac}(W)))
\end{equation}
is injective; here the products run across all henselian valuation rings $W$ mapping to $\bb P^1_V$. Furthermore, the second map is also injective: each factor identifies with the map $\Omega^j_{W, \log} \rightarrow \Omega^j_{\mathrm{Frac}(W), \log}$, which is injective by \cite[Corollary 6.5.21]{GabberRamero2003}. We may now conclude the injectivity of the left vertical map as follows: suppose $\alpha \in H^j(L_{\cdh}\bb F_p(j)^{\syn}(\P_V^1))$ vanishes after pullback to both $\bb P^1_{\kappa}$ and $\bb P^1_{F}$. Then for any valuation ring $W$ and a map $\Spec(W) \rightarrow \bb P^1_V$, the induced map $\Spec(\mathrm{Frac}(W)) \rightarrow \bb P^1_V$ factors through either $\bb P^1_{\kappa}$ or $\bb P^1_{F}$; in either case we conclude that $\alpha$ vanishes after pullback to $\mathrm{Frac}(W)$. The injectivity of~\eqref{eq:separated} now implies $\alpha=0$, completing the proof of claim (1).

We now prove claim (2). We compare the exact sequences~\eqref{eq:h0-eh} for $V$ and $F$ 
\[
\begin{tikzcd}
0 \ar{r} & H^j(C(j)(V)/p) \ar{r} \ar{d} & H^{j-1}(\bb F_p(j-1)^\sub{syn}(V)) \ar{rr}{c_1(\roi(1))\circ\pi^*}  \ar{d} & & \tilde{H}^{j+1}(L_{\cdh}\bb F_p(j)^\sub{syn}(\bb P^1_V)))\ar{d}\\
0 \ar{r} & H^j(C(j)(F)/p) =0 \ar{r} & H^{j-1}(\bb F_p(j-1)^\sub{syn}(F)) \ar{rr}{c_1(\roi(1))\circ\pi^*}  & & \tilde{H}^{j+1}(L_{\cdh}\bb F_p(j)^\sub{syn}(\bb P^1_{F})))
\end{tikzcd}
\]
where we use Proposition \ref{prop:C-properties}(4) at the bottom left. The middle map identifies with he map $\Omega^{j-1}_{V/\bb F_p} \rightarrow \Omega^{j-1}_{\mathrm{Frac}(V)/\bb F_p}$, which is injective again by \cite[Corollary 6.5.21]{GabberRamero2003}. Therefore the top right horizontal map is injective, as required to establish (2) and complete the step 2.

{\bf Step 3.} A priori $C(j)(V)$ is supported in $[j+1,j+3]$ by Proposition~\ref{prop:C-properties}(2), but step 2 shows that $H^{j+1}(C(j)(V))$ is $p$-torsion-free and so vanishes by Proposition \ref{prop:C-properties}(1). Therefore the spectral sequence $E_2^{ij}=H^{i-j}(C(-j)(V))\Rightarrow 0$ arising from Proposition \ref{proposition_payoff} is supported in two adjacent columns: there is no room for differentials and so all the terms of the $E_2$ page vanish.
\end{proof}

\subsection{Proof of Theorem \ref{thm:pbf-blowup}}\label{ss_proof_of_Pn_bundle}
Again let $\bb F$ be an arbitrary prime field. We may now prove Theorem \ref{thm:pbf-blowup}; we begin with the projective bundle formula in the special case $r=1$:

\begin{theorem}[$\bb P^1$-bundle formula for motivic cohomology]\label{thm:pbf} 
For any qcqs $\bb F$-scheme $X$ and $j \in\bb Z $, the map
\begin{equation}\label{eq:pbf}
\Z(j)^{\mot}(X)\oplus \Z(j-1)^{\mot}(X)[-2] \xrightarrow{\pi^* \oplus c_1(\scr O(1))\circ \pi^* } \Z(j)^{\mot}(\P^1_X)
\end{equation}
is an equivalence. 
\end{theorem}
\begin{proof}
First suppose that $\bb F=\bb Q$. Thanks to the pullback square Theorem~\ref{thm:graded-pieces}(2) it suffices to prove that the $\bb P^1$-bundle formula holds for the cohomologies
\[
R\Gamma(-,\widehat{L\Omega}_{-/\Q}^{\geq \star}), \qquad  R\Gamma_\sub{cdh}(-,\widehat{L\Omega}_{-/\Q}^{\geq \star}), \qquad \bb Z(\star)^\bb A.
\] 
It is moreover enough to treat $\bb Q$-schemes of finite type, by the finitariness of motivic cohomology (Theorem \ref{thm:graded-pieces}). For the $\bb P^1$-bundle formula of $\bb A^1$-motivic cohomology see Theorem \ref{thm:cdh}(7), while the $\bb P^1$-bundle formula for $R\Gamma(-,\widehat{L\Omega}_{-/\Q}^{\geq \star})$ has been recorded in Example~\ref{exam:dr_pbf}. Finally, for $R\Gamma_\sub{cdh}(-,\widehat{L\Omega}_{-/\Q}^{\geq \star})$ we use resolution of singularities to reduce to smooth $\bb Q$-schemes (here is where we benefit from our restriction to finite type $\bb Q$-schemes), where $R\Gamma_\sub{cdh}(-,\widehat{L\Omega}_{-/\Q}^{\geq \star})$ agrees\footnote{For any smooth $\bb Q$-scheme $X$, the arrows
\[\xymatrix{
R\Gamma(X,\hat{L\Omega}^{\ge j}_{-/\bb Q}) \ar[r]\ar[d] & R\Gamma_\sub{cdh}(X,\hat{L\Omega}^{\ge j}_{-/\bb Q})\ar[d]\\
R\Gamma(X,\Omega^{\ge j}_{-/\bb Q})\ar[r] & R\Gamma_\sub{cdh}(X,\Omega^{\ge j}_{-/\bb Q})
}\]
are all equivalences. The case of the left arrow follows from the equivalences $L_{R/\bb Q}^i\quis\Omega^i_{R/\bb Q}$ for smooth $\bb Q$-algebras $R$. For the right vertical arrow use cdh descent of derived de Rham cohomology (Lemma \ref{lemma_cdh_descent_HP}) to replace $\ge j$ by $<j$, and then that $L_{-/\bb Q}^i\to\Omega^i_{-/\bb Q}$ is an equivalence cdh locally (as in the proof of Theorem \ref{thm:graded-pieces}). The bottom horizontal arrow reduces to showing that $R\Gamma(X,\Omega^{i}_{-/\bb Q})\quis R\Gamma_\sub{cdh}(X,\Omega^{i}_{-/\bb Q})$ for all $i$, which is a well-known consequence of strong resolution of singularities and $R\Gamma(-,\Omega^{i}_{-/\bb Q})$ satisfying the regular blow-up formula on smooth $\bb Q$-schemes; see the references in the first paragraph of the proof of Corollary~\ref{corol_cdh_filtered_trace}. Finally the top horizontal arrow is an equivalence by the commutativity of the diagram.
}
with $R\Gamma(-,\widehat{L\Omega}_{-/\Q}^{\geq \star})$ and so we may again appeal to Example~\ref{exam:dr_pbf}.

Next we treat the case $\bb F = \bb F_p$. Then, similarly to the previous characteristic zero context, Theorem~\ref{thm:graded-pieces_charp}(2) reduces the problem to checking the $\bb P^1$-bundle formulae for 
\[
\bb Z_p(\star)^\sub{syn}, \qquad  L_\sub{cdh}\bb Z_p(\star)^\sub{syn}, \qquad \bb Z(\star)^\bb A.
\] 
For $\bb A^1$-motivic cohomology again see Theorem \ref{thm:cdh}(7), while the $\bb P^1$-bundle formula for syntomic cohomology has been recorded in Example~\ref{exam:syn_pbf}. Finally, Theorem~\ref{thm:cdh-syn-pbf} established the $\bb P^1$-bundle formula for $L_\sub{cdh}\bb Z_p(\star)^\sub{syn}$.
\end{proof}

\begin{proof}[Proof of Theorem \ref{thm:pbf-blowup}]
We first establish part (3) of the theorem, i.e., the regular blow-up formula. Since this holds for cdh sheaves (indeed, cdh sheaves carry arbitrary abstract blow-up squares to cartesian squares), the squares of Theorem \ref{thm:graded-pieces}(2) and \ref{thm:graded-pieces_charp}(3) reduce the problem to checking the regular blow-up formulae for $R\Gamma(-,L\Omega^{<j}_{-/\bb Q})$ on qcqs $\bb Q$-schemes and for $\bb Z_p(j)^\sub{syn}$ on qcqs $\bb F_p$-schemes. In both cases that reduces to the regular blow-up formula for $R\Gamma(-,L^i_{-/\bb F})$ for all $i\ge 0$ (here we use Lemma \ref{lem_fin_fil_on_syn} in characteristic $p$) which can be proved directly \cite[Lemma~9.4.3]{BhattLurie2022}.

We now know that $\bb Z(\star)^\sub{mot}$ satisfies both the $\bb P^1$-bundle formula (by Theorem \ref{thm:pbf}) and regular blow-up formula. The $\bb P^r$-bundle formula for arbitrary $r\ge1$ now follows formally from an argument in \cite[Lemma~3.3.5]{AnnalaIwasa2023} (which in fact only requires the elementary blowup formula to hold), and the projective bundle formula for an arbitrary locally free sheaf follows by Zariski descent.
\end{proof}

\begin{remark}[Variants on the regular blow-up formula]
\label{rem:blowup}
Given a regular closed immersion $Y\into X$ of qcqs $\bb F$-schemes of pure codimension $r$, the blow-up formula for motivic cohomology may instead be written as a natural equivalence \[
\Z(j)^{\mot}(\mathrm{Bl}_Y(X))  \simeq \Z(j)^{\mot}(X) \oplus \bigoplus_{i=1}^{r-1} \Z(j-i)^{\mot}(Y)[-2i]
\]
(for example this is how Thomason expressed the blow-up formula for $K$-theory \cite{Thomason1993}). Indeed, this follows by applying $\bb Z(j)^\sub{mot}$ to the blow-up square to get a cartesian square of spectra and then using the projective bundle formula to write $\bb Z(j)^\sub{mot}(Y\times_X\mathrm{Bl}_Y(X))\simeq \bigoplus_{i=0}^{r-1}\bb Z(j-i)^\sub{mot}(X)[-2i]$.

Furthermore, we mention that the extension of motivic cohomology to derived $\bb F$-schemes, as in \S\ref{sec:derived}, satisfies the derived blow-up formula in that it converts derived blow-up squares in the sense of \cite{KhanRydh2019} to cartesian squares. Indeed, the $\cdh$ parts of the theory do not depend on the derived structure and thus satisfy the derived blow-up formula. On the other hand, we reduce the claim for syntomic and filtered derived de Rham cohomology to the case of the cotangent complex, which satisfies the derived blow-up formula since derived blow-up squares are ``pulled back'' from blow-ups along regular immersions  (see \cite[Definition 3.1]{KerzStrunkTamme2018}).

On the other hand we stress that for a closed immersion $Y \rightarrow X$ which is not regular then motivic cohomology need not carry the corresponding blowup square to a cartesian square; indeed, $\bb Z(j)^\sub{mot}$ is not a cdh sheaf. One must instead taking into account infinitesimal thickenings, which leads to the pro cdh descent of Theorem~\ref{theorem_pro_cdh_descent}.
\end{remark}

\section{Comparison to $\bb A^1$-motivic cohomology}\label{section_smooth}
We begin by repeating Construction \ref{cons_mot_to_cdh} for the sake of clarity: on the category of qcqs $\bb F$-schemes, there are natural multiplicative comparison maps of $\mathrm D(\bb Z)$-valued presheaves \begin{equation}\bb Z(j)^\sub{mot}\To\bb Z(j)^{\bb A}\label{equation_mot_to_cdh}\end{equation} for $j\ge0$, arising as the shifted graded pieces of a multiplicative comparison map of filtered presheaves of spectra $\Fil_\sub{mot}^\star \K\to \Fil_{\bb A}^\star \KH$. These comparison maps are tautological from the pullback definition of our motivic cohomology, which should be seen a modification of $\bb A^1$-motivic cohomology. 

The goal of this section is to prove the following equivalences related to the maps (\ref{equation_mot_to_cdh}):

\begin{theorem}\label{thm_mot_vs_cdh}
Let $\bb F$ be a prime field and $j\ge0$.
\begin{enumerate}
\item The map (\ref{equation_mot_to_cdh}) induces equivalences of $\mathrm D(\bb Z)$-valued presheaves on $\mathrm{Sch}^\sub{qcqs}_\bb F$ \[L_\sub{cdh}\bb Z(j)^\sub{mot}\quis \bb Z(j)^{\bb A}\qquad\text{and}\qquad L_{\bb A^1}\bb Z(j)^\sub{mot}\quis \bb Z(j)^{\bb A}.\]
\item For any regular Noetherian $\bb F$-scheme $X$, the map (\ref{equation_mot_to_cdh}) induces an equivalence \[\bb Z(j)^\sub{mot}(X)\quis \bb Z(j)^{\bb A}(X).\] (Equivalently, using part (1), the maps $\bb Z(j)^\sub{mot}(X)\to \bb Z(j)^\sub{mot}(\bb A_X^m)$ are equivalences for all $m\ge0$.)
\end{enumerate}
\end{theorem}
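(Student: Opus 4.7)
The plan is to exploit the defining pullback square~(\ref{eq:motfilt}) (resp.~(\ref{eq:motfilt_charp})) of $\Fil^\star_\sub{mot}\K$, together with the pullback descriptions of $\bb Z(j)^\sub{mot}$ from Theorems~\ref{thm:graded-pieces}(2) and~\ref{thm:graded-pieces_charp}(2), reducing everything to properties of the fibre $G_j := \opp{fib}(\bb Z(j)^\sub{mot} \to \bb Z(j)^\sub{cdh})$, which by construction equals $\opp{fib}(F^{\ge j} \to L_\sub{cdh}F^{\ge j})$ where $F^{\ge j}$ denotes Hodge-filtered derived de Rham cohomology in characteristic zero and Nygaard-filtered syntomic cohomology in characteristic~$p$.

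For the cdh claim of part~(1), I would apply the exact left adjoint $L_\sub{cdh}$ to the entire defining pullback square of $\Fil^\star_\sub{mot}\K$. The result is again a pullback square, and by construction (Remarks~\ref{rem_cdh_local_HKR} and~\ref{rem:cdh-local}) the bottom-right corner is already cdh-local, so the right vertical map becomes an equivalence after cdh-sheafification; therefore so does the left vertical. Combined with $L_\sub{cdh}\Fil^\star_\sub{cdh}\KH \simeq \Fil^\star_\sub{cdh}\KH$, this gives $L_\sub{cdh}\Fil^\star_\sub{mot}\K \simeq \Fil^\star_\sub{cdh}\KH$, and passage to shifted graded pieces yields $L_\sub{cdh}\bb Z(j)^\sub{mot} \simeq \bb Z(j)^\sub{cdh}$.

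For the $\bb A^1$-invariance claim of part~(1), since $\bb Z(j)^\sub{cdh}$ is $\bb A^1$-invariant by Theorem~\ref{thm:cdh}(5), it suffices to prove $L_{\bb A^1}G_j = 0$. In characteristic $p$ we already have $G_j[\tfrac{1}{p}] = 0$ by Theorem~\ref{thm:graded-pieces_charp}(3), so only the mod-$p$ part remains, and Theorem~\ref{theorem_GL} identifies it with the cofibre of $R\Gamma_\sub{cdh}(-, W_r\Omega^j_\sub{log}) \to R\Gamma_\sub{\'eh}(-, W_r\Omega^j_\sub{log})$. In both characteristics, the Hodge (respectively, the conjugate filtration of Lemma~\ref{lem_fin_fil_on_syn}) reduces the problem to showing that, for each $i \ge 0$, the comparison map $R\Gamma(-, L^i_{-/\bb F}) \to R\Gamma_\sub{cdh}(-, L^i_{-/\bb F})$ becomes an equivalence after $L_{\bb A^1}$. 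The hard part will be this last step; my plan is to combine the finitariness of both presheaves (allowing reduction to henselian local rings, with valuation rings serving as cdh stalks) with the Gabber--Ramero concentration-in-degree-zero statement for the cotangent complex of valuation rings, and the Suslin simplicial model of $L_{\bb A^1}$.

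For part~(2), the same pullback square reduces matters to showing that $F^{\ge j}(X) \to L_\sub{cdh}F^{\ge j}(X)$ is an equivalence whenever $X$ is a regular Noetherian $\bb F$-scheme. Because the prime field $\bb F$ is perfect, the cotangent complex $L_{X/\bb F}$ is concentrated in degree zero on such $X$, equals $\Omega^1_{X/\bb F}$, and its wedge powers are coherent $\roi_X$-modules; in characteristic $p$ the identification $\bb Z_p(j)^\sub{syn}(X) \simeq R\Gamma_\sub{\'et}(X, W\Omega^j_\sub{log})[-j]$ from Remark~\ref{remark_WOmegalog}(2) further packages the syntomic input in terms of log de Rham--Witt sheaves. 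The graded pieces of the Hodge (resp.~conjugate) filtration then reduce the claim to cdh descent for coherent cohomology on regular Noetherian schemes, which is a classical consequence of resolution of singularities in characteristic zero and of alterations combined with Cartier-smoothness of valuation rings in characteristic~$p$. The principal obstacle in this part is to verify cdh descent for $\Omega^i_{-/\bb F}$ (and its de Rham--Witt analogues) on regular Noetherian schemes, but once this is granted the conclusion follows formally from the pullback square.
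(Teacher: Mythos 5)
Your treatment of part (1) for the cdh-sheafification claim is exactly the paper's argument (apply $L_\sub{cdh}$ to the defining square), and your reduction of the $\bb A^1$-claim to the cotangent complex is the right idea for the two \emph{non}-sheafified corners $R\Gamma(-,\widehat{L\Omega}^{\ge j}_{-/\bb Q})$ and $\bb Z_p(j)^\sub{syn}$ (this is Lemma \ref{lem:a10}(1), via K\"unneth and Geller--Weibel). Where your plan thins out is precisely at the cdh-sheafified corners: $L_{\bb A^1}$ and $L_\sub{cdh}$ do not commute and there is no K\"unneth formula for cdh cohomology, so showing that $L_{\bb A^1}$ of $R\Gamma_\sub{cdh}(-,L^i_{-/\bb F})$ vanishes (or that the comparison map becomes an equivalence after $L_{\bb A^1}$) is not a consequence of Gabber--Ramero plus the simplicial model; evaluated on a valuation ring $V$ it involves $R\Gamma_\sub{cdh}(V[\Delta^m],\Omega^i)$ for the non-valuation rings $V[\Delta^m]$, so you would additionally need cdh descent of $\Omega^i$ on smooth schemes over valuation rings (itself a nontrivial theorem of \cite{BachmannElmantoMorrow}) together with a hypercompleteness argument, and in characteristic $p$ you would also have to confront the failure of $L_\sub{cdh}\bb Z_p(j)^\sub{syn}$ to be $p$-complete (Remark \ref{rem:cdh-local}). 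The paper avoids all of this: in characteristic zero it observes that $\widehat{L\Omega}_{-/\bb Q}$ is already a cdh sheaf (Lemma \ref{lemma_cdh_descent_HP}) and that $L_{\bb A^1}$ preserves cdh sheaves, so the right vertical map becomes an equivalence after $L_{\bb A^1}$; in characteristic $p$ it kills $L_{\bb A^1}L_\sub{cdh}\bb Z_p(j)^\sub{syn}$ by the multiplicativity trick that it receives an $\bb E_\infty$-ring map from $L_{\bb A^1}\bigoplus_j\bb Z_p(j)^\sub{syn}[2j]\simeq 0$.

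The serious gap is in part (2), in characteristic $p$. Your reduction to showing that $\bb Z_p(j)^\sub{syn}(X)\to L_\sub{cdh}\bb Z_p(j)^\sub{syn}(X)$ (equivalently, mod $p$, that $R\Gamma_\sub{\'et}(X,\Omega^j_{\log})\to R\Gamma_\sub{\eh}(X,\Omega^j_{\log})$) is an equivalence for regular Noetherian $X$ is correct, but this statement is \emph{not} a classical consequence of alterations plus Cartier-smoothness of valuation rings: alterations have degree divisible by $p$ (and Gabber's refinement only controls primes $\ell\neq p$), so there is no trace argument for mod-$p$ coefficients, and Cartier-smoothness of valuation rings only identifies the cdh \emph{stalks}, not the value of the sheafification on a regular scheme. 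Indeed this comparison is exactly Corollary \ref{corol_e_vs_eh}, which the paper \emph{deduces from} Theorem \ref{thm_mot_vs_cdh}(2) and explicitly describes as previously out of reach without resolution of singularities; assuming it makes your argument circular. The paper's actual proof of part (2) runs through a completely different mechanism that is absent from your proposal: the projective bundle formula for $\bb Z(j)^\sub{mot}$ (whose hardest input is the $\bb P^1$-bundle formula for $L_\sub{cdh}\bb Z_p(j)^\sub{syn}$, Theorem \ref{thm:cdh-syn-pbf}), which makes $\bb Z(j)^\sub{mot}|_{\Sm_{\bb F}}$ deflatable, and then Gabber's presentation lemma in the Colliot-Th\'el\`ene--Hoobler--Kahn formalism to get Gersten injectivity for $\fib(\bb Z(j)^\sub{mot}\to L_{\bb A^1}\bb Z(j)^\sub{mot})$, reducing the statement to fields, where it follows from part (1) since fields are cdh points. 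Your characteristic-zero argument for part (2) via resolution of singularities is fine, but some replacement for the above is indispensable in characteristic $p$.
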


The maps in part (1) of the theorem are induced by (\ref{equation_mot_to_cdh}), recalling that the $\bb A^1$-motivic cohomology presheaves $\bb Z(j)^{\bb A}:\mathrm{Sch}^\sub{qcqs,op}_\bb F\to\mathrm D(\bb Z)$ are cdh sheaves and $\bb A^1$-invariant by Theorem \ref{thm:cdh}(2)\&(6). In case of confusion, here $L_{\A^1}$ denotes the endofunctor of presheaves of complexes (or of spectra) on $\Sch_{\bb F}^{\qcqs}$ reflecting onto $\A^1$-invariant presheaves; we will recall the explicit formula for $L_{\A^1}$ in the proof of Lemma~\ref{lem:a10} below.

\begin{remark}
Informally, Theorem \ref{thm_mot_vs_cdh}(1) says that our motivic cohomology may be viewed as a ``de-cdh-sheafification'' or ``de-$\bb A^1$-localisation'' of $\bb A^1$-motivic cohomology. More precisely, it states that on equicharacteristic schemes the comparison equivalences \[L_\sub{cdh}\K\quis \KH\qquad\text{and}\qquad L_{\bb A^1}\K\quis \KH\] (the first being part of Theorem~\ref{thm:mainsq}, the second being the definition of $\KH$) upgrade to filtered equivalences, where we equip $\KH$ with the filtration $\Fil_{\bb A}^\star$ of Theorem \ref{thm:cdh}, and we equip the left sides with $L_\sub{cdh}$, resp.~$L_{\bb A^1}$, of our motivic filtration $\Fil^\star_\sub{mot}$. That is, the $\bb A^1$-motivic filtration on $\KH$-theory can be recovered by cdh sheafifying or $\bb A^1$-localising our motivic filtration on $\K$-theory.
\end{remark}

\begin{remark}
Theorem \ref{thm_mot_vs_cdh}(2) is a motivic upgrade of the equivalence $\K(X)\quis\KH(X)$ for regular Noetherian $\bb F$-schemes $X$. Indeed, combined with this equivalence, it states that the map of filtered spectra $\Fil_\sub{mot}^\star \K(X)\to \Fil_{\bb A}^\star \KH(X)$ is an equivalence.
\end{remark}

In particular, we deduce that the new motivic cohomology coincides with the old on smooth varieties:

\begin{corollary}\label{corol_smooth_comparison}
Restricted to the category of smooth schemes over any field, the motivic cohomology $\bb Z(\star)^\sub{mot}$ agrees with the classical theory of \S\ref{ss_classical} and therefore recovers Bloch's cycle complexes. 
\end{corollary}
\begin{proof}
On smooth schemes over a field Theorem \ref{thm_mot_vs_cdh}(2) shows that $\bb Z(\star)^\sub{mot}\quis \bb Z(\star)^\bb A$; the latter is classical motivic cohomology, as recorded in Theorem \ref{thm:cdh}(9).
\end{proof}

Assuming resolution of singularities, one sometimes show that cohomologies are unchanged on smooth varieties by cdh sheafification \cite[\S5]{SuslinVoevodsky2000} \cite[\S4]{Voevodsky2010}; we can eliminate the hypothesis on resolution of singularities in the case of syntomic cohomology:

\begin{corollary}\label{corol_e_vs_eh}
For any regular Noetherian $\bb F_p$-scheme $X$ and $j\ge0$, the canonical maps \[\bb Z_p(j)^\sub{syn}(X)\To L_\sub{cdh}\bb Z_p(j)^\sub{syn}(X)\qquad\text{and}\qquad R\Gamma_\sub{\'et}(X,\Omega^j_\sub{log})\To R\Gamma_\sub{\'eh}(X,\Omega^j_\sub{log})\] are equivalences.
\end{corollary}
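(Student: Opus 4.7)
The plan is to deduce both equivalences as essentially formal consequences of Theorem~\ref{thm_mot_vs_cdh}(2), using the cartesian squares of Theorem~\ref{thm:graded-pieces_charp}(2) and Theorem~\ref{theorem_GL} together with Remark~\ref{remark_WOmegalog}(2); since regular Noetherian schemes are Cartier smooth, all the reference inputs apply. No new ideas are needed, so there is no genuine obstacle: the real content has already been absorbed into Theorem~\ref{thm_mot_vs_cdh}(2).

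\textbf{Step 1: the integral syntomic equivalence.} I consider the natural cartesian square of Theorem~\ref{thm:graded-pieces_charp}(2), namely
\[
\begin{tikzcd}
\bb Z(j)^\sub{mot}(X) \ar{r} \ar{d} & \bb Z_p(j)^\sub{syn}(X) \ar{d} \\
\bb Z(j)^\sub{cdh}(X) \ar{r} & L_\sub{cdh}\bb Z_p(j)^\sub{syn}(X).
\end{tikzcd}
\]
For $X$ regular Noetherian, Theorem~\ref{thm_mot_vs_cdh}(2) asserts that the left vertical arrow is an equivalence. Since the square is cartesian, the right vertical arrow is then also an equivalence, proving the first claim.

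\textbf{Step 2: the logarithmic \'etale versus \'eh equivalence.} I now feed the equivalence of Step 1 modulo $p$ (or, equivalently, directly apply the $r=1$ case of Theorem~\ref{theorem_GL}) into the cartesian square
\[
\begin{tikzcd}
\bb Z(j)^\sub{mot}(X)/p \ar{r} \ar{d} & \bb F_p(j)^\sub{syn}(X) \ar{d} \\
R\Gamma_\sub{cdh}(X,\Omega^j_\sub{log})[-j] \ar{r} & R\Gamma_\sub{\'eh}(X,\Omega^j_\sub{log})[-j].
\end{tikzcd}
\]
Again Theorem~\ref{thm_mot_vs_cdh}(2) identifies the left vertical arrow with the mod-$p$ reduction of $\bb Z(j)^\sub{mot}(X)\quis \bb Z(j)^\sub{cdh}(X)$, which by the mod-$p$ form of Theorem~\ref{thm:cdh}(4) is an equivalence. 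Cartesianness then forces $\bb F_p(j)^\sub{syn}(X)\quis R\Gamma_\sub{\'eh}(X,\Omega^j_\sub{log})[-j]$.

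\textbf{Step 3: conclusion.} Finally, Remark~\ref{remark_WOmegalog}(2) supplies a natural equivalence $\bb F_p(j)^\sub{syn}(X) \simeq R\Gamma_\sub{\'et}(X,\Omega^j_\sub{log})[-j]$, valid since $X$ is regular Noetherian and hence Cartier smooth. Composing with the equivalence of Step 2 yields $R\Gamma_\sub{\'et}(X,\Omega^j_\sub{log})\quis R\Gamma_\sub{\'eh}(X,\Omega^j_\sub{log})$, as required. A quick check of naturality shows that the composite equivalence agrees with the canonical comparison map, since all maps involved are the tautological ones induced by the $\cdh$- and $\eh$-sheafification functors.
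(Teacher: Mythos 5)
Your proposal is correct and follows essentially the same route as the paper: deduce the syntomic statement from the cartesian square of Theorem~\ref{thm:graded-pieces_charp}(2) once Theorem~\ref{thm_mot_vs_cdh}(2) makes the left vertical arrow an equivalence, then pass to mod~$p$ and identify the two corners via Remark~\ref{remark_WOmegalog}(2) (Cartier smoothness) and the identification $L_\sub{cdh}\bb F_p(j)^\sub{syn}\simeq R\Gamma_\sub{\'eh}(-,\Omega^j_\sub{log})[-j]$ from the proof of Theorem~\ref{theorem_GL}. The only cosmetic difference is that you run the mod-$p$ step through the square of Theorem~\ref{theorem_GL} rather than reducing the first equivalence modulo $p$ directly, which amounts to the same thing (and, incidentally, you correctly cite Theorem~\ref{thm_mot_vs_cdh}(2) where the paper's printed proof appears to have a reference typo).
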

\begin{proof}
The first follows from Theorem \ref{thm_mot_vs_cdh}(2) and the cartesian square Theorem \ref{thm:graded-pieces_charp}. The second equivalence follows by taking the first equivalence modulo $p$.
\end{proof}

The core of the proof of Theorem \ref{thm_mot_vs_cdh}(1) is the fact that derived de Rham and syntomic cohomology are very far from being homotopy invariant \cite{Elmanto2021, GellerWeibel1989}:

\begin{lemma}\label{lem:a10} In the category of $\rm D(\bb Z)$-valued presheaves on $\Sch^{\qcqs}_{\bb F}$, the following hold for all $j\ge0$:
\begin{enumerate}
\item $L_{\A^1}R\Gamma(-,L^j_{-/\bb F}) \simeq 0$;
\item if $\bb F= \bb Q$ then the map $L_{\A^1}R\Gamma(-,\widehat{L\Omega}^{\geq j}_{-/\bb Q})\rightarrow L_{\A^1}R\Gamma_{\cdh}(-,\widehat{L\Omega}^{\geq j}_{-/\bb Q})$ is an equivalence.
\item if $\bb F= \bb F_p$, then $L_{\A^1}\Z_p(j)^{\syn} \simeq 0$ and $L_{\A^1}L_{\cdh}\Z_p(j)^{\syn} \simeq 0$.
\end{enumerate}
\end{lemma}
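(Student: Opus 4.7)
My plan is to prove part (1) by an explicit computation on affines, and then derive parts (2) and (3) via finite filtration arguments combined with the interaction between $L_{\A^1}$ and $L_{\cdh}$ in equicharacteristic.

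For part (1), since $L_{\A^1}$ preserves Zariski sheaves (the stable simplicial colimit defining $L_{\A^1}$ commutes with the finite \v{C}ech limits defining Zariski descent), it suffices to show that $(L_{\A^1}L^j_{-/\bb F})(A) \simeq 0$ for every $\bb F$-algebra $A$. Unwinding the definition, this value is $|L^j_{A[\Delta^\bullet]/\bb F}|$, where $A[\Delta^n] := A \otimes_{\bb F} \bb F[\Delta^n]$ and $\bb F[\Delta^n] \cong \bb F[t_1, \ldots, t_n]$ is the standard algebraic $n$-simplex. Because $\bb F[\Delta^n]$ is $\bb F$-flat with free cotangent complex, a Künneth-type decomposition gives
\[
L^j_{A[\Delta^n]/\bb F} \simeq \bigoplus_{i+k=j} L^i_{A/\bb F} \otimes_{\bb F} \Omega^k_{\bb F[\Delta^n]/\bb F},
\]
so passing to the realization reduces the problem to showing $|\Omega^k_{\bb F[\Delta^\bullet]/\bb F}| \simeq 0$ for every $k \geq 0$. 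This is an avatar of the algebraic Poincaré lemma for polynomial de Rham forms on the standard simplex, provable by an explicit simplicial contracting chain homotopy on the polynomial forms that works in any characteristic.

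For part (3), Lemma~\ref{lem_fin_fil_on_syn} presents $\bb F_p(j)^{\syn}(A)$ as a finite iterated extension of shifts of the wedge powers $L^i_{A/\bb F_p}$ for $0 \leq i \leq j$. Applying $L_{\A^1}$ together with part (1) on each graded piece yields $L_{\A^1}\bb F_p(j)^{\syn} \simeq 0$, and this propagates to $L_{\A^1}\bb Z_p(j)^{\syn} \simeq 0$ via the limit presentation $\bb Z_p(j)^{\syn} \simeq \lim_r \bb Z_p(j)^{\syn}/p^r$ and the short exact sequences between successive mod-$p^r$ quotients. For the cdh-local version, cdh-sheafifying the same finite filtration reduces the claim to $L_{\A^1}L_{\cdh}L^i_{-/\bb F_p} \simeq 0$. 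I would handle this by invoking the commutation $L_{\A^1}L_{\cdh} \simeq L_{\cdh}L_{\A^1}$ on $\D(\bb Z)$-valued presheaves on $\Sch^{\qcqs}_{\bb F}$, which in equicharacteristic follows from the fact that cdh sheafification preserves $\bb A^1$-invariance (as established in \cite{BachmannElmantoMorrow}); combined with part (1), this yields $L_{\A^1}L_{\cdh}L^i \simeq L_{\cdh}L_{\A^1}L^i \simeq L_{\cdh}(0) \simeq 0$. Part (2) follows by the analogous pattern: the fibre sequence $\widehat{L\Omega}_{-/\bb Q}^{\geq j} \to \widehat{L\Omega}_{-/\bb Q} \to L\Omega^{<j}_{-/\bb Q}$, combined with the cdh-descent of $\widehat{L\Omega}_{-/\bb Q}$ from Lemma~\ref{lemma_cdh_descent_HP}, reduces the statement to the corresponding claim for $L\Omega^{<j}_{-/\bb Q}$; the latter admits a finite filtration with graded pieces $L^i_{-/\bb Q}[-i]$ for $0 \leq i < j$, and both sides of the map in question vanish by part (1) and the cdh-commutation argument.

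The main technical obstacle is the acyclicity $|\Omega^k_{\bb F[\Delta^\bullet]/\bb F}| \simeq 0$ underlying part (1), since this is the source of every subsequent $\bb A^1$-vanishing statement in the lemma and must be verified uniformly in characteristic. A secondary subtlety is the commutation $L_{\A^1}L_{\cdh} \simeq L_{\cdh}L_{\A^1}$ in equicharacteristic, which depends on the nontrivial preservation theorem from \cite{BachmannElmantoMorrow}.
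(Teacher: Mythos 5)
Your reduction in part (1) is the same as the paper's (K\"unneth for $L^j_{A[\Delta^m]/\bb F}$, reducing to $\colim_{\Delta^{\op}}\Omega^b_{\bb F[\Delta^\bullet]/\bb F}\simeq 0$), but you then assert the key acyclicity via ``an explicit simplicial contracting chain homotopy \ldots that works in any characteristic'' without exhibiting it. This is exactly the hard point: the usual Poincar\'e-lemma-style contractions involve denominators and do not obviously survive to characteristic $p$, and the vanishing is a genuine theorem of Geller--Weibel, which is what the paper cites. You correctly identify this as the main obstacle, but as written it is an unproved assertion rather than a proof; either supply the homotopy (and check the simplicial identities in characteristic $p$) or cite the literature.

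The more serious gap is in part (3). For $L_{\A^1}\bb Z_p(j)^{\syn}$, passing from the mod-$p^r$ vanishing to the integral statement requires commuting the geometric realization defining $L_{\A^1}$ (a colimit) with $\lim_r$; this is false in general and needs the uniform coconnectivity bound $\bb Z_p(j)^{\syn}(A[\Delta^m])/p^r\in \mathrm D^{\le j+1}$, which is how the paper argues (it shows $L_{\A^1}\bb Z_p(j)^{\syn}(A)$ is derived $p$-complete and then checks vanishing mod $p$). This is fixable but must be said. For $L_{\A^1}L_{\cdh}\bb Z_p(j)^{\syn}$ your argument breaks down: the finite filtration of Lemma~\ref{lem_fin_fil_on_syn} lives on the mod-$p$ theory, so cdh-sheafifying it only proves $L_{\A^1}L_{\cdh}\bb F_p(j)^{\syn}\simeq 0$, i.e.\ the vanishing of $L_{\A^1}L_{\cdh}\bb Z_p(j)^{\syn}$ modulo $p$. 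One cannot conclude integrally, because $L_{\cdh}\bb Z_p(j)^{\syn}$ is \emph{not} derived $p$-complete in general --- the paper's footnote in Remark~\ref{rem:cdh-local} gives an explicit counterexample --- so a mod-$p$ vanishing does not imply integral vanishing. The paper circumvents this with a multiplicativity trick: $\bigoplus_j L_{\cdh}\bb Z_p(j)^{\syn}[2j]$ receives an $\bb E_\infty$-ring map from $\bigoplus_j\bb Z_p(j)^{\syn}[2j]$, whose $\A^1$-localisation is the zero ring by the first half, forcing the target to vanish. Some such argument (or another route avoiding $p$-completion of the cdh-sheafified theory) is needed; your proposal as stated does not establish $L_{\A^1}L_{\cdh}\bb Z_p(j)^{\syn}\simeq 0$. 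Parts (2) and the remaining reductions are fine and essentially match the paper, granting the commutation $L_{\A^1}L_{\cdh}\simeq L_{\cdh}L_{\A^1}$, which the paper only uses in the weaker form that $L_{\A^1}$ preserves cdh sheaves.
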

\begin{proof}
We will use the following explicit formula for the endofunctor $L_{\bb A^1}$ of presheaves on $\textrm{Sch}^\sub{qcqs}_{\bb F}$: given a presheaf $\cal F$ then
\begin{equation}\label{eq:la1}
L_{\A^1}\scr F (X) = \colim_{\Delta^{\op}} \scr F(X \times \Delta^{\bullet})
\end{equation}
where $\Delta^{\bullet}$ is the cosimplicial $\bb F$-scheme built from algebraic $m$-simplices:
\[
\Delta^m = \Spec(\bb F[T_0, \cdots, T_m]/(\sum^m_{i=0} T_i = 1). 
\]
Note that $L_{\A^1}$ preserves Nisnevich and cdh sheaves; indeed, this follows from the description of Nisnevich and cdh descent in terms of cd structures and~\eqref{eq:la1}. 

(1): Let $A$ be an $\bb F$-algebra. By the K\"unneth formula for the cotangent complex, there is a natural equivalence
\[
L^j_{A[\Delta^m]/\bb F} \simeq \bigoplus_{a+b = j} L^a_{A/\bb F} \otimes \Omega^b_{\bb F[\Delta^m]/\bb F}.
\]
Therefore $(L_{\A^1}L^j_{-/\bb F})(A) \simeq  \bigoplus_{a+b = j} L^a_{A/\bb F} \otimes (L_{\A^1}\Omega^b_{-/\bb F})(\bb F)$, which reduces the problem to showing that $(L_{\A^1}\Omega^b_{-/\bb F})(\bb F) \simeq 0$ for all $b \geq 0$. The latter vanishing is due to Geller--Weibel \cite{GellerWeibel1989}.

(2): Since $L_{\A^1}$ preserves $\cdh$ sheaves, it suffices to prove that $L_{\A^1}R\Gamma(-,\widehat{L\Omega}^{\geq j}_{-/\bb Q})$ is a cdh sheaf. Since $L_{\A^1}$ preserves fibre sequences, we have a fibre sequence for all $j \geq 0$
\[
L_{\A^1}R\Gamma(-,L\Omega^{< j}_{-/\bb Q})[-1] \rightarrow  L_{\A^1}R\Gamma(-,\widehat{L\Omega}^{\geq j}_{-/\bb Q}) \rightarrow L_{\A^1}R\Gamma(-,\widehat{L\Omega}_{-/\bb Q}).
\]
The presheaf $R\Gamma(-,L\Omega^{< j}_{-/\bb Q})$ is killed by $L_{\A^1}$, thanks to part (1) and induction on $j$. On the other hand, by Lemma~\ref{lemma_cdh_descent_HP}, the last term is a $\cdh$ sheaf since $L_{\A^1}$ preserves $\cdh$ sheaves. In particular, the middle term is a $\cdh$ sheaf, completing the proof.

(3): There are maps of presheaves of $\bb E_\infty$-algebras $\bb Z_p(0)^\sub{syn}\to \bigoplus_{j \geq 0} \Z_p(j)^{\syn} \to \bigoplus_{j \geq 0} L_{\cdh}\Z_p(j)^{\syn}$, and $L_{\A^1}$ is lax symmetric monoidal, so it is enough to show that $L_{\bb A^1}\bb Z_p(0)^\sub{syn}$ vanishes; indeed, any algebra over $0$ is automatically $0$. It is enough to check this vanishing on any affine $\bb F_p$-scheme $\Spec(A)$ and, as observed in \cite[Lemma~3.0.3]{Elmanto2021}, the complex $L_{\bb A^1}\bb Z_p(0)^\sub{syn}(A)$ is $p$-complete since we have the universal bound that $\bb Z_p(0)^\sub{syn}(A[\Delta^m])$ is supported in degrees $\le 1$ for any $m$; so it is finally enough to prove the vanishing of $L_{\bb A^1}\bb F_p(0)^\sub{syn}(A)$. But that follows from part (1) and Lemma~\ref{lem_fin_fil_on_syn}. 
\end{proof}

\begin{proof}[Proof of Theorem \ref{thm_mot_vs_cdh}(1)]
Recall that $\bb Z(j)^\bb A$ is an $\bb A^1$-invariant cdh sheaf by Theorem \ref{thm:cdh}(2)\&(6). So cdh sheafifying the pullback squares of Theorems \ref{thm:graded-pieces}(2) or \ref{thm:graded-pieces_charp}(2) shows that $L_\sub{cdh}\bb Z(j)^\sub{mot}\quis \bb Z(j)^{\bb A}$. Similarly, $\bb A^1$-localising the pullback squares and using the previous lemma yields $L_{\bb A^1}\bb Z(j)^\sub{mot}\quis \bb Z(j)^{\bb A}$.
\end{proof}

The remainder of the section is devoted to the proof of Theorem \ref{thm_mot_vs_cdh}(2). The key inputs are the $\P^1$-bundle formula for motivic cohomology (Theorem \ref{thm:pbf}), the already proved Theorem \ref{thm_mot_vs_cdh}(1), and an argument of Gabber used to prove Gersten injectivity statements \cite{Gabber1994, GrosSuwa1988}. Gabber's argument has been axiomatized by Colliot-Th\'el\`ene--Hoobler--Kahn \cite{ColliotThelene-Hoobler-Kahn1997}, and we now review their formalism in a more modern language.

Let $k$ be any field and suppose that we have a presheaf $\scr F: \Sm^{\op}_{k} \rightarrow \Spt$; for $X \in \Sm_{k}$, we will write $\scr F^X$ for the presheaf $U \mapsto \scr F(U \times_k X)$. There are two morphisms of presheaves
\[
j^*, \pi^*\infty^*: \scr F^{\bb P^1} \rightarrow \scr F^{\bb A^1},
\]
where:
\begin{enumerate}
\item $\pi$ is induced the projection $\A^1 \times_k X \rightarrow X$,
\item $\infty$ is the closed immersion $\Spec(k)\to\P^1$ of the point at $\infty$,
\item $j: \A^1 \hookrightarrow \P^1$ is the open immersion complementary to the point at $\infty$.
\end{enumerate}
In general, there is no reason for the maps $j^*$ and $\pi^*\infty^*$ to be homotopic. This leads to the next definition:
\begin{definition}\label{def:good} We say that a presheaf $\scr F: \Sm^{\op}_{k} \rightarrow \Spt$ is a \emph{deflatable}\footnote{We wish to invoke the picture of deflating a balloon: the Riemann sphere is thought of as a balloon and a presheaf is deflatable if ``after puncturing at $\infty$'' the sphere deflates onto a point.} if the maps $j^*$ and $\pi^*\infty^*$ are homotopic.
\end{definition}

\begin{remark} More precisely, to ask that two morphisms of presheaves are homotopic means that they are identified in the homotopy category of presheaves. This means that there is an $2$-morphism, functorial in smooth $k$-schemes, between these two morphisms of presheaves. We do not keep track of this $2$-morphism (we only require its existence), but we note that functoriality in smooth schemes is a substantial amount of extra compatibilities. In fact, calling these $2$-morphisms \emph{deflations}, the space of deflations can be parametrized as following: it is the space of sections $s: \scr F^{\bb P^1} \rightarrow \cal E$ of the canonical map $\cal E \rightarrow \scr F^{\bb P^1}$, were $\cal E$ is the equaliser of the two maps $j^*, \pi^*\infty^*: \scr F^{\bb P^1} \rightrightarrows \scr F^{\bb A^1}$.
\end{remark}

\begin{remark} Definition~\ref{def:good} implies the validity of axiom ``SUB 2'' of \cite{ColliotThelene-Hoobler-Kahn1997}, which is much weaker than deflatability and instead asks only for scheme-wise homotopy commutativity of a relative variant of this axiom.
\end{remark}

\begin{example} If a presheaf $\scr F:\Sm^{\op}_{k} \rightarrow \Spt$ is $\bb A^1$-invariant, then it is deflatable. Indeed, the map $\pi^*$ is an equivalence and there is a natural $\bb A^1$-homotopy between $j^*$ and $\infty^*$.
\end{example}

The following lemma is a variant of one of the main results of \cite{ColliotThelene-Hoobler-Kahn1997}, stated in a convenient language for our use. It proves Gersten injectivity for good cohomology theories satisfying Nisnevich descent. We denote by $\mathrm{Reg}_{k}$ the category of regular Noetherian $k$-schemes.

\begin{lemma}[{\cite{ColliotThelene-Hoobler-Kahn1997}}] \label{lem:ColliotThelene-Hoobler-Kahn1997}
Let $k$ be a perfect field and $\scr F: \mathrm{Reg}^{\op}_k \rightarrow \Sp$ be a finitary, Nisnevich sheaf such that $\scr F|_{\Sm^{\op}_k}$ is deflatable. Then for any $n \in \Z$ and any regular local $k$-algebra $R$ with fraction field $F$, the canonical map
\[
\pi_n(\scr F(R)) \To \pi_n(\scr F(F))
\]
is injective.
\end{lemma}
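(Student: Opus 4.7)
The plan follows the Gabber--Colliot-Th\'el\`ene--Hoobler--Kahn geometric strategy for Gersten injectivity. Let $\alpha\in\pi_j\scr F(R)$ map to zero in $\pi_j\scr F(F)$; we want to show $\alpha=0$. Since $k$ is perfect, N\'eron--Popescu desingularization writes $R$ as a filtered colimit of smooth $k$-algebras, and the finitariness of $\scr F$ then reduces us to the case $R=A_{\mathfrak p}$ for some smooth integral affine $k$-algebra $A$ and prime $\mathfrak p\subset A$ corresponding to a point $x\in X:=\Spec A$. After Zariski shrinking, $\alpha$ lifts to some $\tilde\alpha\in\pi_j\scr F(X)$, and the vanishing in $\pi_j\scr F(F)=\colim_{0\neq f\in A}\pi_j\scr F(A_f)$ is already witnessed on a dense open of $X$; thus we may assume $\tilde\alpha|_{X\setminus Z}=0$ for some proper closed subscheme $Z\subset X$ containing $x$.

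Next, apply Gabber's geometric presentation lemma over the perfect field $k$: after shrinking $X$ to a Nisnevich neighborhood of $x$ if necessary, one produces a smooth affine $k$-scheme $V$ of dimension $\dim X-1$ and an \'etale morphism $p\colon X\to\A^1_V$ such that $p|_Z\colon Z\to\A^1_V$ is a closed immersion and $p^{-1}(p(Z))=Z$. Identifying $Z$ with $p(Z)\subset\A^1_V\subset\P^1_V$, the square
\[
\begin{tikzcd}
X\setminus Z \ar{r} \ar{d} & X \ar{d} \\
\P^1_V\setminus p(Z) \ar{r} & \P^1_V
\end{tikzcd}
\]
(with vertical maps $p$ and its restriction) is Nisnevich distinguished. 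Applying the Nisnevich sheaf $\scr F$ turns it into a cartesian square of spectra, and from $\tilde\alpha|_{X\setminus Z}=0$ one obtains $\beta\in\pi_j\scr F(\P^1_V)$ with $p^*\beta=\tilde\alpha$ and $\beta|_{\P^1_V\setminus p(Z)}=0$.

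Since $p(Z)\subset\A^1_V$ is disjoint from the section at infinity, the restriction $\infty^*\beta\in\pi_j\scr F(V)$ vanishes. The deflatability hypothesis, applied to the smooth $k$-scheme $V$, then yields $j^*\beta=\pi^*\infty^*\beta=0$ in $\pi_j\scr F(\A^1_V)$. As $p$ factors through the open immersion $j\colon\A^1_V\hookrightarrow\P^1_V$, we conclude $\tilde\alpha=p^*\beta=p^*(j^*\beta)=0$ in $\pi_j\scr F(X)$, whence $\alpha=0$ in $\pi_j\scr F(R)$.

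The principal technical obstacle is Gabber's geometric presentation lemma in a form valid over an arbitrary perfect field: over infinite perfect fields the required neighborhood of $x$ can be taken Zariski and the argument above applies directly, while over finite fields one works Nisnevich-locally and must descend the vanishing back to $R$ itself. Handling this subtlety---via the standard transfer arguments using pro-$\ell$ extensions for primes $\ell\ne\mathrm{char}\,k$, or via Gabber--Quillen patching---is precisely what the CTHK axiomatization packages for us; the role of our hypotheses (Nisnevich sheafiness, finitariness, deflatability) is exactly to supply the abstract input that their formalism requires.
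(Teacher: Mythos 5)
Your argument is correct and follows essentially the same route as the paper: Néron--Popescu plus finitariness to reduce to the local ring of a point on a smooth affine variety, Gabber's presentation lemma to produce a Nisnevich square comparing $(X,Z)$ with $(\P^1_V,p(Z))$, excision to transport the class to a $\beta$ on $\P^1_V$ vanishing near the $\infty$-section, and deflatability to conclude $j^*\beta=\pi^*\infty^*\beta=0$. The only substantive difference is cosmetic --- you work with absolute classes and a single excision square against $\P^1_V$, whereas the paper manipulates the relative groups $\scr F_{Z'}(U)$ after enlarging the support to $Z'=\psi^{-1}(\psi(Z\cap U))$ --- and your closing worry about the finite-field case is resolved more simply than you suggest: the presentation lemma over finite fields (the reference \cite{gabber-finite} cited in the paper) still produces a \emph{Zariski} open neighbourhood of $x$, so no transfer or Nisnevich-local descent argument is needed.
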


\begin{proof}
For a regular $k$-scheme $X$ and closed immersion $Z\into X$ we will write
\[
\scr F_Z(X):= \mathrm{fib}(\scr F(X) \rightarrow \scr F(X \setminus Z)),
\]
so that we have a long exact sequence functorial in $X$ and $Z$
\[
\cdots \rightarrow \pi_n(\scr F(X)) \rightarrow \pi_n(\scr F(X \setminus Z)) \rightarrow \pi_{n-1}(\scr F_Z(X)) \rightarrow \cdots.
\] 
By N\'eron--Popescu it suffices to prove the result when $R=\roi_{X,x}$ is the local ring of a closed point $x \in X$ where $X$ is a smooth affine $k$-scheme.

Let $s \in \ker(\pi_n(\scr F(R))\to\pi_n(\scr F(F))$; by possibly shrinking $X$, we may assume that $s$ is defined on $X$ and vanishes away from some closed subscheme $Z \hookrightarrow X$ of positive codimension, i.e., $s$ lifts to an element $\widetilde{s} \in \pi_n(\scr F_Z(X))$. To prove the result, it suffices to produce an open neighborhood $U\subseteq X$ of $x$ and a closed subscheme $Z' \hookrightarrow U$ with $Z \cap U \subset Z'$ such that $\widetilde{s}$ vanishes on $\pi_n(\scr F_{Z'}(U))$. 

Gabber's presentation lemma \cite[Theorem 3.1.1]{ColliotThelene-Hoobler-Kahn1997} (see \cite{gabber-finite} for the case in which $k$ is a finite field) furnishes an open neighborhood $U \subseteq X$ of $x$, a smooth affine $k$-scheme $V$, a morphism $\phi = (\psi, v): U \rightarrow V \times \A^1$ such that $\psi|_{Z \cap U}$ is finite, and a Nisnevich square
\[
\begin{tikzcd}
 U \setminus (Z \cap U) \ar{r} \ar{d} &  U \ar{d}{\phi}\\
 \A^1_V \setminus (\phi(Z \cap U)) \ar{r} &   \A^1_V.\\
\end{tikzcd}
\]
In particular $\phi(Z \cap U) \hookrightarrow \A^1_V$ is a closed immersion. By Nisnevich excision, we have that $\pi_n(\scr F_{Z \cap U}(U)) \cong \pi_n(\scr F_{\A^1_V \cap \phi(Z \cap U)}(\A^1_V))$. Now set $F:= \psi(Z \cap U)$ so that $Z \cap U \subset \psi^{-1}(F)=:Z'$. So we have a commutative diagram
\[
\begin{tikzcd}
\pi_n(\scr F_{Z\cap U}(U)) \ar{r}  &  \pi_n(\scr F_{Z'}(U)) \\
\pi_n(\scr F_{\phi(Z\cap U)}(\A^1_V) ) \ar{r} \ar{u}{\cong}&   \pi_n(\scr F_{\A^1_F}(\A^1_V))\ar{u}\\
\end{tikzcd}
\] 
and, to finish the proof, we need only show that the bottom map is zero. The map of interest is the top horizontal map of the following commutative diagram
\[
\begin{tikzcd}
\pi_n(\scr F_{\phi(Z\cap U)}(\A^1_V)) \ar{r} & \pi_n(\scr F_{\A^1_F}(\A^1_V))  & \\
 & & \pi_n(\scr F_{F}(V)) \ar{ul}\\
\pi_n(\scr F_{\phi(Z\cap U)}(\P^1_V)) \ar{r} \ar{uu}{\simeq} &  \pi_n(\scr F_{\P^1_F}(\P^1_V))\ar{uu} \ar{ur}&\\
\end{tikzcd}
\]
where the triangle commutes exactly because $\scr F$ is a deflateable. However, the bottom composite is zero, since $\phi(Z \cap U)$ does not meet the $\infty$-section of $\P^1_V$, and thus the top map is also zero as desired. 
\end{proof}

The projective bundle formula implies that our motivic cohomology is deflatable:

\begin{lemma}\label{lem_mot_deflat}
For any $j\ge0$, the presheaf
\[
\bb Z(j)^\sub{mot}|_{\Sm_{\bb F}}:\Sm^{\op}_\bb F\To \rm D(\Z)
\]
is deflatable. 
\end{lemma}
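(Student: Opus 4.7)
The plan is to exploit the projective bundle formula (Theorem~\ref{thm:pbf}) to decompose $\bb Z(j)^\sub{mot}(\P^1\times X)$ into two summands on which the two maps $j^*$ and $\pi^*\infty^*$ can be compared directly, using functoriality of the first Chern class.

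Concretely, for any smooth $\bb F$-scheme $X$, Theorem~\ref{thm:pbf} (with multiplicative structure and functoriality in $X$) gives a natural equivalence
\[
\bb Z(j)^\sub{mot}(X)\oplus \bb Z(j-1)^\sub{mot}(X)[-2]\;\xrightarrow{\;p^*\oplus c_1(\scr O(1))\cdot p^*\;}\;\bb Z(j)^\sub{mot}(\P^1\times X),
\]
where $p:\P^1\times X\to X$ is the projection. Setting $\pi:\A^1\times X\to X$ for the affine projection, and using $p\circ j=\pi$ and $p\circ(\infty\times \id_X)=\id_X$, the two maps in question restrict to:
\begin{itemize}
\item[(i)] on the first summand, $j^*\circ p^*=\pi^*$ and $\pi^*\circ \infty^*\circ p^*=\pi^*\circ\id^*=\pi^*$;
\item[(ii)] on the second summand, by multiplicativity and naturality of $c_1$ (Definition~\ref{def:c1-mot}), both maps become multiplication on $\pi^*(b)$ by a first Chern class of a pullback of $\scr O(1)$. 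Since $\scr O_{\P^1}(1)$ is canonically trivial on the standard chart $\P^1\setminus\{0\}$, which contains both $\A^1$ and the point $\infty$, we have $j^*\scr O(1)\simeq \scr O$ on $\A^1\times X$ and $\infty^*\scr O(1)\simeq \scr O$ on $X$, so both first Chern classes vanish.
\end{itemize}
Thus $j^*$ and $\pi^*\infty^*$ agree on each summand, and by the direct-sum decomposition they are homotopic as maps $\bb Z(j)^\sub{mot}(\P^1\times X)\to \bb Z(j)^\sub{mot}(\A^1\times X)$.

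The only point needing care is that this objectwise identification must be assembled into a homotopy of morphisms of presheaves on $\Sm_{\bb F}^{\op}$. This will follow because every step in the argument is natural in $X$: the PBF equivalence is natural by Theorem~\ref{thm:pbf}; multiplication by $c_1(\scr O(1))$ is natural because $c_1$ is a morphism of presheaves (Lemma~\ref{lem:c1-map}) and the graded motivic complex is a presheaf of graded $\bb E_\infty$-rings; and the identifications $j^*\scr O(1)\simeq \scr O$ and $\infty^*\scr O(1)\simeq \scr O$ are pulled back from universal trivializations on $\P^1$ and hence hold naturally over any $X$. Packaging these naturalities together, we obtain a homotopy of morphisms of presheaves $j^*\simeq \pi^*\infty^*$, which is the required deflation. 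The main (mild) obstacle is keeping track of this coherent naturality of the PBF-splitting together with the vanishing of $c_1$ on the two trivializations, but no further input beyond Theorem~\ref{thm:pbf} and the naturality of $c_1$ is needed.
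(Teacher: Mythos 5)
Your proof is correct and follows essentially the same route as the paper: apply the $\P^1$-bundle formula of Theorem~\ref{thm:pbf} to split $\bb Z(j)^\sub{mot}(\P^1\times X)$, observe that $j^*$ and $\pi^*\infty^*$ agree tautologically on the first summand, and agree on the second because $j^*\scr O(1)$ and $\infty^*\scr O(1)$ are both canonically trivial. (One small slip: $\A^1=\P^1\setminus\{\infty\}$ is not contained in the chart $\P^1\setminus\{0\}$, so the two trivializations come from two different standard charts rather than a single one; the conclusion is unaffected.)
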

\begin{proof}
This is a standard consequence of the $\bb P^1$-bundle formula. Theorem~\ref{thm:pbf} furnishes us with an equivalence
\[
\Z(j)^{\mot} \oplus \Z(j-1)^{\mot}[-2] \xrightarrow{\pi^* \oplus c_1(\scr O(1))\pi^* } (\Z(j)^{\mot})^{\P^1},
\]
whence it suffices to explain why the diagram 
\[
\begin{tikzcd}
\Z(j)^{\mot} \oplus \Z(j-1)^{\mot}[-2] \ar{rr} \ar{dr} & & (\Z(j)^{\mot})^{\A^1}\\
 &  \Z(j)^{\mot} \ar{ur} & 
\end{tikzcd}
\]
commutes in the homotopy category of presheaves. On the $\Z(j)^{\mot}$ component, the diagram commutes already at the level of schemes. On the $\Z(j-1)^{\mot}[-2]$ component, the diagram commutes because on $\A^1_\bb F$ there are natural identifications $\pi^*\infty^*\scr O(1) \cong \scr O \cong  j^*\scr O(1)$.
\end{proof}

We now have all the necessary ingredients to prove Theorem \ref{thm_mot_vs_cdh}(2):

\begin{proof}[Proof of Theorem \ref{thm_mot_vs_cdh}(2)]
The goal is to prove, for any regular Noetherian $\bb F$-scheme $X$, that the map $\bb Z(j)^\sub{mot}(X)\to \bb Z(j)^{\bb A}(X)$ is an equivalence. Since this factors as \begin{equation}\bb Z(j)^\sub{mot}(X)\to L_{\bb A^1}\bb Z(j)^\sub{mot}(X)\quis \bb Z(j)^{\bb A}(X),\label{eqn_mot_vs_cdh_proof}\end{equation} where the equivalence is Theorem \ref{thm_mot_vs_cdh}(1), it is equivalent to show that $\cal N(X)\simeq0$ where $\cal N:=\fib(\bb Z(j)^\sub{mot}\to L_{\bb A^1}\bb Z(j)^\sub{mot})$. Furthermore, since $\cal N$ is finitary and satisfies Zariski descent, it is enough to show that $\cal N(R)\simeq 0$ for every regular, Noetherian, local $\bb F$-algebra $R$. But we will show in the next paragraph that $\cal N|_{\Sm_{\bb F}}$ is deflatable, whence Lemma \ref{lem:ColliotThelene-Hoobler-Kahn1997} implies that $H^n(\cal N(R))\to H^n(\cal N(F))$ is injective for all $n$, where $F$ is the fraction field of $R$. This therefore reduces the problem to showing that $\cal N(F)\simeq0$ for every field extension $F$ of $\bb F$; appealing again to (\ref{eqn_mot_vs_cdh_proof}), this time for $X=\Spec(F)$, it is equivalent to show that $\bb Z(j)^\sub{mot}(F)\quis \bb Z(j)^{\bb A}(F)$. But this follows from the part of Theorem \ref{thm_mot_vs_cdh}(1) stating that $L_\sub{cdh}\bb Z(j)^\sub{mot}\quis \bb Z(j)^{\bb A}$, as fields are local for the cdh topology.

It remains to prove that $\cal N|_{\Sm_{\bb F}}$ is deflatable. Firstly, we know from Lemma \ref{lem_mot_deflat} that $\bb Z(j)^\sub{mot}|_{\Sm_{\bb F}}$ is deflatable. Fixing any choice of deflation for it, this deflation induces (using the explicit formula (\ref{eq:la1})) a deflation for $L_{\bb A^1}\bb Z(j)^\sub{mot}|_{\Sm_{\bb F}}$ which is compatible with the canonical map $\bb Z(j)^\sub{mot}|_{\Sm_{\bb F}}\to L_{\bb A^1}\bb Z(j)^\sub{mot}|_{\Sm_{\bb F}}$. Passing to the fibre induces a deflation for $\cal N|_{\Sm_{\bb F}}$, as desired. 
\end{proof}

\section{Comparison to lisse motivic cohomology}\label{section_lke}
We continue to fix a prime field $\bb F$. The goal of this section is to study the comparison map from lisse motivic cohomology to our new motivic cohomology, as discussed in Construction \ref{cons_lke_to_mot}. However, we may now adopt a cleaner point of view on this comparison map. Indeed, we now know from Corollary \ref{corol_smooth_comparison} that the restriction of $\bb Z(j)^\sub{mot}$ to smooth $\bb F$-algebras coincides with classical motivic cohomology $\bb Z(j)^\bb A$. Therefore we will henceforth view $\bb Z(j)^\sub{lse}$ as the left Kan extension of $\bb Z(j)^\sub{mot}$, restricted to smooth $\bb F_p$-algebras, back along the inclusion $\text{CAlg}_\bb F^\sub{sm}\subseteq \text{CAlg}_\bb F$. For any $\bb F_p$-algebra, this formally induces the same comparison map \begin{equation}\bb Z(j)^\sub{lse}(A)\To \bb Z(j)^\sub{mot}(A)\label{eqn:lke_vs_mot}\end{equation} as Construction \ref{cons_lke_to_mot}. This map is certainly not an equivalence in general: the left side is supported in cohomological degree $\le 2j$ (see Proposition \ref{prop:mot-filt}) but this bound cannot always be true for the right side: otherwise the Atiyah--Hirzebruch spectral sequence would then imply that the spectrum $\K(A)$ was always connective.

In general it seems to be a deep question to what extent the right side of (\ref{eqn:lke_vs_mot}) of is controlled by the left side. In other words, how much of motivic cohomology can be recovered from that of smooth algebras? In this section we provide some partial answers to this question. In particular we will show that, for $A$ local, (\ref{eqn:lke_vs_mot}) induces an equivalence \[\bb Z(j)^\sub{lse}(A)\quis \tau^{\le j}\bb Z(j)^\sub{mot}(A).\] Note that, in light of Remark \ref{remark_lke_as_cycles}, this provides a description of $\tau^{\le j}\bb Z(j)^\sub{mot}(A)$ purely in terms of algebraic cycles. We will return to the link between motivic cohomology and algebraic cycles in Section \ref{section_cf_cycles}.

In this section we also establish some vanishing theorems and prove a Nesterenko--Suslin isomorphism.

\subsection{Behaviour of motivic cohomology in degrees $\le 2j$}

We begin with the following rational statement, writing \[\bb Q(j)^\sub{lse}(A):=\bb Z(j)^\sub{lse}\otimes_{\bb Z}\bb Q, \qquad \bb Q(j)^\sub{mot}(A):=\bb Z(j)^\sub{mot}(A)\otimes_{\bb Z}\bb Q\] for the rationalisations of our motivic cohomologies. (Note that $\bb Q(j)^\sub{lse}$ is the left Kan extension of the restriction of $\bb Q(j)^\sub{mot}$ to smooth $\bb F$-algebras, since left Kan extension commutes with filtered colimits of functors.)

\begin{lemma}
For any $\bb F$-algebra $A$ and $j\ge0$, the map (\ref{eqn:lke_vs_mot}) induces an equivalence \[\bb Q(j)^\sub{lse}(A)\quis\tau^{\le 2j}\bb Q(j)^\sub{mot}(A).\] In other words, the functor $\tau^{\le 2j}\bb Q(j)^\sub{mot}:\CAlg_\bb F\to\text \rm D(\bb Z)$ is left Kan extended from smooth $\bb F$-algebras.
\end{lemma}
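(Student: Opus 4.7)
The map (\ref{eqn:lke_vs_mot}) factors through $\tau^{\le 2j}\bb Q(j)^\sub{mot}(A)$ because, by Proposition \ref{prop:mot-filt}, the source $\bb Q(j)^\sub{lse}(A)$ is supported in cohomological degrees $\le 2j$. Equivalently, after the shift by $[2j]$, it suffices to show that the map $\bb Q(j)^\sub{lse}(A)[2j] \to \bb Q(j)^\sub{mot}(A)[2j]$ realises the target's connective cover. By finitariness of both sides (the lisse side by construction as a left Kan extension from finitely presented smooth $\bb F$-algebras; the motivic side by Theorems \ref{thm:graded-pieces}(5) and \ref{thm:graded-pieces_charp}(4)), we may reduce to the case where $A$ is finitely generated, so that $\Spec A$ has finite valuative dimension and the rational splittings of Theorems \ref{theorem_AH_SS_0}(2) and \ref{thm:p-ahss}(2) apply.

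The central step is then to identify both sides of the comparison map with the weight-$j$ Adams eigenspaces of $K$-theory. Recall that the classical Adams operations $\psi^\ell$ act on $\K(A)$ and, by functoriality, preserve the connective cover $\K^\sub{cn}(A) = \tau_{\ge 0}\K(A)$. Rationally, $\K(A)_\bb Q$ decomposes into its Adams eigenspaces $\K(A)_\bb Q^{(j)}$; since $\tau_{\ge 0}$ commutes with direct sums, this yields $\K^\sub{cn}(A)_\bb Q^{(j)} \simeq \tau_{\ge 0}(\K(A)_\bb Q^{(j)})$, with the canonical map $\K^\sub{cn}(A)_\bb Q^{(j)} \to \K(A)_\bb Q^{(j)}$ identified with the connective cover. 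Proposition \ref{prop:adams} identifies $\bb Q(j)^\sub{mot}(A)[2j] \simeq \K(A)_\bb Q^{(j)}$ in characteristic zero, and the same identification holds in characteristic $p$ via the Frobenius decomposition used in the proof of Theorem \ref{thm:p-ahss}(2). Similarly, left Kan extending from smooth $\bb F$-algebras, where $\bb Q(j)^\sub{lse} = \bb Q(j)^\sub{mot}$ by Corollary \ref{corol_smooth_comparison} and $\K = \K^\sub{cn}$, one obtains $\bb Q(j)^\sub{lse}(A)[2j] \simeq \K^\sub{cn}(A)_\bb Q^{(j)}$. Combining these, the comparison map becomes the connective cover $\tau_{\ge 0}(\K(A)_\bb Q^{(j)}) \to \K(A)_\bb Q^{(j)}$, which is an isomorphism on $\pi_i$ for $i \ge 0$; this is exactly an isomorphism on $H^n$ for $n \le 2j$ after unshifting.

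The main obstacle is verifying that the natural comparison map of Construction \ref{cons_lke_to_mot} really intertwines the two eigenspace decompositions. In characteristic zero, this is essentially formal from Proposition \ref{prop:adams} together with the existence of a compatible rational Adams structure on the lisse filtration, obtained by left Kan extension from the smooth case (where the two filtrations coincide by Corollary \ref{corol_smooth_comparison}). In characteristic $p$, the r\^ole of Adams operations is played by the Frobenius: Theorem \ref{thm:graded-pieces_charp}(5) guarantees the expected Frobenius weight on $\bb Z(j)^\sub{mot}$, while the corresponding weight on $\bb Q(j)^\sub{lse}$ is inherited from the smooth case by left Kan extension, so the two Frobenius-eigenspace decompositions are compatible with the comparison map and the argument goes through as before.
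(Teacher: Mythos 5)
Your proposal is correct and follows essentially the same route as the paper: reduce by finitariness to the finite valuative dimension case, use the rational splittings of the motivic filtrations (Adams operations in characteristic zero, Frobenius in characteristic $p$) to identify the direct sum of the weight-graded comparison maps with the connective-cover map $\K^\sub{cn}(A)_\bb Q\to\K(A)_\bb Q$, and read off the statement weight by weight. The only difference is one of phrasing — you package the splitting as an identification of each graded piece with an Adams/Frobenius eigenspace of $\K(A)_\bb Q$, whereas the paper works directly with the split filtered spectra — and you correctly flag and dispose of the one genuine subtlety (equivariance of the comparison map for the two decompositions), which the paper leaves implicit in the naturality of Proposition \ref{prop:adams} and Remark \ref{rem:promote-compat}.
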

\begin{proof}
Rationally, by Theorem~\ref{theorem_AH_SS_0}(2) (characteristic zero) and Theorem~\ref{thm:p-ahss}(2) (characteristic $p > 0$), there is a natural isomorphism of filtered spectra $\Fil_\sub{mot}^\star \K(A)_\bb Q\cong \bigoplus_{j\ge\star}\bb Q(j)^\sub{mot}(A)[2j]$ for any $\bb F$-algebra $A$. Restricting to smooth $\bb F$-algebras and left Kan extending back identifies the map \[\bigoplus_{j\ge 0}\bb Q(j)^\sub{lse}(A)[2j]\To \bigoplus_{j\ge 0}\bb Q(j)^\sub{mot}(A)[2j]\] (obtained by rationalising the direct sum of (\ref{eqn:lke_vs_mot}) over all weights) with the canonical map $\K^\sub{cn}(A)_\bb Q\to \K(A)_\bb Q$. Since the latter map is the connective cover, we deduce the same for the former map, i.e., $\bb Q(j)^\sub{lse}(A)\quis\tau^{\le 2j}\bb Q(j)^\sub{mot}(A)$.
\end{proof}

\begin{corollary}\label{corollary_lej_rational}
For any local $\bb F$-algebra $A$ and $j\ge0$, the map (\ref{eqn:lke_vs_mot}) induces an equivalence \[\bb Q(j)^\sub{lse}(A)\quis\tau^{\le j}\bb Q(j)^\sub{mot}(A).\] In other words, the functor $\tau^{\le j}\bb Q(j)^\sub{mot}:\CAlg_\bb F^\sub{loc}\to \rm D(\bb Z)$ is left Kan extended from essentially smooth, local $\bb F$-algebras.
\end{corollary}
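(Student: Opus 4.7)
The plan is to bootstrap from the preceding lemma, which already gives $\bb Q(j)^\sub{lse}(A)\quis \tau^{\le 2j}\bb Q(j)^\sub{mot}(A)$ for \emph{every} $\bb F$-algebra $A$. To get from degree $2j$ down to degree $j$ in the local case I would only need to improve a connectivity bound, not redo the identification of cohomology groups. The key input will be the connectivity of lisse motivic cohomology at local rings recorded in Proposition~\ref{prop:mot-filt}.

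More precisely, I would proceed as follows. For $A$ a local $\bb F$-algebra, Proposition~\ref{prop:mot-filt} asserts that $\bb Z(j)^\sub{lse}(A)$ is supported in cohomological degrees $\le j$, and hence the same is true after rationalisation, i.e.\ $\bb Q(j)^\sub{lse}(A)\simeq \tau^{\le j}\bb Q(j)^\sub{lse}(A)$. Combining this with the equivalence from the preceding lemma, one sees that $\tau^{\le 2j}\bb Q(j)^\sub{mot}(A)$ is supported in cohomological degrees $\le j$, which is to say that $H^i(\bb Q(j)^\sub{mot}(A))=0$ for $j<i\le 2j$. Consequently the truncation map $\tau^{\le j}\bb Q(j)^\sub{mot}(A)\to\tau^{\le 2j}\bb Q(j)^\sub{mot}(A)$ is an equivalence, and chaining this with the previous lemma yields the desired $\bb Q(j)^\sub{lse}(A)\quis \tau^{\le j}\bb Q(j)^\sub{mot}(A)$.

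Finally, for the reformulation in the ``in other words'' clause, I would observe that when restricted to local $\bb F$-algebras, the left Kan extension of classical motivic cohomology from $\CAlg_\bb F^\sub{sm}$ agrees with its left Kan extension from the category of essentially smooth local $\bb F$-algebras: any map $R\to A$ from a smooth $\bb F$-algebra to a local $A$ factors through the localisation of $R$ at the preimage of the maximal ideal of $A$, so the overcategory of essentially smooth local algebras over $A$ is cofinal in the overcategory of smooth algebras over $A$, and left Kan extension commutes with the filtered colimits that present essentially smooth local rings as colimits of smooth ones. This is a purely formal bookkeeping step; the only mathematical content is the connectivity-plus-truncation argument of the previous paragraph, which itself is essentially immediate given the preceding lemma. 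I do not anticipate any serious obstacle, since the heavy lifting (identification up to degree $2j$ and Zariski-local connectivity of $\bb Z(j)^\sub{lse}$) has already been done.
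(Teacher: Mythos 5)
Your proposal is correct and is exactly the paper's argument: the paper's proof is the one-line observation that the preceding lemma combines with the degree-$\le j$ support of $\bb Z(j)^\sub{lse}(A)$ for local $A$ (from Proposition~\ref{prop:mot-filt}) to force $\tau^{\le j}\bb Q(j)^\sub{mot}(A)=\tau^{\le 2j}\bb Q(j)^\sub{mot}(A)$. Your expanded version, including the cofinality remark for the ``in other words'' clause, fills in the same steps in more detail.
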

\begin{proof}
This follows from the previous lemma since, for $A$ local, we noted in Proposition \ref{prop:mot-filt} that the lisse motivic cohomology $\bb Z(j)^\sub{lse}(A)$ is supported in cohomological degrees $\le j$.
\end{proof}

The proof of the previous corollary also implies the following rational vanishing result:

\begin{corollary}\label{corol_rat_vanishing}
For any local $\bb F$-algebra $A$ and $0\le j<i\le 2j$, we have $H^i_\sub{mot}(A,\bb Q(j))=0$.
\end{corollary}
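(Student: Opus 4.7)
The plan is to deduce the vanishing directly from the preceding lemma together with the connectivity bound on lisse motivic cohomology. Recall that the lemma just before Corollary \ref{corollary_lej_rational} shows that for any $\bb F$-algebra $A$, the comparison map induces an equivalence
\[
\bb Q(j)^\sub{lse}(A)\quis\tau^{\le 2j}\bb Q(j)^\sub{mot}(A).
\]
I would use this as the starting point.

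Next, I would invoke Proposition \ref{prop:mot-filt}: when $A$ is local, $\bb Z(j)^\sub{lse}(A)$ is supported in cohomological degrees $\le j$, hence so is its rationalisation $\bb Q(j)^\sub{lse}(A)$. Combined with the equivalence above, this means that $\tau^{\le 2j}\bb Q(j)^\sub{mot}(A)$ is itself supported in cohomological degrees $\le j$. In other words, $H^i(\tau^{\le 2j}\bb Q(j)^\sub{mot}(A))=0$ for all $i>j$.

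Finally, for $i$ in the range $j<i\le 2j$, truncation is harmless: the canonical map $\tau^{\le 2j}\bb Q(j)^\sub{mot}(A)\to\bb Q(j)^\sub{mot}(A)$ induces an isomorphism on $H^i$, so
\[
H^i_\sub{mot}(A,\bb Q(j))=H^i(\tau^{\le 2j}\bb Q(j)^\sub{mot}(A))=0,
\]
which is the desired conclusion. There is no real obstacle here; the corollary is a formal consequence of the truncation equivalence and the Gersten-type connectivity bound for lisse motivic cohomology over local rings that was already established in Proposition \ref{prop:mot-filt}.
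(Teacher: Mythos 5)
Your proof is correct and is essentially the argument the paper intends: the paper derives this corollary from the proof of Corollary \ref{corollary_lej_rational}, which combines exactly the truncation equivalence $\bb Q(j)^\sub{lse}(A)\simeq\tau^{\le 2j}\bb Q(j)^\sub{mot}(A)$ with the degree $\le j$ support of $\bb Z(j)^\sub{lse}(A)$ for local $A$ from Proposition \ref{prop:mot-filt}. No issues.
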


\begin{remark}[Rational Drinfeld vanishing]\label{remark_drinfeld_1}
If $A$ is a Henselian local $\bb F$-algebra, then we can improve the vanishing bound of the previous corollary by $1$; namely we also have $H^{2j+1}_\sub{mot}(A,\bb Q(j))=0$ for all $j\ge0$. Indeed, this follows from the theorem of Drinfeld that $\K_{-1}(A)=0$ and the decomposition $\K(A)_\bb Q\simeq\bigoplus_{j\ge0} \bb Q(j)^\sub{mot}(A)[2j]$. We also expect this result to hold integrally: see Conjecture~\ref{conj:vanishing}.
\end{remark}

We now prove an integral version of Corollary \ref{corollary_lej_rational}. By taking $H^1$ of the map~\eqref{eq:c1-mot}, we get a natural map \[A^\times\To H^1_\sub{mot}(A,\bb Z(1))\] for any $\bb F$-algebra $A$; by multiplicativity this induces {\em symbol maps} \begin{equation}(A^\times)^{\otimes j}=A^\times\otimes_{\bb Z}\cdots\otimes_{\bb Z} A^\times \To H^j_\sub{mot}(A,\bb Z(j)\label{eqn_symbol}\end{equation} for all $j\ge1$.

\begin{lemma}\label{lemma_factors_through_Mil}
For any local $\bb F$-algebra $A$ and $j\ge1$, the map (\ref{eqn_symbol}) factors through the Milnor $K$-group $\K_j^M(A)$
\end{lemma}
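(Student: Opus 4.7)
The strategy is to reduce to the universal case of a smooth $\bb F$-algebra, where the Steinberg relation is already known to hold in classical motivic cohomology. By the multiplicativity of the graded ring $\bigoplus_{j\ge0}\bb Z(j)^\sub{mot}(A)[2j]$ (which is built into the motivic filtration by construction), the symbol map~\eqref{eqn_symbol} is simply the $j$-fold cup product of the weight-one symbol map $A^\times\to H^1_\sub{mot}(A,\bb Z(1))$. Consequently, showing that~\eqref{eqn_symbol} factors through $\K_j^M(A)$ reduces to the single case $j=2$: namely, proving that $\{a,1-a\}=0$ in $H^2_\sub{mot}(A,\bb Z(2))$ whenever both $a$ and $1-a$ lie in $A^\times$.

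To prove this Steinberg relation, the plan is to lift it to a universal example. Given such an $a$, consider the smooth $\bb F$-algebra $R:=\bb F[t,t^{-1},(1-t)^{-1}]$, and the $\bb F$-algebra homomorphism $\phi:R\to A$ sending $t\mapsto a$; by naturality, the symbol $\{a,1-a\}\in H^2_\sub{mot}(A,\bb Z(2))$ is the image under $\phi^*$ of the symbol $\{t,1-t\}\in H^2_\sub{mot}(R,\bb Z(2))$. Since $R$ is smooth over $\bb F$, Corollary~\ref{corol_smooth_comparison} identifies $\bb Z(2)^\sub{mot}(R)$ with $\bb Z(2)^\sub{cla}(R)$, and the symbol map for $R$ with the classical symbol map (by the uniqueness in Lemma~\ref{lem:c1-map} and multiplicativity of the comparison~\eqref{eqn_cla_to_mot}). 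In classical motivic cohomology of smooth $\bb F$-schemes, the Steinberg relation $\{t,1-t\}=0$ is well-known (e.g.\ from Totaro~\cite{Totaro1992} or Nesterenko--Suslin~\cite{Suslin1989}); pulling back along $\phi^*$ gives $\{a,1-a\}=0$ in $H^2_\sub{mot}(A,\bb Z(2))$, as required.

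There is no significant obstacle to this argument: everything follows formally from multiplicativity, the naturality of the first Chern class map from Lemma~\ref{lem:c1-map}, and the already-proved identification of $\bb Z(j)^\sub{mot}$ with $\bb Z(j)^\sub{cla}$ on smooth $\bb F$-algebras. The locality hypothesis on $A$ is not actually used here; it enters later, in the companion results of this subsection where one needs vanishing of $\bb Z(j)^\sub{lse}$ in degrees $>j$.
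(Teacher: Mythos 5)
Your proof is correct and takes essentially the same route as the paper's: reduce to the Steinberg relation in weight $2$, pull the symbol back along a map from a universal smooth $\bb F$-algebra, and conclude via the Nesterenko--Suslin/Totaro theorem. The one step you elide is the claim that $\{t,1-t\}=0$ in $H^2_\sub{mot}(R,\bb Z(2))$ for $R=\bb F[t,t^{-1},(1-t)^{-1}]$ ``by Totaro or Nesterenko--Suslin'': those theorems are about fields, so you still need Gersten injectivity $H^2_\sub{mot}(R,\bb Z(2))\hookrightarrow H^2_\sub{mot}(\bb F(t),\bb Z(2))\cong \K_2^M(\bb F(t))$ to reduce to the field case (this does hold for the smooth affine curve $\Spec R$, since in the coniveau spectral sequence the weight-$2$ Gersten complex contributes nothing in total degree $2$ away from the generic point). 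This is precisely the step the paper makes explicit, except that it works with the localization $\bb F[t]_{\frak p}$ at the pullback of the maximal ideal of $A$ rather than your global ring; beyond that cosmetic difference the two arguments coincide, including the implicit appeal to Corollary~\ref{corol_smooth_comparison} to identify the new and classical symbol maps on the smooth model.
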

\begin{proof}
We must show that the map respects the Steinberg relation, so may assume $j=2$. Now let $a\in A^\times$ be a unit such that $1-a$ is also a unit; let $\bb F[t]\to A$, $t\mapsto a$ be the induced map, and $\frak p\subseteq \bb F[t]$ the pullback of the maximal ideal of $A$. There is a commutative diagram by naturality
\[\xymatrix{
\bb F[t]_\frak p^\times\otimes_{\bb Z}\bb F[t]_\frak p^\times\ar[d]\ar[r] & H^2_\sub{mot}(\bb F[t]_\frak p,\bb Z(2))\ar[d]\\
A^\times\otimes_{\bb Z}A^\times\ar[r] & H^2_\sub{mot}(A,\bb Z(2))
}\]
in which the left vertical arrow sends $t\otimes 1-t$ to $a\otimes 1-a$. The problem therefore reduces to the case of the local ring $\bb F[t]_\frak p$. But, setting $F:=\text{Frac}(\bb F[t])$, we have a second commutative diagram by naturality
\[\xymatrix{
F^\times\otimes_{\bb Z}F^\times\ar[r] & H^2_\sub{mot}(F,\bb Z(2))\\
\bb F[t]_\frak p^\times\otimes_{\bb Z}\bb F[t]_\frak p^\times\ar[u]\ar[r] & H^2_\sub{mot}(\bb F[t]_\frak p,\bb Z(2))\ar[u]
}\]
in which the right vertical arrow is injective by Gersten injectivity in motivic cohomology (which we know from Lemmas \ref{lem:ColliotThelene-Hoobler-Kahn1997} and \ref{lem_mot_deflat}). So the problem finally reduces to the case of the field $F$, in which case it is a theorem of Nesterenko--Suslin and Totaro \cite{Suslin1989, Totaro1992} that the symbol map indeed respects the Steinberg relation; of course we use here that the new motivic cohomology of $F$ coincides with the classical theory, but that is a special case of Corollary \ref{corol_smooth_comparison}.
\end{proof}

We will repeatedly use the following general observation about functors in what follows. Recall that a functor $F: \CAlg^{\sub{loc}}_{\bb F} \rightarrow \Spt$ is said to be \emph{rigid} if for any local $\bb F$-algebra $A$ and henselian ideal $I\subseteq A$, the canonical map is an equivalence $F(A) \xrightarrow{\simeq} F(A/I)$. The following is also recorded as the conjuction of \cite[Proposition 2.23 \& Lemma 2.25]{BachmannElmantoMorrow}:

\begin{lemma}\label{lem:rigid-lke} Let $F:\CAlg^{\sub{loc}}_{\bb F} \rightarrow \Spt$ be a rigid functor. Then $F$ is left Kan extended from the subcategory of essentially smooth local $\bb F$-algebras.
\end{lemma}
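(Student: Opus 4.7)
The plan is to compute the left Kan extension $F^{\mathrm{lse}} := L_{\CAlg^{\sub{loc}}_\bb F / \CAlg^{\sub{ess.sm.loc}}_\bb F}(F|_{\CAlg^{\sub{ess.sm.loc}}_\bb F})$ via an explicit simplicial resolution and then invoke rigidity to identify the resulting geometric realization with $F(A)$. The goal is to show that the canonical counit map $F^{\mathrm{lse}} \to F$ is an equivalence on every local $\bb F$-algebra $A$.

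First I would construct, for any local $\bb F$-algebra $(A,\mathfrak m)$, the canonical bar-type simplicial resolution $P_\bullet(A) \to A$: set $P_n(A)$ to be the localization of the polynomial $\bb F$-algebra on the underlying set of $A^{n+1}$ at the preimage of $\mathfrak m$, with the standard face and degeneracy structure coming from the bar construction. Each $P_n(A)$ is essentially smooth local, and the augmentation $P_0(A) \to A$ as well as every face map $P_{n+1}(A) \to P_n(A)$ is surjective. A standard cofinality argument, using that these bar resolutions are cofinal in $(\CAlg^{\sub{ess.sm.loc}}_\bb F)_{/A}$ for the purpose of computing sifted colimits of spectra-valued presheaves, then yields $F^{\mathrm{lse}}(A) \simeq |F(P_\bullet(A))|$.

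Next I would argue that $|F(P_\bullet(A))| \simeq F(A)$. It suffices to show that $F$ sends each face map $P_{n+1}(A) \to P_n(A)$ and the augmentation $P_0(A) \to A$ to an equivalence, for then $F(P_\bullet(A))$ is a simplicial spectrum in which every face map is an equivalence, and so its geometric realization is equivalent to the common value $F(A)$. Each structure map is a surjection, so rigidity will give what we want provided its kernel is a henselian ideal. Since the $P_n(A)$ are essentially smooth local but not henselian, I would pass to henselizations: after reducing to the case where $A$ is itself henselian local, the henselization $P_n(A)^h$ is a henselian local ring admitting a surjection onto $A$ (by the universal property of henselization) whose kernel is automatically henselian, so rigidity yields $F(P_n(A)^h) \simeq F(A)$; it then remains to identify $F(P_n(A))$ with $F(P_n(A)^h)$ along the ind-\'etale henselization map.

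The main obstacle will be this last identification, together with the preliminary reduction to henselian $A$: the stated rigidity hypothesis controls quotients by henselian ideals but not directly ind-\'etale extensions such as the henselization of a local ring. This is handled by adopting the slightly stronger form of rigidity used in \cite{clausen2018k}, namely invariance of $F$ under arbitrary morphisms of henselian pairs inducing isomorphisms on quotients, which in the present local-ring context follows from the stated weak form via smooth lifting along henselian surjections. Once this is in place, both $F^{\mathrm{lse}}(A) \simeq F^{\mathrm{lse}}(A^h)$ and $F(A) \simeq F(A^h)$ follow by cofinality of the induced functor on overcategories, and the three-step outline above carries through to give the desired equivalence $F^{\mathrm{lse}}(A) \simeq F(A)$.
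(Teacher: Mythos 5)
Your overall strategy---resolve $A$ simplicially by (ind-)smooth local algebras, use rigidity to see that $F$ of the resolution is essentially constant, and conclude via contractibility of $\Delta^{\op}$---is exactly the paper's. The gap is in your treatment of the fact that the face maps of the naive bar resolution $P_\bullet(A)$ by localized polynomial algebras are surjections whose kernels are \emph{not} henselian ideals, so the stated rigidity does not apply to them. Your patch is to henselize and to assert that $F(P_n(A))\simeq F(P_n(A)^h)$ (and, for the preliminary reduction, $F(A)\simeq F(A^h)$) follows from the given rigidity via some ``stronger form''. It does not: rigidity concerns quotients by henselian ideals, whereas invariance under the ind-\'etale map $P\to P^h$ is an independent property, and it genuinely fails for one of the two functors to which the paper applies this lemma. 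Indeed $R\Gamma_\sub{\'et}(-,\mu_2)$ is rigid (Gabber's affine analogue of proper base change for henselian pairs), yet for $P=\bb Q[x]_{(x)}$ the unit $1+x$ becomes a square in $P^h$ but is not a square in $P$, so $H^1_\sub{\'et}(P,\mu_2)=P^\times/(P^\times)^2$ and $H^1_\sub{\'et}(P^h,\mu_2)$ differ. So your intermediate step is false even though the conclusion of the lemma holds for this $F$.

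The correct repair, and what the paper does (see Remark \ref{remark_lke_as_cycles}), is to build the henselization into the resolution from the start: take $P_n$ to be the henselization of the localized polynomial algebra along the kernel of its augmentation, i.e.\ iterate the comonad sending a local ring to the henselian pair obtained from the free polynomial algebra on its underlying set, localized at the preimage of the maximal ideal and henselized along the augmentation ideal. Each $P_n$ is then an ind-smooth \emph{local} $\bb F$-algebra, and every face map as well as the augmentation $P_0\to A$ is by construction a henselian surjection, so rigidity applies verbatim to every structure map and one never has to compare a local ring with its henselization. (One still has to know that this henselized comonadic resolution computes the left Kan extension from essentially smooth local algebras; this is the standard comonadic cofinality argument combined with the fact that the terms are filtered colimits of essentially smooth local algebras.)
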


\begin{proof}
As observed in Remark \ref{remark_lke_as_cycles}, we can build for any $B \in \CAlg^{\sub{loc}}_{\bb F}$ a simplicial resolution $P_\bullet\to B$ where each term $P_m$ is an ind-smooth, local $\bb F$-algebra and each face map $P_{m+1}\to P_m$ is a henselian surjection. Since $F$ is rigid the simplicial spectrum $m \mapsto F(P_{m})$ is equivalent to the constant simplicial diagram at $F(B)$, and so $|F(P_{\bullet})| \quis |F(B)|$. Furthermore, since $\Delta^{\op}$ is contractible diagram, we have that  $|F(B)|\quis F(B)$. 
\end{proof}

The following is the first main theorem of the section, showing that Zariski locally our weight-$j$ motivic cohomology is left Kan extended from smooth algebras in degrees $\le j$:

\begin{theorem}\label{thm_lke_lej}
For any local $\bb F$-algebra $A$ and $j\ge0$, the map (\ref{eqn:lke_vs_mot}) induces an equivalence \[\bb Z(j)^\sub{lse}(A)\quis\tau^{\le j}\bb Z(j)^\sub{mot}(A).\] In other words, the functor $\tau^{\le j}\bb Z(j)^\sub{mot}:\CAlg_\bb F^\sub{loc}\to  \rm D(\bb Z)$ is left Kan extended from essentially smooth, local $\bb F$-algebras.
\end{theorem}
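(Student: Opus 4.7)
My plan is to apply Lemma~\ref{lem:rigid-lke} to the functor $F\colon \CAlg^\sub{loc}_\bb F \to \rm{D}(\bb Z)$ defined by $F(A) := \tau^{\le j}\bb Z(j)^\sub{mot}(A)$. Once $F$ is known to be rigid on local $\bb F$-algebras, the lemma identifies $F$ with the left Kan extension of its restriction to essentially smooth local $\bb F$-algebras. By Corollary~\ref{corol_smooth_comparison} combined with the Gersten conjecture for classical motivic cohomology (which ensures $\bb Z(j)^\sub{cla}$ is already supported in cohomological degrees $\le j$ on smooth local algebras), this restriction agrees with $\bb Z(j)^\sub{cla}$. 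A standard cofinality argument, using that essentially smooth local $\bb F$-algebras are sifted colimits of smooth local $\bb F$-algebras, then identifies the resulting left Kan extension with the restriction of $\bb Z(j)^\sub{lse}$ to local algebras, yielding the theorem.

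The proof thus reduces to verifying rigidity of $F$, which I would check after rationalization and modulo each prime. Rationally, Corollary~\ref{corollary_lej_rational} already identifies $F_\bb Q|_{\sub{local}}$ with $\bb Q(j)^\sub{lse}|_{\sub{local}}$; the rigidity of the latter on Noetherian local algebras can be extracted from the vanishing of negative $K$-theory (Drinfeld--Weibel) combined with the Adams operation decomposition of Proposition~\ref{prop:adams} and the cyclic-homology description of rational $K$-theory on henselian pairs. For mod-$\ell$ coefficients with $\ell$ coprime to the characteristic of $\bb F$, the Beilinson--Lichtenbaum comparison Theorem~\ref{thm_BL}(1) identifies $F(-)/\ell$ with $\tau^{\le j}R\Gamma_\sub{\'et}(-,\mu_\ell^{\otimes j})$, whose rigidity on henselian pairs of local $\bb F$-algebras is Gabber's classical rigidity theorem for torsion \'etale cohomology.

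The main obstacle is the mod-$p^r$ case in characteristic $p$. By Theorem~\ref{thm_BL}(2), rigidity of $F/p^r$ is equivalent to rigidity of $\tau^{\le j}(\bb Z_p(j)^\sub{syn}(-)/p^r)$ on local $\bb F_p$-algebras, which does not follow from any known general rigidity of syntomic cohomology (the latter being sensitive to derived de Rham--type information that is manifestly non-rigid). To establish this I would combine the fundamental fibre sequence of Corollary~\ref{corol_fundamental_p} with the pullback square of Theorem~\ref{theorem_GL}, exploiting two complementary rigidity inputs. First, the logarithmic de Rham--Witt sheaves $W_r\Omega^j_\sub{log}$ are rigid on henselian surjections of local $\bb F_p$-algebras because they are generated Zariski-locally by symbols of units, which lift by Hensel's lemma. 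Second, the Artin--Schreier obstruction $\tilde\nu_r(j)$ is rigid by Remark~\ref{rem_rigidity_of_nutilde}. Combining these with the valuative dimension vanishing for cdh and \'eh cohomology of $W_r\Omega^j_\sub{log}$ (cf.\ Proposition~\ref{prop:dim}) should allow one to assemble the partial rigidities into rigidity of the full truncated syntomic cohomology $\tau^{\le j}(\bb Z_p(j)^\sub{syn}(-)/p^r)$, concluding the proof.
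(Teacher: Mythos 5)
Your plan has a fatal flaw at the very first step: the functor $F=\tau^{\le j}\bb Z(j)^\sub{mot}$ is \emph{not} rigid, so Lemma~\ref{lem:rigid-lke} cannot be applied to it. Rigidity is a sufficient condition for being left Kan extended from smooth algebras, but it is far from necessary, and the theorem you are trying to prove concerns a functor that is left Kan extended without being rigid. Concretely, take $j=1$, $A=k[x]/x^2$ and the henselian (nilpotent) ideal $I=(x)$: by Corollary~\ref{corollary_1} one has $\tau^{\le 1}\bb Z(1)^\sub{mot}(A)=A^\times[-1]$, and $A^\times\to (A/I)^\times$ is not injective. More generally, Example~\ref{example_dim_0} computes a nonzero relative term $\bb Z(j)^\sub{lse}(k[x]/x^e,(x))$ sitting in degree $1\le j$, which is exactly the failure of rigidity for the functor the theorem identifies with $\tau^{\le j}\bb Z(j)^\sub{mot}$. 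For the same reason your rational step fails: $\bb Q(j)^\sub{lse}$ is not rigid on nilpotent thickenings (by Goodwillie, relative rational $K$-theory of a nilpotent ideal is relative cyclic homology, which is nonzero), and your mod-$p$ step fails because $\tau^{\le j}\bb Z_p(j)^\sub{syn}$ is likewise not rigid --- in the paper only the \emph{top-degree} piece $\tilde\nu_r(j)=\tau^{>j}\bb F_p(j)^\sub{syn}[j+1]$ is rigid (Remark~\ref{rem_rigidity_of_nutilde}), and "generated by symbols that lift" gives surjectivity, not injectivity.

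There is also a secondary gap you elide: the interchange of $\tau^{\le j}$ with reduction mod $\ell$. Theorem~\ref{thm_BL} identifies $\tau^{\le j}(\bb Z(j)^\sub{mot}/\ell)$ with truncated \'etale (resp.\ syntomic) cohomology, but what you need to control is $(\tau^{\le j}\bb Z(j)^\sub{mot})/\ell$; the two differ by $H^{j+1}_\sub{mot}(A,\bb Z(j))[\ell]$. The paper's proof spends its second half precisely on this point, showing $H^j_\sub{mot}(A,\bb Z(j))\to H^j_\sub{mot}(A,\bb Z/\ell(j))$ is surjective via the symbol map, Kerz's generation of $H^j_\sub{cla}$ by Milnor $K$-theory on (ind-)smooth local rings, and a henselian ind-smooth resolution of $A$. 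The correct architecture is: rationally, use Corollary~\ref{corollary_lej_rational} (which comes from $K^\sub{cn}\to K$ being a connective cover, not from rigidity); mod $\ell\ne p$, use that \'etale cohomology is rigid, hence LKE; mod $p$, use that $\bb F_p(j)^\sub{syn}$ is LKE \emph{by construction} and that only its part above degree $j$, namely $\tilde\nu(j)$, needs the rigidity input; then close the gap between $\tau^{\le j}(-/\ell)$ and $(\tau^{\le j}-)/\ell$ by the Milnor $K$-theory surjectivity argument.
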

\begin{proof}
The result is true rationally by Corollary \ref{corollary_lej_rational}, so it suffices to prove the result for $\tau^{\le j}(\bb Z(j)^\sub{mot}(-))/\ell$ for all primes $\ell$. We first claim that the result is true for the functor \[\tau^{\le j}(\bb Z(j)^\sub{mot}(-)/\ell):\text{CAlg}^\sub{loc}_\bb F\to\text D(\bb Z).\] Indeed, if $\ell$ is invertible in $\bb F$ then we have an equivalence $\tau^{\le j}(\bb Z(j)^\sub{mot}/\ell)=\tau^{\le j}R\Gamma_\sub{\'et}(-,\bb \mu_\ell^{\otimes j})$ by Theorem \ref{thm_BL}(1). Since \'etale cohomology is even rigid, it is left Kan extended from smooth $\bb F$-algebras by Lemma~\ref{lem:rigid-lke}. On the other hand, if $\ell=p=\text{char}(\bb F)$ then $\tau^{\le j}(\bb Z(j)^\sub{mot}/p)=\tau^{\le j}(\bb F_p(j)^\sub{syn})$ by Corollary~\ref{corol_fundamental_p}, which is also left Kan extended from smooth $\bb F_p$-algebras: indeed, $\bb F_p(j)^\sub{syn}$ is even left Kan extended from finitely generated polynomial $\bb F_p$-algebras by definition, and $\tau^{>j}\bb F_p(j)^\sub{syn}$ identifies with $\tilde\nu(j)[-j-1]$ by Remark \ref{remarks_tildenu}(2), which is rigid by Remark \ref{rem_rigidity_of_nutilde} and so left Kan extended from smooth algebras by Lemma \ref{lem:rigid-lke}.

We now claim, for all local $\bb F$-algebras $A$, that the canonical map \[\tau^{\le j}(\bb Z(j)^\sub{mot}(A)/\ell)\to (\tau^{\le j}(\bb Z(j)^\sub{mot}(A)))/\ell\] is an equivalence for all primes $\ell$; this will complete the proof. For this, it suffices to prove that the map  $H^j_\sub{mot}(A,\bb Z(j))\to H^j_\sub{mot}(A,\bb Z(j)/\ell)$ is surjective for all local $\bb F$-algebras $A$. To see this, we pick  a Henselian surjection $P\to A$ where $P$ is an ind-smooth (necessarily local) $\bb F$-algebra. Since $\tau^{\le j}(\bb Z(j)^\sub{mot}(-)/\ell)$ is left Kan extended from essentially smooth $\bb F$-algebras, the induced map in top degree $H^j_\sub{mot}(P,\bb Z/\ell(j))\to H^j_\sub{mot}(A,\bb Z/\ell(j))$ is surjective. By naturality and Lemma \ref{lemma_factors_through_Mil}, this surjective map moreover fits into a commutative diagram 
\[
\begin{tikzcd}
\K_j^M(P)\ar[->>]{r} \ar{d} & H^j_{\mot}(P,\bb Z(j))\ar{d}\ar[->>]{r} & H^j_{\mot}(P,\bb Z/\ell(j))\ar[->>]{d}\\
\K_j^M(A) \ar{r} & H^j_{\mot}(A,\bb Z(j))\ar{r} & H^j_{\mot}(A,\bb Z/\ell(j)).
\end{tikzcd}
\]
As indicated, the arrows on the top row are also surjective. Indeed, by taking filtered colimits it is enough to prove such surjectivities for an essentially smooth, local $\bb F$-algebra in place of $P$: then for the first top arrow it is a theorem of Kerz \cite[Theorem B]{Kerz2010}, and for the second arrow it follows from the usual bound that $\bb Z(j)^\sub{mot}$ is Zariski locally supported in degrees $\le j$ on smooth $\bb F$-algebras. From this it follows that the map $H^j_\sub{mot}(A,\bb Z(j))\to H^j_\sub{mot}(A,\bb Z/\ell(j))$ is surjective as required.
\end{proof}

The proof of the previous result yields the following vanishing theorem:

\begin{corollary}[Weak Gersten vanishing]\label{corollary_Hilb_90}
For any local $\bb F$-algebra $A$ and $j\ge1$, we have $H^{j+1}_\sub{mot}(A,\bb Z(j))=0$; if $A$ is henselian then also $H^{1}_\sub{mot}(A,\bb Z(0))=0$.
\end{corollary}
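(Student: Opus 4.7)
The plan is to extract the corollary from the key surjectivity statement that was established during the proof of Theorem~\ref{thm_lke_lej}. Recall that the proof showed, for every local $\bb F$-algebra $A$, prime $\ell$, and $j\ge 1$, that the map $H^j_\sub{mot}(A,\bb Z(j))\to H^j_\sub{mot}(A,\bb Z/\ell(j))$ is surjective. I will combine this with rational vanishing to deduce the claim in two short steps.

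First I would apply the long exact sequence associated with the cofibre sequence
\[\bb Z(j)^\sub{mot}(A)\xto{\ell}\bb Z(j)^\sub{mot}(A)\To\bb Z(j)^\sub{mot}(A)/\ell.\]
Surjectivity of $H^j_\sub{mot}(A,\bb Z(j))\to H^j_\sub{mot}(A,\bb Z/\ell(j))$ forces multiplication by $\ell$ on $H^{j+1}_\sub{mot}(A,\bb Z(j))$ to be injective, i.e., this group is $\ell$-torsion free. Since this holds for every prime $\ell$, the group $H^{j+1}_\sub{mot}(A,\bb Z(j))$ is torsion free.

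Next, since $j\ge 1$ we have $j<j+1\le 2j$, so Corollary~\ref{corol_rat_vanishing} yields $H^{j+1}_\sub{mot}(A,\bb Q(j))=0$. A torsion-free rational-zero group must itself vanish, giving the first assertion.

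For the henselian statement about $H^1_\sub{mot}(A,\bb Z(0))$, I would run the same argument with $j=0$. The required surjectivity of $H^0_\sub{mot}(A,\bb Z(0))\to H^0_\sub{mot}(A,\bb Z/\ell(0))$ reduces to identifying both sides via the descriptions $\bb Z(0)^\sub{mot}(A)\simeq R\Gamma_\sub{cdh}(A,\bb Z)$ (Examples~\ref{example_00}, \ref{example_0p}) and, modulo $\ell$, via Theorem~\ref{thm_BL} (or Example~\ref{example_0p} when $\ell=p$) with $R\Gamma_\sub{\'et}(A,\bb Z/\ell)$ in degree~$0$; for $A$ henselian local these are $\bb Z$ and $\bb Z/\ell$ respectively, so the map is the canonical surjection. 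Torsion-freeness of $H^1_\sub{mot}(A,\bb Z(0))$ follows as before. Finally, the rational vanishing $H^1_\sub{mot}(A,\bb Q(0))=0$ is exactly the case $j=0$ of the Drinfeld-type vanishing recorded in Remark~\ref{remark_drinfeld_1}, using that $\K_{-1}(A)=0$ for any henselian local ring. The main (already-resolved) obstacle was really the surjectivity input; everything else is a short torsion-free $+$ rational vanishing combination.
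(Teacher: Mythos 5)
Your argument is correct and is essentially the paper's own proof: the surjectivity of $H^j_\sub{mot}(A,\bb Z(j))\to H^j_\sub{mot}(A,\bb Z/\ell(j))$ established at the end of the proof of Theorem~\ref{thm_lke_lej} gives torsion-freeness of $H^{j+1}_\sub{mot}(A,\bb Z(j))$, and Corollary~\ref{corol_rat_vanishing} (resp.\ Remark~\ref{remark_drinfeld_1} for $j=0$, henselian) gives rational vanishing, forcing the group to be zero. The only cosmetic difference is that for $j=0$ you re-derive the needed degree-zero surjectivity directly from the weight-zero identifications rather than from the Milnor $K$-theory argument, which is equally valid.
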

\begin{proof}
The surjectivity of the map $H^j_\sub{mot}(A,\bb Z(j))\to H^j_\sub{mot}(A,\bb Z/\ell(j))$ from the end of the previous proof means that that $H^{j+1}_\sub{mot}(A,\bb Z(j))$ is torsion-free. But it also a torsion group since it vanishes rationally by Corollary \ref{corol_rat_vanishing} and Remark \ref{remark_drinfeld_1}.
\end{proof}

We can also prove a Nisnevich local vanishing result in one additional degree:

\begin{proposition}[More weak Gersten vanishing]\label{proposition_91}
For any henselian local $\bb F$-algebra $A$ and $j\ge1$, we have $H^{j+2}_\sub{mot}(A,\bb Z(j))=~0$.
\end{proposition}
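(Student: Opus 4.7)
The plan is to reduce the integral vanishing to rational vanishing (which we already have) combined with a mod-$\ell$ vanishing for every prime $\ell$, and then prove that mod-$\ell$ vanishing uniformly by combining the Beilinson--Lichtenbaum equivalence of Theorem~\ref{thm_BL} with the appropriate rigidity theorem.

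The case $j=1$ is disposed of immediately: Theorem~\ref{thm:main}(6) gives $\bb Z(1)^\sub{mot}(A)\simeq R\Gamma_\sub{Nis}(A,\bb G_m)[-1]$ in cohomological degrees $\le 3$, and $H^2_\sub{Nis}(A,\bb G_m)=0$ for any local ring, so assume henceforth that $j\ge 2$. Corollary~\ref{corollary_Hilb_90} gives $H^{j+1}_\sub{mot}(A,\bb Z(j))=0$, so the long exact sequence of $\bb Z(j)^\sub{mot}\xrightarrow{\ell}\bb Z(j)^\sub{mot}\to\bb Z(j)^\sub{mot}/\ell$ identifies the $\ell$-torsion subgroup of $H^{j+2}_\sub{mot}(A,\bb Z(j))$ with $H^{j+1}_\sub{mot}(A,\bb Z(j)/\ell)$. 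Corollary~\ref{corol_rat_vanishing}, applicable because $j+2\le 2j$ when $j\ge 2$, shows $H^{j+2}_\sub{mot}(A,\bb Z(j))$ is torsion, so it suffices to prove $H^{j+1}_\sub{mot}(A,\bb Z(j)/\ell)=0$ for every prime~$\ell$.

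For this step, set $E_\ell:=R\Gamma_\sub{\'et}(-,\mu_\ell^{\otimes j})$ if $\ell$ is invertible in $\bb F$ and $E_\ell:=\bb F_p(j)^\sub{syn}$ if $\ell=p=\opp{char}\bb F$. Theorem~\ref{thm_BL} provides a natural map $\bb Z(j)^\sub{mot}/\ell\to E_\ell$ whose cofibre is supported in cohomological degrees $>j$, from which the induced map $H^{j+1}_\sub{mot}(A,\bb Z(j)/\ell)\to H^{j+1}(E_\ell(A))$ is injective. I would then contemplate the commutative square induced by $A\to\kappa$, where $\kappa$ is the residue field of $A$:
\[\begin{tikzcd}
H^{j+1}_\sub{mot}(A,\bb Z(j)/\ell) \ar[hook]{r}\ar{d} & H^{j+1}(E_\ell(A))\ar{d}{\simeq}\\
H^{j+1}_\sub{mot}(\kappa,\bb Z(j)/\ell)\ar{r} & H^{j+1}(E_\ell(\kappa)).
\end{tikzcd}\]
The bottom-left group is zero because $\bb Z(j)^\sub{mot}(\kappa)\simeq\bb Z(j)^\sub{cla}(\kappa)$ (write $\kappa$ as a filtered colimit of smooth $\bb F$-subalgebras and invoke Corollary~\ref{corol_smooth_comparison} together with the finitariness of both theories) is supported in cohomological degrees $\le j$. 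The right vertical map is an isomorphism: for $\ell$ invertible in $\bb F$ this is Gabber's rigidity theorem for étale cohomology with torsion coefficients invertible on $A$, while for $\ell=p$ it follows from the identification $H^{j+1}(\bb F_p(j)^\sub{syn}(A))=\widetilde\nu(j)(A)$ of Remark~\ref{remarks_tildenu}(2) combined with the rigidity of $\widetilde\nu(j)$ recorded in Remark~\ref{rem_rigidity_of_nutilde}. Chasing the square now forces the top horizontal map to be simultaneously injective and zero, so $H^{j+1}_\sub{mot}(A,\bb Z(j)/\ell)=0$, as required.

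The substantive content is concentrated in the rigidity of the right vertical map, and this is precisely where the Henselian hypothesis on $A$ is genuinely used; everything else is formal bookkeeping with Beilinson--Lichtenbaum, Hilbert~90, and the standard vanishing of classical motivic cohomology of fields in degrees $>j$.
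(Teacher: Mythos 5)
Your argument for $j\ge2$ is essentially the paper's: reduce to showing that $H^{j+2}_\sub{mot}(A,\bb Z(j))$ is torsion (rational vanishing in the range $j<i\le 2j$) and has no $\ell$-torsion, identify the $\ell$-torsion with $H^{j+1}_\sub{mot}(A,\bb Z(j)/\ell)$ via the Bockstein sequence and Hilbert 90, and kill that group by embedding it into $H^{j+1}_\sub{\'et}(A,\mu_\ell^{\otimes j})$ (resp.\ $\tilde\nu(j)(A)$) using Theorem \ref{thm_BL} and then invoking rigidity together with the vanishing over the residue field. The paper packages the last step slightly differently — it exhibits $H^{j+1}_\sub{mot}(A,\bb Z/\ell(j))$ as the kernel of a map which the residue-field comparison shows to be injective — whereas you factor an injective map through zero; these are the same computation, and all the inputs you cite (Gabber rigidity, Remark \ref{remarks_tildenu}(2), Remark \ref{rem_rigidity_of_nutilde}, the degree bound for $\bb Z(j)^\sub{cla}$ of fields) are the ones the paper uses.

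The gap is the case $j=1$. You dispose of it by citing Theorem \ref{thm:main}(6), i.e.\ the identification $H^3_\sub{mot}(A,\bb Z(1))\cong H^2_\sub{Nis}(A,\bb G_m)$. But in the paper that degree-$3$ identification is Corollary \ref{corollary_1}, whose proof goes through Corollary \ref{cor:lke-hloc}, which in turn invokes Proposition \ref{proposition_91} — the statement being proved — in the case $j=1$. So as written your $j=1$ case is circular. The fix is cheap and is exactly what the paper does: your main argument applies verbatim to $j=1$ except for the torsion-ness step, where $j+2=3>2j=2$ falls outside the range of Corollary \ref{corol_rat_vanishing}; replace that input by the rational Drinfeld vanishing of Remark \ref{remark_drinfeld_1}, which gives $H^{2j+1}_\sub{mot}(A,\bb Q(j))=0$ for Henselian local $A$, in particular $H^3_\sub{mot}(A,\bb Q(1))=0$. (A minor further point: your aside that $H^2_\sub{Nis}(A,\bb G_m)=0$ ``for any local ring'' is only correct because $A$ is Henselian — local rings are points for the Zariski topology, not the Nisnevich one.)
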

\begin{proof}
The vanishing holds rationally by Corollary \ref{corol_rat_vanishing}, or Remark \ref{remark_drinfeld_1} if $j=1$. So it remains to show that $H^{j+2}_\sub{mot}(A,\bb Z(j))$ is torsion-free; we will prove the stronger (actually equivalent, since we already have Corollary \ref{corollary_Hilb_90}) result that  $H^{j+1}_\sub{mot}(A,\bb Z/\ell(j))=0$ for all prime numbers $\ell$.

Let us first suppose that $\ell$ is invertible in $\bb F$. Consider the fibre sequence \[\tau^{\le j}R\Gamma_\sub{\'et}(-,\mu_{\ell}^{\otimes j})\To \tau^{\le j+1}R\Gamma_\sub{\'et}(-,\mu_{\ell}^{\otimes j})\To H^{j+1}_\sub{\'et}(-,\mu_\ell^{\otimes j})[-j-1]=:\cal F[-j-1]\] on qcqs $\bb F$-schemes. Sheafifying this sequence with respect to the cdh topology and using the identification of Theorem \ref{thm_BL}(1), we get a fibre sequence:
\[\bb Z^\sub{mot}(j)/\ell\To L_\sub{cdh}\tau^{\le j+1}R\Gamma_\sub{\'et}(-,\mu_{\ell}^{\otimes j})\To (L_\sub{cdh}\cal F)[-j-1].\]
Since \'etale cohomology satisfies cdh descent, the middle term agrees with $R\Gamma_\sub{\'et}(-,\mu_{\ell}^{\otimes j})$ up to degrees $\leq j+1$, whence we deduce that \[H^{j+1}_\sub{mot}(A,\bb Z/\ell(j))=\ker\big(H^{j+1}_\sub{\'et}(A,\mu_\ell^{\otimes j})\to H^0(L_\sub{cdh}\cal F(A))\big).\] 
But now we claim that the map appearing on the right side is injective. Indeed, letting $k$ be the residue field of $A$, by functoriality it fits into a commutative diagram
\[\xymatrix{
H^{j+1}_\sub{\'et}(A,\mu_\ell^{\otimes j})\ar[r]\ar[d] & H^0(L_\sub{cdh}\cal F(A))\ar[d] \\
H^{j+1}_\sub{\'et}(k,\mu_\ell^{\otimes j})\ar[r] & H^0(L_\sub{cdh}\cal F(k))
}\]
where the left vertical arrow is an isomorphism (by rigidity of \'etale cohomology), and the bottom horizontal arrow is also an isomorphism (since fields are cdh points). This completes the proof that $H^{j+1}_\sub{mot}(A,\bb Z/\ell(j))=0$.

Next suppose that $\bb F=\bb F_p$ and $\ell=p$. Then from the fundamental fibre sequence of Corollary \ref{corol_fundamental_p} we see that there is a natural identification \[H^{j+1}_\sub{mot}(A,\bb Z/p(j))=\ker\big(\tilde\nu(j)(A)\to H^0_\sub{cdh}(A,\tilde\nu(j))\big).\] Exactly as in the previous case, this vanishes by comparison with the residue field: namely $\tilde\nu(j)(A)\isoto\tilde\nu(j)(k)\isoto H^0_\sub{cdh}(k,\tilde\nu(j))$, the first isomorphism being rigidity of $\tilde\nu(j)$ and the second being the fact that $k$ is a point for the cdh topology
\end{proof}

For the next corollary, we let $\CAlg_\bb F^\sub{h.loc}$ be the category of henselian local $\bb F$-algebras. 

\begin{corollary}\label{cor:lke-hloc}
For $j\ge1$, the functor $\tau^{\le j+2}\bb Z(j)^\sub{mot}:\CAlg_\bb F^\sub{h.loc}\to\text D(\bb Z)$ is left Kan extended from henselisations of essentially smooth, local $\bb F$-algebras.
\end{corollary}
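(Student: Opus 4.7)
The strategy is to reduce, via the vanishing results just established, to the left Kan extension statement of Theorem~\ref{thm_lke_lej}. For any henselian local $\bb F$-algebra $A$ and $j\geq 1$, Corollary~\ref{corollary_Hilb_90} and Proposition~\ref{proposition_91} give $H^{j+1}_\sub{mot}(A,\bb Z(j))=H^{j+2}_\sub{mot}(A,\bb Z(j))=0$, so the natural map $\tau^{\le j}\bb Z(j)^\sub{mot}(A)\to\tau^{\le j+2}\bb Z(j)^\sub{mot}(A)$ is an equivalence. Combined with Theorem~\ref{thm_lke_lej}, this gives $\tau^{\le j+2}\bb Z(j)^\sub{mot}(A)\simeq \bb Z(j)^\sub{lse}(A)$. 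Hence it suffices to show that $\bb Z(j)^\sub{lse}$, restricted to $\CAlg_\bb F^\sub{h.loc}$, is left Kan extended from the subcategory $\cal C:=\{Q^h:Q\in\cal C_0\}$, where $\cal C_0$ denotes the category of essentially smooth local $\bb F$-algebras.

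For this, observe that the henselisation functor $(-)^h:\CAlg_\bb F^\sub{loc}\to\CAlg_\bb F^\sub{h.loc}$ is left adjoint to the inclusion, so for $A$ henselian local and $Q\in\cal C_0$ we have $\Hom_{\CAlg_\bb F^\sub{h.loc}}(Q^h,A)\cong \Hom_{\CAlg_\bb F^\sub{loc}}(Q,A)$. This identifies the slice $\cal C/A$ with $\cal C_0/A$ via $Q\leftrightarrow Q^h$, so the left Kan extension of $\bb Z(j)^\sub{lse}|_\cal C$ along $\cal C\hookrightarrow\CAlg_\bb F^\sub{h.loc}$ at $A$ becomes $\colim_{Q\in\cal C_0/A}\bb Z(j)^\sub{lse}(Q^h)$. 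On the other hand, Theorem~\ref{thm_lke_lej} directly implies that $\bb Z(j)^\sub{lse}|_{\CAlg_\bb F^\sub{loc}}$ is left Kan extended from $\cal C_0$, giving $\bb Z(j)^\sub{lse}(A)\simeq\colim_{R\in\cal C_0/A}\bb Z(j)^\sub{lse}(R)$. It suffices to compare these two colimits.

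Next I would write the henselisation as a filtered colimit $Q^h=\colim_\alpha Q_\alpha$ over essentially étale local covers $Q_\alpha\in\cal C_0$, and use that $\bb Z(j)^\sub{lse}$ is finitary to get $\bb Z(j)^\sub{lse}(Q^h)=\colim_\alpha \bb Z(j)^\sub{lse}(Q_\alpha)$. The double colimit $\colim_{Q\in\cal C_0/A}\colim_\alpha \bb Z(j)^\sub{lse}(Q_\alpha)$ is then indexed by the category $\cal J$ of pairs $(Q,\alpha)$. The forgetful functor $\pi:\cal J\to\cal C_0/A$, $(Q,\alpha)\mapsto Q_\alpha$, is cofinal: for each $R\in\cal C_0/A$, the slice $\pi\downarrow R$ admits the terminal object $(R,\mathrm{id}_R)$, since any pair $(Q,\alpha)$ equipped with a specified map $Q_\alpha\to R$ in $\cal C_0/A$ admits a unique morphism to $(R,\mathrm{id}_R)$ whose component $Q\to R$ is forced to be the composite $Q\to Q_\alpha\to R$. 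Thus the double colimit equals $\colim_{R\in\cal C_0/A}\bb Z(j)^\sub{lse}(R)=\bb Z(j)^\sub{lse}(A)$, concluding the proof. The main conceptual ingredients are the vanishing theorems (Hilbert 90 and Hilbert 90+1) and Theorem~\ref{thm_lke_lej}; the technical heart is the cofinality check, which reduces to the existence of this terminal slice object.
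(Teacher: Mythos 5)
The first half of your argument — using Corollary~\ref{corollary_Hilb_90} and Proposition~\ref{proposition_91} to identify $\tau^{\le j+2}\bb Z(j)^\sub{mot}$ with $\tau^{\le j}\bb Z(j)^\sub{mot}\simeq\bb Z(j)^\sub{lse}$ on henselian local algebras — is exactly the paper's proof, which then simply says the claim "reduces to Theorem~\ref{thm_lke_lej}." You go further and try to justify that reduction, i.e.\ the transfer of the left Kan extension property from $\cal C_0$ (essentially smooth local algebras, inside all local algebras) to $\cal C$ (their henselisations, inside henselian local algebras). That is where the argument breaks down, in two places. First, the adjunction $\Hom_{\CAlg_\bb F^\sub{h.loc}}(Q^h,A)\cong\Hom_{\CAlg_\bb F^\sub{loc}}(Q,A)$ does \emph{not} identify the slice $\cal C/A$ with $\cal C_0/A$: a morphism $Q^h\to Q'^h$ in $\cal C/A$ corresponds to a morphism $Q\to Q'^h$, not to a morphism $Q\to Q'$, so the functor $\cal C_0/A\to\cal C/A$ is essentially surjective but not full, and the asserted identification of colimits over these slices is unjustified. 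Second, your cofinality check goes the wrong way: you verify that the over-categories $\pi\downarrow R$ (objects $(Q,\alpha)$ with a map $Q_\alpha\to R$) have a terminal object. That is the criterion for $\pi$ to be \emph{initial}, i.e.\ for restricting \emph{limits} along $\pi$; for the colimit identity you need the under-categories $R\downarrow\pi$ (objects with a map $R\to Q_\alpha$) to be weakly contractible. The analogous candidate $((R,\mathrm{id}),\mathrm{id}_R)$ is \emph{not} initial there, since a map $R\to Q_\alpha$ need not factor through $Q$; one can still show weak contractibility (e.g.\ by contracting via the zigzag $((Q,\alpha),f)\to((Q_\alpha,\mathrm{triv}),f)\leftarrow((R,\mathrm{triv}),\mathrm{id})$), but that is a different argument from the one you wrote.

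A cleaner route avoids both issues. By transitivity of left Kan extensions, since $\bb Z(j)^\sub{lse}$ on $\CAlg_\bb F^\sub{loc}$ is left Kan extended from $\cal C_0$ (Theorem~\ref{thm_lke_lej}), it is also left Kan extended from the larger full subcategory $\cal D_1:=\cal C_0\cup\cal C$; thus $\bb Z(j)^\sub{lse}(A)\simeq\colim_{\cal D_1/A}\bb Z(j)^\sub{lse}$ for $A$ henselian local. Now the inclusion $\cal C/A\hookrightarrow\cal D_1/A$ \emph{is} final: for $D\in\cal D_1/A$ the under-category $D\downarrow(\cal C/A)$ has an initial object, namely $(D,\mathrm{id})$ if $D\in\cal C$, and $(Q^h,\,Q\to Q^h)$ if $D=Q\in\cal C_0$, by the universal property of henselisation applied to maps into the henselian rings of $\cal C$. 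Hence $\colim_{\cal C/A}\bb Z(j)^\sub{lse}\simeq\bb Z(j)^\sub{lse}(A)$, which together with your first paragraph gives the corollary. As written, though, your proof of the key colimit comparison does not go through.
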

\begin{proof}
Corollary \ref{corollary_Hilb_90} and Proposition \ref{proposition_91} imply that $\tau^{\le j+2}\bb Z(j)^\sub{mot}=\tau^{\le j}\bb Z(j)^\sub{mot}$ on Henselian local $\bb F$-algebras, so the claim reduces to Theorem \ref{thm_lke_lej}.
\end{proof}

An important consequence of the above results is a partial description of weight one motivic cohomology. 

\begin{corollary}[Weight one motivic cohomology]\label{corollary_1}
For any qcqs $\bb F$-scheme $X$, the cofibre of the first Chern class map $R\Gamma_\sub{Nis}(X,\bb G_m)[-1]\to \bb Z(1)^\sub{mot}(X)$ from \eqref{eq:c1-mot} is supported in degrees $>3$. In particular there are natural isomorphisms \[
H^i_{\mot}(X, \Z(1)) \cong \begin{cases}
0 & i \le 0\\
\scr O(X)^{\times}  & i = 1\\
\mathrm{Pic}(X) & i = 2\\
H^2_{\Nis}(X,\bb G_m) & i=3.
\end{cases}
\]
\end{corollary}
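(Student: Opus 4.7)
The plan is to prove the cofibre $C := \mathrm{cofib}(R\Gamma_\sub{Nis}(-,\bb G_m)[-1]\to \bb Z(1)^\sub{mot})$, viewed as a presheaf on qcqs $\bb F$-schemes, is supported in cohomological degrees $>3$; once this is known, the explicit identifications of $H^i_\sub{mot}(X,\bb Z(1))$ for $i\le 3$ follow from the standard low-degree Nisnevich cohomology of $\bb G_m$ (namely $\mathcal O(X)^\times$, $\mathrm{Pic}(X)$, and $H^2_\sub{Nis}(X,\bb G_m)$ in degrees $1,2,3$, and zero in non-positive degrees).

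First I would observe that $C$ is a finitary Nisnevich sheaf, since both its domain and codomain are (the codomain by Theorems~\ref{thm:graded-pieces}(5) and~\ref{thm:graded-pieces_charp}(4)). By finitariness we may reduce to qcqs $X$ of finite Krull dimension, on which the Nisnevich descent spectral sequence $E_2^{p,q} = H^p_\sub{Nis}(X, \cal H^q_\sub{Nis}(C)) \Rightarrow H^{p+q}(C(X))$ converges strongly; hence it suffices to show that the Nisnevich cohomology sheaves $\cal H^q_\sub{Nis}(C)$ vanish for $q\le 3$. Since Nisnevich sheaves are detected on their stalks, this reduces to showing that $C(A)$ is supported in cohomological degrees $>3$ for every henselian local $\bb F$-algebra $A$.

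For such an $A$, the left side $R\Gamma_\sub{Nis}(A,\bb G_m)[-1]\simeq A^\times[-1]$ is concentrated in cohomological degree $1$, since higher Nisnevich cohomology of $\bb G_m$ vanishes on henselian local rings. On the right, combining Example~\ref{example_lke_1} with Theorem~\ref{thm_lke_lej} gives $\tau^{\le 1}\bb Z(1)^\sub{mot}(A)\simeq \bb Z(1)^\sub{lse}(A) \simeq A^\times[-1]$ (using $\mathrm{Pic}(A)=0$ for local $A$), while Corollary~\ref{corollary_Hilb_90} and Proposition~\ref{proposition_91} kill $H^2_\sub{mot}(A,\bb Z(1))$ and $H^3_\sub{mot}(A,\bb Z(1))$ respectively (the latter being where the henselian hypothesis is essential). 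Thus $\tau^{\le 3}\bb Z(1)^\sub{mot}(A) \simeq A^\times[-1]$ as well.

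To conclude, I would verify that the comparison map itself realises these identifications as the identity on $A^\times$ in degree $1$. Tracking Lemma~\ref{lem:c1-map}, the map $R\Gamma_\sub{Nis}(-,\bb G_m)[-1]\to \bb Z(1)^\sub{mot}$ arises as the Nisnevich sheafification of the lisse-to-motivic map $\bb Z(1)^\sub{lse}\to \bb Z(1)^\sub{mot}$ via the identification $\bb Z(1)^\sub{lse}\simeq(\tau^{\le 1}R\Gamma_\sub{Zar}(-,\bb G_m))[-1]$, whose Nisnevich sheafification recovers $R\Gamma_\sub{Nis}(-,\bb G_m)[-1]$ (both agree on Nisnevich stalks). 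On the stalk $A$, this map therefore coincides with the equivalence $\bb Z(1)^\sub{lse}(A)\xrightarrow{\simeq}\tau^{\le 1}\bb Z(1)^\sub{mot}(A)$ of Theorem~\ref{thm_lke_lej}, completing the proof. The main obstacle I expect lies precisely in this last compatibility check: confirming that the first Chern class construction of Lemma~\ref{lem:c1-map} and the lisse-to-motivic comparison of Theorem~\ref{thm_lke_lej} agree on stalks is essentially an unpacking of definitions, but requires some care since the two maps are assembled via different universal property arguments.
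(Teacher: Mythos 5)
Your proposal is correct and follows essentially the same route as the paper: the paper also reduces to Nisnevich stalks and then invokes Corollary~\ref{cor:lke-hloc} for $j=1$, which is precisely the package of Theorem~\ref{thm_lke_lej}, Corollary~\ref{corollary_Hilb_90}, and Proposition~\ref{proposition_91} that you cite individually. Your extra care about convergence of the descent spectral sequence (via finitariness) and about the compatibility of the first Chern class with the lisse-to-motivic map are both valid unpackings of steps the paper leaves implicit, the latter being immediate from the construction in Lemma~\ref{lem:c1-map}.
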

\begin{proof}
It is enough to prove the claim Nisnevich locally, i.e., that the map $A^\times[-1]\to \tau^{\le 3}\bb Z(1)^\sub{mot}(A)$ is an equivalence for any Henselian local $\bb F$-algebra $A$. But that is exactly what Corollary~\ref{cor:lke-hloc} states in the case $j=1$.
\end{proof}

\begin{remark}[Weight one motivic cohomology versus Nisnevich cohomology of $\bb G_m$]
The first Chern class map $R\Gamma_\sub{Nis}(X,\bb G_m)[-1]\to \bb Z(1)^\sub{mot}(X)$ is an equivalence whenever $X$ is a Noetherian $\bb F$-scheme of dimension $\le 2$ (we will prove this in Corollary \ref{corol_surfaces}), but not in general: here we present a three dimensional example.

Following the notation of Example \ref{example_cones_0}, let $Y$ be a two-dimensional, smooth, geometrically connected, projective variety over a field $k$, equipped with a fixed embedding $Y\into \bb P_k^N$, and let $R$ be the associated homogenous coordinate ring; also let $A=R_\frak m^h$ be the henselian local ring of the affine cone $\Spec(R)$ at its irrelevant maximal ideal $\frak m$, and 
\[\xymatrix{
Y\ar[r] \ar[d]\ar[r]^{i}& X\times_RA\ar[d]\\
\Spec(k)\ar[r]&\Spec(A)
}\]
the abstract blow-up square corresponding to the desingularisation $X\times_RA$ of $\Spec(A)$. The forthcoming pro cdh descent Theorem \ref{theorem_Weibel_vanishing} therefore yields a cartesian square of pro complexes
\[\xymatrix{
\bb Z(1)^\sub{mot}(A) \ar[r]\ar[d] & \bb Z(1)^\sub{mot}(X\times_RA)\ar[d]\\
\{\bb Z(1)^\sub{mot}(R/\frak m^r)\}_r \ar[r] & \{\bb Z(1)^\sub{mot}(X\times_RR/\frak m^r)\}_r.
}\]
Since the first Chern class map is an equivalence for $R/\frak m^r$ and $X\times_RR/\frak m^r$ (as they have dimension $\le 2$) and for $X\times_RA$ (as it is regular), the long exact Mayer--Vietoris sequence of pro abelian groups arising from this cartesian square yields an isomorphism \[\{H^2_\sub{Nis}(X\times_RR/\frak m^r, \bb G_m)\}_r\isoto H^4_\sub{mot}(A,\bb Z(1)).\] If $H^2_\sub{Zar}(Y,\roi_Y(1))\neq 0$ then the left side of the previous line does not vanish (indeed, the transition maps in the pro abelian group are surjective, so it vanishes if and only if each term vanishes, but $\ker(H^2_\sub{Nis}(X\times_RR/\frak m^2,\bb G_m)\to H^2_\sub{Nis}(X\times_RR/\frak m,\bb G_m))$ identifies with $H^2_\sub{Zar}(Y,\roi_Y(1))\neq 0$), and so we see that the first Chern class map \[0=H^3_\sub{Nis}(A,\bb G_m)\To H^4_\sub{mot}(A,\bb Z(1))\neq 0\] is not an isomorphism.
\end{remark}

We finish this subsection by stating and discussing a conjecture on the local vanishing of motivic cohomology:

\begin{conjecture}[Local vanishing conjecture] \label{conj:vanishing}
Let $A$ be a local $\bb F$-algebra and $j\ge0$. Then
\[
H^i_{\mot}(A, \bb Z(j)) = 0
\]
for all $i$ in the range $j+1\le i\le 2j$. If $A$ is henselian then the vanishing should extend to $i=2j+1$.
\end{conjecture}

We have proved that the conjecture holds rationally (Corollary \ref{corol_rat_vanishing} and Remark \ref{remark_drinfeld_1}), if $i=j+1$ (Corollary \ref{corollary_Hilb_90}), and if $i=j+2$ and $A$ is henselian (Proposition \ref{proposition_91}). Another way to state the conjecture is that the functor $\tau^{\le 2j}\bb Z(j)^\sub{mot}:\CAlg_\bb F^\sub{loc}\to  \rm D(\bb Z)$ is left Kan extended from essentially smooth, local $\bb F$-algebras (and that we can improve $\tau^{\le 2j}$ by $\tau^{\le 2j+1}$ in the henselian case).

\begin{remark}[Atiyah--Hirzebruch spectral sequence of a local ring]\label{rem:local-vanishing}
Let $A$ be a local Noetherian $\bb F$-algebra of dimension $d$, and write $H^i(j):= H^i_{\mot}(A, \bb Z(j))$ for the sake of space. Using the Soul\'e--Weibel vanishing of Theorem~\ref{theorem_Weibel_vanishing}, and the descriptions of low weight motivic cohomology groups (Examples~\ref{example_00} and \ref{example_0p} and Corollary~\ref{corollary_1}), the local vanishing conjecture predicts that the $E_2$ page of the Atiyah--Hirzebruch spectral sequence for $A$ takes the following peculiar shape:
\[
\begin{tikzcd}[sep=small]
     \cdots & 0 & H^0_\sub{cdh}(A, \bb Z) & H^1_\sub{cdh}(A, \bb Z) & H^2_\sub{cdh}(A, \bb Z) & \cdots &\cdots & H^d_\sub{cdh}(A, \bb Z)\\
     \cdots & 0 & A^\times & 0 & H^2_\sub{Nis}(A, \bb G_m) & \cdots &\cdots & H^{1+d}(1)\\
       \cdots & H^1(2) & H^2(2) & 0 & 0 & \ddots &\ddots & H^{2+d}(2)\\
       \cdots &      \vdots & \vdots & \vdots  & \vdots  & \ddots & \ddots & \vdots \\
        \cdots &      H^{d-2}(d-1) & H^{d-1}(d-1) & 0 & 0 & \cdots & 0 & H^{2d-1}(d-1)\\
         \cdots &     H^{d-1}(d) & H^d(d) & 0 & 0 & \cdots & \cdots &0\\
          &\vdots &\vdots &\vdots &\vdots &\vdots &\vdots &\vdots
\end{tikzcd}.
\]
(where the spectral sequence vanishes both to the right and above of what we have drawn); i.e., the spectral sequence would be supported in the third quadrant and in a finite triangle in the fourth quadrant with vertices at $(1,0)$, $(d,0)$, and $(d,d-1)$.

It might seem heretical to believe in such a spectral sequence, but we already known that the third quadrant of the Atiyah--Hirzebruch spectral sequence for $A$ coincides with lisse motivic cohomology by Theorem \ref{thm_lke_lej}, which converges to connective $K$-theory by Proposition \ref{prop:mot-filt}. So the local vanishing conjecture is merely predicting that the fourth quadrant spectral sequence
\[E_2^{ij}=H^{i-j}(\mathrm{cofib}(\bb Z(-j)^\sub{lse}\to \bb Z(-j)^\sub{mot}))\Longrightarrow\pi_{-i-j}\tau_{<0}\K(A)\]
(i.e., ``Atiyah--Hirzebruch spectral sequence for negative $K$-theory''), whose $E_\infty$-page is supported in the aforementioned triangle by consideration of the abutment, is already supported in the triangle on the $E_2$-page. In other words, the only way in which the local vanishing conjecture could fail would be if there were lots of non-zero differentials in the Atiyah--Hirzebruch spectral sequence of~$A$.
\end{remark}

\subsection{Singular Nesterenko--Suslin isomorphism}\label{sec:singular-nst}
In the previous subsection we constructed the symbol map $\K_j^M(A)\to H^j_\sub{mot}(A,\bb Z(j))$ and used it in the course of the proof of Theorem \ref{thm_lke_lej}. We now establish an analogue of the theorem of Nesterenko--Suslin and Totaro, namely the symbol map is essentially an isomorphism; we just need to take care to replace Milnor $K$-theory by the improved variant $\hat \K_j^M(A)$ of Gabber and Kerz \cite{Kerz2010}.

\begin{theorem}[Singular Nesterenko--Suslin isomorphism]\label{theorem_NS} For any local $\bb F$-algebra $A$ and $j\ge0$, the symbol map $\K^M_j(A) \rightarrow H^j_{\mot}(A,\Z(j))$ descends to an isomorphism
\[
\widehat{\K}^M_j(A) \isoto H^j_{\mot}(A,\Z(j)).
\]
\end{theorem}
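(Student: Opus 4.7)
My plan is to deduce the theorem from Kerz's classical Nesterenko--Suslin isomorphism \cite{Kerz2010} in the essentially smooth local case, by comparing both sides to their corresponding left Kan extensions. The starting observation is that Theorem \ref{thm_lke_lej} provides a natural identification $H^j_\sub{mot}(A,\bb Z(j)) \cong H^j(\bb Z(j)^\sub{lse}(A))$. Following Remark \ref{remark_lke_as_cycles}, I would choose a simplicial resolution $P_\bullet \to A$ where each $P_n$ is an ind-smooth local $\bb F$-algebra and each face map is a henselian surjection, so that $\bb Z(j)^\sub{lse}(A)\simeq|\bb Z(j)^\sub{cla}(P_\bullet)|$. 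Since each $\bb Z(j)^\sub{cla}(P_n)$ is cohomologically supported in degrees $\le j$ by Proposition~\ref{prop:mot-filt} and the Gersten resolution, the functor $H^j$ commutes with the geometric realization, giving
\[
H^j_\sub{mot}(A,\bb Z(j))\cong\opp{colim}_{n\in\Delta^{\op}}H^j(\bb Z(j)^\sub{cla}(P_n)).
\]

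Next I would apply Kerz's theorem at each stage: it yields a symbol-induced isomorphism $\widehat{\K}^M_j(P_n) \isoto H^j(\bb Z(j)^\sub{cla}(P_n))$ (the statement extends from essentially smooth to ind-smooth local algebras by passing to filtered colimits, since both functors commute with such colimits). Meanwhile, a second theorem of Kerz \cite{Kerz2010} states that $\widehat{\K}^M_j$ is rigid for henselian surjections between local $\bb F$-algebras; hence the simplicial abelian group $\widehat{\K}^M_j(P_\bullet)$ is equivalent to the constant simplicial object $\widehat{\K}^M_j(A)$, and its $\Delta^{\op}$-colimit is $\widehat{\K}^M_j(A)$ itself. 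Chaining the above equivalences provides a natural isomorphism $\widehat{\K}^M_j(A)\isoto H^j_\sub{mot}(A,\bb Z(j))$. (Equivalently, one can phrase this as the observation that, by rigidity together with Lemma~\ref{lem:rigid-lke}, both $\widehat{\K}^M_j$ and the functor $A\mapsto H^j_\sub{mot}(A,\bb Z(j))$ are left Kan extended on $\CAlg_\bb F^\sub{loc}$ from essentially smooth local $\bb F$-algebras, where they coincide by Kerz.)

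Finally, one must verify that this abstract isomorphism is induced by the symbol map of the statement; in particular this would immediately prove that the symbol map factors through the surjection $\K^M_j(A) \twoheadrightarrow \widehat{\K}^M_j(A)$. In weight $j=1$ the isomorphism reduces, by Corollary~\ref{corollary_1}, to the tautological equality $A^\times = H^1_\sub{mot}(A,\bb Z(1))$ furnished by the first Chern class of Definition~\ref{def:c1-mot}. To propagate this to all weights I would invoke multiplicativity: both sides of the isomorphism assemble into graded rings, and the classical Nesterenko--Suslin isomorphism for essentially smooth local algebras matches the ring structures, so the same holds after left Kan extending. I expect the last verification to be the main technical point: one must carefully track compatibility of the symbol map through the chain of identifications built from Theorem~\ref{thm_lke_lej}, the geometric realization, Kerz's theorem, and the rigidity property, using the uniqueness clause of Lemma~\ref{lem:c1-map} to pin down the comparison in weight one.
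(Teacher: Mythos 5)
Your overall skeleton -- resolve $A$ by ind-smooth local algebras with henselian surjective face maps, use Theorem~\ref{thm_lke_lej} to compute $H^j_\sub{mot}(A,\bb Z(j))$ as the coequalizer of the top cohomology groups of the terms, and compare with the analogous presentation of Milnor $K$-theory via Kerz's theorem termwise -- is exactly the paper's strategy. But there are two genuine gaps.

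First, the input you invoke on the Milnor side is false as stated: $\widehat{\K}^M_j$ is \emph{not} rigid for henselian surjections (already $\widehat{\K}^M_1(A)=A^\times\to (A/I)^\times$ has kernel $1+I\neq 0$), and there is no such theorem of Kerz. In particular the simplicial abelian group $\widehat{\K}^M_j(P_\bullet)$ is not constant. What is true, and what the paper uses, is a right-exactness statement for \emph{ordinary} Milnor $K$-theory (\cite[Prop.~1.17]{LuedersMorrow2023}): the coequalizer of $\K^M_j(P_1)\rightrightarrows\K^M_j(P_0)$ is $\K^M_j(A)$. This is enough because only the $H^0/\pi_0$ of the simplicial object enters the comparison of top cohomology groups, but it is a strictly weaker (and different) statement than rigidity, and it is stated for $\K^M_j$, not $\widehat{\K}^M_j$.

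Second, because the coequalizer presentation is only available for ordinary Milnor $K$-theory, your "both sides are left Kan extended, so they agree" conclusion only works when $\K^M_j=\widehat{\K}^M_j$ on $A$ and on the terms $P_i$, i.e.\ when the residue field is large enough (Kerz's \cite[Prop.~10(5),(11)]{Kerz2010}). For $A$ with small finite residue field the argument as written breaks down: it is not known that $\widehat{\K}^M_j$ commutes with the coequalizer, and you still owe the factorization of the symbol map through $\widehat{\K}^M_j(A)$ and its injectivity. The paper closes this case by an ad-hoc transfer argument: for $\ell$ prime to $|\bb F_q:\bb F_p|$ the base change $A\otimes_{\bb F_p}\bb F_{p^\ell}$ is again local with larger residue field, the coequalizer presentation produces a transfer $N$ on $H^j_\sub{mot}$ with $N\circ(\text{base change})=\ell$, and running this for two coprime values of $\ell$ both forces the symbol map to kill $\ker(\K^M_j\to\widehat{\K}^M_j)$ and proves injectivity. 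Some argument of this kind (or another reduction to the big-residue-field case) is needed to complete your proof; the deferred "main technical point" at the end of your proposal is precisely where this work lives.
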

\begin{proof}
Let $P_\bullet\to A$ be a simplicial resolution as in Remark \ref{remark_lke_as_cycles}, so that the totalisation of the simplicial complex $m\mapsto \tau^{\le j}\bb Z(j)^\sub{mot}(P_m)$ calculates the evaluation on $A$ of the left Kan extension of $\tau^{\le j}\bb Z(j)^\sub{mot}$ from essentially smooth local $\bb F$-algebras. In light of Theorem \ref{thm_lke_lej}, the totalisation is equivalent to $\tau^{\le j}\bb Z(j)^\sub{mot}(A)$. Calculating the top degree $H^j$ as a coequaliser, this means that the canonical map \[\text{coeq}\big(H^j_\sub{mot}(P_1,\bb Z(j))\rightrightarrows H^j_\sub{mot}(P_0,\bb Z(j))\big)\To H^j_\sub{mot}(A,\bb Z(j))\] is an equivalence.

The canonical map \[\text{coeq}\big(\K_j^M(P_1)\rightrightarrows \K_j^M(P_0)\big)\To \K_j^M(A)\] is also an equivalence; this is the content of \cite[Proposition 1.17]{LuedersMorrow2023}.

Comparing the two coequaliser diagrams via the natural symbol maps we obtain two immediate conclusions. 
\begin{enumerate}
\item The symbol map $\K_j^M(A)\to H^j_\sub{mot}(A,\bb Z(j))$ is surjective. Indeed, as already used in the proof of Theorem \ref{thm_lke_lej}, the symbol map $\K_j^M(P_0)\to H^j_\sub{mot}(P_0,\bb Z(j))$ is surjective by Kerz.
\item Secondly, we may complete the proof in the case that $A$ has big residue field, i.e., its residue field has more than $M_j$ elements in the sense of \cite[Proposition~10(5)]{Kerz2010}. Indeed, in that case the ind-smooth local rings $P_i$, $i=0,1$, also have big residue field and so the symbol maps $\K_j^M(P_i)=\hat \K_j^M(P_i)\to H^j_\sub{mot}(P_i,\bb Z(j))$ are isomorphisms by Kerz \cite[Proposition~10(11)]{Kerz2010}. Comparing the two coequaliser diagrams we deduce that the symbol map $\K_j^M(A)=\hat \K_j^M(A)\to H^j_\sub{mot}(A,\bb Z(j))$ is also an isomorphism.
\end{enumerate}

It remains to treat the case that $A$ has small (in particular, finite) residue field $\bb F_q$, which we do by constructing some ad-hoc transfer maps on $H^j_\sub{mot}(-,\bb Z(j))$. Let $\ell>0$ be an integer prime to $|\bb F_q:\bb F_p|$, so that $\bb F_{q^\ell}=\bb F_q\otimes_{\bb F_p}\bb F_{p^\ell}$ (this identity holds because the right side is a tensor product of Galois extensions of coprime degree, therefore a field); this also implies that the semi-local ring $A\otimes_{\bb F_p}\bb F_{p^\ell}$ is in fact local, as its quotient by its Jacobson radical is a field. Finally observe that $P_\bullet\otimes_{\bb F_p}\bb F_{p^\ell}\to A\otimes_{\bb F_p}\bb F_{p^\ell}$ is a simplicial resolution satisfying the conditions of Remark \ref{remark_lke_as_cycles}, and so (replacing $A$ by $A\otimes_{\bb F_p}\bb F_{p^\ell}$ above), we have a coequaliser diagram
\[\text{coeq}\big(H^j_\sub{mot}(P_1\otimes_{\bb F_p}\bb F_{p^\ell},\bb Z(j))\rightrightarrows H^j_\sub{mot}(P_0\otimes_{\bb F_p}\bb F_{p^\ell},\bb Z(j))\big)\quis H^j_\sub{mot}(A\otimes_{\bb F_p}\bb F_{p^\ell},\bb Z(j))\] Since classical motivic cohomology of smooth schemes admits functorial transfer maps along finite flat morphisms, this diagram induces a transfer map $N:H^j_\sub{mot}(A\otimes_{\bb F_p}\bb F_{p^\ell},\bb Z(j))\to H^j_\sub{mot}(A,\bb Z(j))$ such that the pre-composition with the canonical map $H^j_\sub{mot}(A,\bb Z(j))\to H^j_\sub{mot}(A\otimes_{\bb F_p}\bb F_{p^\ell},\bb Z(j))$ is multiplication by $\ell$. We make no claims that this transfer map is natural, independent of the simplicial resolution, compatible with any transfers on Milnor $K$-theory, etc.; in fact, we only care about the resulting fact that therefore $\text{ker}\big(H^j_\sub{mot}(A,\bb Z(j))\to H^j_\sub{mot}(A\otimes_{\bb F_p}\bb F_{p^\ell},\bb Z(j))\big)$ is annihilated by $\ell$.

It now follows formally that the symbol map factors through $\hat \K_j^M(A)$: indeed, given $x\in\text{ker}(\K_j^M(A)\to\hat \K_j^M(A))$ and any $\ell$ as in the previous paragraph such that $p^\ell>M_j$ then, by functoriality of the symbol map, and the established isomorphism for the local ring $A\otimes_{\bb F_p}\bb F_{p^\ell}$, we deduce that $\ell x$ is annihilated by the symbol map. Picking a different value of $\ell$, prime to the first value, shows that $x$ is annihilated by the symbol map, i.e., the latter factors through $\hat \K_j^M(A)$. The new symbol map $\hat \K_j^M(A) \rightarrow H^j_\sub{mot}(A,\bb Z(j))$ is surjective since the original one was.

To prove that the new symbol map $\hat \K_j^M(A)\to H^j_\sub{mot}(A,\bb Z(j))$ is injective, we again use a transfer argument; let $x$ be in the kernel. Then the transfer map for improved Milnor $K$-theory, and the established isomorphism in case of big residue field, shows that $\ell x=0$ for any $\ell$ as above. Again picking coprime values of $\ell$ shows that $x=0$ and so completes the proof.
\end{proof}

\begin{example}\label{example_Milnor_nil_Q}
Let $A$ be a local $\bb Q$-algebra, and $I\subseteq A$ a nilpotent ideal such that the quotient map $A\to A/I$ is split. Comparing the fundamental fibre sequences of Theorem \ref{thm:graded-pieces}(3) for $A$ and $A/I$, we get an equivalence of relative cohomologies \[\bb Z(j)^\sub{mot}(A,I)\simeq L\Omega^{<j}_{A,I/\bb Q}[-1].\] The right side is supported in degrees $\le j$ and a simple calculation shows that in degree $j$ it is given by $\Omega^{j-1}_{A,I/\bb Q}/d\Omega^{j-2}_{A,I/\bb Q}$. Meanwhile Theorem \ref{theorem_NS} implies that the left side of previous line is given by the relative Milnor $K$-group $K_j^M(A,I)$ in degree $j$. Taking $H^j(-)$ of the previous line thus obtains a natural isomorphism \begin{equation}\K_j^M(A,I)\cong \Omega^{j-1}_{A,I/\bb Q}/d\Omega^{j-2}_{A,I/\bb Q}.\label{eqn_Milnor_Goodwillie}\end{equation}
When $n=2$ the isomorphism \eqref{eqn_Milnor_Goodwillie} goes back to Bloch \cite[Thm.~0.1]{Bloch1974} and is famous precursor to Goodwillie's isomorphism between relative $K$-theory and cyclic homology of $\bb Q$-algebras.

We remark that such an isomorphism \eqref{eqn_Milnor_Goodwillie} has been shown to hold by Gorchinskiy--Tyurin \cite{GorchinskiyTyurin2018} under very mild hypotheses: instead of requiring $A$ to be a local $\bb Q$-algebra, they only require it to be a weakly $5$-fold stable $\bb Z[\tfrac1N]$-algebra where $N$ is large enough so that $I^N=0$. It would be interesting to try to reprove their result via the motivic cohomology of this article and \cite{Bouis2025, Bouis2025a, Bouis2025b}.
\end{example}

\section{Motivic Soul\'e--Weibel vanishing and pro cdh descent}\label{section_Weibel}
One of the most influential conjectures concerning the algebraic $K$-theory of singular schemes has been Weibel's conjecture \cite{Weibel1980}, now a theorem of Kerz--Strunk--Tamme \cite{KerzStrunkTamme2018}. It states, in particular, that for a Noetherian scheme $X$ of dimension $\le d$, the negative $K$-groups $\K_{-n}(X)$ vanish for $n>d$. Kerz--Strunk--Tamme's proof proceeds by first establishing pro cdh descent for $K$-theory of Noetherian schemes. For earlier work on special cases on Weibel's conjecture and pro cdh descent, see for example \cite{Cortinas2008, GeisserHesselholt2010, Krishna2006, Krishna2009a, Krishna2010, pro-cdh, Weibel2001}.

Our goal in this section is to prove the following motivic refinement of part of Weibel's conjecture in the equicharacteristic case and pro cdh descent for new motivic cohomology; as usual, let $\bb F$ denote a prime field.

\begin{theorem}[Motivic Soul\'e--Weibel vanishing]\label{theorem_Weibel_vanishing}
Let $j\ge0$ and let $X$ be a Noetherian $\bb F$-scheme of finite dimension. Then $H^i_\sub{mot}(X,\bb Z(j))=0$ for all $i> j+\dim X$.
\end{theorem}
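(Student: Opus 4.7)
The plan is to prove this by induction on $d = \dim X$, building on the bound for $\bb Z(j)^\sub{cdh}$ from Theorem \ref{thm:cdh}(1) together with the pro cdh descent result (Theorem \ref{theorem_pro_cdh_descent}) proved in \S\ref{sec:pro-cdh}. Since $\bb Z(j)^\sub{mot}$ is a Nisnevich sheaf (Theorems \ref{thm:graded-pieces}(5) and \ref{thm:graded-pieces_charp}(4)) and the Nisnevich cohomological dimension of $X$ is bounded by $\dim X$, a descent spectral sequence reduces us to showing $H^i_\sub{mot}(A,\bb Z(j)) = 0$ for $i > j + \dim A$ whenever $A$ is a Noetherian henselian local $\bb F$-algebra of dimension $d$.

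For the base case $d = 0$, where $A$ is Artinian local with residue field $k$, the cartesian squares of Theorems \ref{thm:graded-pieces}(2) and \ref{thm:graded-pieces_charp}(2) together with the nil-invariance of cdh cohomology show that the lower two corners depend only on $k$. Since $\bb Z(j)^\sub{mot}(k) = \bb Z(j)^\sub{cla}(k)$ is concentrated in degrees $\le j$, one is reduced to controlling the fibre of $\bb Z_p(j)^\sub{syn}(A) \to \bb Z_p(j)^\sub{syn}(k)$ (resp.~the analogous Hodge-completed derived de Rham fibre in characteristic zero). These fibres are governed by the cotangent complex of $A$ over $k$ via Lemma \ref{lem_fin_fil_on_syn}, and a direct cotangent complex analysis (using that $A \to k$ is a surjection with nilpotent kernel) yields the desired vanishing in degrees $> j$.

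For the inductive step, I would pick an ideal $I \subset A$ generated by a system of parameters so that $\dim A/I \le d-1$, set $Y = V(I) \subset X = \Spec A$, and apply pro cdh descent to the blowup square with $X' = \mathrm{Bl}_Y X$. The resulting Mayer--Vietoris long exact sequence relates $H^\bullet_\sub{mot}(X,\bb Z(j))$ to $H^\bullet_\sub{mot}(X',\bb Z(j))$ and to $\lim_r H^\bullet_\sub{mot}(rY,\bb Z(j))$ and $\lim_r H^\bullet_\sub{mot}(rY',\bb Z(j))$. Since $\dim rY, \dim rY' \le d-1$, the inductive hypothesis kills the $Y$-contributions above degree $j + d - 1$, and the Mittag--Leffler vanishing needed to control the $\lim^1$-terms should come built into the pro cdh descent argument.

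The main obstacle is that $X' = \mathrm{Bl}_Y X$ typically still has dimension $d$, so induction on dimension does not close directly. I expect to handle this following the Kerz--Strunk--Tamme strategy, now in its motivic upgrade: use finitariness of $\bb Z(j)^\sub{mot}$ (Theorems \ref{thm:graded-pieces}(5) and \ref{thm:graded-pieces_charp}(4)) to spread out to a finite-type $\bb F$-algebra, where de Jong's alterations furnish an abstract blowup of $X'$ by a regular scheme. For regular Noetherian schemes, Theorem \ref{thm_mot_vs_cdh}(2) identifies $\bb Z(j)^\sub{mot}$ with $\bb Z(j)^\sub{cdh}$, so the bound reduces to Theorem \ref{thm:cdh}(1). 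The projective bundle formula (Theorem \ref{thm:pbf-blowup}) plays an auxiliary role in controlling the exceptional loci that arise. This combination of pro cdh descent, finitariness, alterations, and comparison to the cdh-local theory on regular schemes is the technical heart of the argument.
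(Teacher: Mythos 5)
Your outline assembles the right ingredients (pro cdh descent, finitariness, the cdh-local bound, nil-invariance of the relative term), and you correctly identify the crux: the blowup $X'=\mathrm{Bl}_YX$ still has dimension $d$, so induction on dimension does not close. But your proposed fix does not work. De Jong's theorem produces an \emph{alteration} $X''\to X'$, which is proper and generically \emph{finite}, not generically an isomorphism; it therefore does not sit in an abstract blowup square and is not a cdh cover, so neither pro cdh descent nor cdh descent lets you transport the problem from $X'$ to the regular scheme $X''$. Replacing alterations by a genuine resolution (a modification with regular source) is exactly resolution of singularities, which is unavailable in characteristic $p$ --- and avoiding it is the whole point of the argument. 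There is also a secondary issue at the start: reducing via Nisnevich descent to henselian local rings ``of dimension $d$'' is circular for the stalks at closed points, which again have dimension $d$.

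The paper's route is genuinely different and worth internalising. One proves the stronger statement that $W(j):=\mathrm{fib}(\bb Z(j)^\sub{mot}\to\bb Z(j)^\sub{cdh})$ is supported in degrees $\le j+\dim X$ (Theorem \ref{theorem_Weibel_vanishing2}), via an axiomatisation (Proposition \ref{prop_axiomatic_Weibel}) whose hypotheses are: $L_\sub{cdh}W\simeq 0$, pro cdh descent, and control of the relative term for nilpotent ideals. The induction on $d$ then runs as follows: the Nisnevich descent spectral sequence plus the Nisnevich vanishing lemma (Lemma \ref{lem:nis-vanish}) and the inductive hypothesis show that in degrees $n>d$ the presheaf $H^n(W(-))$ is a Nisnevich sheaf on schemes of dimension $\le d$. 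Given a class $\alpha$ in such a degree, the hypothesis $L_\sub{cdh}W\simeq 0$ (which holds for $W(j)$ but \emph{not} for $\bb Z(j)^\sub{mot}$ itself --- this is why one must work with the fibre) produces a cdh cover killing $\alpha$; refining it to a Nisnevich cover of a proper cdh cover and using Nisnevich separatedness, one finds a \emph{modification} $X'\to X$ with $f^*\alpha=0$. One never resolves $X'$: instead, pro cdh descent together with the inductive hypothesis applied only to the infinitesimal thickenings of the lower-dimensional closed subschemes $Y,Y'$ shows that $H^n(W(X))\to H^n(W(X'))$ is an isomorphism, whence $\alpha=0$. Your blowup of a system of parameters plays no role; the correct blowup is the one adapted to the class you are trying to kill.
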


\begin{theorem}[Pro cdh descent]\label{theorem_pro_cdh_descent}
On the category of Noetherian $\bb F$-schemes, the presheaf $\bb Z(j)^\sub{mot}$ satisfies pro cdh descent for each $j\ge0$. That is, given any abstract blow-up square of Noetherian $\bb F$-schemes
\begin{equation}
\xymatrix{
Y'\ar[r]\ar[d] & X'\ar[d] \\
Y\ar[r] & X
},\label{eqn_blowup}\end{equation}
the associated square of pro complexes 
\[\xymatrix{
\bb Z(j)^\sub{mot}(X) \ar[r]\ar[d] & \bb Z(j)^\sub{mot}(X')\ar[d]\\
\{\bb Z(j)^\sub{mot}(rY)\})_r \ar[r] & \{\bb Z(j)^\sub{mot}(rY')\})_r
}\]
is cartesian.\footnote{Here $rY$ denotes the $r-1^\sub{st}$ infinitesimal thickening of $Y$ inside $X$, and similarly for $rY'$. By ``cartesian'' we simply mean that all pro cohomology groups of the birelative term are zero as pro abelian groups; since $\bb Z(j)^\sub{mot}$ of a Noetherian scheme is bounded above depending only on the dimension (this does not require Theorem \ref{theorem_Weibel_vanishing}, but only the descriptions given in the proof Corollary~\ref{cor:surjections}), this is equivalent to being cartesian in the $\infty$-category of pro complexes.}
\end{theorem}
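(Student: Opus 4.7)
The plan is to reduce the problem via the fundamental pullback squares defining $\bb Z(j)^\sub{mot}$: Theorem~\ref{thm:graded-pieces}(2) in characteristic zero and Theorem~\ref{thm:graded-pieces_charp}(2) in characteristic $p$. Since the property of being pro cdh cartesian for a given abstract blowup square is closed under pullbacks of presheaves, it suffices to establish pro cdh descent for each of the three other corners: the cdh-local motivic cohomology $\bb Z(j)^\sub{cdh}$; the Hodge-completed derived de Rham cohomology $R\Gamma(-, \widehat{L\Omega}_{-/\Q}^{\geq j})$ (resp.~the syntomic cohomology $\bb Z_p(j)^\sub{syn}$); and the cdh sheafifications of the latter.

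For any cdh sheaf $F$ on qcqs schemes with $F(\emptyset)$ contractible, the cartesian square $(X_\red, \emptyset, \emptyset, X)$ is an abstract blowup square, so cdh descent yields a natural equivalence $F(X) \simeq F(X_\red)$. Applied to the infinitesimal thickenings $rY \subseteq X$, and using that $(rY)_\red = Y_\red$ for all $r$, this gives $F(rY) \simeq F(Y_\red) \simeq F(Y)$, and similarly for $Y'$. The pro systems $\{F(rY)\}_r$ and $\{F(rY')\}_r$ are therefore constant, and pro cdh descent collapses to ordinary cdh descent, which holds by hypothesis. This handles $\bb Z(j)^\sub{cdh}$ (Theorem~\ref{thm:cdh}) as well as $R\Gamma_\sub{cdh}(-, \widehat{L\Omega}_{-/\Q}^{\geq j})$ (resp.~$L_\sub{cdh}\bb Z_p(j)^\sub{syn}$), which are cdh sheaves by construction.

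The technical heart is therefore pro cdh descent for the non-sheafified ``syntomic corner'' $R\Gamma(-, \widehat{L\Omega}_{-/\Q}^{\geq j})$ or $\bb Z_p(j)^\sub{syn}$. In characteristic zero, the Hodge filtration with graded pieces $L^i_{-/\Q}[-i]$ reduces the problem, modulo the Hodge completion, to pro cdh descent for the wedge powers $L^i_{-/\Q}$ of the cotangent complex. In characteristic $p$, the finite filtration on $\bb F_p(j)^\sub{syn}$ of Lemma~\ref{lem_fin_fil_on_syn} combined with Illusie's short exact sequences (reassembling $\bb Z_p(j)^\sub{syn}$ from its mod-$p^r$ reductions) similarly reduces the problem, modulo the $p$-adic limit, to pro cdh descent for the wedge powers $L^i_{-/\bb F_p}$. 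Pro cdh descent for the cotangent complex on Noetherian schemes is a derived Artin--Rees / formal functions result, along the lines of the second author's earlier work \cite{pro-cdh}; the argument proceeds affine-locally by comparing the derived $I$-adic completion of $L^i_{R/\bb F}$ (where $I$ is the ideal of $Y$) to the pro system $\{L^i_{(R/I^r)/\bb F}\}_r$, and globalising across the blowup.

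The main obstacle will be the completion aspect: in characteristic zero the Hodge completion, and in characteristic $p$ the $p$-adic limit, must be shown to commute with the pro system $\{rY\}_r$. This is resolved using the finite Krull dimension hypothesis on $X$, which furnishes uniform cohomological dimension bounds on the graded pieces and hence on the completed objects, permitting the two limit processes to be interchanged. Once pro cdh descent has been established for each of the three corners, pro cdh descent for $\bb Z(j)^\sub{mot}$ follows by assembling them through the pullback square.
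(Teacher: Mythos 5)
Your overall architecture matches the paper's: reduce via the fundamental pullback squares to the three remaining corners, observe that cdh sheaves are nil-invariant (so the pro systems over $\{rY\}_r$ are constant and pro cdh descent collapses to ordinary cdh descent), and isolate the non-sheafified de Rham/syntomic corner as the real content, handled through finite filtrations with graded pieces built from wedge powers of the cotangent complex, for which pro cdh descent is the formal-functions result of \cite{pro-cdh}. In characteristic zero your route through the Hodge completion of $\widehat{L\Omega}^{\geq j}_{-/\Q}$ is an unnecessary detour: rather than interchanging the Hodge-completion limit with the pro system, one should use the fibre sequence $\widehat{L\Omega}^{\geq j}\to\widehat{L\Omega}\to L\Omega^{<j}$ together with Lemma~\ref{lemma_cdh_descent_HP} (the uncompleted-weight-truncation-free $\widehat{L\Omega}$ is a cdh sheaf in characteristic zero), which reduces everything to the \emph{finite}-length object $L\Omega^{<j}$ and avoids any completion issue. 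This is how the paper argues, via the fundamental fibre sequence of Theorem~\ref{thm:graded-pieces}(3).

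The genuine gap is in characteristic $p$, in the passage from $\bb F_p(j)^\sub{syn}$ (or $\bb Z_p(j)^\sub{syn}/p^r$) to the $p$-complete $\bb Z_p(j)^\sub{syn}$. Your proposed interchange of the $p$-adic limit with the pro system ``using uniform cohomological dimension bounds'' does not work as stated: pro cdh descent mod $p^c$ says that for each $c$ and each $s$ there exists $s'(s,c)$ such that the transition map on birelative cohomology mod $p^c$ vanishes, but $s'$ may grow with $c$, and an inverse limit over $c$ of pro-zero systems indexed non-uniformly need not be pro-zero. Cohomological amplitude bounds control the degree range, not this uniformity in $c$. What is actually needed, and what the paper proves in Proposition~\ref{prop_pro_cdh_syn}, is that each birelative group $A_r=H^n(\bb Z_p(j)^\sub{syn}(X,X',rY))$ is killed by a single bounded power of $p$; then one chooses $c$ large enough for the given $s$ and the mod-$p^c$ statement implies the integral one. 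This boundedness is not formal: it combines derived $p$-completeness of $A_r$ with the vanishing $A_r[\tfrac1p]=0$ — which rests on Corollary~\ref{corol_Qpsyn}, i.e.\ cdh descent for $\bb Q_p(j)^\sub{syn}$, itself deduced from Weibel's theorem that $\K[\tfrac1p]\simeq\KH[\tfrac1p]$ — together with \cite[Thm.~1.1]{Bhatt2019} (a derived $p$-complete group with vanishing rationalisation is bounded $p$-power torsion). Your proposal is missing this rational/$K$-theoretic input entirely, and without it the characteristic $p$ case does not close.
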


\begin{remark}[Relation to Weibel's $K$-theoretic vanishing conjecture]\label{remark_Weibel} 
Let $X$ be a Noetherian $\bb F$-scheme of dimension $\le d$. Theorem \ref{theorem_Weibel_vanishing} states that the Atiyah--Hirzebruch spectral sequence $E_2^{ij}=H^{i-j}_\sub{mot}(X,\bb Z(-j))\Rightarrow \K_{-i-j}(X)$ is supported in the left half plane $x\le d$. From this one immediately reads off parts of Weibel's package of conjectures about lower $K$-groups: both the vanishing $\K_{-n}(X)=0$ for $n>d$, and the usual description of $\K_{-d}(X)$ via an edge map isomorphism \[H^{d}_\sub{cdh}(X,\bb Z)=H^{d}_{\sub{mot}}(X,\bb Z(0))\cong \K_{-d}(X).\] Theorem \ref{theorem_Weibel_vanishing} can therefore be seen as a motivic refinement of Weibel's vanishing conjecture. This is moreover reflected in the proof of the theorem, which is based on both the arguments of \cite{Cortinas2008, Cortinas2008a}, where Corti\~nas--Haesemyer--Schlichting--Weibel proved Weibel's vanishing conjecture and Vorst's conjecture for varieties over a characteristic zero field, and of \cite{KerzStrunkTamme2018}, where Kerz--Strunk--Tamme used pro cdh descent to prove Weibel's conjecture in general.

We stress however that Theorem \ref{theorem_Weibel_vanishing} is certainly not a new proof of Weibel's vanishing conjecture in $K$-theory, since our theory uses the fundamental square Theorem \ref{thm:mainsq} which itself relies on the work of Kerz--Strunk--Tamme. We refer the reader to Remark~\ref{sec:usual} for more details on this point.
\end{remark}

\begin{remark}[Applications to Adams eignenspaces]\label{remark_Soule_Weibel}
To use the Atiyah--Hirzebruch spectral sequence to deduce the usual $K$-theoretic Weibel conjecture from a vanishing result in motivic cohomology, it would have been sufficient to establish the following weaker diagonal vanishing line: $H^i_\sub{mot}(X,\bb Z(j))=0$ for all $i> 2j+\dim X$. The stronger vertical vanishing line of Theorem \ref{theorem_Weibel_vanishing} is related to a vanishing theorem of Soul\'e as follows. By rationalising the Atiyah--Hirzebruch spectral sequence and rewriting in terms of Adams eigenspaces, Theorem \ref{theorem_Weibel_vanishing} implies that for any Noetherian $\bb F$-scheme $X$ of dimension $\le d$ we have the following vanishing for each $n\in\bb Z$: the Adams eigenspace $\K_n(X)_\bb Q^{(j)}$ vanishes whenever $j>n+d$. This vanishing is due to SGA6 \cite[Expos\'e~VI, Theorem~6.9]{SGA_VI} in the case of $\K_0$ of Noetherian schemes with an ample line bundle, and when $n>0$ to Soul\'e \cite[Corollary~1]{Soule1985} for the higher algebraic $K$-groups of Noetherian rings; when $n<0$ this vanishing of Adams eigenspaces of negative $K$-groups is new as far as we are aware.

In short, Theorem \ref{theorem_Weibel_vanishing} provides an integral refinement of Soul\'e's result, as well as an extension beyond the affine case and to negative $K$-groups.

\end{remark}

\subsection{Proof of pro cdh descent}\label{sec:pro-cdh}
Here we prove Theorem \ref{theorem_pro_cdh_descent}. We begin by noting a similar pro cdh descent property for the Nisnevich cohomology of wedge powers of the cotangent complex $R\Gamma(-,L^i_{-/\bb F}): \Sch_\bb F^{\qcqs,\op} \to \mathrm{D}(\bb Z)$. The following is a slight generalization of \cite[Theorem~2.10]{pro-cdh}.

\begin{lemma}\label{lem_pro_cdh_for_cotangent}
For any abstract blow-up square of Noetherian $\bb F$-schemes (\ref{eqn_blowup}) and $i\ge0$,
the square of pro complexes
\[
\begin{tikzcd}
R\Gamma(X,L^i_{-/\bb F}) \ar{r} \ar{d} & R\Gamma(X',L^i_{-/\bb F}) \ar{d}\\
\{ R\Gamma(rY,L^i_{-/\bb F})\}_r \ar{r} &\{ R\Gamma(rY',L^i_{-/\bb F})\}_r
\end{tikzcd}
\]
is cartesian.
\end{lemma}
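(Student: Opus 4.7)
The plan is to adapt the method of \cite[Thm.~2.10]{pro-cdh}, where the analogous pro cdh descent is established for the classical sheaves of K\"ahler differentials $\Omega^i_{-/\bb F}$; the task is to extend the argument to the derived wedge powers of the cotangent complex.

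\textbf{Step 1 (Reduction to the affine case).} By Zariski descent of $R\Gamma(-, L^i_{-/\bb F})$, we reduce to the situation where $X = \Spec A$ is affine, $Y = V(I)$ for a finitely generated ideal $I \subseteq A$, and $p\colon X' \to X$ is a finitely presented proper morphism restricting to an isomorphism over $X \setminus Y$. The goal becomes that the square of pro complexes obtained from $R\Gamma(-, L^i_{-/\bb F})$ is cartesian.

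\textbf{Step 2 (Postnikov reduction to coherent sheaves).} Since $A$ is Noetherian and $\bb F$ is a prime field, $L_{A/\bb F}$ is connective with finitely generated cohomology modules in each degree, and the same is true of $L_{(A/I^r)/\bb F}$ together with the derived de Rham complexes occurring on $X'$ and $rY'$. Consequently, $L^i_{-/\bb F}$ admits a bounded-above Postnikov filtration whose graded pieces are shifts of coherent sheaves. By induction along this filtration, using that pro cartesian squares are closed under extensions, it suffices to verify the claim after replacing $L^i_{-/\bb F}$ by a coherent sheaf. The essential subtlety is that $L_{rY/\bb F}$ is \emph{not} the restriction of $L_{X/\bb F}$ to $rY$: the transitivity triangle
\[
L_{A/\bb F} \otimes_A^L A/I^r \To L_{(A/I^r)/\bb F} \To L_{(A/I^r)/A}
\]
introduces a correction $L_{(A/I^r)/A}$, with an analogous correction on $X'$. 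However, the pro system $\{L_{(A/I^r)/A}\}_r$ can be analysed via the derived Koszul description in terms of $I^r/I^{2r}$, and an Artin--Rees argument shows that its contribution to the birelative term matches the corresponding contribution from $rY'$.

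\textbf{Step 3 (Formal functions and Artin--Rees).} After the previous reduction, the task is to show that for any coherent $\scr O_X$-module $\scr F$ with pullback $p^*\scr F$ on $X'$, the square
\[
\begin{tikzcd}
R\Gamma(X, \scr F) \ar{r} \ar{d} & R\Gamma(X', p^*\scr F) \ar{d}\\
\{R\Gamma(rY, \scr F|_{rY})\}_r \ar{r} & \{R\Gamma(rY', p^*\scr F|_{rY'})\}_r
\end{tikzcd}
\]
is pro cartesian. This is a classical consequence of Grothendieck's theorem of formal functions---which identifies the limit of the right-hand column with the $I$-adic completion of $R\Gamma(X', p^*\scr F)$---together with the Artin--Rees lemma applied to the birelative fibre $\mathrm{fib}(R\Gamma(X, \scr F) \to R\Gamma(X', p^*\scr F))$, whose cohomology groups are $I$-power torsion because $p$ is an isomorphism over $X \setminus Y$.

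\textbf{Main obstacle.} The principal technical difficulty lies in Step 2: controlling the birelative contributions coming from $L_{(A/I^r)/A}$ and $L_{rY'/X'}$ so that the reduction to coherent sheaves is genuinely pro-cartesian-preserving. Once that is settled, the formal functions machinery of Step 3 is routine.
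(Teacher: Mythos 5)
Your overall strategy is the same as the paper's, which in fact consists of nothing more than the citation to \cite[Thm.~2.10]{pro-cdh} plus one remark; so attempting to reconstruct that argument is the right thing to do. The reconstruction, however, rests on a false claim in Step 2. For a general Noetherian $\bb F$-algebra $A$ the cotangent complex $L_{A/\bb F}$ does \emph{not} have finitely generated homotopy groups: already $\pi_0L_{A/\bb F}=\Omega^1_{A/\bb F}$ fails to be finitely generated for $A=\bb Q[[t]]$ (its generic fibre has uncountable transcendence degree over $\bb Q$), and similar examples exist in characteristic $p$ (e.g.\ power series over a field with infinite $p$-basis). The lemma is stated for arbitrary Noetherian $\bb F$-schemes, not schemes of finite type over $\bb F$, so the ``Postnikov filtration with coherent graded pieces'' of Step 2 simply does not exist, and the reduction to coherent sheaves collapses. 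Even where coherence does hold, note that $L^i_{X'/\bb F}$ is not $p^*L^i_{X/\bb F}$, whereas your Step 3 square is formulated only for pullbacks $p^*\scr F$ of a single coherent sheaf on $X$. The way the argument of \cite[Thm.~2.10]{pro-cdh} actually isolates coherent data is different: only the \emph{relative} cotangent complexes ($L_{X'/X}$, $L_{(A/I^r)/A}$, and their wedge powers) have coherent homotopy sheaves, the pro-systems of the form $\{L_{(A/I^r)/A}\}_r$ are pro-zero by an Artin--Rees argument (your closing sentence of Step 2 gestures at this part), and the non-coherent absolute factor $L^a_{A/\bb F}$ is extracted from the cohomology of the proper morphism by the \emph{projection formula}, so that it only ever appears as a fixed tensor factor applied to a pro-cartesian square of genuinely coherent origin (that of $\scr O_{X'}$ and of $\wedge^bL_{X'/X}$, the latter being supported on $Y'$ for $b\ge1$). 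That projection-formula step is the one that disposes of the non-coherence, and it is absent from your proposal.

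A second, smaller omission: the one substantive point the paper's proof does make is that the pro formal functions theorem \cite[Thm.~2.4]{pro-cdh} carries a finite Krull dimension hypothesis which must be removed, since the present lemma allows infinite-dimensional Noetherian schemes; the paper does this by appealing to the general formal functions theorem of \cite[Lem.~8.5.1.1]{LurieSAG}. Your Step 3 invokes formal functions without addressing this, so a literal implementation of your plan would prove a weaker statement than the one asserted.
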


\begin{proof}
The proof works in the exact same way as in \cite[Theorem~2.10]{pro-cdh}, except that we need to justify why \cite[Theorem~2.4]{pro-cdh} does not require the stated finite dimensionality hypothesis; but this follows from the general formal functions theorem of \cite[Lemma~8.5.1.1]{LurieSAG}.
\end{proof}

Next we establish pro $\cdh$ descent for syntomic cohomology; remarkably, the proof uses algebraic $K$-theory:

\begin{proposition}\label{prop_pro_cdh_syn}
For any abstract blow-up square of Noetherian $\bb F_p$-schemes (\ref{eqn_blowup}) and $j\ge0$, the square of pro complexes
\[
\begin{tikzcd}
\Z_p(j)^{\syn}(X) \ar{r} \ar{d} & \Z_p(j)^{\syn}(X') \ar{d}\\
\{ \Z_p(j)^{\syn}(rY)\}_r \ar{r} &\{ \Z_p(j)^{\syn}(rY')\}_r
\end{tikzcd}
\]
is cartesian.
\end{proposition}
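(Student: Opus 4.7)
The plan is to reduce pro cdh descent for $\bb Z_p(j)^\sub{syn}$ to pro cdh descent for wedge powers of the cotangent complex, which was already established in Lemma \ref{lem_pro_cdh_for_cotangent}, by first reducing modulo $p$ and then applying the finite filtration of Lemma \ref{lem_fin_fil_on_syn}.

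\emph{Step 1: Reduction to mod $p$.} Since $\bb Z_p(j)^\sub{syn}(A) = \lim_r \bb Z_p(j)^\sub{syn}(A)/p^r$ is derived $p$-complete by construction, and since the birelative functor $\mathrm{Br}$ associated to an abstract blowup square is a finite homotopy limit in $\mathrm{Pro}(\mathrm{D}(\bb Z))$ (hence commutes with arbitrary limits), it suffices to prove pro cdh descent for each $\bb Z_p(j)^\sub{syn}/p^r$. Then by induction on $r$, via the fibre sequence $\bb Z_p(j)^\sub{syn}/p^{r-1} \xto{p} \bb Z_p(j)^\sub{syn}/p^r \to \bb F_p(j)^\sub{syn}$ and the fact that $\mathrm{Br}$ sends fibre sequences to fibre sequences (so that the two-out-of-three property holds for pro cdh descent), the problem reduces to establishing pro cdh descent for the mod-$p$ theory $\bb F_p(j)^\sub{syn}$.

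\emph{Step 2: Mod-$p$ case via the cotangent complex.} Lemma \ref{lem_fin_fil_on_syn} equips $\bb F_p(j)^\sub{syn}(A)$, for any $\bb F_p$-algebra $A$, with a natural finite filtration whose graded pieces are shifts of the wedge powers $L^i_{A/\bb F_p}$ for $0 \le i \le j$. Both $\bb F_p(j)^\sub{syn}$ and $L^i_{-/\bb F_p}$ satisfy fpqc descent, so this filtration extends, levelwise, to the presheaf $\bb F_p(j)^\sub{syn}$ on Noetherian $\bb F_p$-schemes, with graded pieces being shifts of $R\Gamma(-, L^i_{-/\bb F_p})$. Each such graded piece satisfies pro cdh descent by Lemma \ref{lem_pro_cdh_for_cotangent}, so by closure under finite extensions, so does $\bb F_p(j)^\sub{syn}$.

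The main technical subtlety is ensuring the good behaviour of pro cdh descent (defined as vanishing of $\mathrm{Br}$ in $\mathrm{Pro}(\mathrm{D}(\bb Z))$) under the relevant operations: closure under fibre sequences, which is automatic since $\mathrm{Br}$ is exact, and commutation with the $p$-adic limit in Step 1, which follows since $\mathrm{Br}$ is a finite homotopy limit. Concerning the authors' remark that the proof uses algebraic $K$-theory, an alternative conceptual route would combine pro cdh descent for $\K$ (Kerz--Strunk--Tamme) with cdh descent for the truncating invariant $\K^\sub{inf}$ (Land--Tamme) to deduce pro cdh descent for $\TC$ itself; however, extracting pro cdh descent for the individual graded pieces $\bb Z_p(j)^\sub{syn}$ of the BMS filtration from that of $\TC$ requires an independent mechanism, so the direct cotangent-complex route above is the cleanest.
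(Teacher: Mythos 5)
Your Step 2 (the mod-$p$ case via the finite filtration of Lemma \ref{lem_fin_fil_on_syn} with graded pieces controlled by Lemma \ref{lem_pro_cdh_for_cotangent}) is exactly the paper's first step and is fine, as is the induction up to mod $p^r$. The gap is in Step 1, in the assertion that pro cdh descent for each $\bb Z_p(j)^\sub{syn}/p^r$ implies it for $\bb Z_p(j)^\sub{syn}=\lim_r\bb Z_p(j)^\sub{syn}/p^r$. The birelative term does commute with the limit over $r$, levelwise in the thickening index $s$; but vanishing of a pro-object is an ``eventual vanishing of transition maps'' condition, and this is not preserved under levelwise inverse limits. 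For each $r$ and each $s$ one gets some $s'(r,s)$ beyond which the mod-$p^r$ birelative transition map dies, and nothing prevents $s'(r,s)\to\infty$ with $r$; a toy example is $B_{r,s}=\bb Z_p$ for $s\le r$ and $B_{r,s}=0$ for $s>r$, where $\{B_{r,s}\}_s=0$ for every $r$ yet $\{\lim_r B_{r,s}\}_s\cong\{\bb Z_p\}_s\ne 0$. (There is also a $\lim^1$ issue in comparing $H^n$ of the limit with the limit of the $H^n$'s, but the missing uniformity is the essential problem.)

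The paper closes this gap with a non-formal argument, and this is precisely where algebraic $K$-theory enters --- the input you dismiss at the end of your proposal. One shows that each integral birelative group $A_s:=H^n(\bb Z_p(j)^\sub{syn}(X,X',sY))$ is \emph{bounded} $p$-power torsion: it is derived $p$-complete (being $H^n$ of a $p$-complete complex), and it vanishes after inverting $p$ because $\bb Q_p(j)^\sub{syn}$ is a cdh sheaf (Corollary \ref{corol_Qpsyn}), a fact whose proof rests on the splitting of the BMS filtration on $\TC[\tfrac1p]$ together with Weibel's theorem $\K[\tfrac1p]\simeq\KH[\tfrac1p]$; by \cite[Thm.~1.1]{Bhatt2019} such a group is killed by a single power $p^c$. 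With $c=c(s)$ fixed, the mod-$p^c$ pro-vanishing from your Step 2, applied to that one modulus, yields $s'>s$ with $A_{s'}/p^c\to A_s/p^c=A_s$ zero, hence $A_{s'}\to A_s$ zero, and the pro-system vanishes. You need to add this bounded-torsion step (or some other source of uniformity in $r$) for the reduction in Step 1 to be valid.
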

\begin{proof}
Since mod-$p$ syntomic cohomology $\bb F_p(j)^\sub{syn}$ admits a finite filtration with graded pieces given by shifts of $R\Gamma(-, L_{-/\bb F}^i)$ for various $i$ (by sheafifying Lemma \ref{lem_fin_fil_on_syn}), it satisfies pro cdh descent thanks to Lemma \ref{lem_pro_cdh_for_cotangent}. The remaining difficulty is to extend the result from $\bb F_p(j)^\sub{syn}$ to $\bb Z_p(j)^\sub{syn}$.

Fix $n\in\bb Z$ and set $A_r:=H^n(\bb Z_p(j)^\sub{syn}(X,X',rY))$ for each $r\ge0$; the goal is to show that the pro abelian group $\{A_r\}_r$ vanishes. We claim that each group $A_r$ is bounded $p$-power torsion. Granting this claim, we may complete the proof as follows. Given $s\ge1$, pick $c>$ such that $p^cA_s=0$. By the previous paragraph and an induction, we see that $\{A_r/p^c\}_r=0$; so there exists $s'> s$ such that the transition map $A_{s'}/p^c\to A_s/p^c$ is zero. But $A_s/p^c=A_s$, so this shows that the transition map $A_{s'}\to A_s$ is zero, as required.
 
It remains to prove that $A_r$ is bounded $p$-power torsion. But it is both derived $p$-complete (since it is $H^n$ of a $p$-complete complex) and satisfies $A_r[\tfrac1p]=0$ (since $\bb Q_p(j)^\sub{syn}$ satisfies cdh descent by Lemma \ref{lemma_Qpsyn}, which we proved using $K$-theory), so it is killed by a power of $p$ by \cite[Theorem~1.1]{Bhatt2019}.
\end{proof}

\begin{proof}[Proof of Theorem \ref{theorem_pro_cdh_descent}]
If $\bb F=\bb F_p$ then Theorem \ref{thm:graded-pieces_charp}(2) shows that $\bb Z(j)^\sub{mot}$ differs from $\bb Z_p(j)^\sub{syn}$ by a cdh sheaf; since syntomic cohomology satisfies pro cdh descent by Proposition \ref{prop_pro_cdh_syn}, the same is true for motivic cohomology.

If instead $\bb F=\bb Q$ then the third term in the fundamental fibre sequence Theorem \ref{thm:graded-pieces}(3) satisfies pro cdh descent by Lemma \ref{lem_pro_cdh_for_cotangent}; since the middle term in the fibre sequence is a cdh sheaf, it follows that motivic cohomology also satisfies pro cdh descent.
\end{proof}

\begin{remark}[Extension to arbitrary qcqs (derived) schemes]
Kelly--Saito--Tamme \cite[Theorem 5.4]{KellySaitoTamme2026} have extended Theorem~\ref{theorem_pro_cdh_descent} to qcqs derived schemes, replacing $rY$ and $rY'$ by suitable derived thickenings. The key is their extension of pro cdh descent for the cotangent complex (Lemma~\ref{lem_pro_cdh_for_cotangent}) to such generality.
\end{remark}

\subsection{Proof of motivic Soul\'e--Weibel vanishing}
Fix a weight $j\ge0$. Here we prove Theorem \ref{theorem_Weibel_vanishing}. In fact, we prove the following stronger statement:

\begin{theorem}\label{theorem_Weibel_vanishing2}
Let $X$ be a Noetherian $\bb F$-scheme of dimension $\le d$. Then the fibre \[W(j)(X):=\opp{fib}\big(\bb Z(j)^\sub{mot}(X)\To \bb Z(j)^{\bb A}(X)\big)\] vanishes in degrees $>j+d$.
\end{theorem}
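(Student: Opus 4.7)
I will prove the theorem by strong Noetherian induction on $d=\dim X$, combining three properties of the presheaf $W(j)$ on Noetherian $\bb F$-schemes: (i) it is a finitary Nisnevich sheaf, by Theorems~\ref{thm:graded-pieces}(5) and~\ref{thm:graded-pieces_charp}(4); (ii) it vanishes on regular Noetherian $\bb F$-schemes, by Theorem~\ref{thm_mot_vs_cdh}(2); and (iii) it satisfies pro cdh descent, obtained by combining Theorem~\ref{theorem_pro_cdh_descent} with the (trivially pro) cdh descent of $\bb Z(j)^\sub{cdh}$. Via finitariness and Nisnevich descent the problem reduces to the case $X=\Spec A$ with $A$ Noetherian local of dimension $d$.

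For the base case $d=0$, so that $A$ is Artinian local with residue field $k$, I would use the fundamental fibre sequences of Theorem~\ref{thm:graded-pieces}(3) in characteristic zero and of Corollary~\ref{corol_fundamental_p} (together with Theorem~\ref{theorem_GL}) in characteristic $p$. Cdh nil-invariance for the relevant discrete sheaves reduces the cdh-local term in each (co)fibre to its value on $\Spec k$. A direct analysis of the top cohomological degree shows that the relevant comparison map is either the surjection $\Omega^{j-1}_{A/\bb Q}\twoheadrightarrow\Omega^{j-1}_{k/\bb Q}$ (char $0$) or the rigidity isomorphism $\tilde\nu_r(j)(A)\quis\tilde\nu_r(j)(k)$ from Remark~\ref{rem_rigidity_of_nutilde} (char $p$); in either case the top cohomology of the defining (co)fibre is annihilated. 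Accounting for the $[-2]$ (resp.~$[-j]$) shift then yields $W(j)(X)$ supported in cohomological degrees $\le j = j+d$.

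For the inductive step, assume the result for dimension $<d$. Given $X=\Spec A$ local Noetherian of dimension $d\ge1$, I choose a closed subscheme $Z\subset X$ of dimension $\le d-1$ containing the singular locus of $X$, and form the abstract blow-up square with $X' := \mathrm{Bl}_Z X$ and $Z' := Z\times_X X'$. Each thickening $rZ$ and $rZ'$ has dimension $\le d-1$, so the inductive hypothesis bounds the pro-systems $\{W(j)(rZ)\}_r$ and $\{W(j)(rZ')\}_r$ in cohomological degrees $\le j+d-1$. The pro cdh cartesian square produced by property~(iii) then reduces the desired vanishing for $W(j)(X)$ in degrees $>j+d$ to the same vanishing for $W(j)(X')$. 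Iterating this procedure on $X'$, while tracking a singularity-complexity invariant that strictly decreases under blow-up, eventually yields a regular scheme on which $W(j)$ vanishes by~(ii), completing the induction.

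The main obstacle is the geometric construction underlying the inductive step: one must produce a terminating sequence of blow-ups that reaches a regular scheme while keeping all exceptional loci of dimension $\le d-1$. In characteristic zero this can be arranged using Hironaka's resolution of singularities. In positive characteristic, in the absence of general resolution, one must rely on more delicate inputs, such as de~Jong alterations bridged to the pro cdh square via transfer arguments, or an adaptation of the geometric argument used by Kerz--Strunk--Tamme in their proof of $K$-theoretic Weibel vanishing. Implementing this uniformly across characteristics, and tracking the singularity-complexity invariant through iterated blow-ups of Noetherian (but not necessarily excellent) schemes, will be the crux of the proof.
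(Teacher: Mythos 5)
Your skeleton---finitariness and Nisnevich descent to reduce to local rings, a base case handled via nil-invariance of the cdh term, and an inductive step driven by pro cdh descent along abstract blow-up squares whose centres have dimension $<d$---matches the architecture of the paper's argument, which axiomatises it as Proposition~\ref{prop_axiomatic_Weibel} and applies it to $W(j)[j]$ via Proposition~\ref{prop_of_W(j)}. But the step you yourself flag as the crux is a genuine gap, and it is exactly the step the paper is built to avoid. Your inductive step requires a \emph{terminating} sequence of blow-ups reaching a regular scheme, i.e.\ resolution of singularities. This is unavailable in characteristic $p$; even in characteristic zero it is only known for quasi-excellent schemes, whereas the theorem is asserted for arbitrary finite-dimensional Noetherian $\bb F$-schemes. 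De Jong alterations are not generically finite of degree one, so they do not sit in an abstract blow-up square, and the transfer argument needed to descend along them is not supplied. Moreover, your ingredient~(ii) (vanishing of $W(j)$ on regular schemes) is not what the induction actually consumes: the paper uses the weaker but far more flexible input $L_\sub{cdh}W(j)\simeq 0$ from Theorem~\ref{thm_mot_vs_cdh}(1), which you never invoke in the inductive step.

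The way around the crux is to kill one cohomology class at a time rather than to resolve the scheme. Fix $n>j+d$ and reduce to $X$ reduced using the nilpotent-thickening bound of Proposition~\ref{prop_of_W(j)}(3). A Nisnevich descent spectral sequence argument, feeding the inductive hypothesis into the stalks $H^b(W(j)(\roi^h_{X,x}))$ (this is where finitariness enters), shows that $Y\mapsto H^{n}(W(j)(Y))$ is a Nisnevich sheaf---in particular Nisnevich separated---on Noetherian schemes of dimension $\le d$. Given $\alpha\in H^n(W(j)(X))$, the vanishing $L_\sub{cdh}W(j)\simeq 0$ yields a cdh cover on which $\alpha$ dies; refining it to a Nisnevich cover of a proper cdh cover and taking a strict transform produces a \emph{single modification} $f:X'\to X$ (with no regularity claim on $X'$) such that $f^*\alpha=0$. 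Pro cdh descent together with the inductive hypothesis applied to the infinitesimal thickenings of the lower-dimensional centres then shows that $H^n(W(j)(X))\to H^n(W(j)(X'))$ is an isomorphism, whence $\alpha=0$. If you replace your ``iterate until regular'' step by this ``kill $\alpha$ on one modification'' step, your proof closes; as written, it does not.
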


\begin{remark}\label{rem:wj}
The cohomology theory $W(j)(X)$ are the shifts of graded pieces of a filtration on the fibre of $\K(X) \rightarrow \KH(X)$. In turn each $W(j)(X)$ admits a filtration whose graded pieces are the ``$N^r$ of motivic cohomology,'' i.e., the fibres of the maps $\Z(j)^{\mot}(X) \rightarrow \Z(j)^{\mot}(\bb A^r \times X)$. These groups refine Bass' $N^rK$-groups which measure the failure of algebraic $K$-theory to be $\bb A^r$-invariant. We intend to explore questions surrounding these groups using motivic methods in the future. 
\end{remark}

Note that Theorem \ref{theorem_Weibel_vanishing2} implies Theorem \ref{theorem_Weibel_vanishing}, as we already know from Theorem~\ref{thm:cdh}(1) that the $\bb A^1$-motivic cohomology $\bb Z(j)^\bb A(X)$ is supported in degrees $\le j+d$; but the stronger statement also tells us that the map $\bb Z(j)^\sub{mot}(X)\to \bb Z(j)^{\bb A}(X)$ is surjective in degree $j+d$.
First we quote the following result, whose Zariski version has often appeared in previous work on the subject:

\begin{lemma}[Nisnevich vanishing lemma]\label{lem:nis-vanish}
Let $\cal F$ be a Nisnevich sheaf of abelian groups on a Noetherian scheme $X$, and $d\ge0$ such that the stalk $\cal F_x$ vanishes for all $x\in X$ satisfying $\dim\res{\{x\}}>d$. Then $H^i_\sub{Nis}(X,\cal F)=0$ for all $i>d$.
\end{lemma}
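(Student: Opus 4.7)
The argument is classical and combines two standard ingredients: the Nisnevich cohomological dimension bound, and a filtration by dimension of support. I will sketch the reduction of the latter to the former.

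The starting point is the fact that for any Noetherian scheme $Y$ of Krull dimension $\le d$ and any Nisnevich sheaf of abelian groups $\cal G$ on $Y$, one has $H^i_\sub{Nis}(Y,\cal G)=0$ for $i>d$. This is due to the fact that the Nisnevich site of a Noetherian scheme of finite Krull dimension is a finitary coherent site of cohomological dimension bounded by the Krull dimension. In particular it handles the case $d \ge \dim X$, so we may assume $d<\dim X$ and do more work.

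The idea is then to write $\cal F$ as a filtered colimit $\cal F=\colim_Z\,\Gamma_Z\cal F$ indexed by closed subschemes $Z\subseteq X$ with $\dim Z\le d$, ordered by inclusion, where $\Gamma_Z\cal F\subseteq\cal F$ denotes the subsheaf of sections supported on $Z$ (i.e.\ the kernel of the restriction to $X\setminus Z$). This colimit exhausts $\cal F$: any section $s$ of $\cal F$ over an object $U\to X$ of the Nisnevich site has Zariski-closed support in $U$ (since the vanishing locus of $s$ is open for the Nisnevich, and hence for the Zariski, topology), and the generic points of this support all lie in dimensions $\le d$ by the stalk-vanishing hypothesis; therefore the closure in $X$ of the image of the support in $X$ is a closed $Z\subseteq X$ of dimension $\le d$ containing the support, and $s$ factors through $\Gamma_Z\cal F$.

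For each such $Z$, full faithfulness of $i_*:\Shv_\sub{Nis}(Z)\hookrightarrow\Shv_\sub{Nis}(X)$ (with essential image the sheaves with vanishing restriction to $X\setminus Z$) gives $\Gamma_Z\cal F\simeq i_*\cal G_Z$ for some Nisnevich sheaf $\cal G_Z$ on $Z$; exactness of $i_*$ then yields $H^i_\sub{Nis}(X,\Gamma_Z\cal F)\cong H^i_\sub{Nis}(Z,\cal G_Z)$, which vanishes for $i>\dim Z$ by the first step. Finally, since Nisnevich cohomology on a Noetherian scheme commutes with filtered colimits of sheaves (the site being finitary of bounded cohomological dimension), we conclude that $H^i_\sub{Nis}(X,\cal F)=\colim_Z H^i_\sub{Nis}(X,\Gamma_Z\cal F)=0$ for $i>d$. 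The only point requiring a little care is verifying that the filtered colimit by supports really exhausts $\cal F$ (in particular controlling the dimension of the support on a Nisnevich cover $U\to X$, using that étale morphisms preserve the dimensions of points), but this is routine given the hypothesis on stalks.
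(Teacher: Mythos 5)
The reduction to the cohomological dimension bound via a filtration by supports is a reasonable idea, and steps (a), (c), (d) of your outline (the bound $\mathrm{cd}_\sub{Nis}\le\dim$, exactness and full faithfulness of $i_*$ for closed immersions, and commutation of Nisnevich cohomology with filtered colimits on a Noetherian scheme) are all fine. The gap is in the exhaustion $\cal F=\colim_Z\Gamma_Z\cal F$, and specifically in the claim that ``the vanishing locus of $s$ is open for the Nisnevich, and hence for the Zariski, topology.'' This is false: the largest subobject of the terminal sheaf on which a section vanishes is a Nisnevich-open subobject, and these are strictly more general than Zariski opens. If $s_u=0$ then $s$ dies on an \'etale neighbourhood $(V,v)\to(U,u)$ with $k(v)=k(u)$; this controls the stalks of $\cal F$ at the henselian local rings $\cal O^h_{V,v'}$ of points of $V$, whose residue fields may be proper separable extensions of those of the underlying points of $U$, and says nothing about the stalks $\cal F_{u'}=\cal F(\cal O^h_{U,u'})$ at other points $u'$ in the image of $V$. (This is exactly where Nisnevich differs from \'etale: strict henselisations do not see the choice of point above, so for \'etale sheaves supports of sections \emph{are} closed.) Explicitly, take $X=\Spec \bb R[t]_{(t^2+1)}$, let $\cal F$ be the Nisnevich sheafification of $R\mapsto R^\times/(R^\times)^2$, and let $s=[-1]$: then $s$ vanishes at the closed point, since $-1$ becomes a square in the henselisation (whose residue field is $\bb C$), but $s_\eta=[-1]\ne 0$ in $\bb R(t)^\times/(\bb R(t)^\times)^2$, so $\mathrm{supp}(s)=\{\eta\}$ is not closed.

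The deeper point is that the stalks at the points of $X$ alone do not form a conservative family for the Nisnevich topos (already on $(\Spec\bb R)_\sub{Nis}$ there are nonzero sheaves $\cal F$ with $\cal F(\bb R)=0$), so the hypothesis of the lemma does not by itself control $\cal F$ on \'etale $X$-schemes with larger residue fields — which is precisely where your colimit can fail to exhaust $\cal F$. Even granting the hypothesis, a section may a priori be supported on a Zariski-dense set of points of small dimension, and I do not see how to produce the required closed $Z$ of dimension $\le d$ without essentially redoing the intended argument. The proof the paper points to (Kato--Saito, Nisnevich) runs the coniveau/niveau spectral sequence together with an induction over the punctured henselian local schemes $\Spec\cal O^h_{X,x}\setminus\{x\}$, whose points are exactly these ``twisted'' points with enlarged residue fields; accordingly the vanishing input must be available for the henselian local rings of all \'etale $X$-schemes (as it is in the paper's application, where the relevant stalks are $H^b(W(\cal O^h_{V,v}))$ and the dimension induction applies to any such Noetherian local ring). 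So the exhaustion step needs either this stronger input or the spectral-sequence argument itself; as written, the proof does not go through.
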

\begin{proof}
This is proved by induction using the coniveau spectral sequence, as stated in \cite[(1.2.5)]{KatoSaito1986} (see also the proof of \cite[Theorem~1.32]{Nisnevich1989}).
\end{proof}

When proving Weibel and Vorst's conjecture for finite type schemes over characteristic zero fields \cite{Cortinas2008, Cortinas2008a}, Corti\~nas--Haesemeyer(--Schlichting)--Weibel analysed the relation between of the top degree Nisnevich and cdh cohomologies of sheaves of differential forms; although pro cdh descent did not appear explicitly, it was implicitly encoded in their use of the formal functions theorem. When proving Weibel's conjecture for $K$-theory \cite{KerzStrunkTamme2018}, Kerz--Strunk--Tamme used pro cdh descent to show that the desired vanishing was of a birational nature. The following proposition may be seen as an axiomatisation of the aforementioned arguments.

\begin{proposition}\label{prop_axiomatic_Weibel}
Let $k$ be a base ring and $W:\Sch_k^\sub{qcqs,op}\to \rm D(\bb Z)$ a finitary Nisnevish sheaf with the following properties:
\begin{enumerate}
\item $L_\sub{cdh}W\simeq0$.
\item $W$ satisfies pro cdh descent on Noetherian $k$-schemes.
\item For any Noetherian, local, henselian $k$-algebra $A$ and nilpotent ideal $I\subseteq A$,  the fibre $W(A,I)=\fib(W(A)\to W(A/I))$ is supported in cohomological degrees $\le 0$.
\end{enumerate}
Then, for any Noetherian $k$-scheme $X$ of finite dimension, $W(X)$ is supported in cohomological degrees $\le \dim X$.
\end{proposition}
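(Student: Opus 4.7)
The plan is to argue by induction on $d$, reducing to the Henselian local case via Nisnevich descent and then applying pro cdh descent. First I would use the Nisnevich descent spectral sequence $E_2^{p,q}=H^p_{\Nis}(X,\mathcal{H}^q(W))\Rightarrow H^{p+q}(W(X))$ together with Lemma~\ref{lem:nis-vanish} to reduce the general statement to the Henselian local version: for every Henselian local Noetherian $k$-algebra $R$ of dimension $d'$, $W(R)$ is supported in degrees $\le d'$. Indeed, the stalk $\mathcal{H}^q(W)_x = H^q(W(\roi^h_{X,x}))$ vanishes for $\dim\overline{\{x\}} > d-q$ (equivalently $\dim\roi^h_{X,x}<q$) precisely when the local version of the statement holds, and Lemma~\ref{lem:nis-vanish} then yields $H^{>d}(W(X))=0$.

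For the base case $d=0$, $R$ is Artinian Henselian local with nilpotent maximal ideal $\mathfrak{m}$: applying (3) to $I=\mathfrak{m}$, the fibre $\fib(W(R)\to W(R/\mathfrak{m}))$ is in degrees $\le 0$, and $W(R/\mathfrak{m})=0$ by (1) since the residue field is a cdh point (for the finitary Nisnevich sheaf $W$, the unit map $W\to L_{\cdh}W$ is an equivalence on fields). For the inductive step $d\ge 1$, let $R$ be Henselian local Noetherian of dimension $d$, reduced by (3). I would apply pro cdh descent (2) to the abstract blowup of the maximal ideal $\pi\colon X'=\mathrm{Bl}_{\mathfrak{m}}(\Spec R)\to \Spec R$, with closed subscheme $Y=\Spec R/\mathfrak{m}$ (dimension $0$) and exceptional fibre $Y'=\pi^{-1}(Y)$ (dimension $\le d-1$). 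The resulting fibre sequence
\[W(R) \to W(X') \oplus \{W(R/\mathfrak{m}^r)\}_r \to \{W(rY')\}_r,\]
combined with the inductive hypothesis applied to the Artinian quotients $R/\mathfrak{m}^r$ and to $rY'$ of dimension $\le d-1$, bounds the two pro complexes on the right in degrees $\le d-1$. Hence $H^i(W(R))\cong H^i(W(X'))$ for $i>d$, reducing the problem to bounding $W(X')$ in degrees $\le d$.

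The hard part will be controlling $W(X')$, since $X'$ is Noetherian of dimension $d$ but no longer local. Applying Nisnevich descent to $X'$ and Lemma~\ref{lem:nis-vanish}, this reduces to a Henselian local statement at closed points of $X'$, whose local rings have dimension $\le d$, so the inductive hypothesis alone does not suffice. I would resolve this by iterating the blowup procedure, replacing $X'$ by its own blowup at the maximal ideals of closed points, and so on. Using that $W$ is finitary together with $L_{\cdh}W\simeq 0$, the resulting pro-system of blowups admits a cdh-trivial limit on which $W$ vanishes, while pro cdh descent propagates the degree bound at each stage. Making the termination of this iteration precise is the technical heart of the argument; it may be streamlined via Kelly--Saito's pro-cdh topology (whose points are precisely Henselian local Noetherian rings) or, in cases where the schemes involved are excellent, by invoking de Jong's alterations to produce directly a regular cover where $W$ vanishes.
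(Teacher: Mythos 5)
Your reduction to the Henselian local case, the base case, and the first application of pro cdh descent (blowing up the maximal ideal to get $H^i(W(R))\cong H^i(W(X'))$ for $i>d$) are all sound. But the argument then goes in circles exactly where you say the "technical heart" lies, and this is a genuine gap rather than a technicality: $X'=\mathrm{Bl}_{\frak m}(\Spec R)$ is again Noetherian of dimension $d$ whose Henselian local rings at closed points again have dimension $d$, so neither the inductive hypothesis nor a further blowup of a closed point makes any progress. Iterating blowups of closed points has no reason to terminate, and the "cdh-trivial limit" is not accessible to the finitariness hypothesis (blowup morphisms are not affine, so $W$ of the inverse limit is not the colimit of the $W(X^{(n)})$). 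Alterations do not help either in this axiomatic setting: hypothesis (1) only gives vanishing of $W$ on (henselian) valuation rings, not on regular Noetherian schemes, and an alteration is not a modification, so hypothesis (2) does not apply to it.

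The paper's proof supplies the missing mechanism, and it is genuinely different from "blow up a fixed center": one must choose the modification \emph{adapted to the class one wants to kill}. Concretely, for reduced $X$ of dimension $d$ and $n>d$, the Nisnevich descent spectral sequence together with Lemma~\ref{lem:nis-vanish} and the inductive hypothesis (applied to the stalks at non-closed points, whose local rings have dimension $<d$) yields an edge-map isomorphism $H^n(W(X))\cong H^0_{\Nis}(X,\cal H^n(W))$; in particular $H^n(W(-))$ is Nisnevich separated on Noetherian $k$-schemes of dimension $\le d$. Now given $\alpha\in H^n(W(X))$, hypothesis (1) produces a cdh cover killing $\alpha$; refine it to a proper cdh cover followed by a Nisnevich cover, and extract a modification $f:X'\to X$ through which the proper part factors. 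Then $\alpha$ dies on a Nisnevich cover of $X'$, hence on $X'$ by the separatedness just established. Finally, the abstract blowup square of $f$ has centers $Y,Y'$ of dimension $<d$, so pro cdh descent plus the inductive hypothesis (applied to the thickenings $rY$, $rY'$) shows $f^*:H^n(W(X))\to H^n(W(X'))$ is an isomorphism, whence $\alpha=0$. Your outline is missing both the top-degree Nisnevich separatedness statement and the idea of selecting the modification from a cdh cover trivialising the given class; without these the induction cannot close.
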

\begin{proof}
We begin by globalising hypothesis (3) by noting the following 
\begin{quote}
(3'): for any Noetherian $k$-scheme $X$ of finite dimension and nil immersion $X_0\to X$, the fibre $W(X,X_0)$ is supported in degrees $\le \dim X$ (and so $W(X)\to W(X_0)$ is an equivalence in degrees $>\dim X$).
\end{quote}
This follows from Nisnevich descent and Nisnevich exactness of the closed embedding $X_0\into X$, more precisely using that $X_\sub{Nis}$ has cohomological dimension $\le\dim X$ and the sheaf $W(-,-\times_XX_0)$ on $X_\sub{Nis}$ has stalks supported in degrees $\le 0$ by (3).

Now let $X$ be a Noetherian $k$-scheme. We must show that $W(X)$ is supported in cohomological degrees $\le\dim X$. Using (3') we may assume that $X$ is reduced.

If $\dim X=0$ then $X$ is a finite disjoint union of the spectra of fields. Since spectra of fields are points for the cdh topology we have $W(X)\quis L_\sub{cdh}W(X)$, which vanishes by hypothesis (1).

We now proceed by induction on $\dim X$, so assume that $d:=\dim X>0$ and that the desired vanishing has been proved for Noetherian $k$-schemes of dimension $<d$. We examine the bounded Nisnevich descent spectral sequence \[E_2^{ab}=H^a_\sub{Nis}(X,\cal H^b(W))\implies H^{a+b}(W(X)),\] where $\cal H^b(W)$ is the Nisnevich sheafifcation of the abelian presheaf $Y\mapsto H^b(W(Y))$. The $E_2$ page of this spectral sequence enjoys various vanishings:
\begin{enumerate}
\item $E_2^{ab}=0$ if $a>d$ (or if $a<0$), since $X$ has Nisnevich cohomological dimension $\le d$.
\item $E_2^{ab}=0$ if $a>0$ and $b>d$. Indeed, for such $b$ and any $x\in X$ such that $\dim\overline{\{x\}}>0$, then $\dim\roi_{X,x}^h<d$ and so the stalk $\cal H^b(W)_x=H^b(W(\roi_{X_x}^h))$ (the equality is a consequence of $W$ being finitary) vanishes by the inductive hypothesis. Lemma \ref{lem:nis-vanish} now implies that $\cal H^b(W(j))$ has no higher cohomology.
\item $E_2^{ab}=0$ if $b\le d$ and $a+b>d$. The proof will be clearest if we start by fixing $b\le d$. Then, for any $x\in X$ such that $\dim\overline{\{x\}}>d-b$, we have that $\dim\roi_{X,x}^h<b\le d$, i.e., $b>\dim\roi_{X,x}^h$ and $\dim\roi_{X,x}^h<d$; so $\cal H^b(W(j))_x=0$ by the inductive hypothesis (and again finitariness to compute the stalk in terms of $\roi_{X,x}^h$). Lemma \ref{lem:nis-vanish} now implies that $\cal H^b(W)$ has no cohomology in degrees $>d-b$, or in other words $H^a(\cal H^b(W))=0$ whenever $a+b>d$.
\end{enumerate}
Thanks to vanishings (1)--(4), we can read off from the Nisnevich descent spectral sequence edge map isomorphisms \[H^{n}(W(X))\isoto H^0_\sub{Nis}(X,\cal H^{n}(W))\] for all $n>d$. For the rest of the proof fix $n>d$. Allowing $X$ to vary, the previous isomorphism may be rephrased as follows:
\begin{quote}
(\dag) On the category of Noetherian $k$-schemes of dimension $\le d$, the abelian presheaf $H^{n}(W(-))$ is a Nisnevich sheaf.
\end{quote}
In fact, we will only need to know that it is Nisnevich separated.

We now return to our fixed $X$ of dimension $\le d$, and pick a cohomology class $\al\in H^{n}(W(X))$; we must show that $\al=0$. We claim that there exists a modification $f:X'\to X$ (i.e., a proper morphism where $X'$ is also reduced and such there there exists a dense open $U\subseteq X$ satisfying $f^{-1}(U)\isoto U$) such that $f^*\al=0$ in $H^{n}(W(X'))$. To prove the claim we first use hypothesis (1) to see that, for any $a\in\bb Z$, the presheaf $H^{a}(W(-))$ vanishes on valuation rings, therefore vanishes after cdh sheafification. In particular there exists a cdh cover $U\to X$ such that $\al$ vanishes in $H^{n}(W(U))$; we can then refine $U$ to a cdh cover of the form $X_2\to X_1\xto{g} X$ where $X_1\to X$ is a proper cdh (often called a cdp) cover and $X_2\to X_1$ is a Nisnevich cover \cite[Proposition~5.9]{SuslinVoevodsky2000}. Next note that there exists a modification $f:X'\to X$ which factors through $X_1$: for example pick a dense open $U\subseteq X$ such that $f^{-1}(U)\isoto U$, and define $X'$ to be $(-)_\sub{red}$ of the closure of $U$ in $X'$. By construction $\al$ vanishes when we pull back to $X_2$, hence also to $X'\times_X X_2$; but $X'\times_X X_2\to X'$ is a Nisnevich cover of schemes of dimension $\le d$ (since modifications and \'etale morphisms do not increase dimension), so (\dag) implies that $H^{n}(W(X'))\to H^{n}(W(X'\times_XX_2))$ is injective and therefore $\al$ already vanished when pulled back to $X'$. This completes the proof of the claim.

Our modification $X'\to X$ fits into an abstract blowup square (\ref{eqn_blowup}) in which $Y'$ and $Y$ have dimension $<d$. From hypothesis (2) and the inductive hypothesis applied to the infinitesimal thickenings of $Y$ and $Y'$, we see that $H^{n}(W(X))\to H^{n}(W(X'))$ is an isomorphism. But this map was constructed so as to kill $\al$. Therefore $\al$ was already zero in $H^{n}(W(X))$, completing the proof.
\end{proof}

\begin{example}[Usual Weibel vanishing]\label{sec:usual}
Here we present a revisionist version of Kerz--Strunk--Tamme's proof of Weibel vanishing \cite{KerzStrunkTamme2018}. First note that Proposition \ref{prop_axiomatic_Weibel} applies verbatim to finitary Nisnevich presheaves of spectra $W:\textrm{Sch}_k^\sub{qcqs,op}\to\textrm{Sp}$ satisfying the same hypotheses; we stated it for presheaves of complexes only for simplicity.

In particular, the proposition applies when $W:=\fib(\K\to\KH)$ and $k=\bb Z$. Indeed, hypothesis (1) follows from the fact that $K(V)\quis \KH(V)$ for any valuation ring $V$ \cite[Theorem~6.3]{KerzStrunkTamme2018} \cite[Theorem.~3.4]{KellyMorrow2021}; hypothesis (2) follow fom pro cdh descent of $K$-theory and cdh descent of $\KH$-theory \cite{KerzStrunkTamme2018}; hypothesis (3) follows from nil invariance of negative $K$-theory. We therefore deduce, for any Noetherian scheme $X$, that $\fib(\K(X)\to\KH(X))$ is supported in homological degrees $\ge-\dim X$. Since there are various ways to show that $\KH(X)$ is supported in homological degrees $\ge-\dim X$ \cite{KerzStrunk2017} \cite[Remark~3.5(a)]{KellyMorrow2021}, we deduce that $\K(X)$ is also supported in homological degrees $\ge-\dim X$ as required.
\end{example}

We now verify that the previous proposition may also be applied in our motivic situation of interest, at least up to a harmless shift:

\begin{proposition}\label{prop_of_W(j)}
The presheaf $W(j)=\fib(\bb Z(j)^\sub{mot}\to\bb Z(j)^{\bb A}):\Sch_\bb F^\sub{qcqs,op}\to\rm D(\bb Z)$ is a finitary Nisnevich sheaf with the following properties:
\begin{enumerate}
\item $L_\sub{cdh}W(j)=0$.
\item $W(j)$ satisfies pro cdh descent on Noetherian $\bb F$-schemes.
\item For any $\bb F$-algebra $A$ and finitely generated nilpotent ideal $I\subseteq A$, the fibre $W(j)(A,I)$ is supported in cohomological degrees $\le j$.
\end{enumerate}
\end{proposition}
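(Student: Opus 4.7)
The overall plan is to split the proof into verifying the finitariness and Nisnevich sheaf property, followed by properties (1)--(3) in turn, with (3) being by far the most substantive. That $W(j)$ is a finitary Nisnevich sheaf is essentially formal: both $\bb Z(j)^\sub{mot}$ and $\bb Z(j)^\sub{cdh}$ are finitary Nisnevich sheaves by Theorems \ref{thm:graded-pieces}(5), \ref{thm:graded-pieces_charp}(4), and \ref{thm:cdh}(2), and fibres commute with limits and filtered colimits. For (1), I would invoke Theorem \ref{thm_mot_vs_cdh}(1), according to which $L_\sub{cdh}\bb Z(j)^\sub{mot}\simeq\bb Z(j)^\sub{cdh}$: since $\bb Z(j)^\sub{cdh}$ is already a cdh sheaf, applying $L_\sub{cdh}$ to the defining fibre sequence yields $L_\sub{cdh}W(j)\simeq\fib(\bb Z(j)^\sub{cdh}\xrightarrow{\simeq}\bb Z(j)^\sub{cdh})=0$. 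Property (2) is inherited from the two corners: $\bb Z(j)^\sub{mot}$ satisfies pro cdh descent by Theorem \ref{theorem_pro_cdh_descent} and $\bb Z(j)^\sub{cdh}$ satisfies honest (hence trivially pro) cdh descent by Theorem \ref{thm:cdh}(2), so their fibre does too.

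For (3), the geometric starting point is that cdh sheaves are insensitive to nilpotent thickenings: since $\Spec(A/I)\hookrightarrow\Spec(A)$ sits in a degenerate abstract blowup square showing it to be a cdh cover for nilpotent $I$, any cdh sheaf $F$ satisfies $F(A)\xrightarrow{\simeq}F(A/I)$. In particular $\bb Z(j)^\sub{cdh}(A, I)=0$, whence $W(j)(A, I)\simeq \bb Z(j)^\sub{mot}(A, I)$. Using the cartesian squares of Theorems \ref{thm:graded-pieces}(2) and \ref{thm:graded-pieces_charp}(2), together with cdh nil-invariance of the cdh-sheafified lower corners, this identifies $W(j)(A, I)$ with $\widehat{L\Omega}^{\ge j}_{A, I/\bb Q}$ in characteristic zero and with $\bb Z_p(j)^\sub{syn}(A, I)$ in characteristic $p$. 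In characteristic zero the bound then drops out cleanly: nil-invariance of Hodge-completed derived de Rham cohomology $\widehat{L\Omega}_{-/\bb Q}$ (after Goodwillie--Feigin--Tsygan; equivalent to nil-invariance of periodic cyclic homology in characteristic zero) gives $\widehat{L\Omega}_{A, I/\bb Q}\simeq 0$, so the Hodge fibre sequence forces $\widehat{L\Omega}^{\ge j}_{A, I/\bb Q}\simeq L\Omega^{<j}_{A, I/\bb Q}[-1]$; since $L\Omega^{<j}$ carries a finite filtration with graded pieces $\wedge^i L_{-/\bb Q}[-i]$ for $0\le i\le j-1$, each sitting in cohomological degrees $\le i$, the claimed bound of cohomological degree $\le j$ follows.

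The characteristic $p$ case is more delicate and I expect it to be the main obstacle. The plan is to first establish the bound modulo $p$, i.e.~that $\bb F_p(j)^\sub{syn}(A, I)$ lives in cohomological degrees $\le j$, and then to promote it to the integral syntomic cohomology via a Nakayama-style argument: the relative syntomic complex is a fibre of derived $p$-complete objects, hence itself derived $p$-complete, and for such an object a bound on its mod-$p$ reduction transfers to the object itself. The filtration of Lemma \ref{lem_fin_fil_on_syn} places $\bb F_p(j)^\sub{syn}(A)$ a priori in cohomological degrees $\le j+1$, with top cohomology group identified with $\tilde\nu(j)(A)$ by Remark \ref{remarks_tildenu}(2). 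The decisive input is the rigidity of $\tilde\nu_r(j)$ along Henselian surjections, and in particular along any nilpotent extension (Remark \ref{rem_rigidity_of_nutilde}); in the long exact sequence of the relative fibre this immediately forces $H^{j+2}(\bb F_p(j)^\sub{syn}(A, I))=0$ and identifies $H^{j+1}$ with the cokernel of $H^j(\bb F_p(j)^\sub{syn}(A))\to H^j(\bb F_p(j)^\sub{syn}(A/I))$. Surjectivity of this last map is the crux: for local $A$ it follows from the Beilinson--Lichtenbaum equivalence of Theorem \ref{thm_BL}(2) and the Nesterenko--Suslin isomorphism of Theorem \ref{theorem_NS}, which identify $H^j$ with $\hat{\K}^M_j/p$, because units and hence Milnor symbols lift along Henselian surjections; the general case can then be handled by propagation from Nisnevich stalks combined with the Nisnevich sheaf property of $W(j)$.
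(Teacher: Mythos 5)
Your handling of the finitary Nisnevich sheaf claim and of parts (1) and (2) coincides with the paper's proof, and your reduction of (3) to the relative Hodge-filtered derived de Rham complex (characteristic $0$), resp.\ relative syntomic cohomology followed by dévissage to mod $p$ via derived $p$-completeness (characteristic $p$), is also the paper's route. One small imprecision in characteristic zero: the relative graded pieces $\fib(L^i_{A/\bb Q}\to L^i_{(A/I)/\bb Q})[-i]$ a priori sit in degrees $\le i+1$, not $\le i$, since the fibre of a map of connective objects can acquire an $H^1$ equal to the cokernel on $H^0$; your count as written only yields $L\Omega^{<j}_{A,I/\bb Q}[-1]$ in degrees $\le j+1$. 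One must add that $\Omega^i_{A/\bb Q}\to\Omega^i_{(A/I)/\bb Q}$ is surjective (because $A\to A/I$ is), which kills that extra degree; the paper makes exactly this observation.

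The genuine gap is in your characteristic $p$ globalisation. Your local argument is sound: rigidity of $\tilde\nu$ disposes of $H^{j+2}$ of the relative fibre and identifies $H^{j+1}$ with the cokernel of $H^j(\bb F_p(j)^\sub{syn}(A))\to H^j(\bb F_p(j)^\sub{syn}(A/I))$, and for local $A$ this map is surjective via Beilinson--Lichtenbaum, the Nesterenko--Suslin isomorphism (together with Corollary \ref{corollary_Hilb_90} to pass to mod-$p$ coefficients) and lifting of units along local surjections. But the proposition asserts the bound for an arbitrary $\bb F_p$-algebra $A$, and ``propagation from Nisnevich stalks'' does not deliver this: an upper bound on the cohomological degrees of the stalks of a Nisnevich sheaf does not bound the cohomological degree of its sections, since the descent spectral sequence contributes $H^s_{\Nis}(\Spec A,\cal H^t)$ in total degree $s+t$ and $\Spec A$ may have arbitrarily large, even infinite, Nisnevich cohomological dimension for a general $\bb F_p$-algebra. (Controlling exactly this kind of descent spectral sequence is the whole content of Proposition \ref{prop_axiomatic_Weibel}.) The paper avoids the issue by quoting \cite[Thm.~5.2]{AntieauMathewMorrowNikolaus}, which gives the coconnectivity bound for the relative mod-$p$ syntomic cohomology of an arbitrary henselian pair; nilpotent ideals are henselian, so it applies globally. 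Your argument does cover the only case used downstream (Noetherian henselian local $A$, as in Proposition \ref{prop_axiomatic_Weibel}(3)), but as a proof of the proposition as stated it is incomplete.
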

\begin{proof}
The presheaf $W(j)$ is a finitary Nisnevich sheaf since $\bb Z(j)^\sub{mot}$ and $\bb Z(j)^{\bb A}$ are, by Theorems \ref{thm:cdh}(2), \ref{thm:graded-pieces}(5), and  \ref{thm:graded-pieces_charp}(4).

(1): $W(j)$ vanishes after cdh sheafification since $L_\sub{cdh}\bb Z(j)^\sub{mot}\quis \bb Z(j)^{\bb A}$ by Theorem~\ref{thm_mot_vs_cdh}(1).

(2): $W(j)$ satisfies pro cdh descent on Noetherian schemes, since the same is true of $\bb Z(j)^\sub{mot}$ by Theorem~\ref{theorem_pro_cdh_descent}.

(3): We first treat the case that $\bb F=\bb Q$. From the fundamental fibre sequence of Theorem \ref{thm:graded-pieces}(3), for both $A$ and $A/I$, we have a description of the relative term $W(j)(A,I)$ as \[W(j)(A,I)={\rm fib}\left(L\Omega_{A/\Q}^{< j} \to  L\Omega_{(A/I)/\Q}^{< j}\right)[-1].\] This is supported in degrees $<j$ since $\Omega_{A/\bb Q}^{j-1}\to\Omega_{(A/I)/\bb Q}^{j-1}$ is surjective.

In the case that $\bb F=\bb F_p$, the pullback square of Theorem \ref{thm:graded-pieces_charp}(2) shows that $W(j)(A,I)\simeq\fib(\bb Z_p(j)^\sub{syn}(A)\to\bb Z_p(j)^\sub{syn}(A/I))$, which is derived $p$-complete; so it is sufficient to prove the claim modulo $p$, namely that $\fib(\bb F_p(j)^\sub{syn}(A)\to\bb F_p(j)^\sub{syn}(A/I))$ is supported in degree $<j$. Since nilpotent ideals are henselian, this is a special case of \cite[Theorem~5.2]{AntieauMathewMorrowNikolaus}.
\end{proof}

\begin{proof}[Proof of Theorem \ref{lem:nis-vanish}]
Apply Proposition \ref{prop_axiomatic_Weibel} to the presheaf $W:=W(j)[j]$. The hypotheses of the proposition are satisfied thanks to Proposition \ref{prop_of_W(j)}.
\end{proof}

Our arguments implicitly reprove the results of \cite{Cortinas2008, Cortinas2008a} concerning Nisnevich and cdh cohomology of differential forms, as well as a similar style of result in finite characteristic:

\begin{corollary}\label{cor:surjections}
\begin{enumerate}
\item For any Noetherian $\bb F$-scheme of dimension $d$ and $j \geq 0$, the canonical map $H^{j+d}_{\mot}(X, \Z(d)) \rightarrow H^{j+d}_{\bb A}(X, \Z(d))$ is surjective.
\item For $j\ge1$ and any Noetherian $\bb Q$-scheme $X$ of dimension $\le d$, the canonical map \[H^d_\sub{Nis}(X,\Omega^{j-1}_{-/\bb Q})\to H^d_\sub{cdh}(X,\Omega^{j-1}_{-/\bb Q})\] is surjective.
\item For $j\ge0$ and any Noetherian $\bb F_p$-scheme $X$ of dimension $\le d$, the canonical maps \[H^d_\sub{Nis}(X,\hat \K_j^M/p)\to  H^d_\sub{cdh}(X,\hat \K_j^M/p)\quad\mathrm{and}\quad H^{d-1}_\sub{Nis}(X,\tilde\nu(j))\to H^{d-1}_\sub{cdh}(X,\tilde\nu(j))\] are surjective, and the canonical map \[H^{d}_\sub{Nis}(X,\tilde\nu(j))\To H^{d}_\sub{cdh}(X,\tilde\nu(j))\] is an isomorphism. Here $\hat \K_j^M/p$ denotes improved Milnor $K$-theory mod $p$, as an abelian Nisnevich or cdh sheaf.
\end{enumerate}
\end{corollary}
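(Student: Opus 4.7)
The plan is to deduce all three parts from the motivic Soul\'e--Weibel vanishing of Theorem~\ref{theorem_Weibel_vanishing2}, which supplies the bound $H^i(W(j)(X)) = 0$ for $i > j + d$, where $W(j) = \fib(\bb Z(j)^\mot \to \bb Z(j)^\cdh)$ and $d = \dim X$. Part~(1) is then immediate: the long exact sequence of the fibre triangle gives $H^{j+d}_\mot(X, \bb Z(j)) \twoheadrightarrow H^{j+d}_\cdh(X, \bb Z(j))$ since $H^{j+d+1}(W(j)(X)) = 0$. (The statement as written has weight $\bb Z(d)$, which appears to be a typo for $\bb Z(j)$, as only the latter reading is consistent with the vanishing bound.)

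For part~(2), I would invoke the fundamental fibre sequence of Theorem~\ref{thm:graded-pieces}(3) to identify $W(j)(X)[2]$ with the cofibre of the map $R\Gamma(X, L\Omega^{<j}_{-/\bb Q}) \to R\Gamma_\cdh(X, \Omega^{<j}_{-/\bb Q})$. Both sides carry the Hodge filtration with graded pieces $R\Gamma(X, L^i_{-/\bb Q})[-i]$ and $R\Gamma_\cdh(X, \Omega^i_{-/\bb Q})[-i]$ respectively, for $0 \le i \le j-1$. For a Noetherian $\bb Q$-scheme of dimension $\le d$, both the Nisnevich and the cdh topologies have cohomological dimension $\le d$ (the latter via the equality with valuative dimension of \cite{ElmantoHoyoisIwasaKelly2021}); combined with the fact that $L^{j-1}_{-/\bb Q}$ is supported in cohomological degrees $\le 0$ with top cohomology sheaf $\Omega^{j-1}_{-/\bb Q}$, a straightforward spectral sequence argument shows that in total cohomological degree $j+d-1$ only the top Hodge piece $i = j-1$ contributes, yielding $H^d_\Nis(X, \Omega^{j-1}_{-/\bb Q})$ on the left and $H^d_\cdh(X, \Omega^{j-1}_{-/\bb Q})$ on the right. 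The Weibel vanishing applied to $W(j)(X)[2]$ at this degree then forces the induced map to be surjective (in fact an isomorphism).

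For part~(3), I would combine Theorem~\ref{thm:cdh}(4)'s identification $\bb Z(j)^\cdh/p \simeq R\Gamma_\cdh(-, \Omega^j_\sub{log})[-j]$ with Kerz's identification $\hat K^M_j/p \simeq \Omega^j_\sub{log}$ of Nisnevich sheaves to obtain $H^{j+d}_\cdh(X, \bb Z/p(j)) = H^d_\cdh(X, \hat K^M_j/p)$, into which $H^{j+d}(\bb Z(j)^\mot(X)/p)$ surjects by mod-$p$ part~(1). The Nisnevich descent spectral sequence for $\bb Z(j)^\mot/p$, the identification $\cal H^j(\bb Z(j)^\mot/p) \simeq \Omega^j_\sub{log}$ (using Hilbert~90 of Corollary~\ref{corollary_Hilb_90} to kill the relevant $\mathrm{Tor}$ contribution), and Nisnevich cohomological dimension together show that $H^d_\Nis(X, \Omega^j_\sub{log})$ is the top contribution to $H^{j+d}(\bb Z(j)^\mot(X)/p)$; the surjectivity of the first map of~(3) then follows from the factorization. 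For the $\tilde\nu(j)$ statements, I would use Corollary~\ref{corol_fundamental_p}, in which $R\Gamma_\cdh(-, \tilde\nu(j))[-j-1]$ is the cofibre of $\bb Z(j)^\mot/p \to \bb F_p(j)^\sub{syn}$. Because $\tilde\nu(j)$ is rigid (Remark~\ref{rem_rigidity_of_nutilde}), $R\Gamma_\cdh(A, \tilde\nu(j))$ is concentrated in degree~$0$ for Henselian local $A$, so Nisnevich-locally the cofibre is a shift of $\tilde\nu(j)_\cdh$. Comparing the Nisnevich spectral sequence for $\bb F_p(j)^\sub{syn}$ (whose top cohomology sheaf is $\tilde\nu(j)$ by Remark~\ref{remarks_tildenu}(2)) with its cdh analogue will then yield both the surjectivity in degree $d-1$ and the isomorphism in top degree~$d$.

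The main technical obstacle will be part~(3), especially the isomorphism $H^d_\Nis(X, \tilde\nu(j)) \cong H^d_\cdh(X, \tilde\nu(j))$: this requires careful control of the Nisnevich spectral sequence for $\bb Z(j)^\mot/p$ at the top contributing bidegrees, ensuring that the potentially interfering differentials are ruled out either by motivic Weibel vanishing, by the Nisnevich-local vanishing of the relevant cohomology sheaves (via Hilbert~90 and Proposition~\ref{proposition_91}), or by the rigidity of $\tilde\nu(j)$.
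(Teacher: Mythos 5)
Parts (1) and (2) of your proposal are correct and essentially reproduce the paper's argument: part (1) is immediate from Theorem \ref{theorem_Weibel_vanishing2} (and yes, the $\bb Z(d)$ in the statement should read $\bb Z(j)$), and your Hodge-filtration analysis of the cofibre of $R\Gamma(X,L\Omega^{<j}_{-/\bb Q})\to R\Gamma_\sub{cdh}(X,\Omega^{<j}_{-/\bb Q})$ is exactly how the paper identifies $H^{j+d+1}(W(j)(X))$ with the cokernel of $H^d_\sub{Nis}(X,\Omega^{j-1}_{-/\bb Q})\to H^d_\sub{cdh}(X,\Omega^{j-1}_{-/\bb Q})$. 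One caveat: your parenthetical ``in fact an isomorphism'' in (2) is not justified. Injectivity of that map would require the connecting map out of $H^{j+d}(W(j)(X))$ to vanish, and Theorem \ref{theorem_Weibel_vanishing2} says nothing about degree $j+d$ itself; only surjectivity follows, and only surjectivity is claimed.

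Part (3) has two genuine problems. First, you identify the top Nisnevich cohomology sheaf $\cal H^j(\bb Z(j)^\sub{mot}/p)$ with $\Omega^j_\sub{log}$ via ``Kerz's identification $\hat K^M_j/p\simeq\Omega^j_\sub{log}$.'' For local $\bb F_p$-algebras that are not regular or Cartier smooth, the dlog map $\hat K_j^M/p\to\Omega^j_\sub{log}$ is not known to be (and in general is not) an isomorphism; the Bloch--Gabber--Kato theorem and its extensions cover fields, regular local rings and valuation rings only. The correct identification, and the one the paper uses, is $\cal H^j(\bb Z(j)^\sub{mot}/p)\cong\hat K_j^M/p$ via the singular Nesterenko--Suslin isomorphism (Theorem \ref{theorem_NS}) together with Corollary \ref{corollary_Hilb_90}; one passes to $\Omega^j_\sub{log}$ only after cdh sheafification, where the two presheaves agree because they agree on valuation rings. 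As written, your argument proves surjectivity of $H^d_\sub{Nis}(X,\Omega^j_\sub{log})\to H^d_\sub{cdh}(X,\Omega^j_\sub{log})$, which is not the statement of the corollary. Second, your claim that ``$R\Gamma_\sub{cdh}(A,\tilde\nu(j))$ is concentrated in degree $0$ for Henselian local $A$'' is false: rigidity of $\tilde\nu(j)$ is a statement about the presheaf, not about its cdh cohomology, and henselian local rings are not cdh points (only henselian valuation rings are) --- already $H^1_\sub{cdh}$ of the henselian local ring of a nodal curve with constant coefficients is nonzero. The $\tilde\nu(j)$ assertions instead follow by comparing the Nisnevich fibre sequence $L_\sub{Nis}\tau^{\le j}\bb F_p(j)^\sub{syn}\to\bb F_p(j)^\sub{syn}\to R\Gamma_\sub{Nis}(-,\tilde\nu(j))[-j-1]$ with its cdh sheafification, taking vertical fibres to express $W(j)/p$ as an extension of $\opp{fib}(R\Gamma_\sub{Nis}(-,\tilde\nu(j))\to R\Gamma_\sub{cdh}(-,\tilde\nu(j)))[-j-1]$ by a term controlled by $\hat K_j^M/p$, and then running the long exact sequences against the cohomological dimension bounds and the vanishing of $H^{>j+d}(W(j)(X))$; your sketch gestures at this but does not carry it out, and the false intermediate claim cannot be part of a correct execution.
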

\begin{proof}
The first claim was explained after Remark \ref{rem:wj}. The rest are related to $W(j)(X)$ via the following descriptions, which we state in the generality of qcqs schemes for the sake of possible future reference:
\begin{enumerate}
\item For any qcqs $\bb Q$-scheme $X$ of valuation dimension $\le d$, then $W(j)(X)$ vanishes in cohomological degrees $>j+d+1$ and there is a natural isomorphism $H^{j+d+1}(W(j)(X)) \cong \opp{coker}(H^d_\sub{Nis}(X,\Omega^{j-1}_{-/\bb Q})\to H^d_\sub{cdh}(X,\Omega^{j-1}_{-/\bb Q}))$.
\item For any qcqs $\bb F_p$-scheme $X$ of valuative dimension $\le d$, then $W(j)(X)/p$ vanishes in cohomological degrees $>j+d+2$ and there is a natural diagram in which the row and column are exact:
\begin{equation}\xymatrix@=5mm{
&&& H^{d-1}_\sub{Nis}(X,\tilde\nu(j))\ar[d]&\\
&&& H^{d-1}_\sub{cdh}(X,\tilde\nu(j))\ar[d]&\\
H^d_\sub{Nis}(X,\hat \K_j^M/p)\ar[r] & H^d_\sub{cdh}(X,\hat \K_j^M/p)\ar[r]^-{\delta} & H^{j+d+1}(W(j)(X)/p)\ar[r] & \opp{coker}\delta\ar[r] \ar[d]& 0\\
&&& H^{d}_\sub{Nis}(X,\tilde\nu(j))\ar[d]&\\
&&& H^{d}_\sub{cdh}(X,\tilde\nu(j))\ar[d]&\\
&&& H^{j+d+2}(W(j)(X)/p)\ar[d]&\\
&&&0&
}\end{equation}
\end{enumerate}
These two claims are clearly sufficient to deduce the corollary, since Theorem \ref{theorem_Weibel_vanishing2} tells us that $W(j)(X)$ (and so also $W(j)(X)/p$) is supported in cohomological degrees $\le j+d$ whenever $X$ is a Noetherian $\bb F$-scheme of dimension $\le d$.

It remains to prove the claims. We first treat the case that $\bb F=\bb Q$. From the fundamental fibre sequence Theorem \ref{thm:graded-pieces}(3) we have a description of $W(j)(X)$, for any qcqs $\bb Q$-scheme $X$, as \[W(j)(X)={\rm fib}\left(R\Gamma(X,L\Omega_{-/\Q}^{< j})\to  R\Gamma_\sub{cdh}(X,L\Omega_{-/\Q}^{< j})\right)[-1].\] By cohomological vanishing bounds in the Nisnevich and cdh topologies, this fibre is supported in cohomological degrees $\le j+d+1$ if $X$ has valuative dimension $\le d$, with its $H^{j+d+1}$ being exactly the desired cokernel.

Next suppose $\bb F=\bb F_p$. From the pullback square of Theorem \ref{thm:graded-pieces_charp}(2) we see that $W(j)/p=\opp{fib}(\bb F_p(j)^\sub{syn}\to L_\sub{cdh}\bb F_p(j)^\sub{syn})$, which we compute as follows: on the category of qcqs $\bb F_p$-schemes, Nisnevich sheafifying Remark \ref{remarks_tildenu} provides us with a fibre sequence \[L_\sub{Nis}\tau^{\le j}\bb F_p(j)^\sub{syn}\To \bb F_p(j)^\sub{syn}\To R\Gamma_\sub{Nis}(-,\tilde\nu(j))[-j-1],\] which may be compared to its cdh sheafification to get the following fibre sequence of presheaves on qcqs $\bb F_p$-schemes: \begin{equation}\opp{fib}\big(L_\sub{Nis}\tau^{\le j}\bb F_p(j)^\sub{syn}\to L_\sub{cdh}\tau^{\le j}\bb F_p(j)^\sub{syn}\big)\To W(j)/p\To \opp{fib}\big(R\Gamma_\sub{Nis}(-,\tilde\nu(j))\to R\Gamma_\sub{cdh}(-,\tilde\nu(j)))\big[-j-1].\label{eqn_nis_vs_cdh}\end{equation} Moreover, $H^j(\bb F_p(j)^\sub{syn}(-))$ is Nisnevich locally given by $\hat \K_j^M/p$; this follows from the isomorphisms $\hat \K_j^M(A)/p\isoto H^j_\sub{mot}(A,\bb F_p(j))\isoto H^j_\sub{syn}(A,\bb F_p(j))$ for local $\bb F_p$-algebras $A$, the first being the Nesterenko--Suslin isomorphism of Theorem \ref{theorem_NS} (or rather, the mod-$p$ version obtained using Corollary \ref{corollary_Hilb_90}) and the second isomorphism coming from the fundamental fibre sequence of Corollary \ref{corol_fundamental_p}. Since the Nisnevich and cdh sites of $X$ have cohomological dimension $\le d$ when $X$ has valuative dimension $\le d$, the claimed vanishing and diagram in (2) can now be read off by by evaluating (\ref{eqn_nis_vs_cdh}) on $X$.
\end{proof}

\section{Some comparisons to algebraic cycles}\label{section_cf_cycles}
We present in this section a variety of contexts in which our motivic cohomology admits a description in terms of algebraic cycles. We do not know what to expect in general.

\begin{definition}
For $j\ge0$ and $A$ a local $\bb F$-algebra, we say that $A$ has {\em geometric weight-$j$ motivic cohomology} if $\bb Z(j)^\sub{mot}(A)$ is supported in cohomological degrees $\le j$.
\end{definition}

If $A$ has geometric weight-$j$ motivic cohomology in the sense of the definition, then Theorem \ref{thm_lke_lej} implies that the canonical map $\bb Z(j)^\sub{lse}(A)\to \bb Z(j)^\sub{mot}(A)$ is an equivalence. But, as explained in Remark~\ref{remark_lke_as_cycles}, the complex $\bb Z(j)^\sub{lse}(A)$ admits a description purely in terms of algebraic cycles; so in this case we obtain a cycle theoretic description of the whole motivic cohomology $\bb Z(j)^\sub{mot}(A)$. This applies to a surprisingly large class of rings:

\begin{theorem}\label{theorem_geometric_cohomology}
Let $j\ge0$ and let $A$ be a local $\bb F$-algebra.
\begin{enumerate}
\item If $A$ is regular Noetherian then it has geometric weight-$j$ motivic cohomology.
\item If $A$ is a valuation ring then it has geometric weight-$j$ motivic cohomology.
\item If there exists a nilpotent ideal $I\subseteq A$ such that $A/I$ has geometric weight-$j$ motivic cohomology, then so does $A$.
\item If $A$ is Noetherian, henselian of dimension $1$, then it has geometric weight-$j$ motivic cohomology.
\item If $j\ge1$ and $A$ is Noetherian, henselian of dimension $\le 2$, then it has geometric weight-$j$ motivic cohomology.
\end{enumerate}
\end{theorem}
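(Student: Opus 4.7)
The plan is to establish each of the five parts in turn, relying primarily on the fundamental pullback squares of Theorems~\ref{thm:graded-pieces}(2) and \ref{thm:graded-pieces_charp}(2), the comparison to cdh-local motivic cohomology (Theorem~\ref{thm_mot_vs_cdh}), and the vanishing results of \S\ref{section_Weibel} and \S\ref{section_lke}. The flavour of the argument shifts at part (4): while (1)--(3) are obtained by leveraging the cdh-local theory and the nil-invariance of the right column of the fundamental square, (4) and (5) are degree-counting results using Soul\'e--Weibel and Hilbert~90.

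For part (1), Theorem~\ref{thm_mot_vs_cdh}(2) gives an equivalence $\bb Z(j)^\sub{mot}(A) \simeq \bb Z(j)^\sub{cdh}(A)$ for $A$ regular Noetherian local; a N\'eron--Popescu argument (available because $\bb F$ is perfect) then expresses $A$ as a filtered colimit of essentially smooth local $\bb F$-algebras, so by finitariness the desired bound reduces to the smooth local case, where it holds by Gersten. For part (2), if $V$ is a valuation ring then the right vertical map in the fundamental pullback square is an equivalence: in characteristic zero, this follows from Lemma~\ref{lemma_cdh_descent_HP} together with Gabber--Ramero's observation that $L\Omega^{<j}_{V/\bb Q}$ agrees with its own cdh sheafification on valuation rings, so $\widehat{L\Omega}^{\ge j}_{V/\bb Q}$ is a cdh sheaf on $V$; in characteristic~$p$, Cartier smoothness of valuation rings and the identification $L_\sub{cdh}\bb F_p(j)^\sub{syn} \simeq R\Gamma_\sub{\'eh}(-,\Omega^j_\sub{log})[-j]$ from the proof of Theorem~\ref{theorem_GL} reduce the $p$-adic question to the agreement of the \'et and \'eh topologies on $V$, and rationally we apply Corollary~\ref{corol_Qpsyn}. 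Thus $\bb Z(j)^\sub{mot}(V) \simeq \bb Z(j)^\sub{cdh}(V)$, and the degree-$\le j$ bound for the cdh-local theory on valuation rings is established in \cite{BachmannElmantoMorrow}.

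For part (3), we write the nil ideal $I \subseteq A$ as the filtered colimit of the ideals $I_\alpha := A_\alpha \cap I$, where $A_\alpha$ ranges over finitely generated $\bb F$-subalgebras of $A$; each $I_\alpha$ is a finitely generated nil ideal, hence nilpotent, and $A/I = \colim A_\alpha/I_\alpha$. By finitariness of $\bb Z(j)^\sub{mot}$ (Theorems~\ref{thm:graded-pieces}(5) and~\ref{thm:graded-pieces_charp}(4)), the fibre $\fib(\bb Z(j)^\sub{mot}(A) \to \bb Z(j)^\sub{mot}(A/I))$ is the filtered colimit of the corresponding fibres for the pairs $(A_\alpha, I_\alpha)$. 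Since cdh sheaves are nil-invariant, each such fibre agrees with $W(j)(A_\alpha, I_\alpha)$, which is supported in degrees $\le j$ by Proposition~\ref{prop_of_W(j)}(3); taking the filtered colimit and using the long exact sequence with $\bb Z(j)^\sub{mot}(A/I)$ yields the result.

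For parts (4) and (5), we combine Theorem~\ref{theorem_Weibel_vanishing} (motivic Soul\'e--Weibel vanishing, giving support in degrees $\le j + \dim A$) with the two Hilbert~90 vanishings, namely Corollary~\ref{corollary_Hilb_90} ($H^{j+1}_\sub{mot}(A,\bb Z(j))=0$ for any local $A$, when $j\ge 1$) and Proposition~\ref{proposition_91} ($H^{j+2}_\sub{mot}(A,\bb Z(j))=0$ for any henselian local $A$, when $j\ge 1$). For (5) with $\dim A \le 2$ these three inputs together force support in degrees $\le j$, and (4) for $j\ge 1$ is the same argument using only Hilbert~90. The main obstacle will be the $j=0$ case of (4), where one must show that $\bb Z(0)^\sub{mot}(A) = R\Gamma_\sub{cdh}(A,\bb Z)$ is concentrated in degree zero for $A$ Noetherian henselian local of dimension one; our plan is to apply pro cdh descent (Theorem~\ref{theorem_pro_cdh_descent}) to a finite proper birational cover of $A_\sub{red}$ dominating the normalization (taking $\mathfrak m$-adic completions or suitable alterations to bypass the lack of excellence) and then invoke parts (1)--(3) to reduce the problem to henselian DVRs, residue fields, and Artinian thickenings.
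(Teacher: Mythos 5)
Your treatment of parts (1), (2), (3), (5), and the $j\ge1$ case of (4) matches the paper's proof in all essentials: (1) is Néron--Popescu plus the smooth comparison and Gersten; (2) reduces to the cdh-local theory by showing the right-hand vertical maps of the fundamental squares are equivalences on valuation rings; (3) reduces by finitariness to a finitely generated (hence nilpotent) ideal and then invokes nil-invariance of the cdh column together with the bound on the relative de Rham/syntomic term (your citation of Proposition~\ref{prop_of_W(j)}(3) is exactly the paper's computation); and (4)--(5) are degree counts combining Theorem~\ref{theorem_Weibel_vanishing} with Corollary~\ref{corollary_Hilb_90} and Proposition~\ref{proposition_91}.

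The one genuine problem is your handling of the $j=0$ case of (4), which you flag as "the main obstacle" and for which you only sketch a route via pro cdh descent applied to a cover dominating the normalization. That sketch is not a proof and is shaky as stated: for a non-excellent Noetherian local domain of dimension one the normalization need not be finite, passing to the $\frak m$-adic completion changes the ring (and motivic cohomology is not insensitive to that), and pro cdh descent only controls pro-systems of infinitesimal thickenings rather than the single group $H^1_\sub{cdh}(A,\bb Z)$ you need to kill. None of this is necessary: the second clause of Corollary~\ref{corollary_Hilb_90} states precisely that $H^1_\sub{mot}(A,\bb Z(0))=0$ for any \emph{henselian} local $\bb F$-algebra $A$ (its proof combines the torsion-freeness argument with Drinfeld's theorem that $\K_{-1}(A)=0$, via Remark~\ref{remark_drinfeld_1}). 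Combined with the Soulé--Weibel bound $H^i=0$ for $i>0+\dim A=1$, this disposes of (4) for $j=0$ by exactly the same two-line argument as for $j\ge1$. With that substitution your proof is complete and coincides with the paper's.
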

\begin{proof}
(1): Using N\'eron--Popescu we reduce, by taking a filtered colimit, to the case that $A$ is essentially smooth over $\bb F$; then we apply the usual Gersten vanishing bound for classical motivic cohomology after knowing the classical comparison result, Corollary~\ref{corol_smooth_comparison}.

(2): If $A$ is a valuation ring then the canonical map $\bb Z(j)^\sub{mot}(A)\to \bb Z(j)^{\bb A}(A)$ is an equivalence since the right vertical maps of the fundamental squares of Theorems~\ref{thm:graded-pieces} and~\ref{thm:graded-pieces_charp} are equivalences. But now, $\bb Z(j)^{\bb A}$ is cdh locally left Kan extended from smooth $\bb F$-schemes by Theorem~\ref{thm:cdh}(10) and therefore $\bb Z(j)^\sub{lse}(A)\quis \bb Z(j)^{\bb A}(A)$. Finally recall once again from the Gersten bound that $\bb Z(j)^\sub{lse}$ is Zariski locally supported in cohomological degrees $\le j$.

(3): We must show that the relative motivic cohomology $\bb Z(j)^\sub{mot}(A,I)$ is supported in degrees $\le j$; by finitariness we may assume that $I$ is finitary generated, hence nilpotent. First we treat the case that $\bb F=\bb Q$. Since $\bb Z(j)^{\bb A}$ and $R\Gamma_\sub{cdh}(-,L\Omega^{<j}_{-/\bb Q})$ are cdh sheaves, they are invariant for the ideal $I$; by taking the horizontal fibres of the fundamental fibre sequence Theorem \ref{thm:graded-pieces}(3) we therefore obtain an equivalence of relative terms \[\bb Z(j)^\sub{mot}(A,I)\simeq L\Omega^{<j}_{A,I/\bb Q}[-1].\] The right side is clearly supported in degrees $\le j$, which completes the proof. Next we assume $\bb F=\bb F_p$. Then again $\bb Z(j)^\sub{cdh}$ and $L_\sub{cdh}\bb Z_p(j)^\sub{syn}$ are invariant for $I$, and so we obtain an equivalence \[\bb Z(j)^\sub{mot}(A,I)\simeq\bb Z_p(j)^\sub{syn}(A,I).\] In this case it is a non-trivial result that the right side is supported in degrees $\le j$ \cite[Theorem~5.2]{AntieauMathewMorrowNikolaus}.

(4): Combine the Soul\'e--Weibel vanishing bound Theorem~\ref{theorem_Weibel_vanishing} with Corollary \ref{corollary_Hilb_90}. Part (5) is proved in the same way, but also using Proposition \ref{proposition_91}.
\end{proof}

\begin{example}[Dimension $0$]\label{example_dim_0}
Let $j\ge0$ and let $A$ be a local $\bb F$-algebra with nilpotent maximal ideal $\frak m$. Then, as we saw in the proof of part (3) of the theorem, in characteristic zero there is an equivalence \begin{equation}\bb Z(j)^\sub{mot}(A,\frak m)\simeq L\Omega^{<j}_{A,\frak m/\bb Q}[-1]\label{eqn_cycles=differential}\end{equation} for the relative motivic cohomology, and in characteristic $p$ there is an equivalence \begin{equation}\bb Z(j)^\sub{mot}(A,\frak m)\simeq \bb Z_p(j)^\sub{syn}(A,\frak m).\label{eqn_cycles=differential2}\end{equation} These are remarkable equivalences. Indeed, $A$ has geometric weight-$j$ motivic cohomology (combine cases (1) and (3) of the theorem) so, resolving $A$ as in Remark \ref{remark_lke_as_cycles}, the left sides of (\ref{eqn_cycles=differential}) and (\ref{eqn_cycles=differential2}) admit presentations purely in terms of complexes of algebraic cycles. But the right sides are linear invariants ultimately built from differential forms. Isomorphisms between algebraic cycles and differential forms also appear in the theory of Chow groups with modulus \cite{Bloch2003, Rulling2007, RullingSaito2018}.

For example, if $A=k[x]/x^e$ where $k$ is a perfect field of characteristic $p$, then (\ref{eqn_cycles=differential2}) states that \[\bb Z(j)^\sub{mot}(A,\frak m)[1]\simeq \bb W_{ej}(k)/V^e\bb W_j(k)\] (using \cite[Theorem~1.1]{Sulyma2023} to describe the syntomic cohomology), thereby offering a cycle theoretic description of the group $\bb W_{ej}(k)/V^e\bb W_j(k)$.
\end{example}

\subsection{Zero cycles on surfaces}
In Theorem \ref{theorem_geometric_cohomology} we worked in a local context, but the main idea globalizes. Suppose that $X$ is a qcqs $\bb F$-scheme such that, for any $x\in X$, the local ring $\roi_{X,x}$ has geometric weight-$j$ motivic cohomology. Then, by checking on stalks, we see that $\bb Z(j)^\sub{mot}(X)$ is given by Zariski sheafifying $\tau^{\le j}\bb Z(j)^\sub{mot}$ on $X_\sub{zar}$; in other words, appealing to Theorem \ref{thm_lke_lej}, the canonical map \[(L_\sub{Zar}\bb Z(j)^\sub{lse})(X)\To \bb Z(j)^\sub{mot}(X)\] is an equivalence, where $\bb Z(j)^\sub{lse}$ is the left Kan extension of classical motivic cohomology from smooth $\bb F$-schemes as at the start of \S\ref{ss_cdh_local}. Similar conclusions holds with Zariski replaced by Nisnevich, if we had instead assumed that each henselian local ring $\roi_{X,x}^h$ had geometric weight-$j$ motivic cohomology. These arguments allow us to calculate the motivic cohomology of surfaces in low weights:

\begin{corollary}\label{corol_surfaces}
Let $X$ be a Noetherian $\bb F$-scheme of dimension $\le 2$ (e.g., a curve or surface, with arbitrarily bad singularities, over a field extension of $\bb F$). Then there are natural equivalences
\[\bb Z(j)^\sub{mot}(X)\simeq
\begin{cases}R\Gamma_\sub{cdh}(X,\bb Z) & j=0, \\
R\Gamma_\sub{Nis}(X,\bb G_m)[-1] & j=1,\\
(L_\sub{Nis}\tau^{\le j}\bb Z(j)^\sub{mot})(X) & j\ge2,
\end{cases}\]
and an isomorphism
\[H^4_\sub{mot}(X,\bb Z(2))\cong H^2_\sub{Nis}(X, \K_2).\] (The right side denotes Nisnevich cohomology with coefficients in the Nisnevich sheafification of $\K_2$.)
\end{corollary}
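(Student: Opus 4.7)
The plan is to treat each weight separately, after noting that throughout $X$ is Noetherian of dimension $\le 2$ so has Nisnevich cohomological dimension $\le 2$ and by Theorem \ref{theorem_Weibel_vanishing} its motivic cohomology $\bb Z(j)^\sub{mot}(X)$ is supported in degrees $\le j+2$.

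The case $j=0$ is immediate from Examples \ref{example_00} and \ref{example_0p}, which already hold for arbitrary qcqs $\bb F$-schemes without any dimension hypothesis. For $j=1$, Corollary \ref{corollary_1} provides a natural map $R\Gamma_\sub{Nis}(X,\bb G_m)[-1]\to\bb Z(1)^\sub{mot}(X)$ whose cofibre $C$ is supported in degrees $>3$. On the other hand, Weibel vanishing gives $H^i(\bb Z(1)^\sub{mot}(X))=0$ for $i>3$, and $R\Gamma_\sub{Nis}(X,\bb G_m)[-1]$ is supported in degrees $\le 3$ by the Nisnevich cohomological dimension bound. The long exact sequence in cohomology then forces $H^i(C)=0$ for every $i$, so $C\simeq0$ as desired.

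For $j\ge2$, first recall that $\bb Z(j)^\sub{mot}$ is a Nisnevich sheaf by Theorems \ref{thm:graded-pieces}(5) and \ref{thm:graded-pieces_charp}(4), so $\bb Z(j)^\sub{mot}(X)\simeq (L_\sub{Nis}\bb Z(j)^\sub{mot})(X)$. Moreover, Theorem \ref{theorem_geometric_cohomology}(5) says that every Noetherian henselian local ring of dimension $\le2$ has geometric weight-$j$ motivic cohomology, i.e.\ the stalk $\bb Z(j)^\sub{mot}(\roi_{X,x}^h)$ is supported in cohomological degrees $\le j$. Checking on stalks, the canonical map $\tau^{\le j}\bb Z(j)^\sub{mot}\to\bb Z(j)^\sub{mot}$ is therefore a Nisnevich-local equivalence on $X$, giving the desired equivalence $\bb Z(j)^\sub{mot}(X)\simeq(L_\sub{Nis}\tau^{\le j}\bb Z(j)^\sub{mot})(X)$.

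For the final isomorphism we analyse the bounded Nisnevich descent spectral sequence
\[
E_2^{a,b}=H^a_\sub{Nis}(X,\cal H^b(\bb Z(2)^\sub{mot}))\implies H^{a+b}(\bb Z(2)^\sub{mot}(X)).
\]
By the previous paragraph $\cal H^b(\bb Z(2)^\sub{mot})=0$ for $b>2$, and $H^a_\sub{Nis}(X,-)=0$ for $a>2$. Hence only the term with $a=b=2$ contributes to the diagonal $a+b=4$, giving an edge-map isomorphism $H^4_\sub{mot}(X,\bb Z(2))\cong H^2_\sub{Nis}(X,\cal H^2(\bb Z(2)^\sub{mot}))$. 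It remains to identify the coefficient sheaf with $\cal K_2$: by the singular Nesterenko--Suslin Theorem \ref{theorem_NS}, for any local $\bb F$-algebra $A$ the symbol map furnishes an isomorphism $\hat{K}^M_2(A)\xrightarrow{\cong} H^2(\bb Z(2)^\sub{mot}(A))$, and the Nisnevich sheafification of $\hat K^M_2$ agrees with that of Quillen $K_2$ (the sheafified symbol map $\hat K^M_2\to K_2$ is an isomorphism on henselian local rings by classical results on $K_2$ of local rings, e.g.\ van der Kallen and Nesterenko--Suslin in the infinite residue field case, with the improved Milnor variant handling the finite residue field case). This yields the required isomorphism $H^4_\sub{mot}(X,\bb Z(2))\cong H^2_\sub{Nis}(X,K_2)$. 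The only subtle point is the identification $\cal H^2(\bb Z(2)^\sub{mot})\cong \cal K_2$ in the presence of possibly finite residue fields, which is the main obstacle and is handled by invoking the improved Milnor variant.
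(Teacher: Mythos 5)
Your proposal is correct and follows essentially the same route as the paper: weight $0$ via Examples \ref{example_00}/\ref{example_0p}, weight $1$ via Corollary \ref{corollary_1} together with the degree bounds from Nisnevich cohomological dimension and Soul\'e--Weibel vanishing, weights $j\ge2$ via the stalkwise application of Theorem \ref{theorem_geometric_cohomology}(5), and the final isomorphism via the Nisnevich descent spectral sequence combined with the singular Nesterenko--Suslin theorem. The only (cosmetic) difference is that for the identification $\hat K_2^M\cong K_2$ on local rings the paper cites Kerz \cite[Prop.~10(3)]{Kerz2010} directly, which already covers the finite residue field case you flag as the subtle point.
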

\begin{proof}
The description of the weight zero motivic cohomology does not depend on the hypotheses on $X$ and may be founded in Examples \ref{example_00} and \ref{example_0p}.

For weight one, we appeal to Corollary \ref{corollary_1} and note that $R\Gamma_\sub{Nis}(X,\bb G_m)$ and $\bb Z(1)^\sub{mot}(X)$ are supported in degrees $\le 2$ and $\le 3$ respectively; for the cohomology of $\bb G_m$ this is because $X$ has Krull dimension $\le 2$, and for the motivic cohomology we appeal to the Soul\'e--Weibel vanishing bound, Theorem~\ref{theorem_Weibel_vanishing}.

Now let $j\ge 2$ (in fact, the following argument equally works when $j=1$). Then the canonical map 
\begin{equation}\label{eq:tau-nis}
L_\sub{Nis}\tau^{\le j}\bb Z(j)^\sub{mot}\to\bb Z(j)^\sub{mot}
\end{equation} of Nisnevich sheaves is an equivalence at all points of $X_\sub{Nis}$ by Theorem \ref{theorem_geometric_cohomology}(5) (which, notably, uses the Soul\'e--Weibel vanishing bound), hence is an equivalence when evaluated on $X$.

Let us now prove the last statement. We write $\cal H^j(\bb Z(j)^\sub{mot}_X)$ for the sheafification on $X_\sub{Nis}$ of $X_\sub{Nis}\ni U\mapsto H^j_\sub{mot}(U,\bb Z(j)e)$. By the equivalence of~\eqref{eq:tau-nis} and the fact that $X$ has Nisnevich cohomological dimension $\le 2$, there is a natural edge map isomorphism \[H^{j+2}_\sub{mot}(X,\bb Z(j))\cong H^2_\sub{Nis}(X, \cal H^j(\bb Z(j)^\sub{mot}_X)).\] But the singular Nesterenko--Suslin Theorem~\ref{theorem_NS} defines a symbol isomorphism $\hat{K}_j^M\isoto \cal H^j(\bb Z(j)^\sub{mot}_X))$ where $\hat{K}_j^M$ is the Nisnevich sheaf of improved Milnor $K$-groups on $X$; in case $j=2$ we moreover have $\hat{K}_2^M\isoto K_2$ \cite[Proposition~10(3)]{Kerz2010}, completing the proof.
\end{proof}

We are very grateful to F.~Binda for help with the following proof:

\begin{theorem}\label{thm:lw-comparison}
Let $X$ be a reduced, equi-dimensional, quasi-projective surface over a field $k$; then there is a natural isomorphism \[H^4_\sub{mot}(X,\bb Z(2))\cong \mathrm{CH}_0(X),\] where $\mathrm{CH}_0(X)$ denotes the lci Chow group of zero cycles\footnote{If $k$ is infinite then this is isomorphic to the older Levine--Weibel Chow group of zero cycles $\mathrm{CH}_0^\sub{LW}(X)$.} of \cite{BindaKrishna2018}.
\end{theorem}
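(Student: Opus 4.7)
The proof plan is short because almost all the hard work has already been carried out. As a first step, I would apply Corollary~\ref{corol_surfaces}, which applies to the Noetherian $\bb F$-scheme $X$ (where $\bb F$ is the prime field contained in $k$) of dimension~$\le 2$, to get a natural isomorphism
\[
H^4_\sub{mot}(X,\bb Z(2)) \;\cong\; H^2_\sub{Nis}(X, K_2).
\]
Here the right side is Nisnevich cohomology of the sheafification of $K_2$. This reduction packages together the Soul\'e--Weibel vanishing bound of Theorem~\ref{theorem_Weibel_vanishing}, the Nisnevich-local control of $\bb Z(2)^{\mot}$ in cohomological degrees $\le 2$ established in Corollary~\ref{cor:lke-hloc}, and the singular Nesterenko--Suslin isomorphism (Theorem~\ref{theorem_NS}), combined with Kerz's identification $\widehat K_2^M\cong K_2$ of improved Milnor $K_2$ with Quillen $K_2$ \cite[Prop.~10(3)]{Kerz2010}.

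The second step is to identify $H^2_\sub{Nis}(X, K_2)$ with the lci Chow group of zero cycles $\mathrm{CH}_0(X)$. This is a version of Bloch's formula in the singular setting and is precisely what the work of Binda--Krishna \cite{BindaKrishna2018, BindaKrishna2022} provides: under the hypotheses of the theorem, they construct a natural ``cycle class'' isomorphism $\mathrm{CH}_0(X)\isoto H^2_\sub{Nis}(X, K_2)$, by resolving the Nisnevich sheaf $K_2$ via a Gersten-type complex on $X$ and relating the resulting quotient at the top level to the rational-equivalence relations defining the lci Chow group. I would simply cite this comparison to conclude.

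The one point deserving of attention---though still contained in the cited references---is the compatibility of naturality: one must check that the isomorphism of the first step composed with the Binda--Krishna isomorphism coincides (up to sign) with the natural symbol-theoretic map that one expects. Both sides are characterized by the fact that on the smooth locus $X^\sub{sm}\subseteq X$ they recover the classical Bloch formula $\mathrm{CH}_0(X^\sub{sm})\cong H^2_\sub{Nis}(X^\sub{sm},K_2)$, and both sides transform naturally along the open immersion $X^\sub{sm}\into X$, along proper birational maps $X'\to X$ (where cycles push forward and motivic cohomology satisfies pro cdh descent, Theorem~\ref{theorem_pro_cdh_descent}), and along finite pushforwards from closed curves; so the agreement of the two identifications follows from a density/rigidity argument at the generic points of the singular locus. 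I do not anticipate any genuine obstacle here, so the main content of the theorem really lies in Corollary~\ref{corol_surfaces} and the cited Binda--Krishna isomorphism.
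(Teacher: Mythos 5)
Your first step coincides with the paper's: Corollary~\ref{corol_surfaces} reduces everything to producing a natural isomorphism $\mathrm{CH}_0(X)\cong H^2_\sub{Nis}(X,K_2)$. The gap is in your second step, where you cite this Nisnevich Bloch--Quillen formula for singular surfaces as if it were available off the shelf in Binda--Krishna. It is not, and the paper explicitly flags this (``the precise form we need does not quite explicitly appear in the papers''). What the literature actually provides is: (a) for $k$ \emph{infinite}, an isomorphism $\mathrm{CH}_0(X)\isoto H^2_\sub{Zar}(X,K_2)$ -- Zariski, not Nisnevich -- from Binda--Krishna--Saito; and (b) over any field, injectivity of the cycle class map $\mathrm{CH}_0(X)\to \K_0(X)$. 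To get the Nisnevich statement from (a) one must additionally prove that the change-of-topology map $H^2_\sub{Zar}(X,K_2)\to H^2_\sub{Nis}(X,K_2)$ is an isomorphism; the paper notes that no reference exists and proves it by comparing the Zariski and Nisnevich descent spectral sequences for $\K$-theory of the surface, checking in particular that the $d_2$-differentials out of $H^0(X,\bb G_m)$ vanish. Moreover, for finite $k$ the input (a) is unavailable, and the paper runs a separate argument: injectivity of the cycle class map from (b), surjectivity of the Nisnevich cycle class map into $H^2_\sub{Nis}(X,K_2^M)$ from Kato--Saito (using perfectness of the base field), and surjectivity of $H^2_\sub{Nis}(X,K_2^M)\to H^2_\sub{Nis}(X,K_2)$ for cohomological-dimension reasons. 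None of this is in your proposal.

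Your fallback ``density/rigidity'' argument for matching the two identifications also does not substitute for the missing step: restriction to the smooth locus does not control $H^2_\sub{Nis}(X,K_2)$ of a singular surface (classes supported over the singular locus can survive in degree $2$), and naturality along proper birational maps and finite pushforwards is not enough to pin down an isomorphism between the two groups. The paper sidesteps this entirely by defining the comparison $\xi_X$ as an explicit composite in a commutative diagram of cycle class and change-of-topology maps and proving directly that this specific map is bijective.
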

\begin{proof}
In light of the final isomorphism of Corollary \ref{corol_surfaces}, we must produce a natural isomorphism $\mathrm{CH}_0(X)\cong H^2_\sub{Nis}(X,\K_2)$. Such Bloch--Quillen formulae for singular surfaces are due to Levine \cite{Levine1985} and Binda--Krishna--Saito \cite{BindaKrishnaSaito2023}; since the precise form we need does not quite explicitly appear in the papers, we provide the extra details.

Firstly, citing from \cite[Lemma~3.4]{BindaKrishnaSaito2023} and the reference therein to \cite[Lemma~3.2]{GuptaKrishna2020}, there is a commutative diagram
\[\xymatrix{
H^2_\sub{Zar}(X, \K_2)\ar[r]^{\lambda_X'} & H^2_\sub{Nis}(X, \K_2)\ar@/^20mm/[dd]^{\sub{edge}}\\
H^2_\sub{Zar}(X, \K_2^M)\ar[u]\ar[r]^{\lambda_X} & H^2_\sub{Nis}(X, \K_2^M)\ar[d]^{\gamma_X}\ar[u]\ar[d]\\
\mathrm{CH}_0(X)\ar[u]^{\rho_X}\ar[r]_{\sub{cyc}_X} &\K_0(X)
}\]
where
\begin{itemize}
\item $\text{cyc}_X$ and $\rho_X$ are cycle class maps from the lci Chow group; 
\item $\lambda_X$ and $\lambda_X'$ are change of topology maps; 
\item the two vertical maps at the top are induced by the canonical map $\K_2^M\to \K_2$; 
\item the edge map is the edge map in the Nisnevich descent spectral sequence; and $\gamma_X$ is defined to make the curvy triangle commute. 
\end{itemize}

We will explain that the cycle class map $\xi_X:\mathrm{CH}_0(X)\to H^2_\sub{Nis}(X, \K_2)$, defined to be the composite from the bottom left to the top right of the diagram, is an isomorphism.

We first treat the case that $k$ is finite (or more generally perfect). According to \cite[Theorem~8.1]{BindaKrishnaSaito2023}, the cycle class maps $\text{cyc}_X$ is injective (this does not require the hypothesis on $k$); so $\xi_X$ is also injective. Furthermore, according to \cite[Theorem~2.5]{KatoSaito1986}, the Nisnevich cycle class map $\lambda_X\rho_X$ is surjective (this does require the hypothesis on $k$, as it means that the regular locus $X_\sub{reg}$ is ``nice'' in the terminology of [op.~cit.]). Finally note that $H^2_\sub{Nis}(X, \K_2^M)\to H^2_\sub{Nis}(X, \K_2)$ is surjective, because $X_\sub{Nis}$ has cohomological dimension $2$ and the map $ \K_2^M\to  \K_2$ is Nisnevich-locally surjective. The last two sentences show $\xi_X$ is surjective, completing the proof in this case.

Next we treat the case that $k$ is infinite. Then the cycle class map $\mathrm{CH}_0(X)\to H^2_\sub{Nis}(X, \K_2)$ (i.e., bottom left to top left of the diagram) is an isomorphism by \cite[Corollary~7.8]{BindaKrishnaSaito2023}. It remains to show that $\lambda_X'$ is an isomorphism (which does not require the hypothesis on $k$); this is well-known to experts but we could not find a reference. This isomorphism is proved by comparing the Zariski descent spectral sequence $E_2^{ij}=H^i_\sub{Zar}(X, \K_{-j})\Rightarrow \K_{-i-j}(X)$ to the analogous Nisnevich descent spectral sequence, as follows. Both spectral sequences are supported in columns $i=0,1,2$ since $X_\sub{Zar}$ and $X_\sub{Nis}$ have cohomological dimension $2$; therefore the only non-zero differentials $\partial$ are on the first page, from the $0^\sub{th}$ column to the $2^\sub{ed}$ column, and so the abutement filtrations on $\K_0(X)$ are described via a commutative diagram
\[\xymatrix{
&&&H^0_\sub{Zar}(X,\bb Z)&\\
H^0_\sub{Zar}(X,\bb G_m)\ar[r]^{\partial} & H^2_\sub{Zar}(X, \K_2)\ar[r]^{-\sub{edge}} & \K_0(X) \ar[r] & \K_0(X)/\text{edge}(H^2_\sub{Zar}(X, \K_2))\ar[r]\ar[u]&0\\
&&&H^1_\sub{Zar}(X,\bb G_m)\ar[u]&\\
&&&0\ar[u]&\\
}\]
and similarly replacing Zar by Nis everywhere. The Zariski diagram maps to the Nisnevich one, involving in particular the map $\lambda_X'$, and one sees from a diagram chase that $\lambda_X'$ being an isomorphism follows from the following standard facts:
\begin{enumerate}
\item $H^0_\sub{Zar}(X,\bb Z)\to H^0_\sub{Nis}(X,\bb Z)$ is injective (it is even an isomorphism);
\item $H^1_\sub{Zar}(X,\bb G_m)\to H^1_\sub{Nis}(X,\bb G_m)$ is injective (it is even an isomorphism);
\item the boundary maps $\partial$ in both the Zariski and Nisnevich diagrams is zero because $\bb G_m$ is representable by a one-dimensional scheme: more precisely, given $f\in H^0_\sub{Zar}(X,\bb G_m)$ (resp.~Nisnevich), let $X\to \Spec(k[t^{\pm1}])$ be the induced map; then the analogous boundary map in the Zariski (resp.~Nisnevich) descent spectral sequence for $\Spec(k[t^{\pm1}])$ is zero, simply because $H^2_\sub{Zar}(\Spec(k[t^{\pm1}]),\K_2)=0$ (resp.~Nisnevich) for dimensional reasons; so by functoriality we deduce $\partial(f)=0$, as desired.
\end{enumerate}
This completes the proof.
\end{proof}

\begin{remark}[Zero cycles]\label{rem:zcyc}
The argument at the end of the proof of Corollary \ref{corol_surfaces} shows, for any $j\ge0$ and any qcqs $\bb F$-scheme $X$ of Krull dimension $\le d$, that there is a natural map \[H^d_\sub{Nis}(X,\hat{\K}_j^M) \cong H^{j+d}(L_{\Nis}\tau^{\leq j}\Z(j)^{\mot}(X)) \To H^{j+d}_\sub{mot}(X,\bb Z(j)).\]  Taking $j=d=\text{dim}(X)$, we hope that $H^{2d}_\sub{mot}(X,\bb Z(d))$ provides a ``good'' group of zero cycles on $X$. This point of view will be explored further elsewhere.
\end{remark}

\appendix

\section{The $\cdh$-sheafification of an \'etale sheaf}\label{app:cdh}
In this appendix we prove a technical result about \'eh sheafification. It states that, under certain hypotheses, the cdh sheafification of an \'etale sheaf is still an \'etale sheaf and therefore  in fact an \'eh sheaf. It is a crucial input into controlling our mod-$p$ motivic cohomology in characteristic $p$.

We begin by recalling the definition:

\begin{definition}\label{def:eh} 
Letting $B$ be a commutative ring, the \emph{$\eh$ topology} on $\textrm{Sch}_B^\sub{qcqs}$ is the Grothdendieck topology generated by abstract blowup squares and \'etale covers. (As in \S\ref{ss_cdh_local}, our convention for abstract blowup squares is that the proper map and closed embedding are assumed to be finitely presented.)
\end{definition}

It is thus relatively formal that a presheaf $\cal F:\textrm{Sch}_B^\sub{qcqs,op}\to \textrm{Sp}$ is an \'eh sheaf if and only if it is both a cdh and \'etale sheaf; in particular, assuming $\cal F$ is an \'etale sheaf, then it is an \'eh sheaf if and only if it sends abstract blowup squares to cartesian squares. See \cite[Proposition 2.8]{BachmannElmantoMorrow} for further details of the argument and references.

\begin{remark}\label{rem:eh}
The $\eh$ topology is finer than the $\cdh$ topology but coarser than the $\h$ topology; the latter can be defined as the topology generated by abstract blowup squares and fppf covers \cite[\S2.2.1]{BachmannElmantoMorrow}. It is an insight of Geisser, when defining his arithmetic cohomology of finite type separated schemes over finite fields, that the $\eh$ topology is better suited to capturing mod-$p$ information than the $h$ topology \cite[Page 30, Remark]{Geisser2006}.

The points of the $\eh$ topology are given by strictly henselian valuation rings.
\end{remark}

We will need the following structural result about algebras over henselian valuation rings:

\begin{lemma}[Nagata's Hensel lemma]\label{lem:nagata}
Let $V$ be a henselian valuation ring and $A$ a finite \'etale $V$-algebra. Then $A$ is a finite product of henselian valuation rings.
\end{lemma}
\begin{proof}
We recall a modern proof which we learned from \cite[Corollary~2.15]{HuberKelly2018}. A ring $R$ is said to have weak dimension $\le 1$ if every submodule of every flat $R$-module is also flat over $R$; equivalently, $R_\frak p$ is a valuation ring for every prime ideal $\frak p\subset R$ \cite[Tag 092A]{Stacks}. The \'etale map $V\to A$ is in particular weakly \'etale (i.e., $V\to A$ and $A\otimes_VA\to A$ are both flat), so that $V$ having weak dimension $\le 1$ implies the same about $A$; this shows that $A_\frak p$ is a valuation ring for every prime ideal $\frak p\subset A$.

But $A$ is finite over a henselian local ring, so it is a finite product of henselian local rings \cite[Tag 04GH]{Stacks}, each of which is the localisation at $A$ at some prime ideal. These localisations are valuation rings by the previous paragraph, completing the proof.
\end{proof}

The following is the main theorem of the appendix:

\begin{theorem} \label{theorem:eh}
Let $B$ be a commutative ring and $\cal F$ a finitary \'etale sheaf on $\Sch_B^{\qcqs}$ valued in $\Sp$ or $\mathrm{D}(\bb Z)$; assume that its cdh sheafification $L_\sub{cdh}\cal F$ takes coconnective values. Then $L_{\cdh}\scr F$ is an \'eh sheaf and the canonical map $L_\sub{cdh}\cal F\to L_\sub{\'eh}\cal F$ is an equivalence.
\end{theorem}
\begin{proof}
It suffices to prove that $L_{\cdh}\scr F$ satisfies \'etale descent (indeed, we have already noted above that being an \'eh sheaf is equivalent to being a sheaf for both the cdh and \'etale topologies, and we will then have $L_{\cdh}\scr F \simeq L_\sub{\'eh}L_{\cdh} \scr F\simeq L_\sub{\'eh}\cal F$, where the final equivalence follows from the fact that the \'eh topology is finer than the cdh topology). Since $\cdh$ sheaves are in particular Nisnevich sheaves, this reduces to the special case of finite \'etale descent  by \cite[Theorem~B.6.4.1]{LurieSAG} \cite[Remark 4.34]{ClausenMathew2021}. So let $Y \to X$ be a finite \'etale cover of qcqs $B$-schemes, and let $C^{\bullet}_X(Y)$ be the corresponding \v{C}ech nerve; our goal is to prove that the map $L_{\cdh}\scr F(X) \rightarrow \lim_{\Delta} L_{\cdh}\scr F(C^{\bullet}_X(Y))$ in $\cal C$ is an equivalence. We define a presheaf
\[
\scr G: \Sch_X^\sub{qcqs,op} \rightarrow \cal C \qquad U \mapsto \mathrm{fib}( L_{\cdh}\scr F(U) \rightarrow \lim_{\Delta} L_{\cdh}\scr F(C^{\bullet}_X(Y)\times_X U)).
\]
and will show that $\scr G \simeq 0$; evaluating at $X$ will then complete the proof. Note that $C^{\bullet}_X(Y)\times_X U$ is the \v{C}ech nerve of the cover $Y\times_X U\to U$, and that $\cal G$ is a cdh sheaf since sheaves are closed under limits.

We note that $\cal G$ is finitary: indeed, $L_\sub{cdh}\cal F$ is finitary by \cite[Proposition 2.15(2)]{BachmannElmantoMorrow} and takes values in coconnective spectra/complexes, where  totalisations commute with filtered colimits (for a proof of this commutation in the generality of $\infty$-categories compactly generated by cotruncated objects, see \cite[Lemma~3.1.7]{ElmantoHoyoisIwasaKelly2021}). The latter citation also proves that $\cal G$ is hypercomplete, since it takes values in $1$-coconnective spectra/complexes.

To prove that $\cal G$ vanishes, it is therefore enough to show that $\cal G(V)=0$ for all henselian valuation rings $V$ with a map $\Spec(V)\to X$. Then each term $C_X^n(Y)\times_XV$ appearing in the \v{C}ech diagram is the spectrum of a finite \'etale $V$-algebra, which is therefore a disjoint union of finitely many spectra of henselian valuation rings by Lemma \ref{lem:nagata}. Since all these henselian valuation rings are points for the cdh topology, we see that in the definition of $\cal G(V)$ we may remove all occurrances of $L_\sub{cdh}$, i.e.,
\[
\scr G(V)\simeq \mathrm{fib}(\scr F(V) \rightarrow \lim_{\Delta} \scr F(C^{\bullet}_X(Y)\times_X V)).
\]
The arrow is an equivalence since $\cal F$ was assumed to be an \'etale sheaf, and so indeed $\cal G(V)\simeq 0$ as required to complete the proof. 
\end{proof}

\begin{remark}
Let $V$ be a finite rank valuation ring under $B$ and $V_\sub{w\'et}$ its category of weakly \'etale $V$-schemes. The previous proofs imply the canonical map \[\scr F|_{V_\sub{w\'et}}\To (L_{\cdh}\scr F)|_{V_\sub{w\'et}}\] of presheaves on $V_\sub{w\'et}$ is an equivalence.

We also record a warning: the order of sheafifications in Theorem \ref{theorem:eh} is crucial. The canonical map $L_\sub{\'et}L_\sub{cdh}\cal F\to L_\sub{\'eh}\cal F$ need not be an equivalence, because the \'etale sheafification of a cdh sheaf need not be a cdh sheaf.
\end{remark}

\begin{example}
For each $j\ge0$, the presheaves \[R\Gamma_\sub{cdh}(-,L\Omega^{< j}_{-/\bb Q}),\quad R\Gamma_\sub{cdh}(-,\hat{L\Omega}^{\ge j}_{-/\bb Q}):\Sch_\bb Q^\sub{qcqs,op}\to\mathrm{D}(\bb Z)\] defined in Remark \ref{rem_cdh_local_HKR} are \'etale sheaves.

Indeed, we have a fibre sequence of \'etale sheaves \[R\Gamma(-,L\Omega^{< j}_{-/\bb Q})\To R\Gamma_\sub{cdh}(-,\hat{L\Omega}_{-/\bb Q})\To R\Gamma_\sub{cdh}(-,\hat{L\Omega}_{-/\bb Q}^{\ge j}),\] where the middle term is already a cdh sheaf by Lemma \ref{lemma_cdh_descent_HP}. So it is enough to prove that the cdh sheafification of $R\Gamma(-,L\Omega^{< j}_{-/\bb Q})$ is an \'etale sheaf. But that follows from Theorem \ref{theorem:eh}, since its sheafification $R\Gamma_\sub{cdh}(-,L\Omega^{< j}_{-/\bb Q})$ agrees with $R\Gamma(-,\Omega^{< j}_{-/\bb Q})$ (as in the proof of Theorem \ref{thm:graded-pieces}(3)), which is coconnective.
\end{example}

\begin{example}
For each $j\ge0$, the presheaf \[L_\sub{cdh}\bb Z_p(j)^\sub{syn}:\Sch_{\bb F_p}^\sub{qcqs,op}\to\mathrm{D}(\bb Z)\] of Remark \ref{rem:cdh-local} is an \'eh sheaf. Indeed firstly, after inverting $p$, the proof of Lemma~\ref{lemma_Qpsyn} showed that $L_\sub{cdh}\bb Q_p(j)^\sub{syn}\simeq \bb Q_p(j)^\sub{syn}$ is a direct summand of $\TC[\tfrac{1}{p}]$; the latter has finite flat descent thanks to transfers, whence $L_\sub{cdh}\bb Q_p(j)^\sub{syn}$ also has finite flat descent and so is an h-sheaf \cite[Remark 2.11]{BachmannElmantoMorrow}, in particular an \'eh sheaf. Secondly, modulo $p$, it follows from Theorem~\ref{theorem:eh} that $L_\sub{cdh}\bb F_p(j)^\sub{syn}$ is an \'eh sheaf, as already recorded in Remark \ref{remark_eh}.
\end{example}

\section{Adams operators}\label{app:chw}
In this appendix we discuss Adams operators $\psi^m$ on various flavours of $K$-theory and negative cyclic homology, and establish a certain compatibility between them in the case of smooth schemes over a fixed base of characteristic zero (Corollary~\ref{corol:key-q}). This machinary is used, and the aforementioned compatibility generalised to qcqs $\bb Q$-schemes, in Theorem \ref{thm:adams}, where we construct Adams operators on $K$-theory which rationally split our motivic filtration in characteristic zero.

For the remainder of the appendix we fix an integer $m\in\bb Z\setminus\{0\}$ and discuss the Adams operator $\psi^m$; we do not worry about commutation of Adams operators associated to different choices of $m$, as it is not necessary for our applications.

\begin{remark}[Multiplication-by-$m^\star$]\label{rem:mult_by_m}
On associated graded objects, the Adams operator in each context will be multiplicatively homotopic to the map {\em multiplication-by-$m^\star$}. For a highly structured definition of this endomorphism in the context of $\bb E_\infty$-algebras in any $\bb Z$-linear presentably symmetric monoidal $\infty$-category, we refer the reader to Raksit \cite[Construction 6.4.9 \& Remark 6.4.10]{raksit-hkr}.

We will also refer to the multiplication-by-$m^\star$ endomorphism in the case of presheaves of complexes on qcqs schemes, which is not presentable; but there we simply define it as being induced by multiplication-by-$m^\star$ pointwise on the presheaf.

Meanwhile, in the case of K-theory (or $\KH$) equipped with some sort of motivic/slice filtration $\Fil$, we will already know in each context that $\gr^0\K$ admits a natural (and often unique) $\bb E_\infty$-$\bb Z$-algebra structure. This structure defines the multiplication-by-$m^\star$ endomorphism on $\gr^\star K$.
\end{remark}

\subsection{$K$-theory of smooth varieties via motivic homotopy theory}\label{ss_Adams_on_KH}
For any qcqs scheme $X$, Bachmann--Hopkins \cite[\S3.3]{BachmannHopkins2020} define an Adams operator $\psi^m$ as an endomorphism of the motivic ring spectrum $\KGL_X[\tfrac1m]\in\SH(X)$. By functoriality of the slice filtration, $\psi^m$ respects the slice filtration or rather, more precisely, it naturally upgrades to an endomorphism $\psi^m$ of $\Fil^\star_\sub{slice}\KGL_X[\tfrac1m]$ as an $\bb E_\infty$-algebra in $\Fil\SH(X)$. This Adams operator is moreover natural in $X$ as Bachmann--Hopkins' construction really defines an endomorphism of $\KGL[\tfrac1m]$ as an absolute motivic ring spectrum. Applying $\omega^\infty$ we obtain an induced Adams operator
\begin{equation}\psi^m:\Fil^\star_\bb A\KH[\tfrac1m]\To \Fil^\star_\bb A\KH[\tfrac1m]\label{eqn:BH_adams}\end{equation}
of presheaves of filtered $\bb E_\infty$-algebras on qcqs schemes, where $\Fil^\star_\bb A$ refers to the $\bb A^1$-motivic filtration on $\KH$ as in Theorem \ref{ss_cdh_local}(1).

Ignoring multiplicative structure, Bachmann--Hopkins' Adams operator on $\KGL_X[\tfrac1m]$ is homotopic to the one defined by Riou \cite[Definition 5.3.2]{Riou2007} \cite[Proposition 3.12]{BachmannHopkins2020} (although Riou often restricts attention to regular Noetherian separated schemes, he also explicitly observes that his Adams operators are pulled back from $\SH(\bb Z)$; so one can pull them back to $\SH(X)$ even for the arbitrary qcqs scheme $X$). It follows that if $X$ is regular Noetherian and has an ample family of line bundles (so that Thomason--Trobaugh and Quillen K-theory coincide), then the Adams operator on $\K_n(X)[\tfrac1m]$ induced by \eqref{eqn:BH_adams} agrees with that of Hiller \cite{Hiller1981}, Kratzer \cite{Kratzer1980}, and Soul\'e \cite{Soule1985}; a proof of this agreement may be found in our joint work with Bachmann \cite[Theorem 4.48(4)]{BachmannElmantoMorrow} (where we worked rationally, but it would have been enough to invert $m$).

In particular, fixing a field $k$ and restricting attention to smooth $k$-schemes we obtain an endomorphism $\psi^m$ of $K$-theory with its motivic filtration $\Fil^\star_\bb A\K[\tfrac1m]$ as a presheaf of filtered $\bb E_\infty$-algebras on smooth $k$-schemes. We could not find a proof of the following result in the literature (since we care about multiplicative structure and only invert $m$ rather than rationalising):

\begin{lemma}\label{lemma_adams_on_smooth}
The induced endomorphism $\psi^m$ of $\gr^\star_\bb A\K[\tfrac1m]\simeq \bb Z(\star)^\bb A[2\star]$, as a presheaf of $\bb E_\infty$-algebras in graded complexes on smooth $k$-schemes, is homotopic to multiplication-by-$m^\star$.
\end{lemma}
\begin{proof}
In fact we prove the stronger result that the endomorphism $\psi^m$ of $s^*\KGL_k[\tfrac1m]:=\gr^\star_\sub{slice}\KGL_k[\tfrac1m]$, as a graded $\bb E_\infty$-algebra in $\SH(k)$, is uniquely homotopic to multiplication-by-$m^\star$.\footnote{Having the final paragraph of Remark \ref{rem:mult_by_m} in mind, the rank map from $K$-theory equips $\bb Z(0)^\bb A$ with the structure of a presheaf on $\Sm_k$ valued in $\bb E_\infty$-algebras in $\mathrm{D}(\bb Z)$ \cite[Construction 4.35 \& Remark 4.37]{BachmannElmantoMorrow}; applying the composition \[\Shv_{\sub{Nis},\bb A^1}(\Sm_k,\mathrm{D}(\bb Z))\to \Shv_{\sub{Nis},\bb A^1}(\Sm_k,\Sp)\xto{\sigma^\infty}\SH(k)\] then equips $\mathrm{Mod}_{s^0\KGL_k}(\SH(k))$ with a $\bb Z$-linear structure, so that the multiplication-by-$m^\star$ endomorphism of $s^\star\KGL_k$ is well-defined.} The desired result is then obtained by applying $\omega^\infty$. The idea of argument is to reduce to the analogous result rationally without multiplicative structure, where it is well-known.

We first use the result of Levine and Voevodsky \cite{Levine2006, Voevodsky2004} that the unit map $1_k\to\KGL_k$ in $\SH(k)$ induces an equivalence $s^0(1_k)\quis s^0(\KGL_k)$. Therefore $\bb E_\infty$-algebra endomorphisms of $s^0\KGL_k[\tfrac1m]$ are the same as $\bb E_\infty$-algebra maps $1_k\to s^0\KGL_k[\tfrac1m]$, of which there is only one since $1_k$ is the unit; more precisely, this shows that the space of endomorphisms $\mathrm{End}_{\CAlg(\SH(k))}(s^0\KGL_k[\tfrac1m])$ is contractible. So $\psi^m$ necessarily acts as the identity on $s^0\KGL_k[\tfrac1m]$ and therefore its action on $s^\star\KGL_k[\tfrac1m]$ is one of graded $\bb E_\infty$-$s^0\KGL_k$-algebras. 

We now compute and compare the spaces of endomorphisms of $s^\star\KGL_k[\tfrac1m]$, either as a graded $\bb E_\infty$-$s^0\KGL_k$-algebra or as a graded $s^0\KGL_k$-module:
\begin{equation}
\mathrm{End}_{\CAlg(\Gr\cal M)}(s^\star\KGL_k[\tfrac1m])\To \mathrm{End}_{\Gr\cal M}(s^\star\KGL_k[\tfrac1m])=\prod_{j\in\bb Z}\mathrm{End}_\cal M(s^j\KGL[\tfrac1m])
\label{eqn_end_compare}\end{equation}
where $\cal M$ denotes $s^0\KGL$-modules in $\SH(k)$. We claim that all the spaces of endomorphisms in \eqref{eqn_end_compare} are discrete and that the arrow is injective. Applying Lemma \ref{lem:algebra_vs_linear_maps} below (with $\cal C=\cal M$ and $A=B=s^\star\KGL_k[\tfrac1m]$) it is enough to show that the spaces $\Map_{\Gr\cal M}((s^\star\KGL_k[\tfrac1m])^{\otimes m},s^\star\KGL_k[\tfrac1m])$ are discrete for all $m\ge0$; expanding the $^{\otimes m}$, this space of maps can be rewritten as \[\prod_{n\in\bb Z}\prod_{\substack{i_1,\dots,i_m\in\bb Z \\ i_1+\cdots+i_m=n}}\Map_{\cal M}(s^{i_1}\KGL_k[\tfrac1m] \otimes \cdots \otimes s^{i_m}\KGL_k[\tfrac1m], s^{n}\KGL_k[\tfrac1m]).\] But by Bott periodicity and invertibility of the Tate motive, the mapping spaces in the previous line are all equivalent to $\mathrm{End}_\cal M(s^0\KGL_k[\tfrac1k])$, which in turn is equivalent to $\Map_{\SH(k)}(1_k,s^0\KGL_k[\tfrac1m])$, which is the underlying space of the spectrum $\map_{\SH(k)}(1_k,s^0\KGL_k[\tfrac1m])\simeq \bb Z(0)^\bb A(k)[\tfrac1m]$, which equals\footnote{The fact that weight-$0$ motivic cohomology of smooth $k$-schemes, defined in terms of $s^0\KGL_k$, is given by $R\Gamma_\sub{Zar}(-,\bb Z)$ is far from formal: any proof probably uses Levine's equivalence $s^0\KGL_k\simeq s^01_k$ and his description of the latter in terms of Bloch's cycle complex (and then it is formal that $z^0(\Spec k,\bullet)=\bb Z$).} $R\Gamma_\sub{Zar}(k,\bb Z)[\tfrac1m]=\bb Z[\tfrac1m]$. This completes the proof of the claim that both sides of \eqref{eqn_end_compare} are discrete and the map injective.

We can repeat the argument of the previous paragraph rationally instead of only inverting $m$. Since the canonical map $\bb Z[\tfrac1m]\to\bb Q$ is injective, the conclusion is that the map \[\mathrm{End}_{\CAlg(\Gr\cal M)}(s^\star\KGL_k[\tfrac1m])\To \mathrm{End}_{\Gr\cal M}(s^\star(\KGL_k)_\bb Q),\] by forgetting the algebra structure and rationalising, is an injection of discrete spaces.

In conclusion, to complete the proof it is enough to show, for each $j\in\bb Z$, that $\psi^m$ acts on $s^j(\KGL_k)_\bb Q$ as multiplication by $m^j$ for each $j\in\bb Z$. But then $s^\star(\KGL_k)_\bb Q$ is the free graded Laurent $\bb E_\infty$-$s^0(\KGL_k)_\bb Q$-algebra on the Tate motive in degree $1$, and $\psi^m$ acts on the Tate motive as multiplication by $m$ by definition.
\end{proof}

The following technical lemma (which is surely well-known to experts but which we could not find stated in the literature) was used and will be required again in the proof of Proposition \ref{prop:key-q}:

\begin{lemma}\label{lem:algebra_vs_linear_maps}
Let $\cal C$ be a presentably symmetric monoidal $\infty$-category, and let $A,B$ be $\bb E_\infty$-algebras in $\cal C$. Assume that the spaces of maps $\Map_{\cal C}(A^{\otimes m},B)$ are discrete for all $m\ge0$. Then the space of $\bb E_\infty$-algebra maps $\Map_{\CAlg(\cal C)}(A,B)$ is discrete and the forgetful map $\Map_{\CAlg(\cal C)}(A,B)\to \Map_{\cal C}(A,B)$ is injective.
\end{lemma}
\begin{proof}
The adjunction $F:\cal C\leftrightarrows\CAlg(\cal C):U $ induces a monad $FU:\CAlg(\cal C)\to \CAlg(\cal C)$ and an augmented simplicial object $(FU)^{\bullet+1}(A)\to A$ in $\CAlg(\cal C)$. This is a colimit diagram because $U$ preserves geometric realisations \cite[Corollary 3.2.3.2]{LurieHA}, is conservative, and the augmented simplicial object $U(FU)^{\bullet+1}(A)\to U(A)$ is split, so a colimit diagram. Taking mapping spaces therefore yields an equivalence \begin{equation}\Map_{\CAlg(\cal C)}(A,B)\quis\lim_{n\in\Delta}\Map_{\CAlg(\cal C)}((FU)^{n+1}(A),B).\label{eqn:mapping_space}\end{equation} We claim by induction on $n\ge0$ that the mapping spaces on the right side are all discrete; in fact we claim more, namely that $\Map_{\cal C}((U(FU)^{n}(A))^{\otimes m},U(B))$ is discrete for all $m\ge1$ and $n\ge 0$ (the case $m=1$ being the original weaker claim).

The case $n=0$ of the stronger claim is precisely our hypothesis, so it is enough to prove the following inductive step: given an $\bb E_\infty$-algebra $C\in \cal C$, if the spaces $\Map_{\cal C}(U(C)^{\otimes m},U(B))$ are discrete for all $m\ge0$ then the spaces $\Map_{\cal C}((UFU(C))^{\otimes m},U(B))$ are also discrete for all $m\ge0$. To prove this claim we first recall that, for any $X\in\cal C$, there is a natural equivalence $UF(X)\simeq \bigoplus_{i\ge0}X^{\otimes i}_{h\Sigma_i}$ \cite[Example 3.1.3.14]{LurieHA}. Applied to $X=U(C)$ we see that $(UFU(C))^{\otimes m}\simeq \bigoplus_{i_1,\dots,i_m\ge0}U(C)^{\otimes i_1}_{h\Sigma_{i_1}}\otimes\cdots\otimes U(C)^{\otimes i_m}_{h\Sigma_{i_m}}$, and then applying $\Map_\cal C(-,B)$ yields \[\Map_{\cal C}((UFU(C))^{\otimes m},B)\simeq\prod_{i_1,\dots,i_m\ge0}\Map_\cal C(U(C)^{\otimes i_1}\otimes\cdots\otimes U(C)^{\otimes i_m},B)^{h\Sigma_{i_1}\times\cdots\times h\Sigma_{i_m}}\] (here we have used the assumption that the tensor product on $\cal C$ commutes with colimits in each variable). But the mapping spaces on the right side are all discrete by assumption, whence the right side is discrete since limits of discrete spaces are discrete. This completes the proof of the inductive step, and so we deduce that all the mapping spaces on the right side of \eqref{eqn:mapping_space} are indeed discrete.

But a cosimplicial limit of discrete spaces is again discrete and just computed by the equaliser of sets at the beginning of the diagram; this shows that the forgetful map \[\Map_{\CAlg(\cal C)}(A,B)\To \Map_{\CAlg(\cal C)}(FU(A),B)=\Map_{\cal C}(U(A),U(B))\] is an inclusion of discrete spaces, as desired.
\end{proof}

\begin{remark}[General qcqs schemes]
In light of our results with Bachmann, the proof of Lemma \ref{lemma_adams_on_smooth} works with minor modifications for any qcqs scheme $X$ in place of $k$. The key inputs are that $s^0(1_X)\quis s^0\KGL_X$ in $\SH(X)$ \cite[Corollary 9.7]{BachmannElmantoMorrow} and that $\bb Z(0)^\bb A(X)$ is coconnective and torsion-free in degree $0$, which follow from the equivalence $R\Gamma_\sub{cdh}(X,\bb Z)\quis \bb Z(0)^\bb A(X)$ \cite[Theorem 9.3(2) and Corollary 9.12(1)]{BachmannElmantoMorrow}.

In particular it follows that, after passing to the associated presheaf of graded $\bb E_\infty$-algebras in complexes on qcqs schemes, the Adams operator \eqref{eqn:BH_adams} becomes homotopic to multiplication-by-$m^\star$.
\end{remark}

\subsection{Negative cyclic homology via Raksit's theorem}\label{ss_Raksit}
We now turn to Adams operators on negative cyclic homology. Although there exist classical definitions of these operators on the level of complexes (see Remark \ref{rem:discuss-adams}  for further discussion) we use Raksit's thesis \cite{raksit-hkr} as our main reference. It constructs functorial multiplicative Adams operators on negative cyclic homology at the level of filtered complexes. In this subsection we fix a discrete\footnote{With appropriate, minor, modifications the discussion of this section also applies to base animated rings as well, and even base derived schemes.} base commutative ring $k$ in which the integer $m$ is invertible.

For any qcqs $k$-scheme $X$, let $\HH(X/k)^{[n]}$ be the $S^1$-equivariant $\mathbb{E}_{\infty}$-algebra spectrum\footnote{The Hochschild complex of discrete commutative rings or, more genereally, animated rings, have a natural structure of a \emph{derived commutative algebra} as in \cite[Def.~4.2.22]{raksit-hkr} which we can forget to a $\mathbb{E}_{\infty}$-algebra. We will not make use of this richer structure explicitly, but it is used systematically in \cite{raksit-hkr} to formulate a universal property of Hochschild homology with the HKR filtration.} obtained by restricting the $S^1$-action on $\HH(X/k)$ along the $m$-power map $[m]:S^1 \rightarrow S^1$; there is a natural $\mathbb{E}_{\infty}$-algebra map $\psi^m:\HH(X/k) \rightarrow \HH(X/k)^{[m]}$ arising from the universal property of Hochschild homology \cite[Construction 6.4.3]{raksit-hkr}, and it promotes uniquely to a multiplicative, $S^1$-equivariant, filtered map
\[
\psi^n: \mathrm{Fil}^{\star}_\sub{HKR}\HH(X/k) \rightarrow \mathrm{Fil}^{\star}_\sub{HKR}\HH(X/k)^{[n]}. 
\]
by \cite[Proposition 6.4.4]{raksit-hkr}.
Passing to fixed points and applying \cite[Lemmas 6.4.5-6.4.6]{raksit-hkr}, as in \cite[Construction~6.4.7]{raksit-hkr}, yields the desired Adams operator
\begin{equation}
\psi^m: \HC^-(X/k) \To \HC^-(X/k)
\label{eqn_adams_on_HC-}\end{equation}
on negative cyclic homology, which similarly upgrades to a natural filtered multiplicative map
\begin{equation}\label{eq:filtered-hc-}
\psi^m: \mathrm{Fil}^{\star}_\sub{HKR}\HC^-(X/k) \To \mathrm{Fil}^{\star}_\sub{HKR}\HC^-(X/k)
\end{equation}
by \cite[Construction~6.4.8]{raksit-hkr}. The induced action of \eqref{eq:filtered-hc-} on $\gr^{\star}_\sub{HRK}\HC^-(X/k)$ is naturally homotopic to the map multiplication-by-$m^\star$ by \cite[Proposition~6.4.12]{raksit-hkr}.

This completes the recollection of what we need about Adams operators on Hochschild and negative cyclic homology as presheaves of $\bb E_\infty$-algebras on qcqs $k$-schemes, either with or without filtration.

\begin{remark}[Filtered circle]
A key tool behind Raksit's construction of filtered multiplicative Adams operators is his \emph{filtered circle} $S^1_{\mathrm{fil}}$, introduced in \cite[Section 6]{raksit-hkr}. The classical Adams operators are defined (Zariski locally) by tensoring the $S^1$-equivariant map $S^1 \rightarrow (S^1)^{[m]}, z \mapsto z^m$ with a commutative ring $R$, within the $\infty$-category of $\bb E_{\infty}$-$\bb Z$-algebras; this ultimately relies on the universal property of the Hochschild homology of a commutative ring as the free $S^1$-equivariant $\bb E_{\infty}$-$\bb Z$-algebra under $R$. Raksit refines this universal property by showing in \cite[Section 6.2]{raksit-hkr} that the HKR-filtered Hochschild homology is the initial $S^1_{\mathrm{fil}}$-equivariant filtered $\bb E_{\infty}$-ring under $R$. We also refer to the alternate approach to the same results by Moulinos--Robalo--Toen \cite{mrt-hkr}, who employ the language of derived algebraic geometry.
\end{remark}

\begin{remark}[Comparison with the operators of Loday \cite{Loday1992} and McCarthy \cite{McCarthy1993}] \label{rem:discuss-adams} 
We take this opportunity to clarify conflicting historical terminology surrounding operators on Hochschild homology and its variants.

Loday defined complex-level ``lambda'' and ``Adams'' operators on Hochschild homology and some of its variants, sometimes requiring that the base ring contained $\bb Q$; see \cite{Loday1989, Loday1992} and \cite[\S9]{Weibel1994} for his approach, often using the Eulerian idempotents. His operators are related by $\psi_{\sub{Loday}}^m:= (-1)^{m-1}m\lambda^m_\sub{Loday}$. Already at the level of Hochschild homology groups, Loday's so-called Adams operator is not multiplicative.

McCarthy \cite{McCarthy1993} clarified the necessary structures underlying Loday's approach and wrote down operators $\Phi^m$ (McCarthy's notation) on Hochschild homology over general base rings as a map of mixed complexes $\HH\to\HH^{[m]}$. His operators are related to Loday's via $\Phi^m=(-1)^{m-1}\lambda^m_\sub{Loday}$ and agree up to homotopy with Raksit's $\psi^m$ after forgetting any mutliplicative structure (indeed, Raksit's approach is a modern, highly structured, and filtered upgrade of McCarthy's).

In other words, Loday's ``lambda operators'' are in fact, up to sign, the modern Adams operators, which McCarthy denoted by $\Psi^m$; while Loday's so-called Adams operators are in fact a multiple of $m$ too much. Unfortunately Loday's $\psi^m_\sub{Loday}$ are often used in the older literature to define a ``Hodge decomposition'' of negative cyclic homology, which therefore differs from the modern Adams decomposition by a $\pm1$ shift in the indexing. The reader should particularly bear this in mind when looking at previous work comparing operators on $K$-theory and negative cyclic homology (see Remark \ref{rem:chw}).
\end{remark}

\subsection{$K$-theory of smooth $X$-schemes via the Annala-Iwasa theorem}\label{sec:ai-thm}
In this subsection we offer a construction of the Adams operator $\psi^m$ on $m$-inverted $K$-theory on $\Sm_X$, for any qcqs scheme $X$, using the technology of Annala--Iwasa \cite{AnnalaIwasa2023}.

\subsubsection{Spherical group rings-units adjunction}\label{sec:adj}
First we review the formalism of \cite{AndoBlumbergGepnerHopkinsRezk2014} regarding units of ring spectra.

To orient the reader who might not be topologically-minded, we review this adjunction in the context of classical algebra. To a monoid $M$ we can associate two groups: either its group completion $M^{\mathrm{gp}}$ or its subset of grouplike elements which we denote by $\mathrm{GL}_1(M)$ to suggest ``one-by-one invertible matrices.'' The formation of the former is the left adjoint of the inclusion of groups into monoids, while the latter is the right adjoint. The unit and counit of the respective adjunctions are recorded as the maps
\[
\mathrm{GL}_1(M) \xrightarrow{\rm include} M \xrightarrow{\rm canonical} M^{\mathrm{gp}}.
\]
For a commutative ring $R$, we denote by $\mathrm{GL}_1(R)=R^\times$ the group of grouplike elements of its multiplicative monoid, i.e., its group of units. This functor participates in an adjunction
\begin{equation}
\mathbb{Z}[-]: \mathrm{Ab} \rightleftarrows \mathrm{CAlg}: \mathrm{GL}_1.
\label{eqn_units_adjunction_honest}
\end{equation}
In particular the functor $\mathrm{GL}_1$ is representable, namely $\mathrm{GL}_1(R)=\Hom_{\mathrm{CAlg}}(\mathbb{Z}[t^{\pm 1}], R)$, and there is a natural map of commutative rings $\bb Z[\mathrm{GL}_1(R)] \rightarrow R$ which is the map from the ``free commutative ring on the units of $R$ back to itself'', produced as the counit of the adjunction. 

In \cite{AndoBlumbergGepnerHopkinsRezk2014}, a spectral enhancement of this adjunction is studied. To set up notation, we need the usual adjunction between spaces and spectra $\Sigma_+^{\infty}:\Spc \rightleftarrows \Spt: \Omega_{-}^{\infty}$, where we have decided to adopt the unconventional notation ``$\Omega_{-}^{\infty}$'' to indicate that we have forgotten the basepoint implicit in the infinite loop space of a spectrum; this notation should help avoid confusion later. The functor $\Sigma_+^{\infty}$ is strong monoidal (for the Cartesian product on the domain and the tensor product on the target), whence the right adjoint $\Omega_{-}^{\infty}$ preserves $\bb E_{\infty}$-algebras and there is therefore an induced adjunction $\Sigma_+^{\infty}:\mathrm{CMon}_{\bb E_{\infty}}(\Spc) \rightleftarrows \CAlg(\Spt): \Omega^{\infty}.$ On the other hand, by presentability reasons, the inclusion of grouplike $\bb E_\infty$-monoids $\iota:\mathrm{CMon}_{\bb E_{\infty}}^\sub{gp}(\Spc) \hookrightarrow \mathrm{CMon}_{\bb E_{\infty}}(\Spc)$ admits both left and right adjoints; we denote the latter by $\mathrm{GL}_1$. Composing yields an adjunction
\[
\Sigma^{\infty}_+ \circ \iota: \mathrm{CMon}^\sub{gp}_{\bb E_{\infty}}(\Spc) \rightleftarrows \CAlg(\Spt): \mathrm{GL}_1 \circ \Omega^{\infty}.
\]
Rewriting the left side via the equivalence $\Omega^{\infty}:\Spt_{\geq 0} \xrightarrow{\simeq} \mathrm{CMon}_{\bb E_{\infty}}^\sub{gp}(\Spc)$ we finally obtain the desired adjunction which provides a spectral version of \eqref{eqn_units_adjunction_honest}:
\begin{equation}
\mathbb{S}[-]:=\Sigma^{\infty}_+ \circ\iota\circ\Omega^{\infty}: \Spt_{\geq 0} \rightleftarrows \CAlg(\Spt): \mathrm{gl}_1.
\label{eqn_units_adjunction}
\end{equation} 
Here the notation $\mathbb{S}[-]$ reflects the fact that $\Sigma^{\infty}_+ \Omega^{\infty}$ is a spherical version of the group ring construction. In particular, the functor $\mathrm{gl}_1$ is also representable
\begin{equation}\label{eq:repr-gl1}
\mathrm{gl}_1(R) \simeq \Map_{\CAlg(\Spt)}(\mathbb{S}[ (\coprod_{n \in \mathbb{N}} \mathrm{B}\Sigma_n)^{\mathrm{gp}} ], R) \simeq \Map_{\CAlg(\Spt)}(\mathbb{S}\{ t^{\pm 1} \}, R),
\end{equation}
where $\mathbb{S}\{ t^{\pm 1} \}$ is the free $\mathbb{E}_{\infty}$-algebra on a single invertible variable $t$,\footnote{This is different from the spherical group ring on $\mathbb{Z}$, denoted by $\mathbb{S}[t^{\pm 1}]$ or $\mathbb{S}[\mathbb{Z}]$ whose $\Spec$ in the sense of spectral algebraic geometry is the ``flat'' version of the multiplicative group.} and just as in classical situation there is a natural $\bb E_{\infty}$-algebra map
\begin{equation}\label{eq:map}
\bb S[\mathrm{gl}_1(R)] \To R
\end{equation}
from the spherical group ring on the units back to the ring spectrum.

\begin{remark}[Homotopy groups of $\mathrm{gl}_1$]\label{rem:hpty} If $R$ is $\bb E_{\infty}$-ring spectra, then there are isomorphisms of homotopy groups $\pi_{n}\mathrm{gl}_1(R) \cong \pi_n(R)$ for each $n \geq 1$; see for example \cite[\S 1.5]{Rezk2006}. In general there is no map of spectra $\mathrm{gl}_1(R) \rightarrow R$ witnessing these isomorphisms.
\end{remark}

Tensoring the adjunction \eqref{eqn_units_adjunction} with a presentable $\infty$-category $\scr C$ obtains an adjunction
\[
\mathbb{S}_{\mathcal{C}}[-]: \scr C \otimes \Spt_{\geq 0} \rightleftarrows \scr C \otimes \CAlg(\Spt): \mathrm{gl}^{\mathcal{C}}_{1}.
\]
For us the relevant presentable $\infty$-category will be $\scr C:= \Shv_{\Zar}(\Sm_X, \Spt)$, for some qcqs scheme $X$; in this case the adjunction will be denoted by
\begin{equation}\label{eq:gl1}
\mathbb{S}_X[-]:  \Shv_{\Zar}(\Sm_X, \Spt)_{\geq 0} \rightleftarrows \Shv_{\Zar}(\Sm_X, \CAlg(\Spt)):\mathrm{gl}^X_1,
\end{equation}
where the left side denotes connective Zariski sheaves of spectra (see Remark \ref{rem:spherical} for clarification).

\begin{remark}[The spherical group ring on a connective sheaf of spectra]\label{rem:spherical} A connective Zariski sheaf of spectra, i.e. an object of $\Shv_{\Zar}(\Sm_X, \Spt)_{\geq 0}$, is not necessarily a sheaf of spectra whose sections are connective; rather, it is a sheaf of spectra in the image of the symmetric monoidal localisation $L_{\Zar}: \PShv(\Sm_X, \Spt_{\geq 0}) \rightarrow \Shv_\sub{Zar}(\Sm_X, \Spt)$ (see also the proof of Proposition~\ref{prop:compat} where this category is constructed as the non-negative part of a $t$-structure). Given $F \in \Shv_{\Zar}(\Sm_X, \Spt)_{\geq 0}$, the sheaf $\mathbb{S}_X[F]$ of $\bb E_\infty$-algebras is the Zariski sheafification of $U\mapsto\bb S[\tau_{\ge0}F(U)])$.
\end{remark}

\subsubsection{The Annala--Iwasa theorem}
We now turn to the work of Annala--Iwasa \cite{AnnalaIwasa2023} and use it to construct Adams operators on $m$-inverted $K$-theory over a fixed qcqs base scheme $X$.

Following the notation of \cite[\S 3.1.1]{BachmannElmantoMorrow} (rather than $\cite{AnnalaIwasa2023}$, where Zariski sheafifications are implicit in certain stabilisation functors), we denote by $\Pic: \Sch^\sub{qcqs,op} \rightarrow \Spt$ the Picard stack valued in connective spectra, and by $L_\sub{Zar}\Pic=R\Gamma_\sub{Zar}(-,\bb G_m)[1]$ its Zariski sheafification, which is a connective Zariski sheaf of spectra. Restricting $L_\sub{Zar}\Pic$ to the category $\Sm_X$ obtains a connective Zariski sheaf of spectra $L_\sub{Zar}\Pic \in \Shv_{\Zar}(\Sm_X, \Sp)_{\geq 0}$, and then applying the left adjoint of~\eqref{eq:gl1} defines $\mathbb{S}_X[L_\sub{Zar}\Pic] \in \Shv_{\Zar}(\Sm_X, \CAlg(\Spt))$. To both simplify notation and closer match that of \cite{AnnalaIwasa2023}, we will allow ourselves to write $\mathbb{S}_X[\Pic]$ in place of $\mathbb{S}_X[L_\sub{Zar}\Pic]$.

Unwinding definitions and applying Remark~\ref{rem:spherical}, we see that $\mathbb{S}_X[\Pic]$ is simply the Zariski sheafification of the presheaf of $\bb E_{\infty}$-algebras $Y\mapsto\mathbb{S}[\scr O(Y)^{\times}[1]]$ on $\Sm_X$.

The \emph{Bott element} is the map
\begin{equation}\label{eq:bott}
\beta:= 1 - [\scr O(-1)]: \Sigma^{\infty}_+ \bb P_X^1 \To \bb S_X[\Pic],
\end{equation}
in $\PShv(\Sm_X, \Spt)$, defined more precisely as follows. Via the canonical map of sets $\textrm{Pic}(\bb P_X^1)\to \pi_0\bb S[\Pic(\bb P_X^1)]$, any line bundle $L$ on $\bb P_X^1$ defines an element of $\pi_0\bb S[\Pic(\bb P_X^1)]$, which is uniquely classified by a map of presheaves from $\Sigma_+^\infty\bb P_X^1$ to the presheaf $U\mapsto \bb S[\Pic(U)]$; sheafifying the target then defines a map of sheaves $\Sigma_+^\infty\bb P_X^1\to \bb S_X[\Pic]$, which one denotes by $[L]$. Applying this construction to the trivial line bundle $L=\roi_{\bb P_X^1}$ yields $1:\Sigma_+^\infty\bb P_X^1\to\Sigma_+^\infty X=\bb S\xto{\sub{unit}} \bb S_X[\Pic]$; applying it $L=\roi(-1)$ defines $[\roi(-1)]:\Sigma_+^\infty\bb P_X^1\to \bb S_X[\Pic]$. Finally $\beta$ is by definition the difference of these two maps.

Since $\bb S_X[\Pic]$ is a Zariski sheaf, and the restrictions of $1$ and $[\roi(-1)]$ along the map $\Sigma^{\infty}_+X \xrightarrow{\infty} \Sigma^{\infty}_+ \bb P_X^1$ are homotopic, $\beta$ induces a map \[\beta: L_{\Zar}\Sigma^{\infty}(\bb P_X^1, \infty) \To \bb S_X[\Pic]\]  of sheaves of spectra on $\Sm_X$, for which we abusively adopt the same notation.

\begin{definition}\label{def:conditions} Let $X$ be qcqs scheme and $E \in \mathrm{Mod}_{\bb S_X[\Pic]}(\Shv_{\Zar}(\Sm_X, \Spt))$. 

\begin{enumerate}
\item We say that $E$ is \emph{$\bb P^1$-periodic} \cite[Definition 5.2.2]{AnnalaIwasa2023} if the map of Zariski sheaves of spectra
\[
\beta_E:E \To \mathrm{fib}(E(\bb P^1_{\ph})\xto{\infty^*}E)
\]
adjoint to the map of Zariski sheaves
\begin{equation}\label{eq:adjoint-bott}
L_{\Zar}\Sigma^{\infty}(\bb P_X^1, \infty) \otimes E \xrightarrow{\beta \otimes \id_E} \bb S_X[\Pic] \otimes E \xrightarrow{\rm act} E
\end{equation}
is an equivalence.
\item We say that $E$ satifies \emph{elementary blowup excision} \cite[Definition 3.3.1]{AnnalaIwasa2023} if, for each $Y \in \Sm_X$, the sheaf $E$ converts the square
\begin{equation}\label{eq:blowup}
\begin{tikzcd}
\bb P^{n-1}_Y\ar{r} \ar{d}& \mathrm{Bl}_0(\bb A^n_Y) \ar{d}\\
Y \ar{r}{0} & \bb A^n_Y\\
\end{tikzcd}
\end{equation}
to a cartesian square of spectra.
\item We say that $E$ satisfies the \emph{projective bundle formula} \cite[\S 5.2.8]{AnnalaIwasa2023} if the map of Zariski sheaves of spectra
\[
\sum_{i=1}^n \beta^i: \bigoplus^{n}_{i=1} E \rightarrow E(\bb P^n_{\ph})
\]
is an equivalence for all $n \geq 1$.
\end{enumerate}
We set
\begin{equation}\label{eq:pbf-picmod}
\mathrm{Mod}_{\bb S_X[\Pic]}(\Shv_{\Zar}(\Sm_X, \Spt))_\sub{pbf} \subset \mathrm{Mod}_{\bb S_X[\Pic]}(\Shv_{\Zar}(\Sm_X, \Spt))
\end{equation}
to be the subcategory of those $E$ which satisfy the projective bundle formula.
\end{definition}

\begin{remark}[Pbf-localisation] \label{rem:pbf}
For presentability reasons, the inclusion~\eqref{eq:pbf-picmod} admits a left adjoint and, following \cite[\S 5.2.8]{AnnalaIwasa2023}, we will denote the localisation endofunctor by 
\[
L_\sub{pbf}: \mathrm{Mod}_{\bb S_X[\Pic]}(\Shv_{\Zar}(\Sm_X, \Spt)) \rightarrow \mathrm{Mod}_{\bb S_X[\Pic]}(\Shv_{\Zar}(\Sm_X, \Spt)).
\]
This functor (thought of as landing onto its essential image) factors as a composite of two other functors
\[
\mathrm{Mod}_{\bb S_X[\Pic]}(\Shv_{\Zar}(\Sm_X, \Spt)) \xrightarrow{L_{\bb P^1}} \mathrm{Mod}_{\bb S_X[\Pic]}(\Shv_{\Zar}(\Sm_X, \Spt))_{\bb P^1} \xrightarrow{L_\sub{ex}} \mathrm{Mod}_{\bb S_X[\Pic]}(\Shv_{\Zar}(\Sm_X, \Spt))_\sub{pbf},
\]
as noted in \cite[5.2.8]{AnnalaIwasa2023}; the point here is that when $E$ is $\bb P^1$-periodic, it satisfies the projective bundle formula if and only if it satisfies elementary blowup excision \cite[Lemma 3.3.5]{AnnalaIwasa2023}.  In particular, the functor $L_\sub{pbf}$ naturally upgrades to a symmetric monoidal localisation functor since $L_\sub{ex}$ evidently does and $L_{\bb P^1}$ does as noted in \cite[Remark 5.2.3]{AnnalaIwasa2023}. We note that, the localisation functor $L_\sub{ex}$ indicates the functor of enforcing blowup excision only on $\bb P^1$-periodic objects and we will continue to use this notation only in this context. 
\end{remark}

\begin{example}[$K$-theory and localising invariants]\label{eq:k-theory}
For any qcqs scheme $Y$, we denote by $\Perf_Y^{\simeq} \subset \Perf_Y$ the maximal subgroupoid of the $\infty$-category of perfect complexes on $Y$. We can endow this space with a natural structure of an $\bb E_{\infty}$-monoid under the tensor product (not direct sum). The natural map\footnote{This map is obtained by applying $(-)^{\simeq} \rightarrow \Omega^{\infty}\K^\sub{cn} \xrightarrow{\simeq} \Omega^{\infty}\K$ to $\Perf_Y$, whose universal property is discussed extensively in \cite{Barwick2016}.} $\Perf^{\simeq}_Y \rightarrow \Omega^{\infty}\K(Y)$ is compatible with the tensor product structure on the domain and the multiplicative structure on the target; in order not to forget that this is the relevant monoidal structure we will write $(\Perf_Y^{\simeq}, \otimes) \rightarrow (\Omega^{\infty}\K(Y), \otimes)$. We moreover have a natural map of spaces $\Omega^{\infty}\Pic(Y) \rightarrow (\Perf_Y^{\simeq}, \otimes)$ by including line bundles into perfect complexes. Finally, applying the functor $\mathrm{GL}_1$ to these maps yields morphisms of grouplike $\bb E_{\infty}$-monoids
\[
\Omega^{\infty}\Pic(Y) \xleftarrow{\simeq} \mathrm{GL}_1(\Omega^{\infty}\Pic(Y)) \rightarrow \mathrm{GL}_1(\Perf_Y^{\simeq}, \otimes) \rightarrow \mathrm{GL}_1(\Omega^{\infty}\K(Y), \otimes),
\]
where the first equivalence follows from the fact that every line bundle is, by definition, invertible under $\otimes$. Overall the composition defines a natural map of grouplike $\bb E_{\infty}$-monoids $\Omega^{\infty}\Pic(Y)  \rightarrow \mathrm{GL}_1(\Omega^{\infty}\K(Y), \otimes)$, or in other words a natural map of connective spectra \[\Pic(Y)\To \mathrm{gl}_1\K(Y).\]

Varying $Y$ over smooth $X$-schemes and sheafifying, the preceding map defines a map
\[
\bb S_X[\Pic] \To \bb S_X[\mathrm{gl}_1^X(\K)]
\]
in $\Shv_{\Zar}(\Sm_X, \CAlg(\Spt))$ (recall from \S\ref{sec:adj} that the left side is given by sheafifying $Y\mapsto \bb S[\Pic(Y)]$, and similarly for the right side). We will be primarily interested in the composition
\begin{equation}\label{pic-to-k}
\bb S_X[\Pic] \To \bb S_X[\mathrm{gl}_1^X(\K)] \xrightarrow{\rm counit}  \K,
\end{equation}
 which is again a morphism of sheaves of $\bb E_{\infty}$-algebras on $\Sm_X$. This endows $K$-theory, restricted to $\Sm_X$, with the structure of $\bb E_{\infty}$-$\bb S_X[\Pic]$-algebra, whence we can regard it as a an object of $\mathrm{Mod}_{\bb S_X[\Pic]}(\Shv_\sub{Zar}(\Sm_X, \Spt))$.

Now the functor $\Perf: \Sm_X^{\op} \rightarrow \Cat_{\bb Z}$ and restriction of scalars along the just-constructed map $\bb S_X[\Pic] \rightarrow  \K$ induce functors \[\mathrm{Mod}_{\K}(\Fun(\Cat_{\bb Z}, \Spt)) \xrightarrow{E \mapsto E \circ \Perf} \mathrm{Mod}_{\K}(\mathrm{PShv}(\Sm_X, \Spt)) \xrightarrow{\rm restrict} \mathrm{Mod}_{\bb S_X[\Pic]}(\PShv(\Sm_X, \Spt)).\] When the input $E\in\mathrm{Mod}_{\K}(\Fun(\Cat_{\bb Z}, \Spt))$ is moreover a localising invariant then the underlying presheaf of $E$ satisfies Nisnevich descent and the resulting $\bb S_X[\Pic]$-module satisfies elementary blowup excision and the $\bb P^1$-periodicity. For the latter, we note that the map $\beta_E$ of Definition \ref{def:conditions}(1) is, by definition, the same as the map $(1-[\roi(-1)])\circ\pi^*: E\to \mathrm{fib}(E(\bb P^1_{\ph})\xto{\infty^*}E)$ discussed for $\K^\sub{cn}$-modules in~\eqref{eqn:P1_for_Kcnmods2},\footnote{While we formulated the map for $\K^\sub{cn}$-modules, we can restrict along the map $\K^\sub{cn} \rightarrow \K$ to discuss such a property for any $\K$-module. We also worked over a prime field throughout \S\ref{sec:p1}, as this was the relevant context, but everything held more generally over $\bb Z$.} which is indeed an equivalence by the discussion in Construction~\ref{cons_add_invariant} which applies since localising invariants are, in particular, additive invariants.

We thus obtain a functor
\[
\mathrm{Mod}_{\K}(\Fun_\sub{loc}(\Cat_{\bb Z}, \Spt)) \To \mathrm{Mod}_{\bb S_X[\Pic]}(\Shv_\sub{Zar}(\Sm_X, \Spt))_\sub{pbf}. 
\] 
and when we speak of an object of $\mathrm{Mod}_{\bb S_X[\Pic]}(\Shv_\sub{Zar}(\Sm_X, \Spt))$ which ``comes from a localising invariant'' we mean an object in the essential image of this functor. 
\end{example}

The Annala--Iwasa theorem concerns the map~\eqref{eq:pbf-picmod}. Notice that even though the target of this map satisfies the projective bundle formula, its source does not. We cite their theorem:

\begin{theorem}\cite[Theorem 5.3.3]{AnnalaIwasa2023} \label{thm:annala-iwasa}
The map~\eqref{pic-to-k} induces a equivalence of Zariski sheaves of $\bb E_\infty$-algebras on $\Sm_X$
\begin{equation}\label{eq:annala-iwasa}
L_\sub{pbf}\bb S_X[\Pic] \xrightarrow{\simeq} \K.
\end{equation}
\end{theorem}

\begin{remark}[Bott-inversion]\label{rem:bott-inversion}
In this remark we will make sense of the formula ``$\bb S_X[\Pic][\beta^{-1}] \xrightarrow{\simeq} \K$'' following \cite[Corollary 5.2.7]{AnnalaIwasa2023}; although this is essentially equivalent to Theorem~\ref{thm:annala-iwasa}, we will need this Bott-inverted point of view to define the Adams operators on $K$-theory.

The formalism of \cite[\S 1]{AnnalaIwasa2023} constructs a symmetric monoidal adjunction
\[
\Sigma^{\infty}_{\bb P^1}: \mathrm{Mod}_{\bb S_X[\Pic]}(\Shv_{\Zar}(\Sm_X, \Spt)) \rightleftarrows \Spt_{\bb P^1}(\mathrm{Mod}_{\bb S_X[\Pic]}(\Shv_{\Zar}(\Sm_X, \Spt)): \Omega^{\infty}_{\bb P^1}
\]
where, roughly speaking, the functor $\Sigma^{\infty}_{\bb P^1}$ witnesses its target as the formal inversion of $\bb P^1$ within the context of presentably symmetric monoidal $\infty$-categories; see \cite[Definition 1.3.8]{AnnalaIwasa2023} for what this exactly means. In particular, there is an autoequivalence \[(\bb P^1)^{-1} \otimes-: \Spt_{\bb P^1}(\mathrm{Mod}_{\bb S_X[\Pic]}(\Shv_{\Zar}(\Sm_X, \Spt)) \quis \Spt_{\bb P^1}(\mathrm{Mod}_{\bb S_X[\Pic]}(\Shv_{\Zar}(\Sm_X, \Spt))\] which is the inverse to tensoring\footnote{This tensoring is in the sense of presentable $\infty$-categories. Namely $\mathrm{Mod}_{\bb S_X[\Pic]}(\Shv_{\Zar}(\Sm_X, \Spt))$ is a module over $\Shv_{\Zar}(\Sm_X, \Spc_*)$ in $\mathrm{Pr}^L$ and so tensoring with $(\bb P^1,\infty)$ means tensoring with the pointed Zariski sheaf represented by $(\bb P^1_X,\infty)$.} by $(\bb P^1,\infty) \in \Shv_{\Zar}(\Sm_X, \Spc_{*})$. 

Now let $E \in \mathrm{Mod}_{\bb S_X[\Pic]}(\Shv_{\Zar}(\Sm_X, \Spt))$. Taking $\Sigma^{\infty}_{\bb P^1}$ of the map~\eqref{eq:adjoint-bott} and applying $(\bb P^1)^{-1} \otimes -$, we get a map
\begin{equation}\label{eq:bott-stable}
\beta_E: \Sigma_{\bb P^1}^{\infty}E \To (\bb P^1)^{-1} \otimes \Sigma_{\bb P^1}^{\infty}E
\end{equation}
which we abusively denote again by $\beta_E$. We may use this to express $L_\sub{pbf}E$ as
\begin{gather*}\label{eq:colimit-pbf}
L_\sub{pbf}\simeq L_\sub{ex}\colim\big(\Omega^{\infty}_{\bb P^1}\Sigma_{\bb P^1}^{\infty} E \xrightarrow{\Omega^{\infty}_{\bb P^1}\beta_E} \Omega^{\infty}_{\bb P^1}( (\bb P^1)^{-1} \otimes\Sigma_{\bb P^1}^{\infty} E)
 \xrightarrow{\Omega^{\infty}_{\bb P^1}(\id \otimes \beta_E)} \Omega_{\bb P^1}^{\infty}(((\bb P^1)^{-1})^{\otimes 2}\otimes\Sigma^{\infty}_{\bb P^1}E)  \to\cdots\big)
\end{gather*} 
by \cite[Corollary 5.2.7]{AnnalaIwasa2023}.
\end{remark}

We are now equipped to define the Adams operators $\psi^m$ on the $m$-inverted non-connective $K$-theory $\K[\tfrac1m]$ of smooth $X$-schemes. The $m$-power map of abelian sheaves $\roi^\times \rightarrow \roi^\times$, $f \mapsto f^m$, induces the multiplication-by-$m$ map of presheaves of spectra on qcqs schemes $\left(-\right)^m: \Pic \rightarrow \Pic$; sheafifying, restricting to $\Sm_X$ and applying the spherical group algebra functor \eqref{eq:gl1} then obtains a map of Zariski sheaves of $\bb E_{\infty}$-algebras on $\Sm_X$
\begin{equation}
\psi^m: \bb S_X[\Pic] \rightarrow  \bb S_X[\Pic].
\label{eqn_adams_on_spherical}\end{equation}

\begin{lemma}\label{lem:nbeta} The map in $\Shv_{\Zar}(\Sm_X,\Sp)$ given by
\[
\Sigma^{\infty}_+\P^1 \xrightarrow{\beta}  \bb S_X[\Pic] \xrightarrow{\psi^m}  \bb S_X[\Pic]\xrightarrow{\eqref{pic-to-k}} \K
\]
is homotopic to $m\beta$. Consequently, denoting by $\bb S_X[\Pic]^{[m]}$ the $\bb E_{\infty}$-$\bb S_X[\Pic]$-algebra obtained by scalar restricting $\bb S_X[\Pic]$ along $\psi^m$, we get an equivalence
\[
L_\sub{pbf}\bb S_X[\Pic]^{[m]} \xrightarrow{\simeq} \K[\tfrac{1}{m}]
\]
\end{lemma}

\begin{proof}
The first part follows from the exact same argument as in \cite[Lemma 3.11]{BachmannHopkins2020}. The first part then implies that the colimit formula calculating $L_\sub{pbf}\bb S_X[\Pic]^{[m]}$ from Remark~\ref{rem:bott-inversion}
is homotopic to:
\[
L_\sub{ex}\big(\colim \Omega^{\infty}_{\bb P^1}\Sigma_{\bb P^1}^{\infty} \bb S_X[\Pic] \xrightarrow{\Omega^{\infty}_{\bb P^1}m\beta} \Omega^{\infty}_{\bb P^1}( (\bb P^1)^{-1} \otimes\Sigma_{\bb P^1}^{\infty} \bb S_X[\Pic]) \xrightarrow{\Omega^{\infty}_{\bb P^1}(\id \otimes m\beta)} \Omega_{\bb P^1}^{\infty}((\bb P^1)^{-1})^{\otimes 2} \otimes\Sigma^{\infty}_{\bb P^1}\bb S_X[\Pic])\cdots\big)
\]
which, thanks to Theorem~\ref{thm:annala-iwasa}, is further homotopic to the colimit:
\[
\K \xrightarrow{ \cdot m} \K \xrightarrow{\cdot m} \K \cdots.
\] 
This latter colimit inverts $m$, whence we obtain $\K[\tfrac{1}{m}]$. 
\end{proof}

The next construction mimics Bachmann--Hopkins in the $\bb A^1$-invariant context \cite[\S 3.3.1]{BachmannHopkins2020}:

\begin{construction}[Adams operators on $K$-theory of smooth $X$-schemes]\label{constr:adams-k}
By Theorem~\ref{thm:annala-iwasa} and Lemma~\ref{lem:nbeta}, there is a unique map $\psi^m:\K\to \K[\tfrac1m]$ of Zariski sheaves of $\bb E_{\infty}$-algebras on $\Sm_X$ fitting (uniquely) into a commutative diagram 
\begin{equation}\label{eq:psin}
\begin{tikzcd}
\bb S_X[\Pic] \ar[swap]{d}{\psi^m} \ar{rr}{\eqref{pic-to-k}} &&  \K \ar[dashed]{d}{\exists! \psi^m}\\
\bb S_X[\Pic] \ar{r}{\eqref{pic-to-k}} &\K\ar{r}& \K[\tfrac{1}{m}].
\end{tikzcd}
\end{equation}
in $\Shv_\sub{Zar}(\Sm_X,\CAlg(\Sp))$. The dashed map factors uniquely through $K[\tfrac1m]$, and it is the resulting map
\begin{equation}\label{adams-final}
\psi^m: \K[\tfrac{1}{m}] \To \K[\tfrac{1}{m}].
\end{equation}
of Zariski sheaves of $\mathbb{E}_{\infty}$-algebras on $\Sm_X$ which we call the \emph{$m$-th Adams operator} on $\K$-theory of smooth $X$-schemes.
\end{construction}

\begin{remark}[Comparison with prior definitions in the regular case]\label{rem:comparison-regular}
Assume throughout this remark that $X$ is a regular Noetherian scheme. Then $\KH = \K$ on $\Sm_X$ and the construction of the Adams operators of Construction~\ref{constr:adams-k} agrees with the one by Bachmann--Hopkins \cite[\S3.3]{BachmannHopkins2020} in the $\infty$-category $\SH(X)$, after applying $\omega^{\infty}$ to the latter. More precisely, the construction above uniquely defines an $\bb E_{\infty}$-algebra map $\psi^m:\mathrm{KGL}_X[\tfrac{1}{m}] \rightarrow \mathrm{KGL}_X[\tfrac{1}{m}]$ in the $\infty$-category of motivic spectra $\mathrm{MS}_X$ in the sense of \cite{AnnalaHoyoisIwasa2023}, and this map agrees with that of \cite[\S3.3]{BachmannHopkins2020}; such a statement makes sense because $\SH(X)$ is a full subcategory of $\mathrm{MS}_X$ and $\mathrm{KGL}_X \in \SH(X)$ under our regularity assumption on $X$. Furthermore, as already mentioned in \S\ref{ss_Adams_on_KH}, Bachmann--Hopkins' operators coincide with those defined previously by Riou and, at the level of $K$-groups, with those of Hiller, Kratzer, and Soul\'e.
\end{remark}

\begin{remark}[Functoriality in $X$]\label{rem_adams_functorial}
The qcqs base scheme $X$ was fixed throughout this subsection and the Adams operators $\psi^m$ were defined only on $\K[\tfrac1m]$ of smooth $X$-schemes. Functoriality in $X$ was not addressed, and we are not sure whether the methods of this subsection can be used to define Adams operations on $\K[\tfrac1m]$ as a sheaf of $\bb E_\infty$-algebras on all qcqs schemes. But one can instead obtain such functorial operators by arguing as follows: taking $X=\Spec(\bb Z)$, we have constructed $\psi^m$ as an endomorphism of $\K[\tfrac1m]$ as a sheaf of $\bb E_\infty$-algebras on $\Sm_\bb Z$, characterised by diagram \eqref{eq:psin}. We left Kan extend this to finitely presented $\bb Z$-schemes, then pro cdh sheafify in the sense of Kelly--Saito (see \S\ref{ss_KellySaito}) on $\Sch_\bb Z^\sub{fp}$, and finally extend to arbitrary qcqs schemes by taking cofiltered limits: this defines an endomorphism $\psi^m$ on $\K[\tfrac1m]$ as a sheaf of $\bb E_\infty$-algebras on all qcqs schemes. Furthermore, this (a priori new) Adams operator fits into a commutative diagram
\begin{equation}
\begin{tikzcd}
\bb S[\Pic] \ar{d}{\psi^m} \ar{r} &  \K[\tfrac1m] \ar[dashed]{d}{\psi^m}\\
\bb S[\Pic] \ar{r} &   \K[\tfrac{1}{m}],
\end{tikzcd}
\end{equation}
of sheaves of $\bb E_\infty$-algebras on qcqs schemes, where $\bb S[\Pic]$ is defined to be the Zariski sheafification of $Y\mapsto\mathbb{S}[\scr O(Y)^{\times}[1]]$ and its endomorphism $\psi^m$ is induced by the $m$-power map $\roi^\times\to\roi^\times$. But upon restricting this diagram to $\Sm_X$, for any qcqs scheme $X$, we recover the characterising property of the $\psi^m$ from Construction \ref{constr:adams-k} and so see that this pro cdh-locally left Kan extended Adams operator agrees on $\Sm_X$ with that of Construction \ref{constr:adams-k}. 

We will provide a different construction of such functorial Adams operators in the case of qcqs $\bb Q$-schemes, which uses trace methods rather than the pro cdh topology: see Theorem \ref{thm:adams}, especially assertion (2).
\end{remark}

\subsection{Compatibility of the Adams operators}\label{sec:proof-chw}

This goal of this subsection is to prove Corollary~\ref{corol:key-q} which establishes a multiplicative compatibility result about Adams operators on $K$-theory and negative cyclic homology, at least when restricted to smooth $k$-schemes, for $k$ a $\bb Q$-algebra. The Annala--Iwasa theorem reduces this to Proposition~\ref{prop:key-q} which is, in turn, established by showing that a certain mapping space is discrete. Throughout this subsection we fix an integer $m\in\bb Z\setminus\{0\}$.

For any $\bb Q$-algebra $k$ recall from \S\ref{sec:ai-thm} the sheaf of $\bb E_\infty$-algebras on $\Sm_k$ \[\mathbb{S}_k[\Pic]:=\mathbb{S}_k[L_\sub{Zar}\Pic]=\text{Zariski sheafification of }Y\mapsto \bb S[\roi(Y)^\times[1]].\] We will consider the following sheaves of $\bb E_\infty$-algebras on $\Sm_k$ and maps between them:
\[
\mathbb{S}_k[\Pic] \xto{\sub{\eqref{pic-to-k}}} \K \stackrel{\mathrm{tr}}\To \HC^{-}(-/\bb Q)\To\HC^-(-/k)\To\HH(-/k).
\]
We will denote by $\mathrm{tr}$ any of these maps from $\mathbb{S}_k[\Pic]$ or $\K$ to $\HC^-$ or $\HH$; the domain and codomain will always be clear. Equipping $\mathbb{S}_k[\Pic]$, $\K$, and $\HC^-$ with the trivial $S^1$-action, and $\HH$ with either its usual $S^1$-action or its rescaled action from \S\ref{ss_Raksit}, we may view these sheaves as valued in $S^1$-equivariant $\bb E_\infty$-algebras and the trace maps as being $S^1$-equivariant (writing $\HH(-/k)^{[m]}$ in case of the rescaled action); in short, we may work in $\Shv_{\Zar}(\Sm_{k}, \CAlg(\mathrm{Sp})^{BS^1})$.


\begin{proposition}\label{prop:key-q}
Let $k$ be a $\bb Q$-algebra. Then the diagram
\begin{equation}\label{key-hc1}
\begin{tikzcd}
\mathbb{S}_k[\Pic] \ar{r}{\mathrm{tr}} \ar[swap]{d}{\psi^m} & \HH(-/k) \ar{d}{\psi^m}\\
\mathbb{S}_k[\Pic] \ar{r}{\mathrm{tr}} & \HH(-/k)^{[m]}.
 \end{tikzcd}
\end{equation}
of $S^1$-equivariant sheaves of $\bb E_\infty$-algebras on $\Sm_k$, i.e., in $\Shv_{\Zar}(\Sm_{k}, \CAlg(\mathrm{Sp})^{BS^1})$, commutes in a unique way.
\end{proposition}
\begin{proof}
It suffices to prove that the maps $\psi^m\circ\mathrm{tr}$ and $\mathrm{tr}\circ\psi^m$ are homotopic and that the mapping space \[\Map_{\Shv_{\Zar}(\Sm_k, \CAlg(\Spt)^{BS^1})}(\bb S_k[ \Pic], \HH(-/k)^{[m]})\] is discrete. We begin by proving these results after forgetting the $S^1$-action, i.e., in the category $\Shv_{\Zar}(\Sm_k, \CAlg(\Spt))$, where $\HH(-/k)^{[m]}$ is the same as $\HH(-/k)$.

We begin by rewriting the mapping space in question:
\begin{align*}
\Map_{\Shv_{\Zar}(\Sm_k, \CAlg(\Spt))}(\bb S_k[ \Pic], \HH(-/k))&\simeq
\Map_{\PShv(\Sm_k^\sub{aff}, \Spt)}(\scr O^{\times}[1], \mathrm{gl}_1\HH(-/k)))\\
&\simeq \Map_{\PShv(\Sm_k^\sub{aff}, \Spt)}(\scr O^{\times}, (\tau_{\ge1}\mathrm{gl}_1\HH(-/k))[-1])\\
&\quis \Map_{\PShv(\Sm_k^\sub{aff}, \Spt)}(\scr O^{\times}, \pi_1\mathrm{gl}_1\HH(-/k)).
\end{align*}
Here the first equivalence is the adjunction~\eqref{eq:gl1}, restriction to smooth affines, and the adjunction between sheaves and presheaves; the second equivalence comes the fact that $\roi^\times[1]$ is supported in homological degrees $1$. We claim that the arrow induced by the truncation map of presheaves $(\tau_{\ge1}\mathrm{gl}_1\HH(-/k))[-1]\to \pi_1\mathrm{gl}_1\HH(-/k)$ is an equivalence. Its fibre is $\Map_{\PShv(\Sm_k^\sub{aff}, \Spt)}(\scr O^{\times}, (\tau_{\ge2}\mathrm{gl}_1\HH(-/k))[-1])$, which we rewrite as \[\Map_{\PShv(\Sm_k^\sub{aff}, \mathrm{CMon}_{\bb E_{\infty}}(\Spc))}(\roi^\times, \Omega^{\infty}(\tau_{\geq 2}\mathrm{gl}_1\HH(-/k))[-1]))\] via the equivalence between connective spectra and grouplike $\bb E_\infty$-monoids in spaces. To prove that this is contractible, it is enough by Lemma~\ref{lem:algebra_vs_linear_maps} to show that the space of maps of presheaves of spaces
\[
\Map_{\PShv(\Sm_k^\sub{aff}, \Spc)}(\roi^\times, \Omega^{\infty}(\tau_{\geq 2}\mathrm{gl}_1\HH(-/k))[-1]))
\]
is contractible. But the presheaf of sets $\roi^\times$ is represented by $\Spec(k[t^{\pm1}])$, so this mapping space is given by $\Omega^{\infty}$ of $(\tau_{\geq 2} \mathrm{gl}_1\HH(k[t^{\pm1}])/k))[-1]$; this complex is indeed $0$ as desired, since the homotopy groups of $\mathrm{gl}_1\HH(k[t^{\pm1}]/k)$ and $\HH(k[t^{\pm1}]/k)$ are isomorphic in degrees $\geq 1$ (see Remark~\ref{rem:hpty}), and the latter vanish in degrees $\ge2$ by the Hochschild--Kostant--Rosenberg theorem. This completes the proof of the claim and so establishes an equivalence of mapping spaces \begin{equation}\Map_{\Shv_{\Zar}(\Sm_k, \CAlg(\Spt))}(\bb S_k[ \Pic], \HH(-/k))\simeq \Map_{\PShv(\Sm_k^\sub{aff}, \Spt)}(\scr O^{\times}, \pi_1\mathrm{gl}_1\HH(-/k)),\label{eqn_adams_discrete}\end{equation} where the right side is the discrete space consisting of the set of maps of abelian presheaves from $\scr O^{\times}$ to $\pi_1\mathrm{gl}_1\HH(-/k))$ on $\Sm_k^\sub{aff}$.

This not only establishes discreteness of the left side of \eqref{eqn_adams_discrete} but also shows that any map $\bb S_k[ \Pic]\to \HH(-/k)$ of presheaves of $\bb E_\infty$-algebras on $\Sm_k$ is uniquely compatible with the Adams operators on each side: indeed, this follows from naturality of \eqref{eqn_adams_discrete} for endomorphisms of $\bb S_k[ \Pic]$ and $\HH(-/k)$, that $\psi^m$ acts on both $\scr O^{\times}$ and $\pi_1\mathrm{gl}_1\HH(-/k)$ as multiplication by $m$ (in the latter case use the natural log isomorphism $\pi_1\mathrm{gl}_1\HH(-/k)\simeq \HH_1(-/k)$ of Remark~\ref{rem:hpty} and note that $\HH_1(-/k)$ is isomorphic to $(\gr^1_\sub{HKR}\HH(-/k))[-1]$ on smooth affine $k$-schemes), and the fact that any map of abelian presheaves automatically commutes with multiplication by $m$ on the source and target. In particular, the maps $\psi^m\circ\mathrm{tr},\,\mathrm{tr}\circ\psi^m\in \Map_{\Shv_{\Zar}(\Sm_k, \CAlg(\Spt))}(\bb S_k[ \Pic], \HH(-/k))$ are equal.

It is now easy to take account of the $S^1$-action. The mapping space \[\Map_{\Shv_{\Zar}(\Sm_k, \CAlg(\Spt)^{BS^1})}(\bb S_k[ \Pic], \HH(-/k)^{[m]})\] is equivalent to \[\Map_{\Shv_{\Zar}(\Sm_k, \CAlg(\Spt))}(\bb S_k[ \Pic], \HH(-/k))^{hS^1},\] where $\Map_{\Shv_{\Zar}(\Sm_k, \CAlg(\Spt))}(\bb S_k[ \Pic], \HH(-/k))$ has been equipped with the usual structure of an $S^1$-equivariant space (induced by the trivial action on $\bb S_k[\Pic]$ and the rescaled action on $\HH(-/k)$). But we have shown that the latter mapping space is discrete, so its $S^1$-homotopy-fixed points is again discrete and consists of the naive $S^1$-fixed points of the set. That is, forgetting the $S^1$-action \[\Map_{\Shv_{\Zar}(\Sm_k, \CAlg(\Spt)^{BS^1})}(\bb S_k[ \Pic], \HH(-/k)^{[m]})\to \Map_{\Shv_{\Zar}(\Sm_k, \CAlg(\Spt))}(\bb S_k[ \Pic], \HH(-/k))\] is an inclusion of discrete spaces. The maps $\psi^m\circ\mathrm{tr},\,\mathrm{tr}\circ\psi^m\in \Map_{\Shv_{\Zar}(\Sm_k, \CAlg(\Spt)^{BS^1})}(\bb S_k[ \Pic], \HH(-/k)^{[m]})$ are therefore homotopic (since we have shown that the same is true after forgetting the $S^1$-action), even in a unique way (since the mapping space is discrete).
\end{proof}

We finally arrive at the main result of this appendix. 

\begin{corollary}\label{corol:key-q}
Let $k$ be a $\bb Q$-algebra. Then the diagram
\[
\begin{tikzcd}
\K[\tfrac{1}{m}] \ar{r}{\mathrm{tr}} \ar[swap]{d}{\psi^m} & \HC^-(-/k) \ar{d}{\psi^m}\\
\K[\tfrac{1}{m}] \ar{r}{\mathrm{tr}} & \HC^-(-/k).
 \end{tikzcd}
\]
of sheaves of $\bb E_\infty$-algebras on $\Sm_k$ commutes
\end{corollary}
\begin{proof}
Taking $S^1$-fixed points of \eqref{key-hc1} yields a commutative diagram
\begin{equation}\label{key-hc2}
\begin{tikzcd}
\bb S_k[\Pic] \ar{r}{\mathrm{tr}} \ar[swap]{d}{\psi^m} & \HC^-(-/k) \ar{d}{\psi^m}\\
\bb S_k[\Pic] \ar{r}{\mathrm{tr}} & \HC^-(-/k)
 \end{tikzcd}
\end{equation}
of sheaves of $\bb E_\infty$-algebras on $\Sm_k$. We view it as a diagram of $\bb S_k[\Pic]$-modules via the indicated structure maps from the top left corner. Applying $L_\sub{pbf}$ turns the top left corner into $K$ by the Annala--Iwasa Theorem \ref{thm:annala-iwasa}, and similarly turns the bottom left (namely the $\bb S_k[\Pic]$-module $\bb S_k[\Pic]^{[m]}$) into $\K[\tfrac1m]$ by Lemma \ref{lem:nbeta}. Meanwhile, $L_\sub{pbf}$ does not affect $\HC^-(-/k)$ since the latter comes from a localising invariant and so satisfies $\bb P^1$-periodicity and elementary blow-up excision (see Remark \ref{eq:k-theory}); this also shows that the bottom right satisfies elementary blow-up excision since it does not depend on the $\bb S_k[\Pic]$-module structure. Finally, using Lemma \ref{lem:nbeta}, the question of $\bb P^1$-periodicity for the bottom right is whether the map \[\HC^-(-/k)\xto{m(1-[\roi(-1)])}\mathrm{fib}(\HC^-(\bb P^1_{\ph}/k)\xto{\infty^*}\HC^-(-/k))\] is an equivalence on $\Sm_k$; but this follows from the usual $\bb P^1$-bundle formula for $\HC^-$ (i.e., the case $m=1$) since $m$ acts invertibly on $\HC^-(-/k)$.

The outcome of applying $L_\sub{pbf}$ to \eqref{key-hc2} is therefore a commutative diagram
\[
\begin{tikzcd}
\K \ar{r}{\mathrm{tr}} \ar[swap]{d}{\psi^m} & \HC^-(-/k) \ar{d}{\psi^m}\\
\K[\tfrac{1}{m}] \ar{r}{\mathrm{tr}} & \HC^-(-/k).
 \end{tikzcd}
\]
of sheaves of $\bb E_\infty$-algebras on $\Sm_k$; harmlessly inverting $m$ in the top left completes the proof.
\end{proof}

\color{black}
\bibliographystyle{acm}
\bibliography{../bibliography-zar, bibliography-zar2, Bibliography}
\end{document}